\newcommand{\NN}{{\mathbb N}} 
\newcommand{\ZZ}{{\mathbb Z}}
\newcommand{\CC}{{\mathbb C}}
\newcommand{\proj}{{\mathbb P}}
\newcommand{\hyp}{{\rm hyp}}
\renewcommand{\tilde}{\widetilde}
\newcommand{\isom}{\cong}
\newcommand{\Res}{{\rm Res}}
\newcommand{\barmoduli}[1][g]{{\overline{\mathcal M}}_{#1}}
\newcommand{\moduli}[1][g]{{\mathcal M}_{#1}}
\newcommand{\omoduli}[1][g]{{\Omega\mathcal M}_{#1}}
\newcommand{\modulin}[1][g,n]{{\mathcal M}_{#1}}
\newcommand{\omodulin}[1][g,n]{{\Omega\mathcal M}_{#1}}
\newcommand{\zomoduli}[1][]{{\mathcal H}_{#1}}
\newcommand{\pomoduli}[1][g]{{\proj\Omega\mathcal M}_{#1}}
\newcommand{\pobarmoduli}[1][g]{{\proj\Omega\overline{\mathcal M}}_{#1}}
\newcommand{\obarmoduli}[1][g]{{\Omega\overline{\mathcal M}}_{#1}}
\newcommand{\omoduliinc}[2][g,n]{{\Omega\mathcal M}_{#1}^{{\rm inc}}(#2)}
\newcommand{\obarmoduliinc}[2][g,n]{{\Omega\overline{\mathcal M}}_{#1}^{{\rm inc}}(#2)}
\newcommand{\omoduliincp}[2][g,\lbrace n \rbrace]{{\Omega\mathcal M}_{#1}^{{\rm inc}}(#2)}
\newcommand{\obarmoduliincp}[2][g,\lbrace n \rbrace]{{\Omega\overline{\mathcal M}}_{#1}^{{\rm inc}}(#2)}
\newcommand{\kbarmodulilim}[1][g,\left\{n\right\}]{{K\overline{\mathcal M}}_{#1}^{{\rm lim}}}
\newcommand{\kbarmoduliplum}[1][g,\left\{n\right\}]{{K\overline{\mathcal M}}_{#1}^{{\rm plb}}}
\newcommand{\spin}[1][g]{{\mathcal{S}}_{#1}}
\newcommand{\barspin}[1][g]{\overline{{\mathcal{S}}_{#1}}}
\newcommand{\hyperell}[1][g]{{\mathcal H}_{#1}}
\newcommand{\barhyperell}[1][g]{{\overline{\mathcal H}}_{#1}}
\newcommand{\permGroup}[1][n]{\mathfrak{S}_{#1}}
\newcommand{\tensor}{\otimes}
\newcommand{\odd}{{\rm odd}}
\newcommand{\even}{{\rm even}}
\newcommand{\dz}{{\rm d}z}
\newcommand{\dw}{{\rm d}w}
\newcommand{\dx}{{\rm d}x}
\newcommand{\dy}{{\rm d}y}
\newcommand{\ind}{{\rm Ind}} %l'indice d'une courbe
\newcommand{\Arf}{{\rm Arf}} %l'invariant de Arf
\newcommand{\partder}[1]{\partial_{#1}}
\newcommand{\dualgraph}{\Gamma_{\rm dual}}
\newcommand{\X}{X} 
\newcommand{\Jac}[1][\X]{\mathcal{J}(#1)} 
\newcommand{\Ox}[1][\X]{\mathcal{O}_{#1}} 
\newcommand{\dualsheave}[1][\X]{\omega_{#1}}
\newcommand{\holoneform}[1][\X]{\Omega_{#1}^{1}} 
 \newcommand{\divisor}[1]{{\rm div }\left( #1 \right)}
\newcommand{\grd}[2]{\mathfrak{g}^{#1}_{#2}} 
\newcommand{\ord}{{\rm ord }} %ordre de qqc
\newcommand{\famcurv}{\mathscr{X}} %famille de courbes
\newcommand{\famomega}{\mathscr{W}} %famille de differentielles
\newcommand{\seczero}{\mathscr{Z}} %section formee des zeros d'une forme
\newcommand{\nodeSet}[1][\X]{\mathcal{N}_{#1}} %ensemble des noeuds de \X.
\newcommand{\PP}{{\mathbb P}}
\newcommand{\Irr}{\mathfrak{Irr}} 
\newcommand{\WP}{{\mathcal{W}\mathcal{P}}}
\newcommand{\Pic}[1][]{\rm{Pic_{#1}}}
\newcommand{\br}[1][i]{{\rm br}_{i}}
\newcommand{\du}[1][i]{{\rm du}_{i}}
\theoremstyle{plain}{
\newtheorem{theorem}{Theorem}[section]
\newtheorem{cor}[theorem]{Corollary}
\newtheorem{conj}[theorem]{Conjecture}
\newtheorem{prop}[theorem]{Proposition}
\newtheorem{lemma}[theorem]{Lemma}

%Namen

\theoremstyle{remark}{
\newtheorem{rem}[theorem]{Remark}

}

\theoremstyle{definition}{
\newtheorem{defn}[theorem]{Definition}

\newtheorem{ex}[theorem]{Example}

}

\title{The Deligne-Mumford and the Incidence Variety Compactifications of the Strata of
$\omoduli$.}
\author{Quentin Gendron}

\begin{document}

\maketitle

\begin{abstract}
The main goal of this work is to construct and study a reasonable compactification of the strata of the moduli
space
of Abelian differentials.
This allows us to compute the Kodaira dimension of some strata of the moduli
space of Abelian
differentials. 
The main ingredients to study the compactifications of the strata are a version of the
plumbing
cylinder construction for differential forms and an extension of the parity of the connected components
of the strata  to the differentials on curves of compact type. We study in detail the compactifications of the
hyperelliptic minimal
strata and of the odd minimal stratum in genus three.

\end{abstract}

\tableofcontents
\clearpage

\section{Introduction.}

Let $\moduli$ be the moduli space of algebraic curves of genus $g$. In the early 1980s Harris and Mumford
(\cite{MR664324}) proved that $\moduli$ is of general type for $g\geq24$. They used in a crucial way the
compactification of $\moduli$ proposed by  Deligne and Mumford at the end of the 1960s
(\cite{MR0262240}). This compactification is the moduli space $\barmoduli$ of
stable algebraic curves of arithmetic genus $g$.

More recently,  the moduli space of  nonzero holomorphic differentials
$\omoduli$ and its
projectivisation $\pomoduli$ have gained great interest, coming in particular from the theory of
dynamical systems (see \cite{MR2261104}).
The moduli space $\omoduli$ has a natural stratification given by the orders of the zeros of the
differentials.  
For a given tuple
$(k_{1},\cdots,k_{n})$ of positive numbers such that $\sum k_{i} = 2g-2$,  we define the stratum
$$\omoduli(k_{1},\cdots,k_{n}):=\left\{ (\X,\omega): \X\in\moduli, \quad
\divisor{\omega}=\sum_{i=1}^{n} k_{i}Z_{i} 
\right\},$$ 
and their images in $\pomoduli$ are denoted by
$\pomoduli(k_{1},\cdots,k_{n})$.
In analogy with~$\moduli$, it is likely that a good compactification of
$\pomoduli$ should help us to compute the Kodaira dimension of the strata of $\pomoduli$.

In this paper, we first introduce and study two compactifications of the strata of the moduli space of Abelian
differentials. This allows us to compute the Kodaira dimension of some of these strata. The last sections are
devoted to the study of the hyperelliptic minimal strata and the non hyperelliptic minimal stratum in genus
three.

\subsection{The {incidence variety compactification}.}

 The notion of Abelian differentials can be generalised  to the case of stable curves by the
notion of stable differentials. Therefore, we
can prolong $\omoduli$ above $\barmoduli$ simply by looking at the moduli space of stable differentials
$\obarmoduli$. The
closure of the strata inside $\obarmoduli$ are called the {\em Deligne-Mumford compactifications} of these
strata. The main drawback of this method is the loss of information. Indeed, a non vanishing stable
differential may vanish on some irreducible components of the stable curve, losing completely the information
on this component.

In order to keep track of more information, we introduce in Section~\ref{section:CompactDesStrates} another
compactification for the strata. Let us define the closure of the {\em ordered closed incidence variety}
$\PP\obarmoduliinc{k_{1},\cdots,k_{n}}$  inside the moduli space of  marked (semi) stable
differentials by
$$\overline{\left\{(\X,\omega,Z_{1},\cdots,Z_{n}) :
\left(\X,Z_{1},\cdots,Z_{n}\right)\in\modulin,\quad \sum_{i=1}^{n}
k_{i}Z_{i} = \divisor{\omega} \right\}}.$$
Now there is an action of a subgroup $\mathfrak{S}$ of $\permGroup$ permuting the zeros of same order.  The {\em
{incidence
variety compactification}} of $\omoduli(k_{1},\cdots,k_{n})$ is given by
$$\PP\obarmoduliincp{k_{1},\cdots,k_{n}}:=\PP\obarmoduliinc{k_{1},\cdots,k_{n}}/S.$$

The interior of the {incidence variety compactification} is isomorphic to the
strata $\pomoduli(k_{1},\cdots,k_{n})$. But we show that its closure
contains in general much more information than $\pobarmoduli(k_{1},\cdots,k_{n})$. The following theorem
illustrates this
point in the case of the principal stratum (see Theorem~\ref{theoreme:CompStratePrincDansVarIncEtNormal}). 
Let us denote the projection  from the {incidence variety compactification} to the Deligne-Mumford
compactification of the principal stratum by $$\pi:\PP\obarmoduliincp[g,\lbrace 2g-2
\rbrace]{1,\cdots,1}\to\pobarmoduli(1,\cdots,1).$$                                                  

\begin{theorem}
The fibre of 
$\pi$ is positive dimensional above the locus of differentials $(\X,\omega)$, where $\X$ is a reducible stable curve of
genus $g\geq 2$ with two irreducible components connected by one node
and $\omega$ vanishes on one component.
\end{theorem}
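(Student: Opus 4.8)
The plan is to exhibit, over every $(\X,\omega)$ in the stated locus, a positive-dimensional family of points of the incidence variety compactification inside the fibre $\pi^{-1}([\X,\omega])$, obtained by smoothing twisted differentials. Write $\X=\X_{1}\cup_{p}\X_{2}$, where $\X_{1},\X_{2}$ are the two irreducible components, of genera $g_{1},g_{2}$, meeting at the node $p$, and let $p_{i}\in\X_{i}$ be the preimage of $p$. Since $\omega$ is a nonzero stable differential vanishing on a component, after relabelling we may write $\omega=(\omega_{1},0)$ with $\omega_{1}\in H^{0}(\X_{1},\dualsheave[\X_{1}])$ nonzero (so $\omega_{1}$ has vanishing residue at $p_{1}$); stability of $\X_{2}$ forces $g_{2}\geq1$ and $\omega_{1}\neq0$ forces $g_{1}\geq1$, in accordance with $g=g_{1}+g_{2}\geq2$. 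A point of $\pi^{-1}([\X,\omega])$ is a point of $\PP\obarmoduliincp[g,\lbrace 2g-2 \rbrace]{1,\cdots,1}$ of the form $(\X,\omega,Z_{1},\dots,Z_{2g-2})$ modulo the $\mathfrak{S}$-action; since $\omega\equiv0$ on $\X_{2}$, the positions on $\X_{2}$ of those $Z_{i}$ that specialise there are not recorded by $\omega$, and the content of the theorem is that many such configurations do arise in the closure.

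Set $\kappa:=\ord_{p_{1}}(\omega_{1})$, so $0\leq\kappa\leq 2g_{1}-2$. On the fixed curve $\X_{2}$, let $E$ denote the set of meromorphic differentials $\eta$, up to scaling, having a pole of order exactly $\kappa+2$ at $p_{2}$ and only simple zeros elsewhere; such an $\eta$ automatically has vanishing residue at $p_{2}$, being its only pole. By Riemann--Roch, $\PP H^{0}\bigl(\X_{2},\dualsheave[\X_{2}]((\kappa+2)p_{2})\bigr)$ has dimension $g_{2}+\kappa$, and a dimension count on $\X_{2}$ shows that its general member has a pole of order exactly $\kappa+2$ at $p_{2}$ together with simple zeros; thus $E$ is a dense open subset, of dimension $g_{2}+\kappa\geq1$ since $g_{2}\geq1$. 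Because two differentials on $\X_{2}$ with the same divisor are proportional, the map from $E$ to $\Sym^{2g_{2}+\kappa}(\X_{2})$ sending $\eta$ to its divisor of zeros is injective; its image $C$ is therefore a positive-dimensional subvariety.

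For each $\eta\in E$ the pair $(\omega_{1},\eta)$ is a twisted differential on $\X$ compatible with the two-level graph in which $\X_{1}$ lies above $\X_{2}$: the matching relation $\ord_{p_{1}}(\omega_{1})+\ord_{p_{2}}(\eta)=\kappa-(\kappa+2)=-2$ holds along the unique edge, and the global residue condition there reduces to $\Res_{p_{2}}\eta=0$, which we have just seen is automatic. Applying the plumbing cylinder construction for differential forms developed earlier in this paper smooths $(\omega_{1},\eta)$ to a family $(\X_{t},\omega_{t},Z_{1}(t),\dots,Z_{2g-2}(t))$ over a punctured disc, with $\X_{t}$ smooth and $\omega_{t}$ a holomorphic differential having $2g-2$ distinct simple zeros $Z_{i}(t)$ --- hence a point of $\pomoduli(1,\dots,1)$ --- such that, as $t\to0$, $\X_{t}$ degenerates to $\X$, a suitable rescaling of $\omega_{t}$ converges to the stable differential $\omega=(\omega_{1},0)$, and the marked zeros $Z_{i}(t)$ converge to a configuration on $\X$ whose restriction to $\X_{2}$ is exactly the $2g_{2}+\kappa$ simple zeros of $\eta$ on $\X_{2}\setminus\{p_{2}\}$. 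The limit point then lies in $\PP\obarmoduliincp[g,\lbrace 2g-2 \rbrace]{1,\cdots,1}$, is sent to $[\X,\omega]$ by $\pi$, and carries the zero configuration of $\eta$. These limit points are parametrised injectively by $E$ (equivalently by $C$, through their $\X_{2}$-part), so they form a subvariety of $\pi^{-1}([\X,\omega])$ of dimension $g_{2}+\kappa\geq1$; the fibre is thus positive-dimensional.

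The only non-formal step is the smoothing, which is where the real difficulty lies. One has to know that a twisted differential on $\X$ satisfying the matching and global residue conditions is genuinely a limit of abelian differentials lying in the \emph{principal} stratum, and --- what is essential for the conclusion about $\pi$ --- that the $2g-2$ marked zeros can be followed through the degeneration, so that the limiting marked stable differential is precisely $(\X,\omega)$ with the expected $2g_{2}+\kappa$ zeros on $\X_{2}$ and none of the zeros absorbed by the node, by the vanishing component, or by the plumbing neck; when $\kappa\geq1$ one must additionally control a pole of order $\kappa+2$ on the lower level. All of this is exactly what the plumbing cylinder construction for differential forms supplies.
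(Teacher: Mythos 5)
Your proof is correct and follows essentially the same route as the paper: you parametrise the fibre by the projectivised linear system of meromorphic differentials on the vanishing component with a pole of order $\kappa+2$ at the node (the paper describes the same positive-dimensional family as the fibre over the canonical class of the map $\X_{1}^{(k)}\to\Jac[\X_{1}]$), observe that the residue at the node vanishes automatically because it is the unique pole, and smooth the resulting candidate differentials via Theorem~\ref{theoreme:PlomberieCylindriqueSansResidu}. The only caveat --- shared with the paper's own proof --- is that the smoothed family lies in the principal stratum only when the nonvanishing restriction $\omega_{1}$ has simple zeros away from the node; for the remaining $(\X,\omega)$ one would additionally break up its higher-order zeros with rational bubbles as in Theorem~\ref{theoreme:ExcesAuBordStratePrincipal}, which does not change the conclusion.
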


\paragraph{}
In order to study the {incidence variety compactification}, we introduce some tools.

In Section~\ref{section:PlomberieCylindrique}, we develop the theory of {\em limit differentials},  which has
a flavour of limit linear series. More precisely, we associate to a family of differentials a limiting object
consisting of a collection of meromorphic differentials parametrised by the irreducible components of the
special curve. For a given component $\X_{i}$, the differential is obtained by rescaling the family in such a
way that it  converges on $\X_{i}$ (see Definition~\ref{definiton:diffLimite}).

To construct examples of limit differentials, we extend the classical plumbing cylinder construction of
curves to the case of differentials (see Lemma~\ref{lemme:PlomberieCylindriqueOk}). In particular, this allows
us to give necessary and sufficient conditions to be a limit differential for an important case (see
Theorem~\ref{theoreme:PlomberieCylindriqueSansResidu}). However, they are not sufficient in full generality
and it remains unclear how to characterise general limit differentials (see nevertheless
Lemma~\ref{lemme:PlomberieCylindriqueAvecResidu} and Lemma~\ref{lemme:noeudsPolaireAvecResiduFaible}).

The second main ingredients are the notions of {\em spin structure} on (semi) stable curves and of {\em Arf
invariant}.  They allow us to generalise the notion of parity of smooth differential to some stable
differentials in Section~\ref{section:Spin}.
In the case of curves of compact type, we associate a canonical spin
structure to a stable pointed differential (see 
Definition~\ref{definition:spinStructureSurCourbeTypeCompact}).
 Using this notion, we show that the
parity of the spin structure above the curves of compact type is invariant by deformations (see 
Theorem~\ref{theoreme:BordDesStratesDansSpin}). 
\begin{theorem}\label{theoreme:BordDesStratesDansSpinIntro}
  Let $n\geq3$ and $(\X,\omega,Z_{1},\cdots,Z_{n})$ be a differential in the closure of the stratum
$\omoduliincp{2l_{1},\cdots,2l_{n}}$. Then the parity of the spin structure~$\mathcal{L}_{\omega}$ associated
to $\omega$ is $\epsilon$ if and only if $(\X,\omega,Z_{1},\cdots,Z_{n})$ is in the closure of
$\omoduliincp{2l_{1},\cdots,2l_{n}}^{\epsilon}$.
\end{theorem}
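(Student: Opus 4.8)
The plan is to show that, on the compact-type part of the boundary, the parity of $\mathcal{L}_{\omega}$ still records --- through Cornalba's moduli space of spin curves $\barspin$ --- which spin component of the stratum the point specialises from. On the open stratum $\omoduli(2l_{1},\cdots,2l_{n})$ the line bundle $\mathcal{L}_{\omega}=\Ox(\sum_{i}l_{i}Z_{i})$ is a square root of $\dualsheave$, since $\sum_{i}2l_{i}Z_{i}=\divisor{\omega}$ is a canonical divisor; by Mumford its parity $h^{0}(\mathcal{L}_{\omega})\bmod 2$ is locally constant, and by the Kontsevich--Zorich classification it is exactly the label $\epsilon$ of the two connected components (the hypothesis $n\geq 3$, which forces $g\geq 4$ and rules out the two hyperelliptic signatures $(2g-2)$ and $(g-1,g-1)$, is what puts us in this clean ``two components, by parity'' case; it is also used below to keep the marked stable curves stable). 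Thus $\omoduliincp{2l_{1},\cdots,2l_{n}}=\omoduliincp{2l_{1},\cdots,2l_{n}}^{+}\sqcup\omoduliincp{2l_{1},\cdots,2l_{n}}^{-}$ with each piece irreducible, hence $\overline{\omoduliincp{2l_{1},\cdots,2l_{n}}}=\overline{\omoduliincp{2l_{1},\cdots,2l_{n}}^{+}}\cup\overline{\omoduliincp{2l_{1},\cdots,2l_{n}}^{-}}$, and it is enough to prove: for every compact-type point $(\X,\omega,Z_{1},\cdots,Z_{n})$ of $\overline{\omoduliincp{2l_{1},\cdots,2l_{n}}^{\eta}}$ the parity of $\mathcal{L}_{\omega}$ equals $\eta$. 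Indeed this is the backward implication of the theorem, and it gives the forward one because a point of parity $\epsilon$ cannot lie over $\overline{\omoduliincp{2l_{1},\cdots,2l_{n}}^{-\epsilon}}$, so, lying over the union of the two closures, it lies over $\overline{\omoduliincp{2l_{1},\cdots,2l_{n}}^{\epsilon}}$.

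Fix then a compact-type point $(\X,\omega,Z_{1},\cdots,Z_{n})$ in $\overline{\omoduliincp{2l_{1},\cdots,2l_{n}}^{\eta}}$. Since the closure is an analytic set, a curve selection argument yields a holomorphic arc $(\X_{t},\omega_{t},Z_{i}(t))$, $t$ in a small disc, with central fibre the given point and $(\X_{t},\omega_{t},Z_{i}(t))\in\omoduliincp{2l_{1},\cdots,2l_{n}}^{\eta}$ for $t\neq0$. Along it $\mathcal{L}_{t}=\Ox[\X_{t}](\sum_{i}l_{i}Z_{i}(t))$ is a family of theta characteristics of parity $\eta$, so the induced arc in $\barspin$ lands in the parity-$\eta$ locus; since that locus is closed (Cornalba, following Mumford) and $\barspin$ is proper over $\barmoduli$, the arc extends over $t=0$ to a spin curve $\zeta$ on $\X$, again of parity $\eta$. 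It remains to identify $\zeta$ with $\mathcal{L}_{\omega}$.

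The identification is carried out componentwise. Writing $\mathcal{L}_{t}=\Ox[\X_{t}]\bigl(\tfrac12\divisor{\omega_{t}}\bigr)$, its flat limit on a component $\X_{v}$ of $\X$ is $\Ox[\X_{v}]$ of one half of $\divisor{\eta_{v}}$, where $\eta_{v}$ is the rescaling of the family that survives on $\X_{v}$, i.e.\ the limit differential of Definition~\ref{definiton:diffLimite}, after the twist at the nodes of $\X$ on $\X_{v}$ that turns this into a line bundle on Cornalba's quasi-stable model; the orders of $\eta_{v}$ at the marked points are the even integers $2l_{i}$, and the theory of limit differentials of Section~\ref{section:PlomberieCylindrique} --- in particular Lemma~\ref{lemme:PlomberieCylindriqueOk} and Theorem~\ref{theoreme:PlomberieCylindriqueSansResidu} --- controls their orders and residues at the nodes, which is exactly what the twist needs. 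But one half of $\divisor{\eta_{v}}$ together with this twist is, by construction, the theta characteristic that Definition~\ref{definition:spinStructureSurCourbeTypeCompact} attaches to $\X_{v}$; hence $\zeta|_{\X_{v}}=\mathcal{L}_{\omega}|_{\X_{v}}$ for all $v$, and since $\X$ has compact type these restrictions determine the spin structure, so $\zeta=\mathcal{L}_{\omega}$ and its parity is $\eta$.

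The main obstacle is precisely this componentwise identification: one must verify that Definition~\ref{definition:spinStructureSurCourbeTypeCompact} genuinely computes the Cornalba limit of the $\mathcal{L}_{t}$ for an arbitrary smoothing of an arbitrary compact-type boundary point --- in particular getting the twist at the nodes right and handling the components on which $\omega$ vanishes identically, where the relevant object is the rescaled meromorphic differential $\eta_{v}$ and not $\omega|_{\X_{v}}$. This is what the limit-differential machinery of Section~\ref{section:PlomberieCylindrique} is there to supply: it pins down the orders and residues of the $\eta_{v}$ at the nodes and hence the combinatorial data governing the limit.
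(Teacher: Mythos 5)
Your proposal is correct and takes essentially the same route as the paper: uniqueness (up to scale) of the limit differential on a compact-type marked curve (Corollary~\ref{corollaire:uniciteLimDiffSurTypeCompacte}), the identification of the limit in Cornalba's $\barspin$ of the family of theta characteristics $\Ox(\frac12\divisor{\famomega})$ with the spin structure of Definition~\ref{definition:spinStructureSurCourbeTypeCompact} (this is exactly Lemma~\ref{lemme:spinEnFamille}), and the properness of $\barspin\to\barmoduli$ together with the disjointness of its even and odd parts (Proposition~\ref{proposition:Cor5.2}). The only point where you diverge is the componentwise identification you flag as the main obstacle: the paper disposes of it not by a flat-limit-and-twist computation but by the uniqueness of extensions of a line bundle over a compact-type degeneration once the degrees on the components of the special fibre are prescribed (Theorem~\ref{theoreme:ProlongFibreEnDroite}), which immediately forces the Cornalba limit to coincide with the bundle of Definition~\ref{definition:spinStructureSurCourbeTypeCompact}.
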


The notion of spin structure does not seems to be the right one for the irreducible pointed differentials.
However, in
this case, we show that the Arf invariant can be generalised (see
Definition~\ref{definition:extensionInvariantArf}) in such a  way that it stays constant by
deformation (see Theorem~\ref{theoreme:InvDeArfGene}).

It would be very interesting to extend this invariant to the whole boundary of the {incidence variety
compactifications}.  But  we show that, unfortunately, this invariant cannot be extend to the whole {incidence
variety compactification} of the strata (see
Corollary~\ref{corollaire:intersectHypOddGenreTroisIntro}).

\subsection{The Kodaira dimension of strata.}

One of the main motivation for a good compactification of the strata of the moduli space of Abelian
differentials
is the computation of their Kodaira dimensions.
In recent works Farkas and Verra  computed the Kodaira dimension of the moduli space of spin structures
and Bini, Fontonari and Viviani  computed the Kodaira dimension of the universal Picard
variety. They followed the path opened by Harris and Mumford for the moduli space of curves. In
particular, they used in an essential way a nice compactification of these spaces constructed by Cornalba in
the first case and Caporaso in the second.

A second way to compute the Kodaira dimension of algebraic spaces is to use the theory initiated by Iitaka.
We can obtain information about the Kodaira dimension of the total space of an algebraic bundle using
knowledge about the Kodaira dimension of the base and of a generic fibre. 

Using these methods, we want to compute the Kodaira dimension of the strata $S$ of the
moduli space of Abelian differentials for which the forgetful map $\pi:S\to\moduli$ is generically surjective.
We  give a complete description of these strata and, more precisely,  the dimension of the image
of every connected component of each stratum.

\begin{theorem}\label{theoreme:DimensionProjectionStratesIntro}
Let $g\geq2$ and $S$ be a connected component of the stratum $\omoduli(k_{1},\cdots,k_{n})$. The
dimension  of the
projection of $S$ by the forgetful map $\pi:\omoduli\to\moduli$ is 
\begin{equation*}
 \dim\left(\pi( S) \right) =
  \begin{cases}
   2g-1 & \text{if } S=\omoduli(2d,2d)^{\hyp} \\
   3g-4 & \text{if } S=\omoduli(2,\cdots,2)^{\even} \\
   2g-2+n       & \text{if } n< g-1 \text{ and }S\neq\omoduli(2d,2d)^{\hyp} \\
   3g-3   & \text{if } n\geq g-1 \text{ and the parity of $S$ is not even} 
  \end{cases}
\end{equation*}
\end{theorem}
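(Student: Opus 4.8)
The plan is to reduce the statement to computing the generic fibre dimension of $\pi|_S$, to bound that dimension from above in the two exceptional families, and to show it is attained in general by a degeneration argument built on the plumbing cylinder construction, with the spin parity controlled by Theorem~\ref{theoreme:BordDesStratesDansSpinIntro}. For the reduction: every connected component $S$ of $\omoduli(k_1,\dots,k_n)$ is irreducible of dimension $2g-1+n$ (period coordinates), and $\pi$ is equivariant for the scaling $\CC^{*}$--action on differentials, so it factors through $\PP S$, of dimension $2g-2+n$, and $\dim\pi(S)=2g-2+n-f$ where $f$ is the dimension of a generic fibre of $\PP S\to\moduli$. Over $X\in\pi(S)$ that fibre is the set of effective divisors $D=\sum_i k_iZ_i$ with the $Z_i$ distinct, $D\sim K_X$, whose associated differential (unique up to scale, a nowhere--zero holomorphic function on a compact curve being constant) lies in $S$. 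Since $f\ge 1$ and $\dim\pi(S)\le\dim\moduli$, this gives the a priori bound $\dim\pi(S)\le\min\{2g-2+n,\,3g-3\}$.

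Next I would handle the two cases in which $\pi(S)$ is forced into a proper subvariety. If $S$ is a hyperelliptic component---$\omoduli(2g-2)^{\hyp}$ or $\omoduli(g-1,g-1)^{\hyp}$, the latter being the stratum written $\omoduli(2d,2d)^{\hyp}$ when $g$ is odd---then by construction $\pi(S)\subseteq\hyperell$, so $\dim\pi(S)\le 2g-1$. If $S=\omoduli(2,\dots,2)^{\even}$ (so $n=g-1$), then for $(X,\omega,Z_1,\dots,Z_{g-1})\in S$ the divisor $L:=Z_1+\dots+Z_{g-1}$ satisfies that $\mathcal{O}_X(L)$ is an effective theta characteristic ($2L\sim K_X$) with $h^{0}(X,\mathcal{O}_X(L))$ even, this being by definition the parity of the even component and precisely the invariant whose deformation--invariance (also across the compact--type boundary) is the content of Theorem~\ref{theoreme:BordDesStratesDansSpinIntro}. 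By upper--semicontinuity of $h^{0}$ over the finite cover of $\moduli$ parametrising theta characteristics, $X$ then lies in the locus of curves carrying an \emph{effective} even theta characteristic, of dimension $3g-4$ in $\moduli$ (the theta--null divisor for $g\ge 4$, the hyperelliptic locus for $g=3$, empty for $g=2$); hence $\dim\pi(S)\le 3g-4$.

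To attain these bounds: for a hyperelliptic component the construction is classical---on a hyperelliptic curve $X$ with involution $\iota$ one has $(2g-2)W\sim K_X$ for every Weierstrass point $W$ and $(g-1)Z+(g-1)\iota Z\sim(g-1)\grd{1}{2}\sim K_X$ for every point $Z$, so letting $X$ range over $\hyperell$ (and $Z$ over $X$ in the second case) gives differentials of the right zero type in $S$ dominating $\hyperell$. For every other $S$ I would proceed by degeneration. Choose a compact--type stable curve $X_0$ whose underlying curve is as generic as possible in the intended target $\Lambda$ (with $\Lambda=\moduli$ when $n\ge g-1$ and the parity of $S$ is not even, and $\Lambda$ a $(2g-2+n)$--dimensional locus of curves carrying a canonical divisor of type $(k_1,\dots,k_n)$ when $n<g-1$), build on $X_0$ a limit differential of zero type $(k_1,\dots,k_n)$ in the sense of Definition~\ref{definiton:diffLimite}---prescribing on each component a meromorphic form with the required zeros away from the nodes and matching pole orders at the nodes---and, when $S$ has a parity, arrange that the canonical spin structure of $X_0$ (Definition~\ref{definition:spinStructureSurCourbeTypeCompact}) have that parity. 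The plumbing cylinder construction (Lemma~\ref{lemme:PlomberieCylindriqueOk}, and Theorem~\ref{theoreme:PlomberieCylindriqueSansResidu} in the residue--free case) smooths this to a genuine differential of the same zero type on a nearby smooth curve; Theorem~\ref{theoreme:BordDesStratesDansSpinIntro} puts the smoothing in $S$; and a count of parameters (deformations of $X_0$, plumbing parameters, scaling) produces a family of dimension $2g-1+n$ inside $S$ whose $\pi$--image is dense in $\Lambda$. Together with the upper bounds this gives $\dim\pi(S)=\dim\Lambda$, namely $3g-3$, $2g-2+n$, or $3g-4$ as claimed. (When $n<g-1$ one can alternatively get $\dim\pi(S)\ge 2g-2+n$ directly: a positive--dimensional family of type--$(k_1,\dots,k_n)$ divisors equivalent to a fixed canonical divisor would, on differentiating $(Z_i)\mapsto[\sum_i k_iZ_i]$ into the Jacobian, make the Abel--Jacobi tangent lines at the $Z_i$ dependent, i.e.\ force $h^{0}(\mathcal{O}_X(Z_1+\dots+Z_n))\ge 2$; but $n<g-1\le g$ and, as one checks on one explicit point via the canonical embedding, at a general point of the incidence variety the $Z_i$ impose independent conditions on $|K_X|$.)

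The main obstacle will be the degeneration step: producing, on a sufficiently general compact--type curve, a limit differential with prescribed zero orders \emph{and} prescribed spin parity, verifying the order and residue compatibilities that Theorem~\ref{theoreme:PlomberieCylindriqueSansResidu} requires, and making the parameter count precise so that the smoothed family is genuinely $(2g-1+n)$--dimensional and dominates $\Lambda$. Keeping track of the parity through this smoothing---which is exactly what Theorem~\ref{theoreme:BordDesStratesDansSpinIntro} supplies---is what distinguishes the even stratum, which never dominates $\moduli$, from the odd and the non--spin strata with $n\ge g-1$, which do. One should also check the low--genus cases $g=2,3$ by hand, where some of the listed components coincide or are empty and the case distinction is to be read from top to bottom.
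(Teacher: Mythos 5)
Your strategy is the same as the paper's: reduce to the generic fibre dimension of the forgetful map, bound the two exceptional families from above (hyperelliptic locus, respectively the divisor of curves with an effective even theta characteristic), and get the lower bounds by degenerating to compact-type curves carrying limit differentials, smoothing with the plumbing construction, and controlling parity with the compact-type spin invariance. The upper bounds and the hyperelliptic components are fine (modulo the slip ``$f\geq 1$'': for every component with $n<g-1$ the generic fibre of the projectivized stratum over its image is in fact finite, so only $f\geq 0$ is true, and that is all your bound uses). The genuine gap is that the lower bounds --- the substance of the theorem --- are not proved but deferred as ``the main obstacle''. What is missing is exactly what the paper's proof of Theorem~\ref{theoreme:DimensionProjectionStrates} constructs: an explicit compact-type degeneration (the snake curve, a chain of $g$ elliptic curves with attachment points chosen to be suitable torsion points) together with an explicit candidate differential on each component, with no residues and orders satisfying the compatibility condition, so that Theorem~\ref{theoreme:PlomberieCylindriqueSansResidu} applies; a concrete mechanism for prescribing the parity (in the paper one switches the parity by taking the zero on the first elliptic component to be a $2$-torsion versus a primitive $2l_{1}$- or $4$-torsion point, using Definition~\ref{definition:spinStructureSurCourbeTypeCompact}); the parameter count in the boundary coordinates of $\barmoduli$ identifying which deformations of the nodes are compatible with the torsion constraints --- this is where $2g-2+n$ versus $3g-3$ actually comes from; the lower bound $3g-4$ for $\omoduli(2,\cdots,2)^{\even}$, which the paper obtains by an induction on the genus attaching an elliptic tail, not by the generic recipe; and the non-hyperelliptic components of $\omoduli(g-1,g-1)$, which require a separate degeneration (a differential in $\omoduli[g-1](g-1,g-3)$ plus an elliptic tail) together with a check that the construction avoids the hyperelliptic component.

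Your alternative route for $n<g-1$ (finiteness of the generic fibre via differentiating the Abel--Jacobi map) is sound in outline, but it again hinges on the unverified step: exhibiting, in each connected component --- in particular in each spin component of $\omoduli(2l_{1},\cdots,2l_{n})$ --- one explicit differential whose zeros impose independent conditions on the canonical system. This is precisely where the components $\omoduli(2d,2d)^{\hyp}$ must be (and are) excluded, since there the two zeros are hyperelliptically conjugate and never impose independent conditions. Without such explicit points, or without the explicit boundary construction above, none of the lower bounds is established, so the proposal as written does not yet prove the theorem.
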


Using this theorem and the fact that the Kodaira dimension of a finite cover is not smaller
than the Kodaira dimension of the base, we deduce the Kodaira dimension of the strata of projective dimension
$3g-3$,  when $\moduli$ is of
general type (see Corollary~\ref{corollaire:dimKodairaStratesProjFini}).
\begin{theorem}
The connected strata $\pomoduli(k_{1},\cdots,k_{g-1})$ are
of general type for $g=22$ and $g\geq 24$.
\end{theorem}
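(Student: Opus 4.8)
The plan is to deduce this from Theorem~\ref{theoreme:DimensionProjectionStratesIntro} together with the principle, recalled just above, that the Kodaira dimension does not drop under a dominant generically finite map, applied to the forgetful map $\pomoduli(k_{1},\ldots,k_{g-1})\to\moduli$ and the fact that $\barmoduli$ is of general type for $g=22$ and $g\geq24$ (Harris--Mumford \cite{MR664324} and its refinements, as recalled in the introduction).

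First I would check that a connected stratum $S=\omoduli(k_{1},\ldots,k_{g-1})$ falls into the fourth case of Theorem~\ref{theoreme:DimensionProjectionStratesIntro}. Here $n=g-1$, so $n\geq g-1$ and it only remains to see that the parity of $S$ is not even. For $g$ in the range considered, $n=g-1$ is large, so by the Kontsevich--Zorich classification such a stratum is disconnected precisely when all the $k_{i}$ are even (there is no hyperelliptic component once $n\geq 3$ and $(k_{1},\ldots,k_{n})\neq(g-1,g-1)$, and no spin component otherwise); in particular a \emph{connected} stratum of this shape is never one of the even spin components $\omoduli(2,\ldots,2)^{\even}$, and it is neither of the form $\omoduli(2d,2d)^{\hyp}$ (that case forces $g=3$) nor has $n<g-1$. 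Hence the fourth case applies and $\dim(\pi(S))=3g-3$, so the forgetful map $\pi\colon S\to\moduli$ is dominant. Since $\dim S=2g-1+(g-1)=3g-2$, the projectivised stratum has $\dim\pomoduli(k_{1},\ldots,k_{g-1})=3g-3=\dim\moduli$, so the induced forgetful map
$$p\colon\pomoduli(k_{1},\ldots,k_{g-1})\longrightarrow\moduli$$
is dominant and generically finite.

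Next I would pass to smooth projective models. Let $\widetilde{Y}$ be a desingularisation of $\barmoduli$ and $\widetilde{X}$ a desingularisation of the incidence variety compactification $\PP\obarmoduliincp{k_{1},\ldots,k_{g-1}}$, whose interior is $\pomoduli(k_{1},\ldots,k_{g-1})$. Resolving the indeterminacy of $p$ yields a surjective morphism $f\colon\widetilde{X}\to\widetilde{Y}$ of smooth projective varieties of the same dimension $3g-3$. The ramification formula $K_{\widetilde{X}}=f^{*}K_{\widetilde{Y}}+R$ with $R$ effective gives, for every $m\geq0$, $h^{0}(\widetilde{X},mK_{\widetilde{X}})\geq h^{0}(\widetilde{X},f^{*}(mK_{\widetilde{Y}}))\geq h^{0}(\widetilde{Y},mK_{\widetilde{Y}})$, whence $\kappa(\widetilde{X})\geq\kappa(\widetilde{Y})$. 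Since $\widetilde{Y}$ is of general type for $g=22$ and $g\geq24$, we get $\kappa(\widetilde{X})\geq 3g-3=\dim\widetilde{X}$, hence equality, and $\pomoduli(k_{1},\ldots,k_{g-1})$ is of general type.

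The routine ingredients are the ramification inequality and the passage to resolutions; the only substantial input is Theorem~\ref{theoreme:DimensionProjectionStratesIntro}, i.e.\ the assertion that the forgetful map is generically finite onto $\moduli$, which is exactly where the study of the boundary of the incidence variety compactification via limit differentials and spin structures is used. The sole additional point to be careful about is the classification-theoretic remark that a connected stratum with $n=g-1$ is never of even parity, so that the fourth case — and not the excluded even or hyperelliptic cases — governs it; this is the only place where a small amount of bookkeeping beyond citing earlier results is needed.
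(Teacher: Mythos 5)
Your proposal is correct and follows essentially the same route as the paper: Theorem~\ref{theoreme:DimensionProjectionStrates} shows the forgetful map to $\moduli$ is dominant and hence generically finite in equal dimension, and then the non-decrease of Kodaira dimension under such maps (the paper cites Iitaka's theorem, Theorem~\ref{theoreme:DimKodairaRevetementFini}, where you re-derive it via the ramification formula) combined with Harris--Mumford and Farkas gives general type. The extra bookkeeping you do, checking that a connected stratum with $n=g-1$ cannot be the even component of $\pomoduli(2,\cdots,2)$ nor hyperelliptic, is exactly the hypothesis the paper's Corollary~\ref{corollaire:dimKodairaStratesProjFini} builds in by excluding $\pomoduli^{\even}(2,\cdots,2)$.
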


Moreover, for a fibre space $f:X\to Y$  there is the well known inequality
$\kappa(X)\leq\dim(Y)+\kappa(X_{y})$ for a
generic fibre  $X_{y}$ of $f$. This gives the Kodaira dimension of the strata which impose few conditions.
Indeed, by showing that a generic fibre of the forgetful map has negative Kodaira dimension, we obtain the
following result (see Theorem~\ref{theoreme:DimKodairaStraPeutConditions}).
\begin{theorem}
For any  $g\geq2$, let $(k_{1},\cdots,k_{n})$ be a tuple of positive numbers  of the form
$(k_{1},\cdots,k_{l},1,\cdots,1)$ with $k_{i}\geq 2$ for $i\leq l$  such that
$$\sum_{i=1}^{n}k_{i}=2g-2 \text{ and } \sum_{i=1}^{l}k_{i}\leq g-2.$$ 
Then  the Kodaira dimension of the stratum $\pomoduli(k_{1},\cdots,k_{n})$ is $-\infty$.
\end{theorem}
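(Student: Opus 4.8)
The plan is to exhibit $\pomoduli(k_{1},\cdots,k_{n})$ as a fibration over a pointed moduli space whose general fibre is an open subset of a projective space of positive dimension, and then to apply the easy addition inequality $\kappa(X)\le\dim(Y)+\kappa(X_{y})$ recalled in the introduction. For a point $(\X,\omega,Z_{1},\cdots,Z_{n})$ of the stratum write $D:=\sum_{i=1}^{l}k_{i}Z_{i}$ for the part of $\divisor{\omega}$ supported on the zeros of order at least $2$, and consider the morphism
\[ q\colon\ \pomoduli(k_{1},\cdots,k_{n}) \;\longrightarrow\; \modulin[g,l],\qquad (\X,\omega,Z_{1},\cdots,Z_{n}) \;\longmapsto\; (\X;Z_{1},\cdots,Z_{l}), \]
where $\modulin[g,l]$ is to be replaced by a finite quotient of itself if some of the $k_{i}$ with $i\le l$ coincide; this affects no dimension, and the target has dimension $3g-3+l$. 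By the easy addition inequality it suffices to show that a general fibre $F$ of $q$ satisfies $\kappa(F)=-\infty$.

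Fix a general pointed curve $(\X;Z_{1},\cdots,Z_{l})$ and, over it, set $D=\sum_{i=1}^{l}k_{i}Z_{i}$. Granting the genericity statement discussed below, the fibre $F$ consists precisely of those $\overline{\omega}\in\PP H^{0}(K_{\X}-D)$ whose remaining $n-l$ zeros are reduced and disjoint from $Z_{1},\cdots,Z_{l}$; thus $F$ is a nonempty Zariski-open subset of the projective space $\PP H^{0}(K_{\X}-D)$, in particular irreducible, so $q$ is dominant with irreducible general fibre and the easy addition inequality applies in its usual form. The numerical hypothesis now enters through a single Riemann--Roch computation: since $D$ is effective one has $h^{0}(D)\ge 1$, hence by Riemann--Roch and Serre duality
\[ h^{0}(K_{\X}-D) \;=\; h^{0}(D)+\deg(K_{\X}-D)-g+1 \;=\; h^{0}(D)+g-1-\sum_{i=1}^{l}k_{i} \;\ge\; g-\sum_{i=1}^{l}k_{i} \;\ge\; 2, \]
where the last inequality is exactly the hypothesis $\sum_{i=1}^{l}k_{i}\le g-2$. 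Therefore $F$ is a dense open subset of a projective space of dimension $g-\sum_{i=1}^{l}k_{i}-1\ge 1$; being rational it has $\kappa(F)=-\infty$, and consequently $\kappa\big(\pomoduli(k_{1},\cdots,k_{n})\big)\le (3g-3+l)+\kappa(F)=-\infty$. Equivalently, the stratum is uniruled, since fixing $\X$ together with $Z_{1},\cdots,Z_{l}$ and letting $\overline{\omega}$ run over a line in the linear system $|K_{\X}-D|$ sweeps a rational curve through a general point (lying in the closure of the stratum).

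The one point that genuinely needs to be checked is the Brill--Noether genericity underlying the description of $F$: for a general pointed curve the general member $\omega$ of $|K_{\X}-D|$ must have residual divisor $\divisor{\omega}-D$ reduced and supported away from $Z_{1},\cdots,Z_{l}$, so that such an $\omega$ really lies in the stratum -- which simultaneously yields the dominance of $q$. This follows from base-point freeness of $|K_{\X}-D|$ on a general curve together with Bertini applied on a curve, and is where the inequality $\sum_{i=1}^{l}k_{i}\le g-2$ is used once more (it guarantees enough sections to move the residual zeros freely). Apart from this, one only replaces the quasi-projective spaces involved by smooth projective models, which is harmless since $\kappa=-\infty$ is a birational invariant, and, if one prefers to argue component by component of the possibly reducible stratum, one observes that each connected component carries the same fibration over $\modulin[g,l]$ with $\PP^{\ge 1}$-fibres. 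The genuine input is thus the one-line estimate $h^{0}(K_{\X}-D)\ge 2$ above.
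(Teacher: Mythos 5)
Your argument is correct, and at bottom it is the paper's: both proofs rest on the same Riemann--Roch estimate $h^{0}\bigl(K_{\X}-\sum_{i\leq l}k_{i}Z_{i}\bigr)\geq g-\sum_{i\leq l}k_{i}\geq 2$ (this is where $\sum_{i\leq l}k_{i}\leq g-2$ enters) and conclude with the easy addition inequality of Theorem~\ref{theoreme:SousAdditiviteKodaria}. The difference is the choice of fibration: the paper keeps $\moduli$ as base and, to see that the fibre of the forgetful map over a generic curve has negative Kodaira dimension, introduces the vanishing incidence variety $I_{k}(\X)\subset\X^{l}\times\PP H^{0}(\X,\holoneform)$ of Definition~\ref{definition:incidenceAnulation}, generically a $\PP^{g-r-1}$-fibration over $\X^{l}$ (with $r:=\sum_{i\leq l}k_{i}$) whose second projection sweeps out the fibre and exhibits it as uniruled; you instead mark the zeros of order at least $2$ and fibre over (a finite quotient of) $\modulin[g,l]$, so that each fibre is outright a nonempty Zariski-open subset of the single projective space $\PP H^{0}(K_{\X}-D)$, hence rational, and no uniruledness argument is needed. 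Your map is just the factorisation of the paper's forgetful map through $\modulin[g,l]\to\moduli$, and the paper's $I_{k}(\X)$ is exactly your fibres assembled over $\X^{l}$, so the route is a repackaging rather than a new method; what it buys is a more transparent fibre geometry, and in fact the Brill--Noether genericity you single out as the remaining point can be dispensed with altogether: replacing the base by the closure of the image of $q$, the fibre over any point of that image is automatically a nonempty open subset of $\PP H^{0}(K_{\X}-D)$, and the bound $h^{0}(K_{\X}-D)\geq g-r\geq 2$ holds for every pointed curve, not only generic ones, so $\kappa=-\infty$ follows without any transversality or dominance statement. Like the paper's own application of Theorem~\ref{theoreme:SousAdditiviteKodaria} to the non-proper map $\pi:\pomoduli(k_{1},\cdots,k_{n})\to\moduli$, your use of easy addition for a non-proper morphism is handled by passing to smooth projective models, which is the same (acceptable) level of rigour as in the paper; your remark on treating the possibly disconnected stratum component by component is also fine, since each (connected) fibre lies in a single connected component.
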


The Iitaka conjecture has been proved by Eckart Viehweg for the fibre spaces $f:X\to Y$, where $Y$ is of
general
type.
So, a similar method could be used to determine the Kodaira dimension of the strata for which the forgetful
map is  generically
surjective to $\moduli$, when $\moduli$ is of general type. However, this method is more subtle for the
remaining strata and  we can only prove that the strata
$\pomoduli(g-1,1,\cdots,1)$ are of general type when  $\moduli$ is of general type (see
Proposition~\ref{proposition:DimKodaira(g-1,1)}). 

To conclude, we compute the Kodaira dimension of both odd (Corollary~\ref{corollaire:dimKodairaOddDeux}) and
even (Proposition~\ref{proposition:dimKodairaEvenDeux}) components of the strata
$\pomoduli(2,\cdots,2)$ and of the hyperelliptic component of $\pomoduli(g-1,g-1)$
(Proposition~\ref{proposition:dimKodairaHypNeg}).

\subsection{Examples.}

We  conclude this work by the explicit description of the {incidence variety compactification} of some strata.
We focus on  the minimal strata $\pomoduli(2g-2)$.  In genus two, there is only one stratum
$\pomoduli[2](2)$ and this stratum has many interpretations. For example, it can be seen as the
Weierstrass divisor in $\moduli[2,1]$ or the moduli space of even spin
structures. 

More generally, the hyperelliptic strata $\pomoduli^{\hyp}(2g-2)$ are  very special and
can be studied with specific tools. They are studied in Section~\ref{section:hyperelliptique} and the
main result is Theorem~\ref{theoreme:isoEntreHypEtWeier} where we show that the fibres of the forgetful map
from the {incidence variety compactification} of $\omoduli^{\hyp}(2g-2)$ to the Weierstrass locus of
hyperelliptic curves inside $\moduli[g,1]$ are projective spaces. 

To be more concrete, let us describe an important locus in the {incidence variety compactification} of the
hyperelliptic
minimal strata (see Theorem~\ref{theoreme:bordHypCasIrr}).
\begin{theorem}\label{theorem:bordHypCasIrrIntro}
Let $\X$ be the union of a smooth curve $\tilde\X$ of genus $g-1$ and a projective line attached to $\tilde\X$
at the points $N_{1}$ and $N_{2}$. 

Then $(\X,\omega,Z)$ is in the {incidence variety compactification}  of the minimal hyperelliptic stratum
$\omoduliincp[g,1]{2g-2}^{\hyp}$ if and only if  the point  $Z$ is in the
exceptional divisor coming from the blow-up, and the differential $\omega$ is the stable differential
with a zero of
order $g-2$ at both $N_{1}$ and $N_{2}$.
\end{theorem}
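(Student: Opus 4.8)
The plan is to prove the two implications separately, working throughout under the hypothesis -- implicit in the hyperelliptic setting, automatic when $g=3$, and in general forced by the ``$\Rightarrow$'' direction -- that $\tilde\X$ is hyperelliptic with $N_{1},N_{2}$ exchanged by its hyperelliptic involution.

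For ``$\Leftarrow$'' I would realise $\X$ inside the closure of the hyperelliptic locus by degenerating an admissible double cover: write $\tilde\X$ as a double cover of $\PP^{1}$ branched over $2g$ points, with $N_{1},N_{2}$ the fibre over one further point $p_{0}$, and take $\X_{t}$ the smooth hyperelliptic genus $g$ curve double covering $\PP^{1}$ branched over these $2g$ points together with two extra branch points tending to $p_{0}$ as $t\to 0$. On $\X_{t}$ let $\omega_{t}$ have its single zero of order $2g-2$ at the Weierstrass point $Z_{t}$ over one of the two moving branch points, so $(\X_{t},\omega_{t},Z_{t})$ lies in the hyperelliptic minimal stratum. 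A direct computation shows that as $t\to 0$ the colliding pair produces a node at $p_{0}$, the marked zero runs into it, and the stable limit of $(\X_{t},\omega_{t},Z_{t})$ is precisely $(\X,\omega,Z)$ with $\omega$ restricting on $\tilde\X$ to the holomorphic form of divisor $(g-2)(N_{1}+N_{2})$, vanishing on the exceptional $\PP^{1}$, and $Z$ a point of that $\PP^{1}$ fixed by the limiting involution. Since the $\CC^{\ast}$ of automorphisms of $\X$ coming from the rational bridge acts transitively on $\PP^{1}\setminus\{N_{1},N_{2}\}$ and fixes $\omega$, every admissible position of $Z$ yields the same point of the compactification, so one family suffices. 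Alternatively one smooths the two nodes by the residue-free plumbing of Lemma~\ref{lemme:PlomberieCylindriqueOk} and Theorem~\ref{theoreme:PlomberieCylindriqueSansResidu} applied to the rescaled limit differential on the $\PP^{1}$, but the double cover keeps us manifestly in the hyperelliptic locus.

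For ``$\Rightarrow$'', let $(\X_{t},\omega_{t},Z_{t})$ in the hyperelliptic minimal stratum degenerate to $(\X,\omega,Z)$. Since $\X=\tilde\X\cup\PP^{1}$ is a stable pointed curve the $\PP^{1}$ must carry three special points, hence $Z$ lies on it: this is the assertion that $Z$ is in the exceptional divisor. The hyperelliptic involutions degenerate to an involution $\sigma$ of $\X$ exhibiting the $\X_{t}$ as an admissible double cover, so $\sigma$ restricts to a degree two map on $\tilde\X$ (whence $\tilde\X$ is hyperelliptic) exchanging $N_{1}$ and $N_{2}$. Next, $\omega$ vanishes identically on the $\PP^{1}$: otherwise $\omega|_{\PP^{1}}$ would be a nonzero section of $\omega_{\PP^{1}}(N_{1}+N_{2})$, a line bundle of degree $0$ on $\PP^{1}$, hence would have no zero, contradicting that the unique zero $Z_{t}$ of $\omega_{t}$ limits to $Z$ on the $\PP^{1}$. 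Consequently the residues of $\omega|_{\tilde\X}$ at $N_{1},N_{2}$ vanish, so $\omega|_{\tilde\X}$ is a holomorphic differential on the smooth genus $g-1$ curve $\tilde\X$; its only zeros are at $N_{1},N_{2}$, since the zero of $\omega_{t}$ has left $\tilde\X$; and $\sigma^{\ast}(\omega|_{\tilde\X})=-\omega|_{\tilde\X}$ with $\sigma(N_{1})=N_{2}$ forces the two orders to coincide. As they sum to $\deg K_{\tilde\X}=2g-4$, each is $g-2$, so $\omega|_{\tilde\X}$ has divisor $(g-2)(N_{1}+N_{2})$, which determines it up to scale; together with $\omega|_{\PP^{1}}=0$ this is exactly the stable differential in the statement.

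The step I expect to cost the most care is the analysis on the exceptional $\PP^{1}$, on which the stable differential collapses to zero: one must make precise, with the limit differential formalism of Definition~\ref{definiton:diffLimite} and the plumbing study of Section~\ref{section:PlomberieCylindrique}, both that the relevant rescaled limit on the $\PP^{1}$ has its single zero of order $2g-2$ at $Z$ and poles of order $g$ at the two nodes, and that the explicit family above really lands in the compactification with limit $(\X,\omega,Z)$. A shorter route, granting Theorem~\ref{theoreme:isoEntreHypEtWeier}, is to note that the fibre of the forgetful map over $(\X,Z)$ is a projective space and that the constraints on $\omega|_{\tilde\X}$ collapse it to the single point given by $\omega$.
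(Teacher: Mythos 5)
Your proposal is correct, and in substance it follows the route the paper intends: the paper omits the proof of Theorem~\ref{theoreme:bordHypCasIrr}, referring to the method of Theorem~\ref{theoreme:isoEntreHypEtWeier} (admissible double covers, anti-invariance of the limit differential under the degenerating hyperelliptic involution, uniqueness up to scale on polarly related components) and to the computations of Section~\ref{section:omoduli3,4,odd}. Your ``$\Rightarrow$'' direction is exactly this: stability of the pointed curve forces $Z$ onto the bridge, the degenerate involution (equivalently Theorem~\ref{theoreme:locusHyperell}) gives hyperellipticity of $\tilde\X$ and $\sigma(N_1)=N_2$, the degree-zero/residue argument kills $\omega|_{\PP^1}$, a Hurwitz-type argument confines the zeros of $\omega|_{\tilde\X}$ to the nodes, and anti-invariance equalises the two orders, giving $(g-2)(N_1+N_2)$. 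Where you genuinely diverge is the ``$\Leftarrow$'' direction: instead of producing a smoothing by the plumbing machinery (Lemma~\ref{lemme:PlomberieCylindriqueAvecResidu}, Proposition~\ref{proposition:relationPlumStable}, and the hyperellipticity check of the smoothing as in the proof of Theorem~\ref{theoreme:bordZeroOrdre4Irrbis}), you exhibit an explicit family of double covers with two branch points colliding at $p_0$ and the marked Weierstrass point over one of them; this is a concrete instance of the admissible-cover degeneration, it stays manifestly hyperelliptic, and the limit of $(x-b(t))^{g-1}\,\dx/y$ is computed directly, while your $\CC^{\ast}$-observation correctly reduces all positions of $Z$ on the bridge to one family. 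The paper's plumbing route is more robust (it is what carries over to the non-hyperelliptic analysis of Section~\ref{section:omoduli3,4,odd}); your family is shorter for this particular statement.

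One inaccuracy worth fixing in your aside: the ``residue-free'' smoothing via Theorem~\ref{theoreme:PlomberieCylindriqueSansResidu} is not available here. The rescaled limit differential on the bridge is, in suitable coordinates, $(z-1)^{2g-2}z^{-g}\,\dz$, whose poles of order $g\geq 2$ at $N_1,N_2$ have nonzero (opposite) residues, so the hypotheses of that theorem fail; one must use Lemma~\ref{lemme:PlomberieCylindriqueAvecResidu} with an auxiliary $\eta$ on $\tilde\X$ having simple poles at $N_1,N_2$, and then still argue separately (as in Theorem~\ref{theoreme:bordZeroOrdre4Irrbis}, via the choice of plumbing parameters) that the smoothing is hyperelliptic. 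Since your main construction does not rely on this alternative, the proof stands; also note that your standing hypothesis that $\tilde\X$ is hyperelliptic with $N_1,N_2$ conjugate is indeed needed for the ``if'' direction as literally stated, and you are right to make it explicit.
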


The first non hyperelliptic minimal stratum is $\pomoduli[3]^{\odd}(4)$.
The description of
the boundary of this stratum gives us the opportunity to illustrate most of the tools developed in this paper.

Let us define a {\em generic curve in the divisor $\delta_{i}$} to be a curve in the divisor
$\delta_{i}$ with a single node. 
The description of the boundary of $\pomoduli[3]^{\odd}(4)$ above the set of curves stably equivalent to
generic curves in $\delta_{0}$
and $\delta_{1}$
is given in Corollary~\ref{corollaire:stabDiffgTroisIrr} and
Corollary~\ref{corollaire:stabDiffgTroisRed}. For example, the  pointed stable differentials in the boundary
of
$\pomoduli[3,1]^{\odd}(4)$ such
that the projection to $\barmoduli[3]$ is stably equivalent to a generic curve of the divisor $\delta_{0}$ is
given by the following theorem.

\begin{theorem}\label{theoreme:bordZeroOrdre4IrrbisIntro}
Let $(\X,\omega,Z)$ be a stable pointed differential in $\PP\obarmoduliinc[3,1]{4}^{\odd}$ such that
$\X$ is the
union of a smooth curve $\tilde\X$ of genus two  and a projective line which meet at two distinct points
$N_{1}$ and $N_{2}$. 

Then $(\X,\omega,Z)$ satisfies that $Z\in \PP^{1}$, the restriction of $\omega$ to $\PP^{1}$ vanishes and the
restriction of $\omega$ to $\tilde\X$ is of one of the following two forms.
\begin{itemize}
 \item  The restriction of $\omega$ to $\tilde\X$ is an
holomorphic differential with a zero of order two at $N_{1}$. 

\item The restriction of $\omega$ to $\tilde\X$ is an
holomorphic differential with two simple zeros at $N_{1}$ and $N_{2}$. 
\end{itemize}
\end{theorem}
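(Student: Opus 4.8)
The plan is to reconstruct the stable differential $\omega$ and the limit differential on each of the two components of $\X$, and then to locate $Z$; the whole statement reduces to showing that $\omega|_{\PP^{1}}=0$.

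First I would restrict $\omega$ to $\PP^{1}$. As $\omega$ is a section of the dualizing sheaf $\dualsheave[\X]$ and $\PP^{1}$ meets $\tilde\X$ in exactly the two nodes $N_{1},N_{2}$, we have $\omega|_{\PP^{1}}\in H^{0}(\PP^{1},\dualsheave[\PP^{1}](N_{1}+N_{2}))$, a one-dimensional space; in a coordinate $z$ on $\PP^{1}$ with $N_{1}=\{z=0\}$, $N_{2}=\{z=\infty\}$ this reads $\omega|_{\PP^{1}}=c\,\dz/z$ for some $c\in\CC$, with residue $c$ at $N_{1}$ and $-c$ at $N_{2}$. Likewise $\omega|_{\tilde\X}\in H^{0}(\tilde\X,K_{\tilde\X}(N_{1}+N_{2}))$, and $\omega$ being a section of $\dualsheave[\X]$ means exactly that the residues of $\omega|_{\tilde\X}$ and $\omega|_{\PP^{1}}$ at each $N_{i}$ are opposite. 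I claim $c=0$. If not, then $\omega|_{\PP^{1}}=c\,\dz/z$ has no zero on $\PP^{1}$, so the limit differential on $\PP^{1}$ equals $\omega|_{\PP^{1}}$ and carries no zero; since the limit differential is compatible with the stratum $\omoduli[3](4)$, its single zero of order $4$ must then sit on $\tilde\X$, which is where the marked point $Z$ lies, so $Z\in\tilde\X$. Thus the $\PP^{1}$-component carries only $N_{1},N_{2}$, with $\omega$ nonvanishing on it: it is a superfluous bridge, and contracting it identifies $(\X,\omega,Z)$ with an \emph{irreducible} one-nodal pointed stable differential $(\hat\X,\hat\omega,Z)$, where $\hat\X$ is obtained from $\tilde\X$ by gluing $N_{1}$ to $N_{2}$. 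By the deformation-invariance of the generalised Arf invariant along the boundary of the incidence variety compactification (Theorem~\ref{theoreme:InvDeArfGene}), the component of the compactification containing this point is determined by $\Arf(\hat\X,\hat\omega,Z)$, and evaluating this invariant by Definition~\ref{definition:extensionInvariantArf}, using that the vanishing node carries the ``horizontal'' local model $c\,\dz/z$, yields an even value; hence $(\X,\omega,Z)$ would lie in $\PP\obarmoduliinc[3,1]{4}^{\even}$, not in the odd component. (Alternatively, if $(\X,Z)$ is required to be a \emph{stable} pointed curve, then stability of $\PP^{1}$ already forces $Z\in\PP^{1}$, and $c=0$ then because the order-$4$ zero cannot lie on $\tilde\X$.) Either way $\omega|_{\PP^{1}}=0$.

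It remains to exploit $c=0$. The residues of $\omega|_{\tilde\X}$ at $N_{1},N_{2}$ are now opposite to those of $\omega|_{\PP^{1}}=0$, hence vanish; since a simple pole has nonzero residue, $\omega|_{\tilde\X}$ is holomorphic at $N_{1},N_{2}$ and so is a genuine holomorphic differential on the smooth genus-two curve $\tilde\X$, with $\divisor{\omega|_{\tilde\X}}$ effective of degree $2g_{\tilde\X}-2=2$. Because $\omega$ survives on $\tilde\X$, $\omega|_{\tilde\X}$ \emph{is} the limit differential there (no rescaling), whereas on $\PP^{1}$ the limit differential is obtained by rescaling the family (Definition~\ref{definiton:diffLimite}). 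The degenerating zero of order $4$ cannot converge onto $\tilde\X\setminus\{N_{1},N_{2}\}$, since a holomorphic differential on a genus-two curve has only two zeros with multiplicity; it therefore migrates onto the $\PP^{1}$-bridge, so $Z\in\PP^{1}$ and the rescaled limit differential on $\PP^{1}$ has a zero of order $4$ at $Z$, the orders at $N_{1},N_{2}$ being those produced by the plumbing (compare Theorem~\ref{theoreme:PlomberieCylindriqueSansResidu} and Lemma~\ref{lemme:PlomberieCylindriqueAvecResidu}). As the stratum $(4)$ imposes a \emph{single} zero, now accounted for on $\PP^{1}$, the differential $\omega|_{\tilde\X}$ has no zero off $\{N_{1},N_{2}\}$; writing $a_{i}=\ord_{N_{i}}(\omega|_{\tilde\X})\geq 0$ this gives $a_{1}+a_{2}=2$, i.e. $\{a_{1},a_{2}\}=\{2,0\}$ or $a_{1}=a_{2}=1$. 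These are precisely the two asserted shapes of $\omega|_{\tilde\X}$, the double zero at $N_{2}$ being the double zero at $N_{1}$ after relabelling the two nodes; this finishes the proof.

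I expect the delicate step to be the exclusion of $c\neq 0$: one must know that the associated irreducible pointed differential lies in the \emph{even} component, which uses the explicit shape of the generalised Arf invariant (Definition~\ref{definition:extensionInvariantArf}), not merely its invariance. Everything else — the migration of the order-$4$ zero onto the bridge, and the degree count on $\tilde\X$ — is routine given the theory of limit differentials and the plumbing lemmas of Section~\ref{section:PlomberieCylindrique}; the converse inclusion, that each of the two listed shapes genuinely occurs in the odd component, would follow from the plumbing construction of Lemma~\ref{lemme:PlomberieCylindriqueAvecResidu} together with a parity computation.
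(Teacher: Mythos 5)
Your argument does establish the statement, but only along the branch you relegate to a parenthesis, and that branch is precisely the paper's own route: since $\PP\obarmoduliinc[3,1]{4}^{\odd}$ lives inside $\pobarmoduli[3,1]$, the underlying marked curve $(\X,Z)$ is a \emph{stable} pointed curve (Definition~\ref{definition:DiffMarquee}), and a projective line carrying only the two nodes is unstable, so $Z\in\PP^{1}$ is forced at the outset. Then $\omega|_{\PP^{1}}\equiv 0$ because the incidence condition $\ord_{Z}(\omega)\geq 4$ is closed on any component where $\omega$ does not vanish identically, while a differential on $\PP^{1}$ with at worst simple poles at two points has degree $-2$ and cannot vanish to order $4$ (this is the cleaner way to phrase your ``$c=0$ because the order-$4$ zero cannot lie on $\tilde\X$''). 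The residue matching at the nodes then makes $\omega|_{\tilde\X}$ holomorphic of degree $2$, and since the unique zero of nearby fibres converges to $Z\in\PP^{1}$, the limit cannot acquire a zero at a smooth point of $\tilde\X$ away from the nodes; hence the zeros sit at $N_{1},N_{2}$ with orders $(2,0)$ or $(1,1)$, which are the two listed shapes. This matches the paper (Theorem~\ref{theoreme:bordZeroOrdre4Irrbis} together with Proposition~\ref{proposition:relationPlumStable}, i.e. Corollary~\ref{corollaire:stabDiffgTroisIrr}); the paper additionally determines the pole orders of the limit differential on $\PP^{1}$ and decides, via plumbing and the hyperelliptic involution, which configurations smooth into the odd rather than the hyperelliptic component, but none of that is needed for the necessity statement.

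The primary argument you offer for $\omega|_{\PP^{1}}=0$, however, is wrong and should be deleted. If $c\neq 0$ you put $Z$ on $\tilde\X$, contract the bridge, and claim that the generalised Arf invariant of the resulting irreducible one-nodal differential, ``computed from the horizontal local model $c\,\dz/z$ at the node,'' is even. That parity claim is unsubstantiated — the invariant of Definition~\ref{definition:extensionInvariantArf} depends on a whole admissible symplectic system, not on the local model at the node — and it is in fact contradicted by Theorem~\ref{theoreme:bordZeroOrdre4Irr}: irreducible one-nodal pointed differentials with a single zero of order $4$ at a smooth non-Weierstrass point and simple poles at the node genuinely occur in the closure of the \emph{odd} component. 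Your argument is rescued only because the case it tries to exclude is vacuous: $Z\in\tilde\X$ would make the marked curve unstable, so such a point simply does not exist in $\pobarmoduli[3,1]$. Rely on the stability argument and drop the bridge-contraction/Arf step.
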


This theorem together with Theorem~\ref{theorem:bordHypCasIrrIntro} implies  that the {incidence variety
compactifications} of the hyperelliptic and odd
connected components
of $\pomoduli[3,1](4)$ intersect each other (see Corollary~\ref{corollaire:intersectHypOddGenreTrois}). 
\begin{cor}\label{corollaire:intersectHypOddGenreTroisIntro}
  Let $\X$ be the union of a curve $\tilde{\X}$ of genus two and a projective line glue together at a pair of
points of $\tilde{\X}$ conjugated by the hyperelliptic involution. Let $Z\in E$ and $\omega$ be a
differential
which vanishes on $E$ and has two single zeros at the points which form the nodes on $\tilde{\X}$.

Then the pointed differential $(\X,\omega,Z)$ is in  $\obarmoduliinc[3,1]{4}^{\hyp}$ and
$\obarmoduliinc[3,1]{4}^{\odd}$.
\end{cor}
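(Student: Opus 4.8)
The plan is to exhibit the pointed differential $(\X,\omega,Z)$ described in the corollary as a limit of differentials lying in each of the two connected components $\omoduliincp[3,1]{4}^{\hyp}$ and $\omoduliincp[3,1]{4}^{\odd}$ separately, and then invoke the two structural theorems already available. First I would observe that the hyperelliptic membership is an immediate consequence of Theorem~\ref{theorem:bordHypCasIrrIntro}: by hypothesis $\tilde\X$ has genus two, the two attaching points $N_1,N_2$ are conjugate under the hyperelliptic involution, $Z$ lies in the exceptional divisor $E$ of the blow-up, and $\omega$ vanishes on the projective line $E$; since on a genus-two curve a pair of hyperelliptic-conjugate points is a canonical divisor, the stable differential with a double zero split as $N_1+N_2$ — i.e. a simple zero at each of $N_1$ and $N_2$ — is exactly the one produced by that theorem (here $g=3$, so ``zero of order $g-2$'' means a simple zero at each $N_i$). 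Hence $(\X,\omega,Z)\in\obarmoduliinc[3,1]{4}^{\hyp}$.

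Next I would verify the odd membership using Theorem~\ref{theoreme:bordZeroOrdre4IrrbisIntro}. That theorem classifies the stable pointed differentials in $\PP\obarmoduliinc[3,1]{4}^{\odd}$ whose underlying curve is a genus-two smooth curve $\tilde\X$ glued to a $\PP^1$ at two distinct points $N_1,N_2$, and the second of its two listed cases is precisely: $Z\in\PP^1$ (the exceptional component), $\omega|_{\PP^1}=0$, and $\omega|_{\tilde\X}$ a holomorphic differential with two simple zeros at $N_1$ and $N_2$. This is literally the configuration of the corollary, so once we know such a configuration is realized — which is exactly the content of that theorem, proved via the plumbing cylinder construction of Lemma~\ref{lemme:PlomberieCylindriqueOk} and Theorem~\ref{theoreme:PlomberieCylindriqueSansResidu} together with the spin/Arf parity bookkeeping of Theorem~\ref{theoreme:BordDesStratesDansSpinIntro} — we conclude $(\X,\omega,Z)\in\obarmoduliinc[3,1]{4}^{\odd}$ as well.

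The only genuine point requiring care, and the step I expect to be the main obstacle, is checking that the \emph{same} triple $(\X,\omega,Z)$ appears in both lists rather than two superficially similar but distinct triples: the hyperelliptic description constrains the pair $\{N_1,N_2\}$ to be a hyperelliptic-conjugate pair on $\tilde\X$, while the odd description (second case of Theorem~\ref{theoreme:bordZeroOrdre4IrrbisIntro}) a priori allows any pair of distinct points $N_1,N_2$ on the genus-two curve that together form the zero divisor of some holomorphic differential — but on a genus-two curve $\divisor{\omega|_{\tilde\X}}$ for a holomorphic $\omega|_{\tilde\X}$ is a canonical divisor, hence automatically a hyperelliptic-conjugate pair (or a single double point, which is the first case, excluded here). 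So the two loci of attaching data coincide, and the remaining data ($Z$ on the exceptional component, $\omega$ vanishing there) match identically. I would therefore phrase the proof as: ``By Theorem~\ref{theorem:bordHypCasIrrIntro}, $(\X,\omega,Z)\in\obarmoduliinc[3,1]{4}^{\hyp}$. Since on the genus-two curve $\tilde\X$ the divisor $N_1+N_2$ is canonical, $N_1$ and $N_2$ are conjugate under the hyperelliptic involution, so the second case of Theorem~\ref{theoreme:bordZeroOrdre4IrrbisIntro} applies and $(\X,\omega,Z)\in\obarmoduliinc[3,1]{4}^{\odd}$.'' — together with a sentence noting that both compactifications sit inside $\PP\obarmoduliinc[3,1]{4}$, so this genuinely witnesses a nonempty intersection of the two components' closures, completing the proof.
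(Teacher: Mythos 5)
Your proposal is correct and takes essentially the same route as the paper, which deduces Corollary~\ref{corollaire:intersectHypOddGenreTrois} precisely by combining the hyperelliptic boundary description (Theorem~\ref{theoreme:bordHypCasIrr}) with the genus-three odd boundary description (Theorem~\ref{theoreme:bordZeroOrdre4Irrbis} and Corollary~\ref{corollaire:stabDiffgTroisIrr}); your check that two simple zeros of a holomorphic differential on a genus-two curve are automatically conjugate under the hyperelliptic involution is exactly the compatibility point that makes the two descriptions meet on the same triple. The one caveat is that membership in the odd component is not literally the statement of Theorem~\ref{theoreme:bordZeroOrdre4IrrbisIntro} (which is phrased as a necessary condition) but comes from its proof, where the plumbing with $\epsilon_{1}=-\epsilon_{2}$ yields a non-hyperelliptic, hence odd, smoothing --- which is what you implicitly invoke when you appeal to the plumbing construction and the parity bookkeeping.
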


%%%%%%%%%%%%%%%%%%%%%%%%%%%%%%%%%%%%%%%%%%%%%%%%%%%%%%%%%%%%%%%%%%%%%%%%%%%%%%%%%%%%%%%%%%%%%%%%%%%%%%%%%%%%%%

\section{The {Incidence Variety Compactification} of the Strata of the Moduli Space of Differentials.}
\label{section:CompactDesStrates}

The projectivisation of the Hodge bundle over the moduli space of curves $\pomoduli$ has a natural
compactification given by the moduli space of stable differentials $\pobarmoduli$. The first idea in order to
compactify a stratum is to take its closure inside $\pobarmoduli$. This is called the {\em Deligne-Mumford
compactification } of the stratum. However, this compactification loses lots of information.
To keep track of more information we introduce in Definition~\ref{definition:VarieteeIncidence}
another compactification $\PP\obarmoduliincp{k_{1},\cdots,k_{n}}$ via the closure of the strata inside the
moduli space of marked differentials. This compactification of the strata will be called the {\em
{incidence variety compactification}} of the stratum. The end of this section is devoted to the study of the 
spaces $\PP\obarmoduliincp{k_{1},\cdots,k_{n}}$. We show in
Theorem~\ref{theoreme:ExcesAuBordStratePrincipal} and
Theorem~\ref{theoreme:CompStratePrincDansVarIncEtNormal} that this compactification contains much more
information at the boundary than the one given by the closure inside $\pobarmoduli$.

In this section, all spaces  we consider will be complex orbifolds. 

\paragraph{Background on moduli spaces.}

We begin this section by recalling some basic facts and notations about various moduli spaces.
The {\em moduli space  of curves of genus $g$}, denoted by $\moduli$, is the space of complex structures on a
curve of genus $g$. The {\em moduli space of $n$-pointed curves} is denoted by $\modulin$.
It is well known since Riemann (see for example \cite{MR1288523}) that the dimension of  $\modulin$ is
$3g-3+n$.

A modular compactification of $\modulin$ is given by the
 {\em moduli space $\barmoduli[g,n]$ of $n$-marked stable curves}. This compactification is called the {\em
Deligne-Mumford compactification} of the moduli space of $n$-marked curves.
Recall that a {\em stable curve}  is a connected  nodal curve for which each
irreducible component of the normalisation has not an Abelian fundamental
group.
The {\em dual graph} of a stable curve $\X$ of genus $g$, denoted by $\dualgraph(\X)$, is the
weighted graph such that the vertices  correspond to the irreducible components of
$\X$, the edges correspond to its nodes and the weight at a vertex is given by the geometric genus of
the corresponding component.

 The {\em moduli space of nonzero holomorphic 1-forms $\omoduli$} or {\em Hodge bundle of~$\moduli$} 
parameterises pairs  $(\X,\omega)$, where $\X$ is a smooth curve of genus $g$
and~$\omega$ is a nonzero holomorphic 1-form on $\X$.
Remark that the space $\omoduli$ is sometimes denoted by $\zomoduli[g]$ in the literature (for example
\cite{MR2261104}, \cite{MR2010740},...). We will never use this notation due to the risk of confusion
with the notation of the hyperelliptic locus inside $\moduli$ (see Section~\ref{section:hyperelliptique}).

The space $\omoduli$ has a natural stratification by the multiplicities of zeros of $\omega$.
Let $(k_1,\cdots, k_n)$ be a $n$-tuple of strictly positive numbers such that $\sum_{i=1}^{n}k_i = 2g-2$. The
stratum $\omoduli(k_1,\cdots,k_n)$ is the subspace of
$\omoduli$ consisting of
equivalence pairs $(\X,\omega)$, where $\omega$ has  $n$ distinct zeros of respective orders $(k_1,\cdots,
k_n)$.
In particular, for $g\geq2$ the following decomposition holds (see for example \cite{MR2261104}):
\begin{equation}\label{equation:Strate1}
 \omoduli = \bigsqcup_{\substack{n\in\lbrace 1,\cdots,2g-2 \rbrace\\ 2g-2\geq k_1\geq\cdots\geq k_n\geq 1,\
\sum k_i=2g-2}} 
\omoduli(k_1,\cdots,k_n).
\end{equation}

 The notion of differentials extends to the case of (semi) stable curves in the following way.
 A {\em stable  differential} on a  stable curve $\X$ is a meromorphic 1-form~$\omega$ on $\X$  which
is holomorphic outside of the nodes of $\X$ and has at worst simple poles at the
nodes and the two residues at a node are opposite.
Alternatively, the  stable  differentials could be defined as the global sections of the {\em dualizing sheaf}
$\dualsheave$ of $\X$ (see \cite{MR1631825}).
We can now extend the Hodge bundle $\obarmoduli$ above $\barmoduli$.
 The space $\obarmoduli$ is the {\em moduli space of stable differentials} of genus $g$.

Since the definition of stable differential extends readily to the case of semi stable curves, we can extend
this notion to the case of stable marked curves.
\begin{defn}\label{definition:DiffMarquee}
 A {\em marked  stable differential $(\X,\omega,Q_{1},\cdots,Q_{n})$ of genus $g$} is the datum of a stable
$n$-marked curve  $(\X,Q_{1},\cdots,Q_{n})$ in $\barmoduli[g,n]$ and  a  stable differential $\omega$ on
$\X$. 
\end{defn}
The {\em moduli space of marked stable differentials} will be denoted by  $\obarmoduli[g,n]$. It is the Hodge bundle
above the moduli space of marked curves $\barmoduli[g,n]$. Its restriction to the locus of smooth $n$-marked
curves is the {\em moduli space of $n$-marked Abelian differentials} and is denoted by $\omoduli[g,n]$.

There is a natural $\CC^{\ast}$-action on the moduli space of Abelian differentials given by
\begin{equation}
\CC^{\ast}\times\omoduli\to\omoduli:(\alpha,(\X,\omega))\mapsto(\X,\alpha\omega).
\end{equation} 
The quotient of $\omoduli$ under this action is denoted by $\pomoduli$. Remark that this action preserves the
stratification of $\omoduli$ and the images of  $\omoduli(k_{1},\cdots,k_{n})$ inside $\pomoduli$
are well defined and are denoted by $\pomoduli(k_{1},\cdots,k_{n})$.
Moreover, the group $\CC^{\ast}$ acts in a similar way on $\omoduli[g,n]$ and we denote the quotient under
this action by $\pomoduli[g,n]$.

\paragraph{The {Incidence variety compactification} of the strata of $\omoduli$.}

In order to compactify the strata of $\omoduli$, we define the {\em ordered incidence variety}
$\PP\omoduliinc{k_{1},\cdots,k_{n}}$ to be
the subspace of the moduli space of $n$-marked differentials given by
\begin{equation}
 \left\{(\X, \omega, Z_{1},\cdots,Z_{n}) :\ 
\divisor{\omega} = \sum_{i=1}^{n}k_{i}Z_{i}  \right\}\subset \PP\omodulin.
\end{equation}
Moreover, the {\em closed ordered incidence variety}, denoted by $\PP\obarmoduliinc{k_{1},\cdots,k_{n}}$,
is defined as the closure of the ordered incidence variety inside 
$\pobarmoduli[g,n]$.

 In general, there exists a subgroup of $\permGroup$ acting non-trivially on the closed ordered incidence
variety
$\PP\obarmoduliinc{k_{1},\cdots,k_{n}}$.
 Namely, if $k_{i}=k_{j}$ for $i\neq j$, then the transposition
$(i,j)$ acts  on  $\PP\obarmoduliinc{k_{1},\cdots,k_{n}}$ by permuting the points $Z_{i}$ and $Z_{j}$. Let
$\mathfrak{S}$
be the subgroup of $\permGroup$ generated by these transpositions. It is easy to see that
$\mathfrak{S}\isom\prod\permGroup[l_{i}]$, where $l_{i}:=\#\left\{j| k_{j}=i\right\}$ is the number of
indices $j$
such that the order $k_{j}$ is equal to $i$.

\begin{defn}\label{definition:VarieteeIncidence}
Let $\omoduli(k_{1},\cdots,k_{n})$ be a stratum of $\omoduli$ and let $S$ be one of its connected components.
 The {\em {incidence variety compactification} of $S$} is 
 \begin{equation}
 \PP\obarmoduliincp{k_{1},\cdots,k_{n}}:=\PP\obarmoduliinc{k_{1},\cdots,k_{n}}/\mathfrak{S}.
 \end{equation}

A triple $(\X,\omega,Z_{1},\cdots,Z_{n})\in\PP\obarmoduliincp{k_{1},\cdots,k_{n}}$ will be called a {\em
pointed
differential} or a {\em pointed flat surface}.
\end{defn}

Remark that the notions of pointed differentials and marked differentials (see
Definition~\ref{definition:DiffMarquee}) do not coincide.

Let us remark that the closed ordered incidence variety is a suborbifold of $\pobarmoduli[g,n]$. 
Therefore the {incidence variety compactification} of every stratum is an orbifold as the quotient of an
orbifold by
a finite group.

\paragraph{The forgetful map.}

There is a natural {\em forgetful map} between the {incidence variety compactification} and the corresponding
stratum. Before
defining this map on the whole compactification, we  restrict ourself to its restriction above the smooth
pointed differentials. This restriction is given by
\begin{align*}
\varphi:\PP\omoduliincp{k_{1},\cdots,k_{n}} &\to \PP\omoduli[g](k_{1},\cdots,k_{n})\\
  (\X,\omega,Z_{1},\cdots,Z_{n}) & \mapsto (\X,\omega).
\end{align*}

This map turns out to be an isomorphism.
\begin{lemma}\label{lemme:IsoEntreIncEtStrate}
The forgetful map 
\begin{equation}
\varphi:\PP\omoduliincp{k_{1},\cdots,k_{n}} \to \PP\omoduli[g](k_{1},\cdots,k_{n})
\end{equation}
 is an isomorphism of orbifolds.
\end{lemma}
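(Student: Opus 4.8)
The plan is to construct an explicit inverse map and check that both composites are the identity, working in the orbifold category. First I would observe that $\varphi$ is well-defined: forgetting the marked points $Z_1,\dots,Z_n$ is obviously $\mathfrak{S}$-invariant, so it descends to the quotient by $\mathfrak{S}$, and it lands in $\PP\omoduli(k_1,\dots,k_n)$ by definition of the ordered incidence variety (the marked points are exactly the zeros, with their prescribed multiplicities). Next I would define the candidate inverse $\psi: \PP\omoduli(k_1,\dots,k_n)\to \PP\omoduliincp{k_1,\dots,k_n}$ sending $(\X,\omega)$ to the class of $(\X,\omega,Z_1,\dots,Z_n)$, where $\divisor{\omega}=\sum k_i Z_i$; the labelling of the $Z_i$ among zeros of equal order $k_i$ is only defined up to the action of $\mathfrak{S}$, which is precisely why we pass to the quotient, so $\psi$ is well-defined as a map to $\PP\omoduliincp{k_1,\dots,k_n}$.

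The key point is that $\psi$ is a morphism of orbifolds and not merely a set-theoretic section. For this I would argue locally: over a small (orbifold) chart $U$ of $\PP\omoduli(k_1,\dots,k_n)$ one has a universal curve $\famcurv\to U$ with a universal differential, and the zero locus of that differential is a relative divisor of degree $2g-2$; since over the stratum the zeros stay distinct with constant multiplicity pattern, this divisor splits, after an \'etale base change realizing the monodromy on the zeros, into $n$ disjoint sections $Z_1,\dots,Z_n$. These sections give a lift $U\to \PP\omoduliinc{k_1,\dots,k_n}$, and composing with the quotient $\PP\obarmoduliinc{k_1,\dots,k_n}\to\PP\obarmoduliincp{k_1,\dots,k_n}$ yields a morphism independent of the choice of base change; these glue to the global morphism $\psi$. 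The monodromy action on the set of zeros of a given order factors exactly through a subgroup of $\permGroup[l_i]$, which is why quotienting by $\mathfrak{S}\isom\prod\permGroup[l_i]$ exactly kills the ambiguity and makes $\psi$ single-valued.

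Finally I would check $\varphi\circ\psi=\mathrm{id}$ and $\psi\circ\varphi=\mathrm{id}$. The first composite sends $(\X,\omega)$ to $(\X,\omega,Z_1,\dots,Z_n)$ and then forgets the $Z_i$, returning $(\X,\omega)$. For the second, starting from a class $[(\X,\omega,Z_1,\dots,Z_n)]$ with $\divisor{\omega}=\sum k_i Z_i$, applying $\varphi$ gives $(\X,\omega)$, and then $\psi$ recovers the marked points as the zeros of $\omega$ of the appropriate orders; since the original marked points already were exactly those zeros with those orders, the result agrees with the original class after quotienting by $\mathfrak{S}$. Because both maps are morphisms of orbifolds that are mutually inverse on points (and on the orbifold structure, as the constructions are compatible with the universal families), $\varphi$ is an isomorphism of orbifolds. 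I expect the only real subtlety to be the orbifold-theoretic bookkeeping in the middle step — verifying that $\psi$ is genuinely a morphism of orbifolds, i.e. behaves well with respect to the stabilizer groups and the universal family, rather than the essentially formal verification that the composites are the identity.
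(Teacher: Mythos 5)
Your proposal follows essentially the same route as the paper: the paper also defines the inverse by marking the zeros, $(\X,\omega)\mapsto(\X,\omega,Z_{1},\cdots,Z_{n})$, composes with the quotient by $\mathfrak{S}$, and leaves the verification that the two maps are mutually inverse as routine. Your extra local argument (lifting via the universal family and an \'etale base change to see that $\psi$ is a genuine orbifold morphism) is exactly the "routine" part the paper omits, so the proposal is correct and matches the paper's proof.
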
 

In particular, this lemma clearly implies that the dimension of the {incidence variety compactification} 
$\PP\obarmoduliincp{k_{1},\cdots,k_{n}}$ is  $2g-2+n$.

\begin{proof}
It suffices to show that there exists an inverse $\psi$ to $\varphi$. Let
$(\X,\omega)$ be a smooth differential with zeros of order
$(k_{1},\cdots,k_{n})$. We denote by $Z_{1},\cdots,Z_{n}$ the corresponding
zeros. 

Let us define the map 
\begin{align*}
\tilde\psi:\PP\omoduli[g](k_{1},\cdots,k_{n})&\to
\PP\omoduliinc{k_{1},\cdots,k_{n}}\\
(X,\omega)&\mapsto(X,\omega,Z_{1},\cdots,Z_{n}).
\end{align*}
 We define
$\psi$ by the composition of $\tilde\psi$ with the quotient by the action of $\mathfrak{S}$.
It is a routine to prove that both maps are inverse to each other.
\end{proof}

We extend the map $\varphi:\PP\omoduliincp{k_{1},\cdots,k_{n}} \to
\PP\omoduli[g](k_{1},\cdots,k_{n})$ at
the boundary of the strata.
Let $(\X',\omega',Z_{1},\cdots,Z_{n})\in\PP\obarmoduliincp{k_{1},\cdots,k_{n}}$ be a pointed differential. We
denote by $\X$ the image of $\X'$ by
the forgetful map $\pi:\barmoduli[g,n]\to\barmoduli[g]$. Moreover, for every irreducible component $\X_{i}$ 
of $\X$, the corresponding irreducible component of $\X'$ is denoted by $\X_{i}'$.
We obtain a differential $\omega$ on $\X$ in the following way. The restriction of $\omega$ on every
irreducible component $\X_{i}$ of $\X$ is the differential $\omega'|_{\X_{i}'}$. 

This is clearly an extension of the forgetful map $\varphi$, and it remains to show that the image of this
extension lies in $\pobarmoduli$.

 \begin{lemma}\label{lemme:ProlongAppOubli}
The forgetful map 
\begin{align*}
\varphi:\PP\obarmoduliincp{k_{1},\cdots,k_{n}}&\to\PP\obarmoduli[g](k_{1},\cdots,k_{n})\\
 (\X',\omega' , Z_ {1},\cdots,Z_{n})&\mapsto(\X,\omega)
\end{align*}
described in the preceding paragraph is well defined. More precisely, the pair $(\X,\omega)$ is a stable
differential.
\end{lemma}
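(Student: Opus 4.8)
The plan is to identify $\varphi$ with (the restriction of) the morphism of projectivised Hodge bundles covering the forgetful map $\pi\colon\barmoduli[g,n]\to\barmoduli[g]$, and to reduce the statement to the single substantive point that restricting a stable differential to the non-contracted components again produces a stable differential. Fix a point $(\X',\omega',Z_{1},\cdots,Z_{n})$ in $\PP\obarmoduliinc{k_{1},\cdots,k_{n}}$ and let $c\colon\X'\to\X$ be the stabilisation morphism underlying $\pi$; it contracts exactly the rational components of $\X'$ carrying at most two nodes, i.e.\ those which become unstable once the $n$ points are dropped. A maximal connected union $T$ of such components is a chain of $\PP^{1}$'s: a connected graph all of whose vertices have valence at most two is a path or a cycle, and a cycle is impossible here since $\X'$ is connected and $g\geq2$; moreover $T$ meets $\overline{\X'\setminus T}$ in one or two points, at the ends of the chain. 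The non-contracted components of $\X'$ map isomorphically onto the components $\X_{i}$ of $\X$, and the nodes of $\X$ are the images of the non-contracted nodes of $\X'$ together with one new node $c(T)$ for each contracted chain $T$ meeting the rest of $\X'$ in two points. As $\omega'$ is a stable differential it is a section of the dualizing sheaf $\dualsheave[\X']$, so each $\omega'|_{\X_{i}'}$ is a meromorphic $1$-form on $\X_{i}'$ with at worst simple poles over the nodes, and $\varphi$ sends $(\X',\omega',Z_{1},\cdots,Z_{n})$ to the pair $(\X,\omega)$ with $\omega|_{\X_{i}}:=\omega'|_{\X_{i}'}$.

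It remains to check that the $\omega'|_{\X_{i}'}$ glue to a section of $\dualsheave$, that is, to a meromorphic $1$-form on $\X$ that is holomorphic off the nodes and has at worst simple poles with opposite residues on the two branches at each node. Away from the new nodes $c(T)$ the map $c$ is a local isomorphism and these conditions are inherited directly from $\omega'$. Near a new node I argue inside the corresponding chain $E_{1},\cdots,E_{k}$. If this chain meets the rest of $\X'$ in a single point — so $c(T)$ is in fact a smooth point of $\X$ — then $E_{k}$ carries only one node, on which $\omega'|_{E_{k}}$ must have zero residue by the residue theorem on $\PP^{1}$, hence is holomorphic, hence vanishes; descending the chain and using on each $E_{j}$ that the residues of a meromorphic form on $\PP^{1}$ sum to zero gives $\omega'|_{T}=0$ and forces $\omega'$ to have vanishing residue, hence no pole, at the attaching point of the surviving component. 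If instead the chain meets the rest in two points, $p_{1}'\in\X_{a}'$ and $p_{2}'\in\X_{b}'$ (possibly $\X_{a}'=\X_{b}'$), then propagating residues along the chain — the residue theorem on each $E_{j}$ together with the opposite-residue condition at each of its nodes — yields $\Res_{p_{1}'}(\omega'|_{\X_{a}'})+\Res_{p_{2}'}(\omega'|_{\X_{b}'})=0$; since $p_{1}'$ and $p_{2}'$ are nodes of $\X'$ both restrictions have at worst a simple pole there, so after contracting the chain the two branches at $c(T)$ carry forms with at worst simple poles and opposite residues. Thus $\omega\in H^{0}(\X,\dualsheave)$, i.e.\ $(\X,\omega)$ is a stable differential; this is the ``more precisely'' part of the statement.

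This fibrewise computation is exactly the canonical isomorphism $c_{*}\dualsheave[\X']\cong\dualsheave$, whence $H^{0}(\X',\dualsheave[\X'])\cong H^{0}(\X,\dualsheave)$ and $\varphi$ is the restriction of the morphism $\PP\obarmoduli[g,n]\to\PP\obarmoduli[g]$ of projectivised Hodge bundles covering $\pi$. In particular $\omega\neq0$, for if $\omega$ vanished on every $\X_{i}$ the residue computation above would force $\omega'$ to vanish on every contracted chain as well, hence $\omega'=0$, contradicting that the point lies in a \emph{projectivised} space. Being a morphism of orbifolds $\varphi$ is continuous, and on the dense open substratum $\PP\omoduliincp{k_{1},\cdots,k_{n}}$ it coincides with the isomorphism of Lemma~\ref{lemme:IsoEntreIncEtStrate} followed by the inclusion $\PP\omoduli[g](k_{1},\cdots,k_{n})\hookrightarrow\PP\obarmoduli[g](k_{1},\cdots,k_{n})$; therefore $\varphi$ carries $\PP\obarmoduliincp{k_{1},\cdots,k_{n}}$, the closure of that substratum, into $\PP\obarmoduli[g](k_{1},\cdots,k_{n})$, the closure of $\PP\omoduli[g](k_{1},\cdots,k_{n})$. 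This gives the asserted well-definedness.

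The step I expect to be the main obstacle is the residue bookkeeping inside a contracted chain: one must verify that, however long the chain and whatever pattern of node attachments occurs (including the possibility of a self-node), the poles stay simple and the two surviving residues come out exactly opposite. It is also worth pinning down carefully the graph-theoretic claim that every maximal contracted subcurve is a chain — in particular that no closed necklace of rational curves can occur — since the whole local analysis rests on it. A more routine point that still needs attention is to carry out the construction $\CC^{\ast}$-equivariantly at the level of Hodge bundles, before projectivising, so that both the non-vanishing of $\omega$ and the continuity argument are clean.
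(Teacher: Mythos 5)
Your overall strategy—restrict $\omega'$ to the non-contracted components and verify stability at the new nodes by combining the residue theorem on the contracted rational pieces with the opposite-residue condition at the nodes of $\X'$—is in substance the same computation as the paper's proof, which organises it instead as an induction, blowing down one exceptional component at a time (on which $\omega'$ is either $0$ or $a\,\dz/z$) and checking the order of blow-downs is immaterial. However, the structural claim on which you say the whole local analysis rests is false as stated. The stabilisation $c\colon\X'\to\X$ does not contract only the rational components carrying at most two nodes: it contracts every maximal connected subcurve of arithmetic genus zero (a tree of projective lines) meeting the closure of its complement in at most two points, and such a tree can contain components of valence three or more. For instance, take a rational component with four nodes, one joining the main curve and three joining bubbles that each carry two of the marked points and no further node; the pointed curve is stable, yet after the three bubbles are contracted the central component has one node left and is contracted too, so the maximal contracted subcurve is a star, not a chain, and it need not meet the rest of $\X'$ ``at the ends''. (Your exclusion of cycles is correct, but the reason is that a contracted subcurve must have arithmetic genus zero—contracting a necklace would drop the genus—not connectedness of $\X'$ or $g\geq2$.)

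The repair is routine and brings you back to the paper's argument: work with trees instead of chains, and either propagate from the leaves inward or simply sum the residues of $\omega'$ over all components of the contracted tree $T$—each $\PP^{1}$ contributes zero by the residue theorem, the internal nodes of $T$ cancel in opposite pairs, and what remains is that the residues at the at most two attachment points on the $T$-side sum to zero; by the opposite-residue condition the same holds on the surviving branches, which is exactly stability of $\omega$ at the new node (and holomorphy at the new smooth point when $T$ is attached at a single point). With that change your proof is correct and essentially coincides with the paper's inductive blow-down; your additional remarks—the identification with $c_{*}\dualsheave[\X']\isom\dualsheave$, the nonvanishing of $\omega$ (cleanest via $h^{0}(T,\dualsheave[T])=0$ for a genus-zero tree once the attachment residues vanish), and the continuity argument placing the image in the closure of the stratum—make explicit points the paper leaves implicit.
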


\begin{proof}
 The forgetful map $\barmoduli[g,n]\to\barmoduli$ is well defined. Hence it is enough to show that the
differential $\omega$ is stable.

Let $E$ be an exceptional component of $\X'$ and
$\X'_{E}$ be the curve obtained from $\X'$ by blowing down $E$. We denote by $\omega_{E}'$ the restriction of
the form $\omega'$ on $\X'_{E}$. We can suppose that the nodal points of $E$ are $0$ and $\infty$.

 Then the restriction of the form $\omega'$ on $E$ is either zero or of the
differential~$\frac{a\dz}{z}$ for some $a\in\CC^{\ast}$.

 If the restriction is zero, then it is clear that $\omega_{E}'$ is
stable on $\X_{E}'$. 

If the differential on $E$ is given $\frac{a\dz}{z}$. Since the residues of the differential at
the nodes have to sum up to zero, the residues at the  points of the node are $-a$ and $a$. This implies
that $\omega'_{E}$ is still a stable differential on $\X'_{E}$.

If we take $E_{1}$ and $E_{2}$ two exceptional components, we can easily verify that
$\X'_{E_{1}E_{2}}=\X'_{E_{2}E_{1}}$ and $\omega_{E_{1}E_{2}}=\omega_{E_{2}E_{1}}$. So by
induction on the set of exceptional components, the limit $\omega$ is a well defined stable differential on
$\X$. And it is easy to check that this limit coincides with the form given by the map $\varphi$.
\end{proof}

\paragraph{Closure of the principal stratum.} 
In this paragraph, we show that the {incidence variety compactification} contains much more
information than the Deligne-Mumford compactification at the boundary of the  principal stratum. This part
uses the results of Section~\ref{section:PlomberieCylindrique} and in particular
Theorem~\ref{theoreme:PlomberieCylindriqueSansResidu}.

\begin{theorem}\label{theoreme:ExcesAuBordStratePrincipal}
Let $(\X,\omega)\in\pomoduli(2g-2)$ be a differential in the minimal stratum. This differential is in the
boundary of principal stratum
$\PP\omoduli(1,\cdots,1)$  and the
dimension of the fibre  of the forgetful
map $$\pi:\PP\obarmoduliinc[g,\left\{ 2g-2 \right\}]{1,\cdots1}\to\PP\obarmoduli[g](1,\cdots,1)$$ above
$(\X,\omega)$ is
$\max(0,2g-4)$.
\end{theorem}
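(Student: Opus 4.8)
\emph{Setup and strategy.} Write $n=2g-2$ and let $Z\in\X$ be the unique zero of $\omega$, which has order $n$. The plan is to identify the fibre of $\pi$ over $(\X,\omega)$, up to a finite group, with the moduli space $\barmoduli[0,n+1]$ of stable $(n+1)$-pointed rational curves: a point of the fibre is $\X$ with a single tree of rational curves bubbled off at $Z$ carrying all $n$ marked points, and every such configuration occurs. Since $\dim\barmoduli[0,n+1]=(n+1)-3=2g-4\geq 0$ for $g\geq 2$, this yields the value $\max(0,2g-4)$.

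\emph{Producing the fibre, and membership in the boundary.} Fix a smooth rational curve $E$, a point $N\in E$, and distinct points $W_{1},\dots,W_{n}\in E\setminus\{N\}$. On $E\cong\PP^{1}$ there is, up to a scalar, a unique meromorphic differential $\eta$ with $\divisor{\eta}=\sum_{i=1}^{n}W_{i}-(n+2)N$; its only pole, at $N$, has order $n+2\geq 2$, so its residue vanishes. Glue $Z\in\X$ to $N\in E$ to form the curve $\X'=\X\cup E$, mark $W_{1},\dots,W_{n}$ on $E$, and let $\omega'$ be $\omega$ on $\X$ and $0$ on $E$. Applying Theorem~\ref{theoreme:PlomberieCylindriqueSansResidu} at the node $Z=N$, where the zero of order $n$ of $\omega$ is matched (with vanishing residue) to the pole of order $n+2$ of $\eta$, the pointed differential $(\X',\omega',W_{1},\dots,W_{n})$ is a limit of smooth pointed differentials $(\X_{t},\omega_{t},W_{1}(t),\dots,W_{n}(t))$ for which $\omega_{t}$ has exactly $n$ distinct simple zeros, namely at the $W_{i}(t)$. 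Thus $(\X_{t},\omega_{t})\in\PP\omoduli(1,\dots,1)$ and the forgetful map $\varphi$ sends these to $(\X,\omega)$, which proves both that $(\X,\omega)$ lies in the boundary of $\PP\omoduli(1,\dots,1)$ and that $(\X',\omega',W_{1},\dots,W_{n})$ lies in the fibre of $\pi$ over $(\X,\omega)$. Letting $(E;N,W_{1},\dots,W_{n})$ vary over $\modulin[0,n+1]$ and taking closures inside the (proper) fibre, we get inside the fibre a subset mapping finitely onto $\barmoduli[0,n+1]$, so the fibre has dimension at least $2g-4$.

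\emph{The fibre is no larger.} Conversely, let $(\X',\omega',Z_{1},\dots,Z_{n})$ be an arbitrary point of the fibre over $(\X,\omega)$. Since $\barmoduli[g,n]\to\barmoduli$ sends $\X'$ to the smooth curve $\X$, the curve $\X'$ is $\X$ with trees of rational curves attached at finitely many of its points, and $\omega'|_{\X}=\omega$ by the construction of $\varphi$ (Lemma~\ref{lemme:ProlongAppOubli}). An induction from the leaves of each tree inwards, using that $H^{0}\left(\PP^{1},\omega_{\PP^{1}}(q)\right)=H^{0}\left(\PP^{1},\mathcal{O}_{\PP^{1}}(-1)\right)=0$ for a point $q$, shows that $\omega'$ vanishes on every rational component of $\X'$. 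Now write the point as a limit of smooth pointed differentials $(\X_{t},\omega_{t},Z_{i}(t))$ with $\omega_{t}$ in the principal stratum: the $Z_{i}(t)$, being the zeros of $\omega_{t}$, specialise on the smooth curve $\X$ to the zero divisor $\divisor{\omega}=(2g-2)Z$, hence they all converge to $Z$; on the other hand a marked point lying on a tree attached at a point $p\in\X$ has $Z_{i}(t)\to p$. Consequently every tree is attached at $Z$, so (as two trees cannot meet $\X$ in one point) there is a single tree $T$, attached at $Z$, carrying all $n$ marked points, with $\omega'|_{T}=0$. Thus the point of the fibre is determined by the isomorphism class of the stable $(n+1)$-pointed rational curve $(T;\ \text{gluing node},Z_{1},\dots,Z_{n})\in\barmoduli[0,n+1]$; so the fibre embeds into $\barmoduli[0,n+1]$ up to a finite group (the automorphisms of $(\X,Z)$ and the marked-point symmetry), and therefore has dimension at most $2g-4$.

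\emph{Conclusion and main difficulty.} Combining the two estimates, the fibre of $\pi$ over $(\X,\omega)$ has dimension exactly $2g-4=\max(0,2g-4)$, the value $0$ (a single point $\barmoduli[0,3]$) being realised when $g=2$. I expect the principal difficulty to be the upper bound — concretely, justifying rigorously that in a degeneration the marked points of the limit are exactly the points of $\X$ to which the zeros of the nearby Abelian differentials accumulate and that no zeros escape the smooth limit curve — which is precisely the sort of statement controlled by the limit-differential formalism of Section~\ref{section:PlomberieCylindrique} and should be invoked with care. The construction of the fibre is, by contrast, a direct consequence of Theorem~\ref{theoreme:PlomberieCylindriqueSansResidu}.
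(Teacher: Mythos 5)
Your argument follows essentially the same route as the paper: you bubble off a $\PP^{1}$ at the zero $Z$ carrying a differential with $2g-2$ simple zeros and a pole of order $2g$ at the attaching point (whose residue vanishes by the residue theorem on $\PP^{1}$ — note that ``pole of order $\geq 2$'' alone would not imply this), smooth it with Theorem~\ref{theoreme:PlomberieCylindriqueSansResidu}, and count the moduli of the marked rational tail. The only difference is that you spell out the upper bound — identifying the whole fibre with $\barmoduli[0,2g-1]$ via the collision of all zeros at $Z$ — which the paper compresses into ``a simple dimension count concludes the proof.''
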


\begin{proof}
 Let $(\X,\omega,Z)\in\PP\omoduliinc[g,1]{2g-2}$ be a pointed differential of genus $g$ and
$(\PP^{1},Q_{1},\cdots,Q_{2g-2},P)$
be a marked rational curve. There exists a meromorphic differential with a single zero at all the $Q_{i}$ and
a pole of order $2g$ at $P$. Indeed, this differential is given up to scalar multiplication by
$$\eta:=\frac{\prod\limits_{i}(z-Q_{i})}{(z-P)^{2g}}\dz.$$
Let us glue the curve $\X$ with this rational curve via the identification of $Z$ with~$P$. It is easy to
verify that we can apply
Theorem~\ref{theoreme:PlomberieCylindriqueSansResidu} in order smooth this differential. The differential
that we obtain has $2g-2$ simple zeros.  This shows that the pointed differential
$$\left(\X\cup\PP^{1}/Z\sim
P,(\omega,0),Q_{1},\cdots,Q_{2g-2}\right)$$ is an element of $\PP\obarmoduliincp[g,\left\{ 2g-2
\right\}]{1,\cdots,1}$ for any tuple
 $(Q_{1},\cdots,Q_{2g-2},P)$. A simple dimension count concludes the proof.
\end{proof}

We now prove an analogous result for the absolute boundary of the stratum
$\PP\obarmoduliinc[g,\left\{ 2g-2 \right\}]{1,\cdots1}$ for curves in the divisor $\delta_{i}$, for $i\geq 2$.

\begin{theorem}\label{theoreme:CompStratePrincDansVarIncEtNormal}
The fibre of the forgetful map  
$$\pi:\PP\obarmoduliinc[g,\left\{2g-2\right\}]{1,\cdots1}\to\PP\obarmoduli[g](1,\cdots,1)$$ is positive
dimensional
over a differential $(\X,\omega)$, where $\X$ is a generic curve in~$\delta_{i}$ for $i\geq 1$ and $\omega$
vanishes on one component of $\X$.
\end{theorem}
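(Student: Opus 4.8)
The plan is to imitate the construction used in the proof of Theorem~\ref{theoreme:ExcesAuBordStratePrincipal}, but now gluing the additional rational curve not to a smooth minimal differential, but to a curve already lying in~$\delta_i$, so that one obtains a positive-dimensional family of pointed differentials all mapping to the \emph{same} stable differential $(\X,\omega)$ with $\omega$ vanishing on one component. Concretely, let $\X=\X_1\cup\X_2$ be a generic curve of $\delta_i$, with $\X_1$ of genus $i$ and $\X_2$ of genus $g-i$, meeting at a single node $N$, and let $\omega$ be a stable differential on $\X$ whose restriction to $\X_1$ is identically zero while its restriction to $\X_2$ is a holomorphic differential. Since $\omega|_{\X_2}$ is holomorphic of degree $2g-2-(2(g-i)-2)=2i$ as a divisor on... — more precisely, $\omega|_{\X_2}\in H^0(\X_2,\omega_{\X_2}(2i\cdot N))$ is forced to have a zero of order at least $2i$ at $N$ for the stability/matching at the node, with possibly a lower-order zero at $N$ if a residue is present; I would first isolate the case where $\omega|_{\X_2}$ vanishes to order exactly $2g_2-2 = 2(g-i)-2$ away from $N$ plus the constraint at $N$, so that the only remaining freedom for the $2g-2$ marked points $Z_1,\dots,Z_{2g-2}$ is on the component(s) where $\omega$ vanishes or on an attached rational bubble.

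The key steps, in order: First, describe the stable pointed model: attach to $\X$ a chain of rational curves at the node $N$ (or at a general point of $\X_1$), carrying a meromorphic differential built by hand in the style of $\eta=\frac{\prod_i(z-Q_i)}{(z-P)^{2g}}\dz$ from the proof of Theorem~\ref{theoreme:ExcesAuBordStratePrincipal}, with simple zeros at points $Q_1,\dots$ that will become the $Z_j$ and a pole matching the zero order of $\omega$ on the adjacent component. Second, verify that the residue conditions at the nodes of this bubble chain are satisfied (they are, because the meromorphic differential on a rational component can be chosen with prescribed residues summing to zero, exactly as in Lemma~\ref{lemme:ProlongAppOubli}). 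Third, invoke Theorem~\ref{theoreme:PlomberieCylindriqueSansResidu} to smooth the node(s) and realize the resulting family as genuine limits of pointed differentials in $\PP\obarmoduliinc[g,\{2g-2\}]{1,\dots,1}$; here one must check the hypothesis of that theorem, namely that the relevant nodes carry no residue (which is why I would treat the no-residue case first and, if needed, reduce the general case to it or invoke Lemma~\ref{lemme:PlomberieCylindriqueAvecResidu}). Fourth, do the dimension count: the points $Q_1,\dots,Q_{2g-2}$ on the rational bubbles, modulo the automorphisms of $\PP^1$ fixing the two special points $0,\infty$ (a one-dimensional group), give a family of positive dimension, while the image $(\X,\omega)$ in $\PP\obarmoduli[g](1,\dots,1)$ is fixed — hence the fibre of $\pi$ is positive-dimensional.

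The main obstacle, I expect, is the bookkeeping of which component $\omega$ is allowed to vanish on and the precise order of vanishing of $\omega|_{\X_2}$ at the node $N$: for the plumbing theorem to apply one needs the orders of the differentials on the two sides of a node to be compatible (sum equal to $-2$ for a node being smoothed), so the rational bubble's meromorphic differential must have a pole of the correct order, and one must check there is enough freedom left for $2g-2$ marked points to produce a genuinely positive-dimensional fibre rather than a finite one. A secondary subtlety is ensuring the limit pointed differential one writes down actually lies in the \emph{closure} of the principal stratum and not merely in some larger stratum's closure; this is exactly what Theorem~\ref{theoreme:PlomberieCylindriqueSansResidu} delivers, provided its hypotheses are verified, so the crux of the write-up is checking those hypotheses in the present configuration. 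I would also remark that for $i\geq 2$ the genus-$i$ component $\X_1$ itself contributes moduli, but since $\X$ is held fixed in the fibre of $\pi$ this does not affect the argument — the positive dimension comes purely from the positions of the marked points on the vanishing locus of $\omega$.
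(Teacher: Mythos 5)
There is a genuine gap: your construction does not prove the statement for an arbitrary differential $(\X,\omega)$ with $\X$ generic in $\delta_{i}$ and $\omega|_{\X_{1}}\equiv 0$, which is what the theorem asserts, but only for very special ones. If you put the marked points $Q_{1},\dots$ on a rational bubble inserted at the node (or attached at a point $P$ of $\X_{1}$), the compatibility condition forces the limit differential on $\X_{1}$ to have its whole divisor supported at the attaching point and the node: on $\X_{1}$ you need a meromorphic differential with divisor of the form $mP-(m-2g_{1}+2)N_{1}$ (or $(2g_{1}-2)N_{1}$ if the bubble sits at the node), i.e.\ a canonical-class condition on $\X_{1}$. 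So the attaching point cannot be ``a general point of $\X_{1}$'' (it is constrained to finitely many points, or the configuration simply does not exist for $g_{1}\geq 2$), and in the bubble-at-the-node version the compatibility condition propagates to $\X_{2}$ and forces $\omega|_{\X_{2}}$ to have a zero of order $2g_{2}-2$ at $N_{2}$. Hence the fibres you exhibit sit only over a proper closed subset of the locus described in the statement. Moreover, in the variant where the bubble is attached at a point of $\X_{1}$ and carries the $2g_{1}$ remaining marked points, the family you get (points on $\PP^{1}$ with one special point, modulo the two-dimensional stabiliser) has dimension $2g_{1}-2$, which is zero precisely in the case $i=1$; and the bubble-at-the-node version carries two poles of order $\geq 2$ whose residues only sum to zero, so an extra residue-vanishing condition (or Lemma~\ref{lemme:PlomberieCylindriqueAvecResidu}) is needed before Theorem~\ref{theoreme:PlomberieCylindriqueSansResidu} applies.

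The paper's proof avoids bubbles altogether and gets the positive dimension from the component on which $\omega$ vanishes: since $\omega|_{\X_{1}}\equiv 0$, at least $k>2g_{1}-2$ of the marked points must lie on $\X_{1}$, and such a configuration underlies a limit differential exactly when $\sum_{i}Q_{i}-(k-2g_{1}+2)N_{1}$ is a canonical divisor on $\X_{1}$. The map $\X_{1}^{(k)}\to\Jac[\X_{1}]$, $(Q_{1},\dots,Q_{k})\mapsto\Ox[\X_{1}]\bigl(\sum_{i}Q_{i}-(k-2g_{1}+2)N_{1}\bigr)$, is surjective, so the fibre over the canonical class has dimension at least $k-g_{1}\geq g_{1}\geq 1$; the unique pole of the resulting differential on $\X_{1}$ is at $N_{1}$, so its residue vanishes by the residue theorem, and Theorem~\ref{theoreme:PlomberieCylindriqueSansResidu} smooths every such configuration into the principal stratum while the forgetful image stays equal to the given $(\X,\omega)$. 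If you want to salvage your approach, you would have to replace the bubble configurations by this family of divisors on $\X_{1}$ itself (or prove separately that every relevant $(\X,\omega)$ admits such bubble degenerations, which is false as stated).
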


\begin{proof}
Let $(\X:=\X_{1}\cup\X_{2}/N_{1}\sim N_{2},\omega)$ be  a differential  of genus $g\geq2$ in
$\pobarmoduli(1,\cdots,1)$ and suppose that $\omega|_{\X_{1}}=0$. Then, the component $\X_{1}$ contains more
than $2g_{1}-2$ marked points.  
The map $h:\X_{1}^{(k)}\to\Jac[\X_{1}]$ from the symmetric product of $\X_{1}$ to the Jacobian of $\X_{1}$
given by
$$ (Q_{1},\cdots,Q_{k})\mapsto \Ox[\X_{1}]\left(\sum_{i} Q_{i} -
(k-2g_{1}+2)N_{1}\right)$$ 
is surjective. Hence the dimension of the fibre of $\pi$ at $(\X,\omega)$ is at least $k-g_{1}$. Such divisors are canonical and
since there is no residue
at $N_{1}$, we apply Theorem~\ref{theoreme:PlomberieCylindriqueSansResidu} to conclude that every such
differential can be smoothed in $\pomoduli(1,\cdots,1)$.
\end{proof}

We are going to present some other results about the closure  of the minimal hyperelliptic strata in
Section~\ref{section:hyperelliptique} and of the closure of $\pomoduli[3,1]^{\odd}(4)$ in
Section~\ref{section:omoduli3,4,odd}.

%%%%%%%%%%%%%%%%%%%%%%%%%%%%%%%%%%%%%%%%%%%%%%%%%%%%%%%%%%%%%%%%%%%%%%%%%%%%%%%%%%%%%%%%%%%%%%%%%%%%%%%%%%%%%%

\section{Limit Differentials and Plumbing Cylinders.}\label{section:PlomberieCylindrique}

In order to remedy the disadvantage of stable differentials that may vanish on some components, we
introduce the notion of {\em limit differential}. It is, in a sense,  similar to the notion of limit linear
series, but for families of pointed differentials in a stratum (see Definition~\ref{definiton:diffLimite}).
In particular, a limit
differential is a collection of differentials parametrised by the set of irreducible components of a marked
curve (such collection will be called {\em candidate differential}). None differential of this collection
identically vanish, but the price to pay is to allow some poles of order greater than one
at the nodes.

This notion is interesting only if the following conditions are satisfied. First, this notion should be manageable, at least
for important cases.
In particular,  we should be able to exhibit limit differentials. To produce  examples, we
extend the classical
plumbing cylinder construction from the case of curves to the case of
differentials (see Lemma~\ref{lemme:PlomberieCylindriqueOk}). This allows us to give necessary and
sufficient conditions for being a limit differential in an important case (see
Theorem~\ref{theoreme:PlomberieCylindriqueSansResidu}). 
Two of these conditions are easily stated: 
 a limit differential $(\X,\omega)$ must satisfy the {\em compatibility
condition} at
every node $N_{i}$ of $\X$
\begin{equation*}
\ord_{N_{i,1}}(\omega)+ \ord_{N_{i,2}}(\omega) =-2,
\end{equation*}
and the {\em residue condition} at every node where $\omega$ has simple poles
\begin{equation*}
\Res_{N_{i,1}}(\omega)+\Res_{N_{i,2}}(\omega) = 0.
\end{equation*}
We refer to Theorem~\ref{theoreme:PlomberieCylindriqueSansResidu} for the other conditions.
The general case is much more complicated, and we show
that the above conditions are not sufficient. However, see Lemma~\ref{lemme:noeudsPolaireAvecResiduFaible} for
a necessary condition and Lemma~\ref{lemme:PlomberieCylindriqueAvecResidu} for a sufficient one (both of them
being non-optimal).

Second, it should be possible to deduce information on the {incidence variety compactification} of the strata
of $\omoduli$ from the limit differentials. We connect the two notions for important cases in
Proposition~\ref{proposition:relationPlumStable}. We will use this relationship intensively in
Section~\ref{section:hyperelliptique} and Section~\ref{section:omoduli3,4,odd}.

\paragraph{Limit Differentials.}
Before defining the notion of limit differential, we prove a preliminary result about families of pointed
differentials. This allows us to introduce the notion of {\em scaling}.

\begin{lemma}
 Let $$\left(f:\famcurv\to\Delta^{\ast},
\famomega:\Delta^{\ast}\to\dualsheave[\famcurv/\Delta^{\ast}],\seczero_{1},\cdots,\seczero_{n}:\Delta^{\ast}
\to\famcurv \right)$$ be a family of pointed differentials inside the stratum
 $\obarmoduliinc{k_{1},\cdots,k_{n}}$ and let $(\X,\omega,Z_{1},\cdots,Z_{n})$ be its
stable limit.  Then, for every irreducible component $\X_{i}$ of
$\X$ there exists a unique $r_{i}\in\ZZ$ such that for a generic section $s:\Delta^{\ast}\to\famcurv$ with
$\bar{s}(0)\in\X_{i}$ we have 
\begin{equation}
\lim_{t\to 0} t^{r_{i}}\famomega\left(t,s(t)\right) \neq 0.
\end{equation}
Moreover, every map $\alpha_{i}:\Delta\to\CC$ satisfying this property is given by
$$t^{r_{i}}(1+t\CC\left[t\right]).$$
The map $t^{r_{i}}$ is called the {\em scaling} of the component $\X_{i}$ for this family. 

The stable limit of the family of differentials is given by $$\lim_{t\to 0}
\left(\alpha(t)\famomega\left(t\right)\right),$$
where $\alpha$ is a scaling such that for every scaling $\alpha_{i}$ the quotient $\alpha/\alpha_{i}$ is
bounded at the origin.
\end{lemma}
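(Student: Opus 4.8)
The plan is to work locally near a point of a component $\X_i$ and analyze the vanishing order of the family $\famomega$ there. First I would take a generic section $s:\Delta^\ast\to\famcurv$ with $\bar s(0)\in\X_i$ a smooth point of $\X$ away from the marked points; in a local coordinate the function $t\mapsto\famomega(t,s(t))$ is a holomorphic (or at worst meromorphic) function of $t$ on the punctured disc, and since the family extends to a stable differential on the special fibre, the total space $\famomega$ is (after possibly a base change, which we may assume has already been performed so that the stable limit exists) a section of the relative dualizing sheaf over $\Delta$; hence $\famomega(t,s(t))$ is meromorphic at $t=0$ and we let $r_i\in\ZZ$ be minus its order of vanishing (or the order of its pole) at the origin. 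By construction $\lim_{t\to0}t^{r_i}\famomega(t,s(t))\neq0$ and is finite. The content to check is that $r_i$ does not depend on the choice of the generic section $s$.

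For the independence of $s$, I would argue that two generic sections $s,s'$ through $\X_i$ differ, after trivializing, by a family of automorphisms/translations within $\X_i$, and that the meromorphic function $t\mapsto\famomega(t,s(t))/\famomega(t,s'(t))$ extends to a nonvanishing holomorphic function at $t=0$: indeed its limit as $t\to0$ is the ratio of the values of a fixed nonzero meromorphic differential $\omega_i$ on $\X_i$ (the rescaled limit) at two distinct generic points, hence a nonzero finite number. The word ``generic'' is what makes this clean: we must avoid the finitely many points of $\X_i$ that are zeros or poles of $\omega_i$, and avoid the nodes and marked points. This simultaneously shows $r_i$ is well defined and that any valid rescaling factor $\alpha_i(t)$ — i.e. any $\alpha_i$ with $\lim_{t\to0}\alpha_i(t)\famomega(t,s(t))$ finite and nonzero for generic $s$ — must satisfy $\alpha_i(t)/t^{r_i}$ holomorphic and nonvanishing at $0$, which is exactly the assertion $\alpha_i(t)=t^{r_i}(1+t\CC[[t]])$, intersected with convergent series since everything is analytic.

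For the last paragraph, I would identify the stable limit with the rescaling by the ``slowest'' scaling. Order the components so that $r_j\le r_i$ for all $i$ for some fixed $j$, equivalently pick $\alpha=t^{r_j}$ with $\alpha/\alpha_i=t^{r_j-r_i}$ bounded at the origin for every $i$ (such a $j$ exists since there are finitely many components). Then on each component $\X_i$ the rescaled family $\alpha(t)\famomega(t)$ has limit $t^{r_j-r_i}|_{t=0}\cdot\omega_i$, which is $\omega_i$ if $r_i=r_j$ and $0$ otherwise; this is precisely the stable differential obtained as the limit in $\obarmoduli[g,n]$, because the stable limit is by definition the limit of $\famomega$ (up to the $\CC^\ast$-action, here realized by the scalar $\alpha(t)$) and the components where the form is ``too small'' get killed while the dominant ones survive — and one checks the resulting form is stable exactly as in the proof of Lemma~\ref{lemme:ProlongAppOubli}, the dualizing-sheaf condition being closed. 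Uniqueness of the limit in the separated space $\obarmoduli[g,n]$ then forces it to equal $(\X,\omega,Z_1,\cdots,Z_n)$.

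The main obstacle I expect is making rigorous the claim that $\famomega(t,s(t))$ is honestly meromorphic in $t$ at the origin and that the leading coefficient, as a function of the generic point on $\X_i$, is the restriction of a single nonzero meromorphic differential $\omega_i$ — in other words, setting up carefully the rescaled family $t^{r_i}\famomega$ and checking it converges, as a section of the relative dualizing sheaf, to a nonzero limit on all of $\X_i$ and not just along one section. This is where one must use properness of $\obarmoduli[g,n]$ (possibly after a finite base change, which does not affect the rational number-free integer $r_i$ up to the order of ramification, and here we have normalized so the stable limit already exists) and the local structure of the dualizing sheaf at the nodes; once that is in hand, the independence of $s$, the description of all scalings, and the identification of the stable limit are each short.
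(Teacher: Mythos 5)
Your outline starts the same way the paper does (extend $\famomega$ across $t=0$ and read off orders of vanishing along the special fibre), but the step you yourself flag as ``the main obstacle'' is exactly the mathematical content of the lemma, and the justification you offer for it is circular. To show that $r_{i}$ is independent of the generic section you compare $\famomega(t,s(t))$ and $\famomega(t,s'(t))$ and claim the limit of their ratio is the ratio of the values of ``the rescaled limit $\omega_{i}$'' at two generic points of $\X_{i}$; but the existence of a single, not identically zero, meromorphic differential $\omega_{i}$ on $\X_{i}$ arising as $\lim_{t\to 0}t^{r_{i}}\famomega$ is precisely what well-definedness of the scaling amounts to, so it cannot be invoked to prove it — nor can you make sense of ``generic'' (avoiding the zeros and poles of $\omega_{i}$) before $\omega_{i}$ exists. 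The paper closes this point with a short divisor argument that is absent from your sketch: since the family lies in the incidence variety, $\famomega$ is a nowhere vanishing section of $\dualsheave[\famcurv/\Delta^{\ast}]\bigl(-\sum k_{i}\seczero_{i}\bigr)$ over $\Delta^{\ast}$; its meromorphic extension $\bar h$ to $\bar\famcurv$ therefore has divisor supported on the special fibre, $\divisor{\bar h}=\sum_{\X_{i}\in\Irr(\X)} l_{i}\X_{i}$, and $\alpha_{i}=t^{-l_{i}}$ is the scaling. In your local language, what is needed is that at a smooth point of $\X_{i}$ one can write $\famomega=t^{m}u(x,t)\,\dx$ with $u$ not identically zero on $\{t=0\}$ and that the integer $m$ (the order of vanishing along the irreducible divisor $\X_{i}$) is the same at all points of $\X_{i}$; this is what makes $r_{i}$ intrinsic and simultaneously produces the limit differential on all of $\X_{i}$, and it is the missing idea.

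Two secondary remarks. The assertion ``$\alpha_{i}/t^{r_{i}}$ holomorphic and nonvanishing at $0$'' only gives $\alpha_{i}=c\,t^{r_{i}}(1+t\,\CC[t])$ with $c\in\CC^{\ast}$, so the normalisation $c=1$ requires prescribing the value of the limit rather than its mere nonvanishing (the paper's statement has the same looseness, so this is not held against you). In the last paragraph your inequality points the wrong way: with your conventions one must take $r_{j}\geq r_{i}$ for all $i$, so that $\alpha/\alpha_{i}=t^{r_{j}-r_{i}}$ is bounded and annihilates the components with $r_{i}<r_{j}$; the limit computation and the appeal to separatedness of $\obarmoduli[g,n]$ that follow are correct, and note that the paper argues the converse implication (if $\alpha\famomega$ has a nonvanishing stable limit then $\alpha/\alpha_{i}$ is bounded), either direction being acceptable once the scalings themselves have been constructed.
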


Let $\X$ be a (semi) stable curve, we denote by $\Irr(\X)$ the set of irreducible components of $\X$.
\begin{proof}
 Let us define the meromorphic map
$$h:\famcurv\to\CC,(t,x)\mapsto \famomega(t,x),$$
where $\famomega$ is seen as a section of $\Ox[\famcurv]\left( \sum k_{i}\seczero_{i} \right)$. In
particular, the map $h$ is of the form $h(x,t)=h(t)$, where $h$ is never vanishing.
Let us denote its meromorphic continuation on $\bar\famcurv$ by $\bar h$. The divisor of $\bar h$ is of the
form $$\divisor{\bar h} = \sum_{\X_{i}\in\Irr(\X)}l_{i}\X_{i},$$
where $l_{i}\in\ZZ$.
 %-\sum_{i=1}^{n}k_{i}\seczero_{i}+
This implies that $\alpha_{i}:=t^{-l_{i}}$ is a scaling for $\X_{i}$. 
The uniqueness and the general description of the map $\alpha_{i}$ having this property clearly follows from
this description.

Now, let $\alpha$ be a map such that $\lim\limits_{t\to 0}
\left(\alpha(t)\famomega(t)\right)$ is a non vanishing stable differential and $\alpha_{i}$ be the scaling of
any component of $\X$.
By definition $\alpha$ is the scaling of some component of $\X$. 
Let us  show that the  quotient $\frac{\alpha}{\alpha_{i}}$ is
bounded in a neighbourhood of $0$. For any section $s:\Delta^{\ast}\to\famcurv$ we have the
equality $$\alpha(x)\famomega(t,s(t))=\frac{\alpha (t)}{\alpha_{i}(t)}\alpha_{i}(t)\famomega(t,s(t)).$$ 
Hence, if $\frac{\alpha}{\alpha_{i}}$ is not bounded at the origin, then the limit is not bounded on the
smooth
locus of
$\X_{i}$. In particular, the limit is not a stable differential.
\end{proof}

Now we introduce the notion of limit differential of a family of pointed differentials. 
\begin{defn}\label{definiton:diffLimite}
 A {\em limit differential $(\X,\omega,Z_{1},\cdots,Z_{n})$ of type $(k_{1},\cdots,k_{n})$} is a tuple
 such that there exists
a family of pointed differentials $$\left(f:\famcurv\to\Delta^{\ast},
\famomega:\Delta^{\ast}\to\dualsheave[\famcurv/\Delta^{\ast}],\seczero_{1},\cdots,\seczero_{n}:\Delta^{\ast}
\to\famcurv \right)$$ inside $\omoduliincp{k_{1},\cdots,k_{n}}$ which satisfies the two following properties. 

First, the marked curve $(\X,Z_{1},\cdots,Z_{n})$ is the stable
limit of the family
$\left(\famcurv,\seczero_{1},\cdots,\seczero_{n}\right)$. 

Second, for every  irreducible component $\X_{i}$
of the curve $\X$ and for every section $s:\Delta^{\ast}\to\famcurv$, we have $$\lim_{t\to 0}
\alpha_{i}(t)\famomega(t,s(t)) =
\omega(\bar{s}(0)),$$
where $\alpha_{i}$ is the scaling of $\X_{i}$.

The set of limit differentials of type $(k_{1},\cdots,k_{n})$ modulo the usual action of the group
$\mathfrak{S}\subset\permGroup$ (see Section~\ref{section:CompactDesStrates})
is denoted by $\kbarmodulilim(k_{1},\cdots,k_{n})$.
\end{defn}

A limit differential is a collection of never identically zero meromorphic differentials
parametrised by the set of irreducible components of
a stable marked curve. Moreover, the sum of the smooth parts of the divisors of these differentials is given
by $\sum k_{i}Z_{i}$.
 In order to avoid confusion with the stable pointed differentials, we call
such objects {\em candidate differentials} of type $(k_{1},\cdots,k_{n})$.

We now give necessary conditions for a candidate differential to be a limit differential.  

\begin{lemma}\label{lemme:CondCompPlomberieCyl}
 Let $(\X,\omega,Z_{1},\cdots,Z_{n})$ be a limit differential and $N_{1}\sim N_{2}$ be a node of the curve
$\X$. Then the differential $\omega$
satisfies the  {\em Compatibility Condition}
\begin{equation}\label{equation:conditionDeCompatibiliteGeneral}
 \ord_{N_{1}}(\omega)+ \ord_{N_{2}}(\omega)=-2.
\end{equation}

Moreover, if the orders of $\omega$ at $N_{1}$ and $N_{2}$ are $-1$, then the differential $\omega$ satisfies
the {\em Residue Condition}
\begin{equation}\label{equation:conditionDeCompatibiliteGeneral2}
 \Res_{N_{1}}(\omega)+ \Res_{N_{2}}(\omega)=0.
\end{equation}
\end{lemma}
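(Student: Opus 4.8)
The plan is to extract both conditions from the behavior of the meromorphic function $h(t,x) = \famomega(t,x)$ (thought of as a section of the appropriate twist of the dualizing sheaf) near a node of the central fiber, using the plumbing coordinates that present the total space $\bar\famcurv$ near that node. First I would fix a family of pointed differentials realizing $(\X,\omega,Z_1,\cdots,Z_n)$ as a limit differential, and write $N := N_1 \sim N_2$ for the chosen node; let $\X_1, \X_2$ be the two (not necessarily distinct) branches, with scalings $\alpha_1(t) = t^{r_1}$ and $\alpha_2(t) = t^{r_2}$. After blowing up the total family to resolve this node, there are local coordinates $(u,v)$ on $\bar\famcurv$ with $uv = t$ (or $uv = t^k$ after a base change, which does not affect the argument), in which the branch $\X_1$ is $\{v=0\}$ and $\X_2$ is $\{u=0\}$. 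The marked sections $\seczero_i$ do not pass through the node, so near $N$ the form $\famomega$ is a genuine meromorphic differential, holomorphic away from $v=0, u=0$, and on each generic fiber $uv = t$ it restricts to a holomorphic one-form with at worst a simple pole at the node (the fibers being stable curves, or one step before).

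The key computation is to expand $\famomega$ in these plumbing coordinates. On the fiber over $t$, restricting $\famomega$ and writing it in the coordinate $u$ (so $v = t/u$, $dv = -t\,du/u^2$), one gets an expression of the form $\famomega|_{uv=t} = \bigl(\sum_m c_m(t)\, u^m\bigr)\,\frac{du}{u}$ with the sum running over $m \in \ZZ$ in a bounded range (boundedness in $m$ coming from the simple-pole condition on fibers and holomorphy along each branch away from $0$). The scaling $\alpha_1$ is by definition the power of $t$ needed so that $\alpha_1(t)\famomega$ has a nonzero finite limit along a generic section of $\X_1 = \{v=0\}$, i.e.\ along $u$ fixed, $t\to 0$; similarly $\alpha_2$ for $\X_2 = \{u=0\}$, i.e.\ along $v$ fixed. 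Writing everything out, the limit $\omega|_{\X_1}$ in the coordinate $u$ on $\X_1$ is obtained from the lowest-in-$t$ surviving terms, and its order of vanishing at $u=0$ equals the minimal exponent $m$ appearing, say $m_1$; symmetrically $\ord_{N_2}(\omega) = m_2$ is the minimal exponent in the $v$-expansion, which corresponds to the $-m$ with $m$ \emph{maximal} in the $u$-expansion. Tracking the powers of $t$ through $uv=t$ one finds $m_1$ and $-m_2$ are linked by exactly the relation $m_1 + m_2 = -2$: the $\frac{du}{u}$ normalization contributes $-1$ on each side, and the change of variables $dv/v = -du/u$ plus $v = t/u$ accounts for the shift. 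This gives the Compatibility Condition~\eqref{equation:conditionDeCompatibiliteGeneral}.

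For the Residue Condition, I would specialize to the case $\ord_{N_1}(\omega) = \ord_{N_2}(\omega) = -1$, which by the above forces $m_1 = m_2 = -1$, i.e.\ the $\frac{du}{u}$-coefficient $c_{0}(t)$ realizes, after scaling, both residues in the limit. The residue of $\famomega$ on a fiber $uv=t$ at the point of the node on the $\X_1$-side is read off the $\frac{du}{u}$-term, and on the $\X_2$-side from the $\frac{dv}{v}$-term; but on a single smooth fiber these two "residues at the node" are the residues of a holomorphic form at the two preimages of a node of the stable fiber, hence already opposite (the fiber being a stable differential, or: the sum of residues of a meromorphic form on a smooth curve is zero and the only poles are at the node). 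Passing to the limit with the common scaling $\alpha_1 = \alpha_2$ (equal because $m_1 = m_2$), the residues converge to $\Res_{N_1}(\omega)$ and $\Res_{N_2}(\omega)$ respectively, so their sum is the limit of $0$, namely $0$.

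The main obstacle I expect is bookkeeping precision in the second paragraph: correctly matching the integer exponents $m_1, m_2$ in the two expansions to $\ord_{N_1}(\omega), \ord_{N_2}(\omega)$ while keeping straight the $du/u$ versus $dv/v$ normalizations and the $uv = t$ substitution — it is easy to be off by a sign or by the additive constant $-2$ if one is careless about whether one writes the form as $f(u)\,du$ or $g(u)\,du/u$. The cleanest way to control this is to do one explicit model case (e.g.\ $\famomega = u^a v^b\,\frac{du}{u}$ on $uv = t$, for which everything is an elementary computation) and then argue that a general $\famomega$ is a convergent sum of such monomials with the leading behavior governing the limit. Once the model case is pinned down, both conditions drop out, and the residue statement is then essentially free since "residues at a node sum to zero" is stable under the limiting process.
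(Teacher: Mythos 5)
Your overall strategy---plumbing coordinates $uv=t^{a}$ at the node, writing $\famomega$ against the normalization $\frac{du}{u}$, and comparing the surviving leading terms on the two branches---is in essence the strategy of the paper. But the decisive step is asserted rather than proved: you claim that ``tracking the powers of $t$ through $uv=t$'' forces $m_{1}+m_{2}=-2$, and nothing you have stated implies this. The conditions you actually invoke (the sections avoid the node, holomorphy off the two branches, bounded exponent range, ``simple poles on fibers''---which is not even the situation here, since for $t\neq 0$ the fibres are smooth curves in the open stratum carrying holomorphic forms) are all satisfied by, say, $\famomega=(u^{2}+v^{2})\frac{du}{u}$ on $uv=t$, whose limits on the two branches are $u\,du$ and $-v\,dv$, with orders summing to $+2$, not $-2$. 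What rules this out in the actual situation is the input your proposal never uses: the differentials of the family vanish only along the marked sections, which avoid the node, so $\famomega$ is nowhere zero on a punctured neighbourhood of the node. The paper exploits exactly this: it writes $\famomega=h\cdot\eta$ with $\eta=\frac{x\dx-y\dy}{x^{2}+y^{2}}$ a nowhere-vanishing generator of the relative differentials (restricting to $\frac{\dx}{x}$ on each fibre), so that $h$ is a nonvanishing function off the central fibre, hence has divisor supported on the two branches; after clearing a power of $t^{a}$, the Weierstrass preparation theorem gives $h=x^{d}\cdot(\text{unit})$. It is this single integer $d$ that ties the two branches together and produces orders $d-1$ and $-d-1$, i.e.\ the sum $-2$. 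Your ``model case plus leading behaviour'' remark is precisely this structure statement, and it is the part that requires proof; without it the exponent-matching step fails.

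The residue half has the right mechanism (on a fibre $uv=t^{a}$ one has $\frac{dv}{v}=-\frac{du}{u}$, so the $\frac{du}{u}$- and $\frac{dv}{v}$-coefficients are opposite), but your justification is misstated: for $t\neq0$ there is no node on the fibre and the form has no poles near the smoothed node, so there are no ``residues at the two preimages of a node'' to compare; the correct argument is the coefficient (or vanishing-cycle period) comparison just described, which in the paper appears as the case where $h$ is invertible, the limits on the two sides being $h(0)\frac{\dx}{x}$ and $-h(0)\frac{\dy}{y}$. Likewise, your claim that the two scalings coincide when both orders are $-1$ is a consequence of that same structure result (the case $d=0$), not something available beforehand, so it too rests on the missing step.
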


\begin{proof} 
 Let $(f:\famcurv\to\Delta^{\ast},\famomega,\seczero_{1},\cdots,\seczero_{n})$ be a family of
pointed differentials which
converges to the limit differential $(\X,\omega,Z_{1},\cdots,Z_{n})$. Let $U$ be  a neighbourhood of the node
$N_{1}\sim N_{2}$ in $\bar\famcurv$.
Without loss of generality, we can assume that $U$ satisfies the following properties. First, the
intersections
$\seczero_{i}\cap U$ are empty for every $i\in\lbrace 1,\cdots,n \rbrace$. In particular, the only possible
zeros and
poles of $\famomega|_{U}$ are contained in $\X|_{U}$. Second, there exists a coordinate system
$(x,y,t)$ of an open subset of $\Delta^{3}$ containing the origin such that
\begin{equation}\label{equation:noeudGeneral}
 U:=\left\{  xy= t^{a}\right\},
\end{equation}
where $a\geq1$. Moreover, we can suppose
that $\X|_{U}$ is given by the equation $\lbrace xy=0 \rbrace$.  In the rest of the proof, we denote by
$\X_{x}$,
$\X_{y}$ and $\X_{U}$ the subset of~$U$ of respective equations $\left\{y=0\right\}$, $\left\{x=0\right\}$
and $\left\{xy=0\right\}$.

We pick a differential $\eta$
that generates $\holoneform[U] / f^{\ast}(\holoneform[\Delta])$ and that vanishes
nowhere on $U$, for example
 $$\eta:=\frac{x\dx-y\dy}{x^{2}+y^{2}}.$$ For
$t\neq 0$, its restriction to the curve $\famcurv_{t}$ is a differential without zeros or poles.
For $t=0$, its  restriction to the component $\X_{x}$ (resp. $\X_{y}$) has a unique
simple pole at $N_{1}$ (resp. $N_{2}$) with residue $1$ (resp. $-1$).

Since $\eta$ generates  $\holoneform[U] / f^{\ast}(\holoneform[\Delta])$, the family of
differentials $\famomega|_{U\setminus\X_{U}}$ is given by 
\[\famomega=h\cdot\eta,\]
where $h$ is a meromorphic function with neither poles nor zeros in $U\setminus\X_{U}$. By multiplying
the function $h$ by a power of $t^{a}$, we obtain a new family of differentials proportional to
$\famomega$
on $U\setminus\X_{U}$. In particular, we can suppose that $h$ is holomorphic on $U$ and vanishes
 on at most one component of $\X_{U}$. This new family will still be denoted by $\famomega$ and the
holomorphic function by $h$.

We have two cases to consider. The first one is the case where $h$ is invertible on $U$. In this case the
limit
differential of $\famomega$ on $\X_{U}$ is simply a scaling of the restriction of $\eta$ on $\X_{U}$. Hence
the residues of $\omega$ at $N_{1}$ and $N_{2}$ are respectively~$h(0)$ and $-h(0)$. In particular, in this
case, both the compatibility and the residue conditions are satisfied.

The second case is where $h$ vanishes on one component. Without loss of generality, we can suppose that
$h|_{\X_{y}}\equiv 0$ and $h|_{\X_{x}}\not\equiv0$. By the Weierstrass preparation
theorem, the function $h$ can be written as
\begin{equation}
 h(x,y)=\left(x^{d} + h_{1}(y)x^{d-1}+\cdots + h_{d}(y)\right)\tilde{h}(x,y),
\end{equation}
where $\tilde{h}$ is invertible and the $h_{i}$ are holomorphic maps vanishing at the origin.
Moreover, since by hypothesis the divisor of $h$ is a multiple of $\X_{y}$, we deduce that the
functions $h_{i}$ are identically zero. Hence the function $h$ is of the form
 \begin{equation}\label{equation:formeLocaleFamilleDeDiffAuxNoeuds}
 h(x,y)=x^{d}\cdot \tilde{h}(x,y).
\end{equation}
This implies that restriction $\omega_{x}$ of $\omega$ to the component $\X_{x}$ is given by
\[
\left.\left( x^{d}\cdot \tilde{h}(x,y)\cdot\frac{x\dx-y\dy}{x^{2}+y^{2}}\right)\right|_{\X_{x}}
=x^{d}\cdot\tilde{h}(x,0)\frac{\dx}{x}.
\] By rescaling the family of differentials  $\famomega$ by the function $(t^{a})^{-d}$, we find
that the restriction $\omega_{y}$ of $\omega$ to the component $\X_{y}$ is given by 
\[y^{-d}\cdot\tilde{h}(0,y)\frac{-\dy}{y}. \]
In particular, since $\tilde{h}(0,0)\in\CC^{\ast}$, the sum of the orders of $\omega_{x}$ and $\omega_{y}$ at
the
origin is $-2$.
\end{proof}

It is convenient to formulate a byproduct of our proof as a
separate lemma.
\begin{lemma}\label{lemme:relationScaleAuNoeud}
Let $(\famcurv,\famomega,\seczero_{1},\cdots,\seczero_{n})$ be a family of pointed differentials  which
converges to the limit differential $(\X,\omega,Z_{1},\cdots,Z_{n})$. Let $N$ be a node between the
irreducible components $\X_{i}$ and $\X_{j}$ (which may coincide), and suppose that the equation of
$\famcurv$ around $N$ is $xy=t^{a}$ for some $a\geq 1$. 

If $\ord_{N}(\omega|_{\X_{i}})=k\geq
-1$, then the scaling $\alpha_{i}$ and $\alpha_{j}$ of $\X_{i}$ and $\X_{j}$ satisfy the equality
\begin{equation}\label{equation:relationScaleAuNoeud}
 \frac{\alpha_{i}}{\alpha_{j}}=(t^{a})^{k+1}.
\end{equation}
\end{lemma}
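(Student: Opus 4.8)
The plan is to read the claim off from the computation already carried out in the proof of Lemma~\ref{lemme:CondCompPlomberieCyl}, the only extra work being to track which power of $t$ is needed to rescale $\famomega$ so that it converges on each of the two branches of $N$. First I would place myself in the local model $U=\{xy=t^{a}\}$ used there, with local branches $\X_{x}=\{y=0\}$ and $\X_{y}=\{x=0\}$ of $\X$ at $N$, and recall that replacing $\famomega$ by $t^{m}\famomega$ for a suitable $m\in\ZZ$ yields a family proportional to $\famomega$ over $\Delta^{\ast}$; since this multiplies the scaling of \emph{every} component of $\X$ by the same factor $t^{-m}$, it leaves the quotient $\alpha_{i}/\alpha_{j}$ unchanged. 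So I may assume, exactly as in that proof, that $\famomega=h\,\eta$ with $\eta=\frac{x\dx-y\dy}{x^{2}+y^{2}}$, where $h$ is holomorphic on $U$ and vanishes on at most one branch. I would also note once and for all that a scaling is defined only up to a unit $1+t\CC\left[t\right]$, so all the equalities of scalings below are to be read as equalities of leading terms.

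Next I would distinguish the two cases of the proof of Lemma~\ref{lemme:CondCompPlomberieCyl}. If $h$ is invertible on $U$, then $\famomega=h\eta$ already converges on each branch to a differential with a simple pole at $N$ (residues $h(0)$ and $-h(0)$), which is in particular not identically zero on the corresponding component; hence both scalings equal $1$, while $\ord_{N}(\omega|_{\X_{i}})=-1$, so $\alpha_{i}/\alpha_{j}=1=(t^{a})^{0}$, which is the formula with $k=-1$. If instead $h$ vanishes on a branch, then Weierstrass preparation (as in that proof) gives $h=x^{d}\tilde h$ with $\tilde h$ invertible and $\X_{y}$ the vanishing branch, and the induced orders of $\omega$ along the two branches are $d-1$ and $-d-1$ with $d\geq 1$. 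In particular, only the non-vanishing branch then carries a differential of order $\geq -1$; so the hypothesis $\ord_{N}(\omega|_{\X_{i}})=k\geq -1$ with $k\geq 0$ forces $\X_{x}\subset\X_{i}$, and computing $\ord_{N}\bigl(x^{d}\tilde h(x,0)\tfrac{\dx}{x}\bigr)=d-1$ gives $d=k+1$.

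It then remains to read off the two scalings. The family $\famomega=h\eta$ already converges on $\X_{i}$ to the nonzero differential $x^{d}\tilde h(x,0)\tfrac{\dx}{x}$, so $\alpha_{i}=1$; and rewriting $h=(t^{a})^{d}y^{-d}\tilde h$ by means of $x=t^{a}/y$ shows that $(t^{a})^{-d}\famomega$ converges on $\X_{y}$ to the nonzero differential $y^{-d}\tilde h(0,y)\tfrac{-\dy}{y}$, so $\alpha_{j}=(t^{a})^{-d}$. Hence $\alpha_{i}/\alpha_{j}=(t^{a})^{d}=(t^{a})^{k+1}$. Finally I would dispose of the case in which $\X_{i}$ and $\X_{j}$ are the same component: then $h$ cannot vanish on a branch, since otherwise the differential carried by this single component would have to be both the nonzero $x^{d}\tilde h(x,0)\tfrac{\dx}{x}$ read from one branch and the zero limit read from the other; so we are back in the invertible case, $k=-1$, and the identity reads $1=1$.

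I do not expect a real obstacle here: the analytic content is entirely contained in Lemma~\ref{lemme:CondCompPlomberieCyl}. The points that need to be stated carefully are that a scaling is pinned down only up to a unit $1+t\CC\left[t\right]$ (so the displayed identity holds up to such a unit, or after fixing representatives), that a preliminary global rescaling of $\famomega$ does not change $\alpha_{i}/\alpha_{j}$, and that the hypothesis $k\geq -1$ is precisely what selects which of $\X_{i}$, $\X_{j}$ plays the role of the ``holomorphic'' branch $\X_{x}$ when $h$ vanishes somewhere.
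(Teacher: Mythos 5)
Your proof is correct and takes essentially the same route as the paper: the paper gives no separate argument for this lemma, stating it as a byproduct of the proof of Lemma~\ref{lemme:CondCompPlomberieCyl}, and your computation (writing $\famomega=h\,\eta$ in the local model $xy=t^{a}$, applying Weierstrass preparation to get $h=x^{k+1}\tilde h$, and reading off that $(t^{a})^{-(k+1)}\famomega$ converges on the other branch) is precisely that byproduct. The extra care you take about the unit ambiguity $1+t\CC\left[t\right]$, the invariance of $\alpha_{i}/\alpha_{j}$ under a global rescaling, and the cases $k=-1$ and $\X_{i}=\X_{j}$ is sound and does not constitute a different approach.
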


As an application we can prove that a limit differential $(\X,\omega,Z_{1},\cdots,Z_{n})$ is uniquely
determined up to multiplicative constants by $(\X,Z_{1},\cdots,Z_{n})$. 
\begin{cor}\label{corollaire:uniciteLimDiffSurTypeCompacte}

 Let $(\X,Z_{1},\cdots,Z_{n})$ be a marked curve in the image of the
{incidence variety compactification} $\obarmoduliincp{k_{1},\cdots,k_{n}}$ by the forgetful map. Then there
exists a limit differential on $(\X,Z_{1},\cdots,Z_{n})$ of type
$(k_{1},\cdots,k_{n})$. Moreover for any
 two of such limit differentials $\omega$ and $\omega'$ there exist  constants $c_{i}\in\CC^{\ast}$ such that 
$$\left.\frac{\omega}{\omega'}\right|_{\X_{i}}=c_{i},$$
for every irreducible component $\X_{i}$ of $\X$. 
\end{cor}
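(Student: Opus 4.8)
The plan is to handle existence and uniqueness separately. For \emph{existence}: since $(\X,Z_{1},\cdots,Z_{n})$ lies in the image of $\obarmoduliincp{k_{1},\cdots,k_{n}}$ under the map forgetting the differential, it underlies some pointed stable differential in the incidence variety compactification; as this compactification is the closure of the incidence variety, $(\X,Z_{1},\cdots,Z_{n})$ is the stable limit of the marked curves of a one-parameter family $(f:\famcurv\to\Delta^{\ast},\famomega,\seczero_{1},\cdots,\seczero_{n})$ of smooth pointed differentials inside $\omoduliincp{k_{1},\cdots,k_{n}}$ (possibly after a finite base change $t\mapsto t^{m}$, which is harmless). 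By the lemma preceding Definition~\ref{definiton:diffLimite}, each irreducible component $\X_{i}$ of $\X$ carries a scaling $\alpha_{i}$, and $\omega_{i}:=\lim_{t\to0}\alpha_{i}(t)\famomega(t,s(t))$ — for $s$ a generic section through $\X_{i}$ — is a nonzero meromorphic differential on $\X_{i}$. The collection $\omega:=(\omega_{i})_{\X_{i}\in\Irr(\X)}$ then satisfies the two conditions of Definition~\ref{definiton:diffLimite} by construction, so it is a limit differential on $(\X,Z_{1},\cdots,Z_{n})$ of type $(k_{1},\cdots,k_{n})$.

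For \emph{uniqueness up to constants}, let $\omega,\omega'$ be two limit differentials on $(\X,Z_{1},\cdots,Z_{n})$ and fix an irreducible component $\X_{i}$. Since $\omega|_{\X_{i}}$ and $\omega'|_{\X_{i}}$ are nonzero meromorphic $1$-forms on the smooth curve $\X_{i}$, their quotient is a meromorphic function on $\X_{i}$, and it is a nonzero constant precisely when $\divisor{\omega|_{\X_{i}}}=\divisor{\omega'|_{\X_{i}}}$. By the definition of a candidate differential, away from the nodes of $\X$ both divisors equal $\sum_{Z_{j}\in\X_{i}}k_{j}Z_{j}$, so it is enough to prove that, for each node $N$ on $\X_{i}$, the integer $\ord_{N}(\omega|_{\X_{i}})$ depends only on $(\X,Z_{1},\cdots,Z_{n})$ and $(k_{1},\cdots,k_{n})$. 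Two facts constrain these orders: first, $\divisor{\omega|_{\X_{i}}}$ has degree $2g_{i}-2$, where $g_{i}$ is the genus of $\X_{i}$, so
\[
\sum_{N\ \mathrm{on}\ \X_{i}}\ord_{N}(\omega|_{\X_{i}})=2g_{i}-2-\sum_{Z_{j}\in\X_{i}}k_{j};
\]
second, Lemma~\ref{lemme:CondCompPlomberieCyl} gives $\ord_{N_{1}}(\omega)+\ord_{N_{2}}(\omega)=-2$ at every node $N_{1}\sim N_{2}$. Thus the node orders of any limit differential on $\X$ form a solution of one fixed linear system: one degree identity per component and one compatibility identity per node. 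When $\X$ is of compact type — i.e.\ $\dualgraph(\X)$ is a tree — this system has a unique solution, which one reads off by induction on the number of components: a leaf component $\X_{i}$ has a single node, so the degree identity pins $\ord_{N}(\omega|_{\X_{i}})$, the compatibility identity then pins the order of $\omega$ on the other branch of $N$, and deleting $\X_{i}$ (recording that order at the surviving branch point of the neighbour) leaves a tree with one fewer component. Hence $\ord_{N}(\omega|_{\X_{i}})=\ord_{N}(\omega'|_{\X_{i}})$ for every node, $\divisor{\omega|_{\X_{i}}}=\divisor{\omega'|_{\X_{i}}}$, and $\omega|_{\X_{i}}/\omega'|_{\X_{i}}$ is the desired constant $c_{i}\in\CC^{\ast}$.

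The one real obstacle is the use of the tree structure. For a stable curve that is not of compact type, a cycle in $\dualgraph(\X)$ leaves a free parameter in the linear system above, so the node orders of a limit differential are no longer forced by the combinatorics alone; pinning down the limit differential in that generality would require knowing which candidate differentials actually occur as limit differentials, which — as the paper stresses — is only understood in the situations of Theorem~\ref{theoreme:PlomberieCylindriqueSansResidu} and Lemma~\ref{lemme:PlomberieCylindriqueAvecResidu}. This is why the statement is meant (and used) for curves of compact type. On compact type one could equally argue with scalings instead of divisors: normalise the scalings of the two families defining $\omega$ and $\omega'$ to agree on one fixed component, and propagate along the edges of the tree using Lemma~\ref{lemme:relationScaleAuNoeud}, which expresses the ratio of the two scalings at the ends of an edge in terms of the order of the differential there; this produces the constants $c_{i}$ directly.
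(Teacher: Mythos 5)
Your proof is correct and follows essentially the same route as the paper: existence from a degenerating family in the stratum via the scaling lemma, and uniqueness by pruning leaves of the dual graph, using the degree of the canonical divisor on each component together with the compatibility condition~\eqref{equation:conditionDeCompatibiliteGeneral} to pin down the node orders, hence the divisors, hence the ratio constants $c_{i}$. Your explicit remark that the argument needs $\dualgraph(\X)$ to be a tree makes precise an assumption the paper's proof leaves implicit (as the label of the corollary indicates, it is meant for curves of compact type), and your alternative propagation of scalings via Lemma~\ref{lemme:relationScaleAuNoeud} is a legitimate variant of the same induction.
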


\begin{proof}
Let $\X_{i}$ be an irreducible component of $\X$ which corresponds to a leaf of the dual graph of $\X$. Let
$Z_{i,1},\cdots,Z_{i,n_{i}}$ be the marked points in $\X_{i}$. Then the restriction of $\omega$ to $\X_{i}$
has  zeros of order $k_{i,j}$ at $Z_{i,j}$ and at most one other zero or a unique pole which has to be located
at the
node of $\X_{1}$. Moreover, the order at the node is imposed by the fact that the degree of $\omega|_{\X_{i}}$
is $2g_{i}-2$. Hence~$\omega|_{\X_{i}}$ is uniquely determined up to a multiplicative constant.

Now we continue this process on the irreducible components adjacent to the preceding components. The order at
the nodes with the previous components are determined by the
compatibility condition~\eqref{equation:conditionDeCompatibiliteGeneral} and  the order at the marked
points $Z_{l}$ are $k_{l}$. Hence it follows that the order at the last node is imposed by the condition on
the
degree of $\omega$. 

Iterating this process we show that there is at most one limit differential (up to multiplication) on
$(\X,Z_{1},\cdots,Z_{n})$. And since $(\X,Z_{1},\cdots,Z_{n})$ lies in the projection of
$\obarmoduliincp{k_{1},\cdots,k_{n}}$, there exists at least one limit differential on this curve.
\end{proof}

There is a global obstruction to smooth a candidate differential which satisfies the compatibility condition
and the residue condition.Let us
look first at a very simple example.

\begin{ex}\label{exemple:ObstructionPlomberie1}
Let  $\X$ be irreducible with one node and the differential $\omega$ has a zero of
order $k$ and a pole of order $k+2$ at the node. It follows from 
Lemma~\ref{lemme:relationScaleAuNoeud} that the differential cannot be smoothed. Indeed, the scaling of an
irreducible component is unique for a given family of differentials. But in this case, by
Lemma~\ref{lemme:relationScaleAuNoeud}, the scaling $\alpha$ of $\X$ satisfies
$\frac{\alpha}{\alpha}=(t^{a})^{k+1}$ for an $a\geq1$, which is absurd.
\end{ex}

Let us now introduce some definitions.

\begin{defn}\label{definition:ordreNoeds}
Let $(\X,\omega)$ be a candidate differential and $N$ a node of $\X$. The {\em order} of $N$ relatively to
$\omega$ is 
$$\ord(N):=\underset{i=1,2}{\max}(\ord_{N_{i}}(\omega)).$$ 
\end{defn}

\begin{defn}\label{definition:GrapheDualDiffLimites}
 Let $(\X,\omega,Z_{1},\cdots,Z_{n})$ be a candidate differential. 
The {\em dual graph} $\Gamma_{\omega}$ of $(\X,\omega)$ is the partially directed weighted graph given by the
following data.
\begin{itemize}
 \item The graph coincides with the dual graph of $\X$.
 \item An edge is directed from the component with the zero to the component with the pole
of $\omega$ and no orientation in the case of simple poles.
 \item The weight $w(e)$ of an edge $e$ is one greater than the order of the  corresponding node (see
Definition~\ref{definition:ordreNoeds}).
\end{itemize}
 \end{defn}
 
 \begin{ex}
 The graphs of the curves of Example~\ref{exemple:ObstructionPlomberie1}  and
Example~\ref{exemple:ObstructionPlomberie2} are drawn in Figure~\ref{figure:exempleGrapheDualPlomberie}.
 \begin{figure}[ht]
 \centering
\begin{tikzpicture}[shorten >=1pt,auto,node distance=2cm,]

     \node(A) at (1,0)[circle,draw]{$g-1$};
     \node(B) at (3.5,0)[circle,draw]{$1$};
     \node(C) at (5.5,1.5)[circle,draw]{$0$};
     \node(D) at (7.5,0)[circle,draw]{$1$}; 
         
     \draw[loop](A) to node[above]{$k+1\geq 1$} (A) ;

     \draw[->](B) to node[]{$1$} (C);
     \draw[->](D) to node[above]{$1$} (C);

\end{tikzpicture}
\caption{}
\label{figure:exempleGrapheDualPlomberie}
\end{figure}
 \end{ex}

\begin{defn}
 Let $\Gamma$ be an partially oriented graph. A {\em path} $\gamma$ is a finite continuous sequence of pairs
$\left\{(e_{i},\alpha_{i})\right\}_{i\in\left\{1,\cdots,l\right\}}$, where $e_{i}$ is an edge of $\Gamma$ and
$\alpha_{i}\in\left\{0,\pm 1\right\}$ is $0$ if the edge has no orientation, $1$ if the direction coincide
with the orientation of $e_{i}$ and $-1$
otherwise. Such path $\gamma$ will be denoted by $$\gamma:=\sum_{i=1}^{l}\alpha_{i}e_{i}.$$
\end{defn}

We now give another property which is satisfied by the limit differentials. Let us recall that $\nodeSet$
denotes the set of nodes of a curve $\X$.

\begin{lemma}\label{lemme:ConditionPlomberieCheminsFermer}
 Let $(\X,\omega,Z_{1},\cdots,Z_{n})$ be a limit differential.
 There exists a tuple
$(\epsilon_{1},\cdots,\epsilon_{r})\in(\Delta^{\ast})^{\nodeSet}$ such that for every closed path
 $\gamma=\sum_{i=1}^{l}\alpha_{i}e_{i}$ in the dual graph of $(\X,\omega)$
the equation
\begin{equation}\label{equation:paramCylindrique}
\prod_{i=1}^{l}\epsilon_{j_{i}}^{\alpha_{i}w(e_{i})}=1
\end{equation}
  is satisfied, where the node corresponding to $e_{i}$ is $N_{j_{i}}$.
\end{lemma}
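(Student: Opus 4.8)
The idea is to extract the constants $\epsilon_{j}$ directly from the family $(f:\famcurv\to\Delta^{\ast},\famomega,\seczero_{1},\dots,\seczero_{n})$ realising the limit differential, using the local picture of each node established in Lemma~\ref{lemme:CondCompPlomberieCyl} and Lemma~\ref{lemme:relationScaleAuNoeud}. First I would fix such a family, and for each node $N_{j}$ of $\X$ choose, as in the proof of Lemma~\ref{lemme:CondCompPlomberieCyl}, a neighbourhood $U_{j}\subset\bar\famcurv$ with coordinates $(x_{j},y_{j},t)$ in which $\famcurv$ is cut out by $x_{j}y_{j}=t^{a_{j}}$ for some $a_{j}\geq 1$ and $\X|_{U_{j}}=\{x_{j}y_{j}=0\}$. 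Here one should take care to use the \emph{same} base parameter $t$ (i.e.\ the same family, with a single disc $\Delta^{\ast}$) at all nodes simultaneously; the integers $a_{j}$ may differ from node to node. Set $\epsilon_{j}:=t^{a_{j}}$, viewed as an element of $\Delta^{\ast}$ after fixing a small enough $t$. By Lemma~\ref{lemme:relationScaleAuNoeud}, if the edge $e_{i}$ of the dual graph corresponds to the node $N_{j_{i}}$, oriented from the component $\X_{i}^{+}$ (carrying the zero or the chosen simple-pole side) to the component $\X_{i}^{-}$, then the scalings satisfy $\alpha_{\X_{i}^{+}}/\alpha_{\X_{i}^{-}}=(t^{a_{j_{i}}})^{w(e_{i})}=\epsilon_{j_{i}}^{\,w(e_{i})}$, where $w(e_{i})=\ord_{N_{j_i}}(\omega|_{\X_i^+})+1$ is exactly the weight of the edge in $\Gamma_{\omega}$.

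The second step is the combinatorial bookkeeping. A closed path $\gamma=\sum_{i=1}^{l}\alpha_{i}e_{i}$ in $\dualgraph(\X)$ visits a sequence of vertices $v_{0},v_{1},\dots,v_{l}=v_{0}$, where traversing $e_{i}$ in the direction recorded by $\alpha_{i}$ takes us from $v_{i-1}$ to $v_{i}$. Along each such traversal the ratio of the scalings of the endpoints is $\epsilon_{j_{i}}^{\,\alpha_{i}w(e_{i})}$ by the displayed relation of Lemma~\ref{lemme:relationScaleAuNoeud} (the sign $\alpha_{i}\in\{0,\pm1\}$ accounting for whether we traverse the edge with or against its orientation, the case $\alpha_i=0$ of an unoriented simple-pole edge giving a trivial contribution since then $w(e_i)=0$). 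Multiplying these ratios around the closed loop, the scalings of the intermediate vertices cancel telescopically and we are left with $\prod_{i=1}^{l}\epsilon_{j_{i}}^{\,\alpha_{i}w(e_{i})}=\alpha_{v_{0}}/\alpha_{v_{0}}=1$, which is precisely \eqref{equation:paramCylindrique}. One should note that it does not matter that a single scaling $\alpha_{v}$ is only well defined up to a unit factor of the form $1+t\CC[t]$: such a factor is the same at both ends of the telescoping product and cancels as well.

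The only genuine subtlety — the step I expect to require the most care — is the compatibility of the local coordinate systems at the different nodes with a \emph{single} family over a \emph{single} disc. A priori the exponents $a_{j}$ arise from putting the total space $\bar\famcurv$ in local normal form node by node, and one must check that after a base change $t\mapsto t^{N}$ making the total space regular at every node at once (the standard semistable reduction), all the local equations can be taken to be $x_{j}y_{j}=t^{a_{j}}$ with the \emph{same} $t$; this is exactly the setup already used in Lemma~\ref{lemme:CondCompPlomberieCyl} and Lemma~\ref{lemme:relationScaleAuNoeud}, so I would simply invoke it, remarking that replacing $t$ by $t^{N}$ only replaces each $\epsilon_{j}$ by $\epsilon_{j}^{N}$ and hence does not affect the validity of \eqref{equation:paramCylindrique}. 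A second minor point is well-definedness of the edge orientations when $\ord_{N}(\omega)=-1$ on both sides: then $w(e)=0$, the factor $\epsilon_{j}^{\,\alpha_i\cdot 0}=1$ regardless of the (unoriented) choice of $\alpha_i$, so consistency is automatic. Assembling these observations gives the lemma.
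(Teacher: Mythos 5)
Your proposal is correct and follows essentially the same route as the paper: both extract $\epsilon_{j}=t^{a_{j}}$ from the local equations $x_{j}y_{j}=t^{a_{j}}$ of the family at the nodes and combine Lemma~\ref{lemme:relationScaleAuNoeud} with the uniqueness of the scalings around a closed path, your telescoping product being just a repackaging of the paper's iterated rescaling along $\gamma$. The side remarks (single parameter $t$, unoriented edges having weight $0$) are accurate and consistent with the paper's setup.
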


\begin{proof}
Let $(\famcurv,\famomega,\seczero_{1},\cdots,\seczero_{n})$ be a family converging to the limit differential
$(\X,\omega,Z_{1},\cdots,Z_{n})$. Let $\gamma=\sum_{i=1}^{l}\alpha_{i}e_{i}$ be a closed path in the dual
graph
$\Gamma_{\omega}$ of the limit
differential $(\X,\omega)$, starting at
the vertex $v_{1}$  and ending at the
vertex $v_{l+1}=v_{1}$. We denote the node corresponding to $e_{i}$ by $N_{i}$. We suppose that the local
equation of $\bar\famcurv$ around $N_{i}$ is given by $xy=t^{a_{N_{i}}}$.
We denote by $\omega_{V_{j}}$ the restriction of $\omega$ to the irreducible component $\X_{V_{j}}$ of $\X$
corresponding to~$V_{j}$. We can suppose (maybe after rescaling) that the family of differentials $\famomega$
converges to $\omega_{V_{1}}$ on $\X_{V_{1}}$. It follows from Lemma~\ref{lemme:relationScaleAuNoeud} that the
 scaling of $\X_{V_{2}}$ for $\famomega$ is $ \left(t^{a_{N_{1}}}\right)^{\alpha_{1}w(e_{1})}$.
Therefore the family of
differentials 
\[  \left(t^{a_{N_{1}}}\right)^{\alpha_{1}w(e_{1})}\famomega   \]
converges to $\omega_{V_{2}}$. Looking at the node $N_{2}$, the family
\[  \left(t^{a_{N_{2}}}\right)^{\alpha_{2}w(e_{2})} \left(t^{a_{N_{1}}}\right)^{\alpha_{1}w(e_{1})}\famomega  
\]
converges to $\omega_{V_{3}}$.
 We iterate this process until $i=l$ and we obtain that the family of differentials 
\begin{equation}
 \prod_{i=1}^{l}(t^{a_{N_{i}}})^{\alpha_{i}w(e_{i})}\famomega
\end{equation}
converges to $\omega_{V_{1}}$. By uniqueness of the scaling for a given irreducible component, the following
equation is satisfied
\begin{equation}
 \prod_{i=1}^{l}(t^{a_{N_{i}}})^{\alpha_{i}w(e_{i})}=1.
\end{equation}
In particular, the tuple $(t^{a_{N_{1}}},\cdots,t^{a_{N_{r}}})\in(\Delta^{\ast})^{\nodeSet}$ satisfies
Equation~\eqref{equation:paramCylindrique} for every closed path $\gamma$ in the dual graph of $(\X,\omega)$.
\end{proof}

\paragraph{Plumbing Cylinder Construction.}

We develop the theory of plumbing cylinders in two steps. First, we introduce the {\em plumbing cylinder
construction} at a single node. Second, we use it to smooth
some limit differentials which will be called {\em plumbable differentials}.

Before extending the plumbing cylinder construction  to the case of differentials, let us recall this
classical result known since (at least) Klein. For a simple proof of the
polar case, which extends to the
holomorphic case, see \cite[Encadr\'e~III.2]{MR2768303}.
\begin{lemma}\label{lemme:formeLocalDesFormesDiff}
 Let $\omega$ be a differential on a Riemann surface $\X$ and $Q\in\X$.  Let~$k$ be the order
and $a_{-1}$ be the residue of $\omega$ at $Q$.

There exists an open neighbourhood $U$ of $Q$ and a coordinate $z$ on $U$ such that $z(Q)=0$ and:
\begin{itemize}
 \item[If $k\leq-2$,] the differential $\omega|_{U}$ is given by the equation $
\left(z^{k}+\frac{a_{-1}}{z}\right)\dz$.

\item[If $k=-1$,]the differential $\omega|_{U}$ is given by the equation
 $\frac{a_{-1}}{z}\dz$.

\item[If $k\geq0$,]the differential $\omega|_{U}$ is given by the equation
$z^{k}\dz$.

\end{itemize}
%Moreover, the coordinate $z$ is unique if $k\geq -1$ and depends on a unique parameter if $k\leq -2$.
These equations are called the {\em local normal form} of $\omega$ at $Q$.
\end{lemma}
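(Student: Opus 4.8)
The plan is to treat each of the three cases by finding a holomorphic coordinate change that kills all the "unwanted" terms in the Laurent expansion of $\omega$ at $Q$, leaving only the leading monomial (plus, in the polar case, the residue term which is a genuine analytic invariant). First I would fix an arbitrary holomorphic coordinate $w$ on a small disc around $Q$ with $w(Q)=0$, and write $\omega = f(w)\,\dw$ where $f$ is meromorphic at $0$ with a pole of order exactly $-k$ when $k\le -1$ and a zero of order exactly $k$ when $k\ge 0$. The target in each case is to produce a new coordinate $z=z(w)$, $z(Q)=0$, $z'(0)\neq 0$, such that $\omega$ takes the stated form in $z$.

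For the holomorphic case $k\ge 0$: write $f(w)=w^{k}g(w)$ with $g(0)\neq 0$. Since $g$ is holomorphic and nonvanishing near $0$, it has a holomorphic $(k+1)$-st root $h$, i.e. $h^{k+1}=g$, $h(0)\neq 0$. Set $z$ to be a primitive of $w^{k}h(w)^{k+1}$... more cleanly, look for $z=w\,u(w)$ with $u(0)\neq 0$; then $z^{k}\,\dz = w^{k}u^{k}\cdot(u+wu')\,\dw$, and one solves the resulting first-order ODE / functional equation for $u$ near $0$ by the inverse function theorem, using $g(0)\neq 0$ to guarantee a nonvanishing holomorphic solution. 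This is the standard argument and I would present it concisely. For $k=-1$: write $f(w) = a_{-1}/w + (\text{holomorphic})$; the residue $a_{-1}$ is coordinate-independent, and one checks that a coordinate change $z = w\,u(w)$ with $u(0)\neq0$ transforms $\frac{a_{-1}}{w}\dw$ to $\frac{a_{-1}}{z}\dz$ plus the exact differential $a_{-1}\,d\log u$, so it suffices to choose $u$ so that the holomorphic remainder is absorbed; equivalently, $\omega - \frac{a_{-1}}{w}\dw$ is holomorphic hence equals $dG$ locally, and one uses $z = w\exp(G/a_{-1})$ (when $a_{-1}\neq 0$) or integrates directly (when $a_{-1}=0$, $\omega$ is holomorphic and falls under a degenerate instance, but $k=-1$ forces $a_{-1}\neq0$ anyway since the order is exactly $-1$). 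For $k\le -2$: subtract off the residue part, $\omega = \left(\frac{a_{-1}}{w}+\psi(w)\right)\dw$ where $\psi$ has a pole of order $-k$ at $0$; the point is that the meromorphic differential $\psi(w)\,\dw$, having no residue, is locally exact, say $\psi(w)\,\dw = dG$ with $G$ meromorphic with a pole of order $-k-1$; then $G$ itself, after a $(k+1)$-st-root type normalization of its leading term, can be used as (a constant multiple of a negative power of) the new coordinate, arranging $dG = z^{k}\dz$, while the residue term transforms as in the $k=-1$ case and adds the $\frac{a_{-1}}{z}\dz$ summand. I would cite \cite[Encadr\'e~III.2]{MR2768303} for the polar case as the excerpt already does, and remark that the holomorphic case is formally identical.

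The main obstacle is bookkeeping the interaction between the residue term and the leading-order term in the cases $k\le -2$: one must verify that the single coordinate change that normalizes $dG$ to $z^{k}\dz$ simultaneously sends $\frac{a_{-1}}{w}\dw$ to exactly $\frac{a_{-1}}{z}\dz$ with no cross-terms, i.e. that there is no obstruction to killing the intermediate-degree Laurent coefficients. The cleanest way around this is the exactness argument: since $\omega - (\text{residue part})$ is exact, write the whole $\omega$ as $d\big(\tfrac{1}{k+1}z^{k+1}\big) + a_{-1}\,d\log z$ for a suitable meromorphic function $z$ with a simple zero, which is the same as saying $z^{-(k+1)}$ differs from a primitive of $\omega/a_{-1}$-type quantity by an invertible factor; the inverse function theorem then produces $z$. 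I would therefore organize the proof around "integrate $\omega$ (minus its residue) and take an appropriate root of the resulting local function," reducing all three cases to a single mechanism and keeping the verification routine.
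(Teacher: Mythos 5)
The paper itself offers no proof of this lemma: it is stated as a classical fact (``known since (at least) Klein'') with a pointer to \cite[Encadr\'e~III.2]{MR2768303} for the polar case, so there is no internal argument to compare yours against. Your plan reconstructs the standard proof and is correct in outline: for $k\geq 0$, integrate and extract a $(k+1)$-st root of the primitive (your ODE for $u$ with $z=wu$ is exactly equivalent to that); for $k=-1$, write $\omega-\frac{a_{-1}}{w}\dw=dG$ with $G$ holomorphic and take $z=w\exp(G/a_{-1})$, which indeed gives $\frac{a_{-1}}{z}\dz=\omega$.

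One caution on the case $k\leq-2$: the intermediate description --- choose $z$ so that $dG=z^{k}\dz$ and then assert that the residue term ``transforms as in the $k=-1$ case'' --- is not literally correct, since a single coordinate change cannot in general achieve $dG=z^{k}\dz$ and $\frac{a_{-1}}{w}\dw=\frac{a_{-1}}{z}\dz$ simultaneously; substituting leaves a holomorphic cross term $a_{-1}\,d\log(z/w)$. You flag exactly this obstacle, and the one-shot mechanism you propose instead is the right one, but it deserves to be made explicit: seek $z=wu(w)$ with $u(0)\neq 0$ such that $\omega=d\bigl(\tfrac{z^{k+1}}{k+1}\bigr)+a_{-1}\tfrac{\dz}{z}$. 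Writing $\omega=\frac{a_{-1}}{w}\dw+dG$ with $G$ meromorphic of polar order $m:=-(k+1)\geq 1$ and leading coefficient $c\neq 0$, and using $\frac{\dz}{z}=\frac{\dw}{w}+\frac{du}{u}$, this reduces to the scalar equation $\tfrac{(wu)^{k+1}}{k+1}+a_{-1}\log u=G(w)$ (up to an additive constant). Multiplying by $w^{m}$ turns it into $\tfrac{u^{-m}}{-m}+a_{-1}w^{m}\log u=w^{m}G(w)$, which at $w=0$ reads $u_{0}^{-m}=-mc$ and has nonvanishing $u$-derivative $u_{0}^{-m-1}$ there; the implicit function theorem then yields a holomorphic $u$ with $u(0)=u_{0}\neq 0$, so $z=wu$ is a genuine coordinate realizing the normal form. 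With that computation spelled out (and a fixed branch of $\log$ near $u_{0}$), your plan is a complete and correct proof of the statement the paper leaves to the literature.
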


We can now describe the Plumbing cylinder construction.

\begin{lemma}[Plumbing cylinder construction.]\label{lemme:PlomberieCylindriqueOk}
Let $V:=\left\{z\in\CC: |z|<1\right\}$ and $W:=\left\{w\in\CC: |w|<1\right\}$ be two discs in $\CC$
and $U=V\cup W$ identified at their origins.

 Let $(a,b,k)\in\CC^{2}\times \ZZ$ be a triple of the form $(0,0,-1)$ or $(1,-1,k)$ for $k\neq -1$ and let
$a_{-1}$ be a complex
number. We define the differential $\omega$  on $U\setminus 0$  by 
$$\omega|_{V}=az^{k}\dz+\frac{a_{-1}}{z}\dz \text{, and }
\omega|_{W}=\frac{b}{w^{(k+2)}}\dw-\frac{a_{-1}}{w}\dw.$$ 
Then there exists a differential form $\eta$ on the cylinder of parameter $\epsilon$
\begin{equation}\label{equation:cylindreDeParamEpsilon}
 A_{\epsilon}:=\left\{(x,y)\in\CC^{2}: xy=\epsilon,|x|<1,|y|<1\right\},
\end{equation}
 and a  biholomorphism  
\begin{equation} 
\varphi:U\setminus B(0,\sqrt{|\epsilon|})\to A_{\epsilon}\setminus\left\{(x,y)\in
A_{\epsilon}: |x|=|y|\right\},
\end{equation}
 satisfying the following two properties.
\begin{itemize}
\item[i)] The pair $(A_{\epsilon},\eta)$ is a flat cylinder (i.e., $\eta$ has no zeros or poles in
$A_{\epsilon}$).
\item[ii)] The restrictions of the pull back of $\eta$ by $\varphi$ are  
\begin{equation}
 \varphi^{\ast}(\eta)|_{V\setminus  B(0,\sqrt{|\epsilon|})}=az^{k}\dz+\epsilon^{k+1}\frac{a_{-1}}{z}\dz
\end{equation}
and 
\begin{equation}
 \varphi^{\ast}(\eta)|_{W\setminus  B(0,\sqrt{|\epsilon|})}=\epsilon^{k+1}\omega|_{W\setminus 
B(0,\sqrt{|\epsilon|})}.
\end{equation}
\end{itemize}
\end{lemma}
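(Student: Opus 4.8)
The plan is to reduce everything to an explicit computation on model cylinders, using the local normal forms provided by Lemma~\ref{lemme:formeLocalDesFormesDiff} (which we have already put $\omega$ into: on $V$ it is $az^{k}\dz+\frac{a_{-1}}{z}\dz$, and on $W$ it is $\frac{b}{w^{k+2}}\dw-\frac{a_{-1}}{w}\dw$, with $(a,b,k)\in\{(0,0,-1),(1,-1,k)\}$). The key point is that on each of the two punctured discs the differential $\omega$ is, up to the residue term, the pull-back of a \emph{translation-invariant} differential on a cylinder: in the coordinate $u$ with $z=u$ on $V$ and $w=\epsilon/u$ on $W$, one checks that $z^{k}\dz$ and $\frac{b}{w^{k+2}}\dw$ glue, after scaling $W$ by $\epsilon^{k+1}$, to a single nowhere-vanishing differential on the overlap, and the residue pieces $\frac{a_{-1}}{z}\dz$ and $-\frac{a_{-1}}{w}\dw$ also glue (since $\frac{\dz}{z}=\frac{\dd u}{u}$ and $-\frac{\dw}{w}=\frac{\dd u}{u}$ on the overlap). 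So first I would make this gluing precise.

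Concretely: define $A_{\epsilon}$ as in \eqref{equation:cylindreDeParamEpsilon} and set $\eta$ on $A_\epsilon$ by declaring it to be, in the chart $|x|>\sqrt{|\epsilon|}$ (where $x$ is a coordinate and $y=\epsilon/x$), the differential $a x^{k}\dx+\epsilon^{k+1}\frac{a_{-1}}{x}\dx$, and in the chart $|y|>\sqrt{|\epsilon|}$ the differential $\epsilon^{k+1}\bigl(\frac{b}{y^{k+2}}\dd y-\frac{a_{-1}}{y}\dd y\bigr)$. The substitution $y=\epsilon/x$ turns $\epsilon^{k+1}\frac{b}{y^{k+2}}\dd y$ into $\epsilon^{k+1}\cdot\frac{b x^{k+2}}{\epsilon^{k+2}}\cdot(-\epsilon x^{-2})\dx=-b\,x^{k}\dx$, and since $b=-a$ (in both allowed cases: $(a,b)=(0,0)$ or $(1,-1)$), this equals $a x^{k}\dx$; similarly $\epsilon^{k+1}(-\frac{a_{-1}}{y}\dd y)=\epsilon^{k+1}\frac{a_{-1}}{x}\dx$. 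Hence the two local expressions agree on the overlap $\sqrt{|\epsilon|}<|x|<1$, $\sqrt{|\epsilon|}<|y|<1$, so $\eta$ is a well-defined holomorphic differential on $A_{\epsilon}$; and in either chart it is $x^{k}$ (resp.\ $y^{k}$) times an invertible function plus a lower-order term that, when $k\neq -1$, does not introduce a zero away from $0$ — but $0\notin A_\epsilon$, so in fact one should just observe that for $|x|>\sqrt{|\epsilon|}$ the expression $a x^{k}+\epsilon^{k+1}a_{-1}x^{-1}$ is nonvanishing when $|\epsilon|$ is small (when $a=0$, $k=-1$, it is simply $\frac{a_{-1}}{x}$, nonvanishing). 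This gives property~i).

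Next I would exhibit the biholomorphism $\varphi$. On $V\setminus B(0,\sqrt{|\epsilon|})=\{ \sqrt{|\epsilon|}<|z|<1\}$ set $\varphi(z)=(z,\epsilon/z)\in A_\epsilon$, which lands in the region $|x|>|y|$; on $W\setminus B(0,\sqrt{|\epsilon|})$ set $\varphi(w)=(\epsilon/w, w)$, landing in $|x|<|y|$. These two maps together give a biholomorphism from $U\setminus B(0,\sqrt{|\epsilon|})$ onto $A_{\epsilon}\setminus\{|x|=|y|\}$, and pulling back $\eta$ along $\varphi$ recovers exactly the displayed formulas of ii): on $V$ one gets $a z^{k}\dz+\epsilon^{k+1}\frac{a_{-1}}{z}\dz$, and on $W$, since $\varphi$ there is $w\mapsto(\epsilon/w,w)$ and $\eta$ in the $|y|>\sqrt{|\epsilon|}$ chart is $\epsilon^{k+1}\omega|_{W}$, we get precisely $\epsilon^{k+1}\omega|_{W\setminus B(0,\sqrt{|\epsilon|})}$.

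The only genuinely delicate point — the part I expect to be the main obstacle — is the nonvanishing claim in property~i) in the residue case $(a,b,k)=(1,-1,k)$ with $a_{-1}\neq 0$: one has to check that $x^{k}+\epsilon^{k+1}a_{-1}x^{-1}$, i.e.\ $x^{-1}(x^{k+1}+\epsilon^{k+1}a_{-1})$, has no zero in the annulus $\sqrt{|\epsilon|}<|x|<1$. A zero would require $|x|^{k+1}=|\epsilon|^{k+1}|a_{-1}|$, i.e.\ $|x|=|\epsilon|\,|a_{-1}|^{1/(k+1)}$ if $k+1>0$, which is $<\sqrt{|\epsilon|}$ once $|\epsilon|$ is small enough, so no such $x$ lies in the annulus; for $k+1<0$ one solves $|x|=|\epsilon|\,|a_{-1}|^{1/(k+1)}$ and checks it exceeds $1$ for small $|\epsilon|$ — either way, shrinking $\epsilon$ handles it. (When $a_{-1}=0$ there is nothing to check beyond $x\neq0$.) Once this elementary estimate is in place, properties i) and ii) both hold, completing the construction.
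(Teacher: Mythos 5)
Your construction is essentially the paper's own proof: you use the same maps $\varphi_{V,\epsilon}(z)=(z,\epsilon/z)$, $\varphi_{W,\epsilon}(w)=(\epsilon/w,w)$, and the same differential $\eta$ (the paper presents it as the restriction to $A_{\epsilon}$ of $\frac{x^{k+1}-\epsilon^{k+1}a_{-1}}{x^{2}+y^{2}}(x\dx-y\dy)$, you present it chartwise), and your verification of property ii) is the same substitution computation. Two points, however, deserve attention. First, a small slip of phrasing: on $xy=\epsilon$ the regions $|x|>\sqrt{|\epsilon|}$ and $|y|>\sqrt{|\epsilon|}$ are \emph{disjoint} (their closures meet only along the circle $|x|=|y|=\sqrt{|\epsilon|}$), so there is no ``overlap'' on which the two expressions could be compared. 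The computation you do is still exactly what is needed: it shows that the $y$-chart expression, rewritten in the coordinate $x$ via $y=\epsilon/x$, coincides with the single rational expression $a x^{k}\dx+\epsilon^{k+1}a_{-1}x^{-1}\dx$, which is defined on all of $A_{\epsilon}$ since $x$ is a global coordinate there ($x\neq 0$); phrase it that way and the gluing is complete, including along $|x|=|y|$.

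Second, and more substantively, your treatment of the point you yourself flag as delicate (nonvanishing in i)) only covers half of the cylinder. In the coordinate $x$, $A_{\epsilon}$ is the annulus $|\epsilon|<|x|<1$, while you only exclude zeros from $\sqrt{|\epsilon|}<|x|<1$. On the other half, where $\varphi^{\ast}\eta=\epsilon^{k+1}\omega|_{W}$, the zeros of $x^{k+1}+\epsilon^{k+1}a_{-1}$ correspond to the zeros of $\omega|_{W}$ itself, located at $|w|=|a_{-1}|^{-1/(k+1)}$ (for $k+1>0$; similarly for $k\leq-2$ on the $V$ side), a position \emph{independent of $\epsilon$}; so ``shrinking $\epsilon$ handles it'' is false there. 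Indeed, since ii) forces $\eta$ to reproduce $\omega|_{W}$ up to the scalar $\epsilon^{k+1}$, property i) can only hold when $\omega$ has no zeros on $U\setminus 0$ in the annuli being plumbed — which is the situation in which the lemma is actually applied (the normal form is taken on a neighbourhood of an isolated zero or pole), and which the paper's proof also assumes tacitly when it asserts that $\eta$ does not vanish. You should either state this hypothesis (or shrink the discs $V,W$ accordingly) or observe that the zeros of $\eta$ in $A_{\epsilon}$ are exactly the images of the zeros of $\omega$ in $U\setminus B(0,\sqrt{|\epsilon|})$; what cannot be done is to dispose of them by taking $\epsilon$ small.
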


\begin{proof}
First, we prove the result in the cases $k=-1$ and $a_{-1}=0$.

Let $\epsilon\in\Delta^{\ast}$ and define the following spaces:
\begin{equation*}
A_{\epsilon}=\left\{(x,y)\in\CC^{2}; |x|<1, |y|<1, xy=\epsilon \right\},
\end{equation*}
\begin{equation*}
A_{\epsilon}'=A_{\epsilon}\setminus \left\{(x,y)\in A_{\epsilon}; |x|=|y|\right\},
\end{equation*}
and
\begin{eqnarray*}
B_{\epsilon}' &=& B_{V,\epsilon}'\cup B_{W,\epsilon}'\\
 &=& \left\{ z\in V; |z|>\sqrt{|\epsilon|} \right\} \cup \left\{ w\in W; |w|>\sqrt{|\epsilon|} \right\}.
\end{eqnarray*}

The biholomorphism $\varphi$ is given by the two following restrictions (see
Figure~\ref{figure:tuillauterieCylindrique}):
\begin{eqnarray*}
\varphi_{V,\epsilon} &:&B_{V,\epsilon}' \to A_{\epsilon}',\quad  z\mapsto\left(z,\frac{\epsilon}{z}\right),\\
\varphi_{W,\epsilon} &:&B_{W,\epsilon}' \to A_{\epsilon}',\quad w\mapsto\left(\frac{\epsilon}{w},w\right).
\end{eqnarray*}

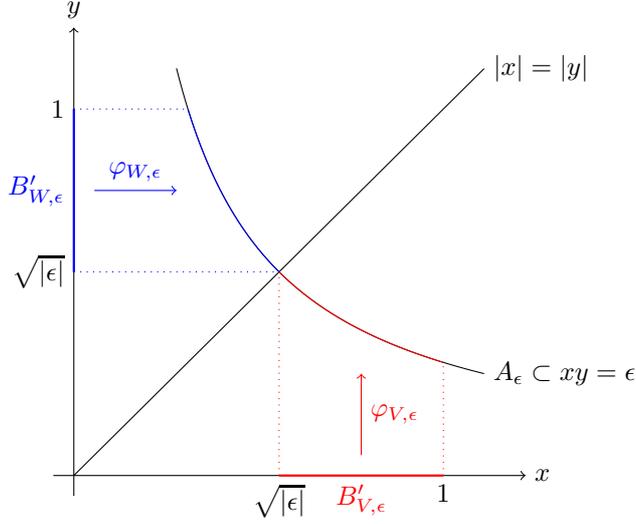
\begin{figure}[ht]
\shorthandoff{:}
\centering
\begin{tikzpicture}[samples=100,scale=2.7]
    \draw[->] (-.1,0)  -- (2.2,0) node[right] {$x$} ;
    \draw[->] (0,-.1) -- (0,2.2) node[above] {$y$} ;

    \draw[domain=.5:2] plot ({\x},{1/\x})     node[right] {$A_{\epsilon}\subset xy = \epsilon$};

    \draw[domain=0:2] plot ({\x},{\x}) node[right] {$|x|=|y|$};

    \draw[red,thick] (1,0) -- (1.8,0);
    \draw[blue,thick] (0,1) -- (0,1.8) ;
      
    \draw[red,domain=1:1.8] plot ({\x},{1/\x});
    \draw[blue,domain=5/9:1] plot ({\x},{1/\x});
      
    \draw[red,dotted] (1,0) -- (1,1);
    \draw[red,dotted] (1.8,0) -- (1.8,5/9);
       
    \draw[blue,dotted] (0,1) -- (1,1);
    \draw[blue,dotted] (0,1.8) -- (5/9,1.8);
       
    \node[below,red] at (1.4,0) {$ B_{V,\epsilon}'$};
    \node[left,blue] at (0,1.4) {$ B_{W,\epsilon}'$};

    \draw[->,red] (1.4,.1) -- (1.4,.5); \node[right,red] at (1.4,.3) {$\varphi_{V,\epsilon}$} ;
    \draw[->,blue] (.1,1.4) -- (.5,1.4); \node[above,blue] at (.3,1.4) {$\varphi_{W,\epsilon}$} ;
    
    \node[below] at (1,0) {$\sqrt{|\epsilon|}$};
    \node[below] at (1.8,0) {$1$};
	\node[left] at (0,1) {$\sqrt{|\epsilon|}$};
    \node[left] at (0,1.8) {$1$};
\end{tikzpicture}
\caption{The maps $\varphi_{V,\epsilon}$ and  $\varphi_{W,\epsilon}$.}
\label{figure:tuillauterieCylindrique}
\end{figure}

Let us now define the differential form $\eta$ on $A_{\epsilon}$ to be the restriction of the
differential of $\CC^{2}$ of equation
\begin{equation}
\frac{x^{k+1}}{x^{2}+y^{2}}\left( x\dx - y\dy \right).
\end{equation}
It is clear that $\eta$ does not vanishes on $A_{\epsilon}$. Therefore $(A_{\epsilon},\eta)$ is a flat
cylinder. 

It remains to compute the pull backs of $\eta$ by  $\varphi_{V,\epsilon}$ and  $\varphi_{W,\epsilon}$.
It is easily verified that the push forward of $\partder{z}$ via $\varphi_{V,\epsilon}$ and $\partder{w}$ via
$\varphi_{W,\epsilon}$ are  respectively
$$\partder{x}-\frac{y}{x}\partder{y}, \text{ and }
-\frac{x}{y}\partder{x}+\partder{y}.$$
Hence the pull backs by $\varphi$ of $\eta$ on $B_{V,\epsilon}'$ and $B_{W,\epsilon}'$ are:
\begin{eqnarray}
\varphi_{V,\epsilon}^{\ast}\left(\eta\right) &=& z^{k}\dz, \\
\varphi_{W,\epsilon}^{\ast}\left(\eta\right) &=& -\frac{\epsilon^{k+1}}{w^{k+2}}\dw.
\end{eqnarray}
In the case $k=-1$, it suffices to multiply this $\eta$ by $a_{-1}$ to obtain all the residues.

Now we prove the general result: let us suppose that $k\neq -1$ and $a_{-1}\neq 0$. The biholomorphism
$\varphi$ is of
course given by the same formula. One can easily verify that the differential $\eta$ is the restriction to
$A_{\epsilon}$ of the differential
\begin{equation}
\frac{x^{k+1}-\epsilon^{k+1}a_{-1}}{x^{2}+y^{2}}\left( x\dx - y\dy \right).
\end{equation}
\end{proof}

Let us now define the subset of the set of limit differentials which can be obtained by plumbing the nodes.
\begin{defn}\label{definiton:diffPlombable}
We say that a limit differential $(\X,\omega,Z_{1},\cdots,Z_{n})$ is {\em plumbable} if there exists a family
of pointed limit differentials 
$$\left(f::\famcurv\to\Delta^{\ast},
\famomega:\Delta^{\ast}\to\dualsheave[\famcurv/\Delta^{\ast}],\seczero_{1},\cdots,\seczero_{n}:\Delta^{\ast}
\to\famcurv \right),$$ 
satisfying the following conditions.
\begin{itemize}
\item The tuple $(\X,\omega,Z_{1},\cdots,Z_{n})$ is the limit differential of this family.
\item For every node $N_{i}$,  there exists a neighbourhood $\mathcal{U}_{i}$ of $N_{i}$ not containing any
other  node or marked point  $Z_{i}$ satisfying the following properties:

 the complement of the union of the $\mathcal{U}_{i}$ is  
$$\famcurv\setminus\bigcup_{i}\mathcal{U}_{i}=\left(\X\setminus\bigcup_{i} U_{i}\right) \times \Delta ,$$ 
where $U_{i}$ denotes the restriction of $\mathcal{U}_{i}$ on $\X$;

the sections $\seczero_{i}$ are given by $Z_{i}\times \Delta$; and

the differentials
$(\mathcal{U}_{i,t},\famomega(t)|_{\mathcal{U}_{i}})$ are given by the plumbing cylinder construction at
$N_{i}$ with a parameter $\epsilon_{i}(t)$.
\end{itemize}
The set of pointed plumbable differentials of type $(k_{1},\cdots,k_{n})$ modulo the action of
$\mathfrak{S}\subset\permGroup$ (see Section~\ref{section:CompactDesStrates})
is denoted by $\kbarmoduliplum(k_{1},\cdots,k_{n})$.
\end{defn}

We now prove that the conditions given in Lemma~\ref{lemme:CondCompPlomberieCyl} and
Lemma~\ref{lemme:ConditionPlomberieCheminsFermer} characterise limit differentials without poles of order
$\geq 2$ with a nonzero residue. Let us recall that for a curve $\X$, we denote by $\nodeSet$ the set of
nodes of $\X$. Moreover, let $e_{i}$ be an edge in the dual graph of $(\X,\omega)$ (see
Definition~\ref{definition:GrapheDualDiffLimites}), we denote by $w(e_{i})$ the weight of $e_{i}$ (which is
one greater than the order of the corresponding node).

\begin{theorem}\label{theoreme:PlomberieCylindriqueSansResidu}
Let $(\X,\omega,Z_{1},\cdots,Z_{n})$ be a candidate differential which has no residue at the poles of
order $k\geq 2$. 

 If $(\X,\omega,Z_{1},\cdots,Z_{n})$ satisfies the three  conditions,
\begin{itemize}
 \item[i)] The Compatibility
Condition $($Equation~\eqref{equation:conditionDeCompatibiliteGeneral}$)$ 
\[
 \ord_{N_{1}}(\omega)+ \ord_{N_{2}}(\omega)=-2,
\]
at every node $N_{1}\sim N_{2}$ of $\X$. 

 \item[ii)] The Residue Condition $($Equation~\eqref{equation:conditionDeCompatibiliteGeneral2}$)$
\[
 \Res_{N_{1}}(\omega)+ \Res_{N_{2}}(\omega)=0,
 \] 
at every node $N_{1}\sim N_{2}$ of $\X$. 

\item[iii)]There exists a tuple
$(\epsilon_{1},\cdots,\epsilon_{r})\in(\Delta^{\ast})^{\nodeSet}$ satisfying
Equation~\eqref{equation:paramCylindrique}, i.e.
\[
\prod_{i=1}^{l}\epsilon_{j_{i}}^{\alpha_{i}w(e_{i})}=1
\]
 for every closed path $\gamma:=\sum_{i=1}^{l}\alpha_{i}e_{i}$ in the
dual graph of $(\X,\omega)$.
\end{itemize}
 then $(\X,\omega,Z_{1},\cdots,Z_{n})$ is a plumbable differential.
\end{theorem}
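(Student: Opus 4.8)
The plan is to construct explicitly, for a candidate differential $(\X,\omega,Z_1,\cdots,Z_n)$ satisfying conditions (i)--(iii), a family over $\Delta^\ast$ realizing it as a limit differential, and moreover a family of the special ``plumbed'' shape demanded by Definition~\ref{definiton:diffPlombable}. First I would fix, using Lemma~\ref{lemme:formeLocalDesFormesDiff}, a local coordinate at each half-node $N_{i,1}, N_{i,2}$ in which $\omega$ has its normal form; by the Compatibility Condition~(i) and the Residue Condition~(ii), near each node $N_i$ the pair of local forms is exactly of the type $(az^k\dz + \tfrac{a_{-1}}{z}\dz,\ \tfrac{b}{w^{k+2}}\dw - \tfrac{a_{-1}}{w}\dw)$ with $(a,b,k)\in\{(0,0,-1),(1,-1,k)\}$ — after rescaling each component's differential by a constant $c_i\in\CC^\ast$, which is harmless by Corollary~\ref{corollaire:uniciteLimDiffSurTypeCompacte} — and moreover $a_{-1}=0$ whenever $k\geq 2$ by the hypothesis that there is no residue at higher-order poles. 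This is precisely the local input needed to invoke the plumbing cylinder construction of Lemma~\ref{lemme:PlomberieCylindriqueOk} at each node independently.

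Next I would assemble the global family. Let $r=\#\nodeSet$ and let $(\epsilon_1,\cdots,\epsilon_r)$ be a tuple satisfying Equation~\eqref{equation:paramCylindrique} for all closed paths; I would choose these $\epsilon_j$ to be monomials in a single parameter $t$, i.e.\ $\epsilon_j = t^{a_{N_j}}$ for suitable positive integers $a_{N_j}$ — the existence of such an integral solution to the multiplicative relations~\eqref{equation:paramCylindrique} is a combinatorial fact about the weighted dual graph that I would need to check (one can solve the linear system $\sum \alpha_i w(e_i) a_{N_i}=0$ over $\QQ$ for all cycles and clear denominators; the point is that such a solution always exists because the relations are homogeneous). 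Then over each small disc of radius less than one in the $t$-variable, I form $\famcurv$ by taking $(\X\setminus\bigcup_i U_i)\times\Delta$ and gluing in the cylinders $A_{\epsilon_i(t)}$ of Lemma~\ref{lemme:PlomberieCylindriqueOk} via the biholomorphisms $\varphi_i$; the sections $\seczero_i$ are $Z_i\times\Delta$, which makes sense since the $U_i$ avoid the marked points. On $(\X\setminus\bigcup U_i)\times\Delta$ I put the constant family $\famomega = \omega$, and on each cylinder I put the form $\eta_i$ produced by the lemma, scaled appropriately.

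The main obstacle is to check that these locally-defined differentials glue to a single coherent family $\famomega$ over $\famcurv$ — this is exactly where condition~(iii) is used. On the overlap between a cylinder $A_{\epsilon_i(t)}$ and the cylindrical piece $B'_{i}$ of $\X\setminus\bigcup U_j$, Lemma~\ref{lemme:PlomberieCylindriqueOk}(ii) tells us that $\varphi_i^\ast(\eta_i)$ agrees with $\omega$ on the ``$V$-side'' but is $\epsilon_i^{k_i+1}$ times $\omega$ on the ``$W$-side''. Going around the curve, the accumulated rescaling factor along a path $\gamma$ is $\prod \epsilon_{j_i}^{\alpha_i w(e_i)}$, so the patched family is globally well-defined on $\famcurv_t$ precisely when this product is $1$ for every closed $\gamma$ — which is condition~(iii). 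Granting this, by construction the special fibre of the family, after rescaling each component by the constant-valued scaling $\alpha_i = t^{0}$ up to the cumulative monomial factors dictated by Lemma~\ref{lemme:relationScaleAuNoeud}, recovers exactly $\omega|_{\X_i}$ on each component, so $(\X,\omega,Z_1,\cdots,Z_n)$ is the limit differential of the family; and since the family was built cylinder-by-cylinder with parameters $\epsilon_i(t)$ in disjoint neighbourhoods $\mathcal{U}_i$, it satisfies all the bullet points of Definition~\ref{definiton:diffPlombable}, hence $(\X,\omega,Z_1,\cdots,Z_n)$ is plumbable. I would also remark that one must verify the resulting total space $\famcurv$ is a legitimate (possibly after base change $t\mapsto t^m$) smoothing with the correct special fibre, but this is standard once the local plumbing models are in place.
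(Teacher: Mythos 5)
Your overall strategy is the same as the paper's: put $\omega$ in local normal form at each node, plumb each node with Lemma~\ref{lemme:PlomberieCylindriqueOk}, and use condition~(iii) to make the constants of proportionality accumulated along paths in the dual graph path-independent; this is exactly how the paper glues the local models into a global differential (there the constants are $a_{k}^{\gamma}=a_{1}\prod\epsilon_{j_{i}}^{\alpha_{i}w(e_{i})}$). The one genuinely different ingredient is your choice of parameters $\epsilon_{j}(t)=t^{a_{N_{j}}}$, and this is where the gap sits. You assert that a solution in positive integers of the homogeneous system $\sum_{i}\alpha_{i}w(e_{i})a_{N_{j_{i}}}=0$ (over all closed paths) ``always exists because the relations are homogeneous''. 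That is not a proof: homogeneity only guarantees the zero solution, and strictly positive solutions can genuinely fail to exist --- for the loop of weight $k+1\geq 1$ in Example~\ref{exemple:ObstructionPlomberie1} the only solution is $a_{N}=0$, and indeed that candidate is not plumbable. What rescues your construction is precisely hypothesis~(iii), which you did not invoke at this step: if $(\epsilon_{1},\cdots,\epsilon_{r})\in(\Delta^{\ast})^{\nodeSet}$ satisfies Equation~\eqref{equation:paramCylindrique}, then $x_{j}:=-\log|\epsilon_{j}|>0$ is a strictly positive real solution of the rational homogeneous system; since the solution subspace is defined over $\QQ$ and strict positivity is an open condition, it contains a strictly positive rational, hence (after clearing denominators) integral, vector. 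With that justification your monomial family is fine --- and in fact cleaner than the paper's, which keeps the given complex $\epsilon_{i}$, plumbs once, and only afterwards remarks that $(\epsilon_{1}^{1/t},\cdots,\epsilon_{r}^{1/t})$ again satisfies~\eqref{equation:paramCylindrique} so that varying $t$ produces the required family; your choice also has the virtue that the accumulated factors are pure powers of $t$, so the limit on each component is exactly $\omega|_{\X_{i}}$.

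Two smaller points. First, you cannot literally put the constant family $\omega$ on all of $(\X\setminus\bigcup_{i}U_{i})\times\Delta$: across a cylinder the plumbing lemma forces a jump by $\epsilon_{i}^{k_{i}+1}$, so the differential on the component reached through a node must be rescaled by the product accumulated along a path, and condition~(iii) is what makes these componentwise constants well defined; your discussion of the ``accumulated rescaling factor'' shows you see this, but the construction has to be stated that way, since this consistency of constants is the heart of the paper's proof. Second, the appeal to Corollary~\ref{corollaire:uniciteLimDiffSurTypeCompacte} to declare the componentwise rescaling by constants $c_{i}$ ``harmless'' is misplaced: that corollary is a uniqueness statement for limit differentials on a marked curve and does not say that plumbability of a given candidate differential is invariant under rescaling its components. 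You do not need it: Lemma~\ref{lemme:formeLocalDesFormesDiff} already produces unit leading coefficients, the Residue Condition matches the residues at simple poles, and the remaining nonzero constant on the polar side of a node with $k\neq-1$ can be absorbed into the local coordinate (or one keeps general coefficients $a_{i},b_{i}$ with $a_{i}=-b_{i}$ when $k_{i}=-1$, as the paper does, multiplying $\eta$ by a constant).
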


\begin{proof}
Let $(\X,\omega,Z_{1},\cdots,Z_{n})$ be a candidate differential which has no residue at the poles of order
$k\geq 0$ and
let  $N_{1},\cdots,N_{m}\in\nodeSet$ be the
nodes of $\X$.

It is easily verified that if the parameters $(\epsilon_{1},\cdots,\epsilon_{r})$ satisfy
Equation~\eqref{equation:paramCylindrique} for any closed path, then the same holds for
$(\epsilon_{1}^{1/t},\cdots,\epsilon_{r}^{1/t})$  for any $t\in\Delta^{\ast}$.
Hence it suffices to show that $(\X,\omega,Z_{1},\cdots,Z_{n})$ can be plumbed using the parameters
$(\epsilon_{1},\cdots,\epsilon_{r})$ of Theorem~\ref{theoreme:PlomberieCylindriqueSansResidu}.

According to \cite[page~184]{MR2807457}, there exist neighbourhoods $U_{i}$ of $N_{i}$ which contains neither
any  other  node nor any point  $Z_{i}$. They may be chosen as the unions of the discs
$V_{i}=\left\{z_{i}\in\CC;|z_{i}|<1\right\}$ and
$W_{i}=\left\{w_{i}\in\CC;|w_{i}|<1\right\}$, identified at their origins. Moreover, since the
compatibility
condition and the residue condition are satisfied, we can suppose that the
respective
restrictions
of $\omega$ to $V_{i}$ and $W_{i}$ are of the form $a_{i}z_{i}^{k_{i}}\dz_{i}$ and
$b_{i}w_{i}^{-(k_{i}+2)}\dw_{i}$, where $a_{i}$ and $b_{i}$ are not zero and $a_{i}=-b_{i}$ if $k_{i}=-1$.

 We define for each node $N_{i}$ the following spaces:
\begin{equation*}
A_{i}=\left\{(x_{i},y_{i})\in\CC^{2}; |x_{i}|<1, |y_{i}|<1, x_{i}y_{i}=\epsilon_{i} \right\},
\end{equation*}
\begin{equation*}
A_{i}'=A_{i}\setminus \left\{(x_{i},y_{i})\in A_{i}; |x_{i}|=|y_{i}|\right\},
\end{equation*}
and
\begin{equation*}
B=\left(\X\setminus \underset{i}{\bigcup} U_{i}\right) \bigcup \left(\underset{i}{\bigcup}
B_{i}'\right),
\end{equation*}
where
\begin{eqnarray*}
B_{i}' &=& B_{V_{i}}'\cup B_{W_{i}}'\\
 &=& \left\{ z_{i}\in V_{i}; |z_{i}|>\sqrt{|\epsilon_{i}|} \right\} \cup \left\{ w_{i}\in W_{i};
|w_{i}|>\sqrt{|\epsilon_{i}|} \right\}.
\end{eqnarray*}
Now the curve $\X'$ is the union of $B$ and $\underset{i}{\bigcup}A_{i}$ with the space
$\underset{i}{\bigcup} B'_{i}$ and
$\underset{i}{\bigcup}A'_{i}$ identified via the embeddings:
\begin{eqnarray*}
\varphi_{V_{i}} :B_{V_{i}}'\to A_{i},&
z_{i}\mapsto\left(z_{i},\frac{\epsilon_{i}}{z_{i}}\right)\\
\varphi_{W_{i}} :B_{V_{i}}'\to A_{i},&
w_{i}\mapsto\left(\frac{\epsilon_{i}}{w_{i}},w_{i}\right).
\end{eqnarray*}
We denote the image of $\varphi_{V_{i}}$ by $A_{i}^{V}$ and the image of
$\varphi_{W_{i}}$ by $A_{i}^{W}$. 

Let us remark that the connected components of $\X\setminus
\underset{i}{\bigcup} U_{i}$ and  $\X'\setminus\underset{i}{\bigcup}A_{i}$ are canonically biholomorphic. We 
denote these connected components by $\tilde\X_{j}$ and the corresponding irreducible components of $\X$ by
$\X_{j}$. Moreover, the set of cylinders $A_{i}^{V}$ and
$A_{i}^{W}$ which are at the boundary of $\tilde\X_{j}$ is denoted by $\mathcal{C}(\tilde\X_{j})$.

According to Lemma~\ref{lemme:PlomberieCylindriqueOk}, there exist  differentials
$\omega_{1}',\cdots,\omega_{r}'$ on the cylinders $A_{1},\cdots,A_{r}$ which are proportional to $\omega$ on
$A_{i}^{V}$ and $A_{i}^{W}$. More precisely, if $\omega_{i}'=\alpha_{i}\omega$ for an
$\alpha_{i}\in\CC^{\ast}$ on $A_{i}^{V}$, then $\omega_{i}'=\alpha_{i}\epsilon_{i}^{\pm (k_{i}+1)}\omega$
on $A_{i}^{W}$. The fact that the constants of proportionality are distinct is the key point in the rest of
the proof.

To complete the proof, it suffices to show that we can extend the differentials~$\omega_{i}'$ by a
differential $\omega'$ on $\X'$ which is proportional to $\omega$ on every
component~$\tilde\X_{i}$. Observe that such a differential exists if and only if for every
component~$\tilde\X_{j}$ there exists a common constant of
proportionality between $\omega_{i}'$ and $\omega$ for every cylinder $A_{i}$ in $\mathcal{C}(\tilde\X_{j})$.

Let us construct the constants of proportionality in the following way. Let~$\X_{1}$ be an irreducible
component of $\X$ and $a_{1}\in\CC^{\ast}$. We impose that on every
cylinder of $\mathcal{C}(\tilde\X_{1})$ the relation between $\omega$ and $\omega'_{i}$ is given by
$\omega_{i}'=a_{1}\omega$.

Let $\X_{k}$ be another irreducible component of $\X$. For every path
$$\gamma_{1,k}=\sum_{i=1}^{l_{k}}\alpha^{k}_{i}e^{k}_{i}$$  from $\X_{1}$ to $\X_{k}$ in the dual
graph of $(\X,\omega)$ we assign the following number 
\begin{equation}\label{equation:constanteCheminGraphe}
 a_{k}^{\gamma}:= a_{1}\prod_{i=1}^{l_{k}}\epsilon_{j_{i}}^{\alpha^{k}_{i}w(e^{k}_{i})},
\end{equation}
where $w(e^{k}_{i})$ one greater than the order of the node  corresponding to
$e_{i}^{k}$.

It suffices to prove that under the third condition of
Theorem~\ref{theoreme:PlomberieCylindriqueSansResidu} the $a^{\gamma}_{k}$ do not
depend on the choice of the path $\gamma$. Indeed, if this is the case there exists a differential $\omega'$
on
$\X'$ which coincides with $a_{k}\omega$ on $\tilde\X_{k}$.

Let $\gamma_{1}$ and $\gamma_{2}$ be two paths from $\X_{1}$ to $\X_{2}$ in the dual graph of $(\X,\omega)$.
Then the number associated  by Equation~\eqref{equation:constanteCheminGraphe} to the concatenation
$\gamma_{1}\circ\gamma_{2}^{-1}$ is 
$a_{k}^{\gamma_{1}}(a_{k}^{\gamma_{2}})^{-1}$. Hence it suffices to show that
$a_{k}^{\gamma_{1}}(a_{k}^{\gamma_{2}})^{-1}=a_{1}$ to conclude the proof. Let us denote the path
$\gamma_{1}\circ\gamma_{2}^{-1}$
by $\sum_{i=1}^{l}\alpha_{i}e_{i}$. Then by definition 
\begin{equation*}
 a_{1}^{\gamma}=a_{1}\prod_{i=1}^{l}\epsilon_{j_{i}}^{\alpha_{i}w(e_{i})}.
\end{equation*}
Since the parameters $\epsilon_{i}$ satisfy Equation~\eqref{equation:paramCylindrique}, this quantity is
precisely $a_{1}$.
\end{proof}

As an easy application of this theorem, we have the following remark.
\begin{rem}
Let $(\X,\omega)$ be a holomorphic differential with at least one zero $Z$ of order $k\geq2$.
Moreover,
let $(\PP^{1},0,1,\infty)$ be a rational curve with three marked points and define the differential
$\eta_{i}:=z^{i}(z-1)^{k-i}\dz$ on $\PP^{1}$. Attaching $\X$ to $\PP^{1}$ via the identification of $Z$ with
$\infty$ and using the plumbing cylinder construction of Lemma~\ref{lemme:PlomberieCylindriqueOk}, we obtain
the
construction of \cite{MR2010740} for breaking up a zero of a differential into a pair of zeros.

An advantage of this construction is that it can be easily generalised to the case of breaking up a zero into
more zeros. We use such a generalisation in the proof of
Theorem~\ref{theoreme:ExcesAuBordStratePrincipal}.
\end{rem}

As shown in Lemma~\ref{lemme:formeLocalDesFormesDiff}, there exist differentials which have a pole of order
$k\geq2$ and a nonzero residue. If our candidate differential has such local behaviour at a node, then the
conditions of Theorem~\ref{theoreme:PlomberieCylindriqueSansResidu} are not sufficient to be smoothable. 

\begin{ex}\label{exemple:ObstructionPlomberie2}
Let $(\X,\omega,Z)$ be a candidate differential of genus two such that
$\X:=\X_{1}\cup \PP^{1} \cup \X_{2}$, where $(\X_{1},\omega|_{\X_{1}})$ and $(\X_{2},\omega|_{\X_{2}})$ are
two
flat tori and the projective line has coordinate $z$ such that it is attached to $\X_{1}$ at $0$
and
to $\X_{2}$  at $\infty$. Finally, the restriction of $\omega$ to $\PP^{1}$ is
$\omega_{0}:=\frac{(z-1)^{2}}{z^{2}}\dz$.

The differential $(\X,\omega,Z)$ is not a limit differential. Otherwise, the differential~$\famomega(t)$ of
the
family $\left(f:\famcurv\to\Delta^{\ast},
\famomega:\Delta^{\ast}\to\dualsheave[\famcurv/\Delta^{\ast}],\seczero:\Delta^{\ast}\to\famcurv \right)$ would
have a
zero of order two at $\seczero(t)$. Therefore, the point $\seczero(t)$ would be a Weierstrass point
of~$\famcurv(t)$. Since the limiting position of the Weierstrass points are the $2$-torsion points of both
elliptic curves,  the curve $\famcurv(t)$ would have seven Weierstrass points 
 (see Theorem~\ref{theoreme:limitDesPtsDeWeiCasHyp}), a contradiction.
\end{ex}

We do not give a complet characterisation of the limit differentials having poles of order greater or equal to
$2$ with a nonzero
residue. However, the following lemma gives a necessary condition.

\begin{lemma}\label{lemme:noeudsPolaireAvecResiduFaible}
 Let $(\X,\omega,Z_{1},\cdots,Z_{n})$ be a limit differential. Let $N$ be a node between the irreducible
components $\X_{1}$ and $\X_{2}$ such that the local normal form at $N$ of $\omega|_{\X_{1}}$ is $z^{k}\dz$
and the one of  $\omega|_{\X_{2}}$ is $z^{-k-2}+\alpha z^{-1}\dz$. 
Then, there exists a differential $\eta$ on $\X_{1}$ with a pole of order $-1$ and residue $-\alpha$ at $N$
which
has no poles on the smooth locus of $\X_{1}$.
\end{lemma}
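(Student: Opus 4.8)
The plan is to turn the assertion into a residue count on $\X_{1}$ and then to single out the one configuration in which that count has any content. Write $\omega_{1}:=\omega|_{\X_{1}}$ and $\omega_{2}:=\omega|_{\X_{2}}$; by Lemma~\ref{lemme:formeLocalDesFormesDiff} the residue of $\omega_{2}$ at $N$ equals $\alpha$. Passing to the normalisation $\nu\colon\tilde{\X}_{1}\to\X_{1}$, let $\tilde{N}$ be the branch of $N$ on which $\nu^{\ast}\omega_{1}$ has the zero of order $k$. If $N$ is a self-node of $\X_{1}$, or if $\X_{1}$ is attached to $\X\setminus\X_{1}$ by some node other than $N$, then $\tilde{\X}_{1}$ carries a point $P\ne\tilde{N}$ lying over a node of $\X$, and there is a differential on $\tilde{\X}_{1}$, holomorphic away from $\{\tilde{N},P\}$, with at worst simple poles there and residues $-\alpha$ and $\alpha$; it descends to the required $\eta$ on $\X_{1}$ (the two residues at a self-node being opposite, as needed for descent), with poles only over nodes. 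Hence the only case that needs work is the one in which $\X_{1}$ is a \emph{leaf}: $N$ is the unique node of $\X$ meeting $\X_{1}$ and it joins $\X_{1}$ to a different component $\X_{2}$. In that case I claim $\alpha=0$, and then $\eta:=0$ (equivalently any holomorphic form, which exists since $k+\sum k_{i}=2g(\X_{1})-2\ge 0$) does the job.

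To prove $\alpha=0$ I would argue on a degenerating family, in the spirit of the proof of Lemma~\ref{lemme:ConditionPlomberieCheminsFermer}. Pick a family $(f\colon\famcurv\to\Delta^{\ast},\famomega,\seczero_{1},\dots,\seczero_{n})$ converging to the limit differential, with scalings $\alpha_{1},\alpha_{2}$ of $\X_{1},\X_{2}$, and set $\famomega_{i}:=\alpha_{i}\famomega$, so that $\famomega_{1}\to\omega_{1}$ on $\X_{1}$ and $\famomega_{2}\to\omega_{2}$ on $\X_{2}$. Fix a neighbourhood of $N$ with coordinates $(x,y)$ in which $\famcurv_{t}=\{xy=t^{a}\}$ for some $a\ge 1$, $\{y=0\}\subset\X_{1}$, $\{x=0\}\subset\X_{2}$, and which contains no $\seczero_{i}$. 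Since $\X_{1}$ is a leaf, the edge of $\dualgraph(\X)$ coming from $N$ is a bridge, so for small $t$ the smooth curve $\famcurv_{t}$ has a separating vanishing cycle $\sigma$ near $N$, bounding a compact subsurface $R_{t}$ that degenerates to $\X_{1}$ with a disc around $N$ removed. As $\famomega_{1}(t)$ is a holomorphic $1$-form on the smooth curve $\famcurv_{t}$ (strata of $\omoduli$ consist of holomorphic differentials), it is closed, so Stokes' theorem gives $\oint_{\sigma}\famomega_{1}(t)=\int_{R_{t}}d\famomega_{1}(t)=0$ for every $t\in\Delta^{\ast}$, whence also $\oint_{\sigma}\famomega_{2}(t)=(\alpha_{2}/\alpha_{1})(t)\oint_{\sigma}\famomega_{1}(t)=0$.

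Finally I would read off the same period from the $\X_{2}$-side. For $\sigma$ one may take the circle $\{|y|=\delta\}$ with $\delta$ small and fixed: all such circles are homologous inside the plumbing collar, on which $\famomega_{2}(t)$ is holomorphic, so the value $0$ just obtained is the period along this $\sigma$ as well. The circle $\{|y|=\delta\}$ is a fixed compact subset of the smooth locus of $\X_{2}$ avoiding $N$ and the other marked points, so $\famomega_{2}(t)\to\omega_{2}$ uniformly on it, and
\[
0=\lim_{t\to 0}\oint_{\{|y|=\delta\}}\famomega_{2}(t)=\oint_{\{|y|=\delta\}}\omega_{2}=2\pi i\,\Res_{N}(\omega_{2})=2\pi i\,\alpha ,
\]
giving $\alpha=0$. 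The genuinely delicate step is this last one: justifying that the period of $\famomega_{2}(t)$ around a loop collapsing onto $N$ converges to $2\pi i\,\Res_{N}(\omega_{2})$. This uses that $\famomega_{2}(t)$ is holomorphic on the collar (so the period does not depend on the radius of the loop there) together with the locally uniform convergence $\famomega_{2}(t)\to\omega_{2}$ on the smooth locus of $\X_{2}$, which is part of what it means to be a limit differential. The remaining ingredients — the Riemann–Roch construction of $\eta$ in the non-leaf case and the bookkeeping with the normalisation — are routine.
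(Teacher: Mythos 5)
Your argument is essentially correct, but it takes a genuinely different route from the paper's. The paper proves the lemma by constructing $\eta$ directly out of a degenerating family: near the node it writes $\famomega=g\cdot\frac{x\dx-y\dy}{x^{2}+y^{2}}$ on $xy=t^{a}$, subtracts the leading term $\frac{x^{d}}{(t^{a})^{d}}$, and shows that the resulting family converges on $\X_{1}$ to a meromorphic form with a simple pole of residue $-\alpha$ at $N$; comparing $t^{ad}\famomega=t^{ad}(\famomega_{1}+\famomega_{2})$ with the known limit $\omega|_{\X_{1}}$ then excludes poles on the smooth locus. That construction is uniform over all configurations and produces an $\eta$ adapted to the family, in the spirit in which such forms are later exploited in Lemma~\ref{lemme:PlomberieCylindriqueAvecResidu}. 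You instead note that the statement is automatic from classical differentials of the third kind whenever $\tilde\X_{1}$ carries a node-preimage other than the branch of $N$ (no use of the limit hypothesis at all), so that the entire content of the lemma sits in the leaf case, where you force $\alpha=0$ by the vanishing of periods of the holomorphic forms $\famomega(t)$ over the separating vanishing cycle. This is more elementary and isolates exactly the residue-theorem obstruction that the paper extracts in Corollary~\ref{corollaire:zeroJamaisSurLePontFaible}; what it does not give is the specific, family-adapted $\eta$ of the paper's proof.

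Three small repairs. First, your case split omits the configuration in which the only node of $\X$ meeting $\X_{1}$ besides $N$ is a self-node of $\X_{1}$: it is neither your case ``$N$ is a self-node'' nor ``$\X_{1}$ is attached to $\X\setminus\X_{1}$ by another node'', but the same third-kind construction applies verbatim since $\tilde\X_{1}$ again has a second node-preimage. Second, the locally uniform convergence $\famomega_{2}(t)\to\omega|_{\X_{2}}$ on the fixed circle, which you rightly flag, is not literally contained in Definition~\ref{definiton:diffLimite} (stated via sections); it can be justified from the local model $\famomega=h\cdot\frac{x\dx-y\dy}{x^{2}+y^{2}}$ used in the proof of Lemma~\ref{lemme:CondCompPlomberieCyl}, whose restriction to the fibre $xy=t^{a}$ is $-h(t^{a}/y,y)\,\dy/y$, so the period over $\{|y|=\delta\}$ is an explicit Laurent coefficient whose limit is $\pm 2\pi i\alpha$. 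Third, when $\alpha=0$ the phrase ``pole of order $-1$ with residue $-\alpha$'' must be read as ``at worst a simple pole with residue $-\alpha$'' (a literal simple pole with zero residue is impossible); with that reading your holomorphic (or zero) choice of $\eta$ in the leaf case is admissible, and the case $\alpha\neq 0$ with $\X_{1}$ a leaf is vacuous, as it must be.
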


\begin{proof}
%ce serait bien d'avoir meux!!!!!
Let $N$ be a node where the restriction of $\omega$ to one branch of the node has a pole of
order $d+1\geq 2$ with a  nonzero residue. Let $\left(\famcurv,
\famomega,\seczero_{1},\cdots,\seczero_{n}\right)$ be a family of pointed differentials which converges to
$(\X,\omega,Z_{1},\cdots,Z_{n})$.
Let~$U$ be a neighbourhood of $N$ in $\bar\famcurv$  given inside $\Delta^{3}$ by the equation
$$xy=t^{a},$$
with $a\geq 1$.
Let us suppose that the equation of the branch of $N$ with the pole is $\left\{ x=0 \right\}$ (denoted by
$\X_{y}$) and the restriction on it is
$$\omega_{y}=\frac{1}{y^{d}}(1+\alpha y^{d})\frac{-\dy}{y}$$ and that the family $\famomega$ converges to
$\omega_{y}$ on $\X_{y}$. We denote by $\eta$ the differential $\frac{x\dx-y\dy}{x^{2}+y^{2}}$ on $U$. Observe
that the family of
differentials $\famomega$ is given by 
\[\famomega=g\cdot\eta,\]
where $g$ is a meromorphic function with no poles or zeros in $U\setminus\X|_{U}$. The limit of
$(t^{a})^{d}\famomega$ to
$\left\{ y=0 \right\}$ (denoted by
$\X_{y}$) is $$\omega_{x}=x^{d}\frac{\dx}{x}.$$  The limit
of the family 
\[ \left(f-\frac{x^{d}}{(t^{a})^{d}}\right)\cdot\eta\]
on $\X_{x}$ is $\alpha\frac{-\dx}{x}$. This form can be prolonged on the whole component $\X_{1}$ containing
$\X_{x}$ to a
meromorphic form $\omega_{1}$. 

It remains to show that the poles of $\omega_{1}$ cannot be located in the smooth locus
of $\X_{1}$. Let us denote the family which prolongs respectively $
\left(g-\frac{x^{d}}{(t^{a})^{d}}\right)\eta$ and $\frac{x^{d}}{(t^{a})^{d}}\eta$ on $\famcurv$ by
$\famomega_{1}$ and $\famomega_{2}$. It follows from the fact that $\famomega_{1}$ converges to a meromorphic
differential on $\X_{1}$ and the equality
$$ t^{ad}\famomega=t^{ad}\left(\famomega_{1}+\famomega_{2}\right),$$
 that the limit of $t^{ad}\famomega_{2}$ on $\X_{1}$ is $\omega|_{\X_{1}}$. And it follows by
addition that the differential $\omega_{1}$ has no poles on the smooth locus of $\X_{1}$.
\end{proof}

An interesting application  is the fact that the zero of a differential
in the strata $\omoduli(2g-2)$ cannot
converge to the node of a compact curve with two components. 

\begin{cor}\label{corollaire:zeroJamaisSurLePontFaible}
Let $(\X,\omega,Z)\in\kbarmodulilim[g](2g-2)$ be a limit pointed differential with a single zero. Then
$(\X,Z)$ is not the union of two components attached by a pointed projective line.
\end{cor}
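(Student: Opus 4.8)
The plan is to assume such a configuration exists and extract a contradiction from the residue theorem on one of the two "outer" components. So suppose $\X=\X_{1}\cup\PP^{1}\cup\X_{2}$ is a chain of three smooth components, the central $\PP^{1}$ carrying the marked point $Z$ and meeting $\X_{1}$ at a node $N$ and $\X_{2}$ at a node $N'$; write $g_{i}$ for the genus of $\X_{i}$, so that $g_{1}+g_{2}=g$, and observe that stability of $\X_{i}$ (which carries the single special point $N_{i}$) forces $g_{i}\geq 1$. Since the only marked point is $Z\in\PP^{1}$, the restriction $\omega|_{\X_{i}}$ has neither zero nor pole on the smooth locus of $\X_{i}$, so its divisor is $(2g_{i}-2)N_{i}$; as $g_{i}\geq 1$ this differential is holomorphic with a zero of order $2g_{i}-2$ at $N_{i}$. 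The Compatibility Condition of Lemma~\ref{lemme:CondCompPlomberieCyl} then yields $\ord_{N}(\omega|_{\PP^{1}})=-2g_{1}$ and $\ord_{N'}(\omega|_{\PP^{1}})=-2g_{2}$, while $\omega|_{\PP^{1}}$ has a zero of order $2g-2$ at $Z$.

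Choosing a coordinate $z$ on $\PP^{1}$ with $N=\{z=0\}$, $N'=\{z=\infty\}$ and $Z=\{z=1\}$, these orders pin down
\[
\omega|_{\PP^{1}}=c\,\frac{(z-1)^{2g-2}}{z^{2g_{1}}}\,\dz,\qquad c\in\CC^{\ast}.
\]
The residue of $\omega|_{\PP^{1}}$ at $N$ equals, up to sign, $c\binom{2g-2}{2g_{1}-1}$; since $1\leq 2g_{1}-1\leq 2g-3$ this binomial coefficient is a nonzero integer, so $\omega|_{\PP^{1}}$ has at $N$ a pole of order $2g_{1}\geq 2$ with a nonzero residue $\alpha$. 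By Lemma~\ref{lemme:formeLocalDesFormesDiff}, the local normal forms at $N$ of $\omega|_{\X_{1}}$ and of $\omega|_{\PP^{1}}$ are $z^{k}\dz$ and $\bigl(z^{-k-2}+\alpha z^{-1}\bigr)\dz$ with $k=2g_{1}-2\geq 0$.

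I would then apply Lemma~\ref{lemme:noeudsPolaireAvecResiduFaible} at the node $N$, with its ``$\X_{1}$'' being $\X_{1}$ and its ``$\X_{2}$'' being $\PP^{1}$: the hypotheses are precisely the local normal forms just recorded, and the lemma produces a differential $\eta$ on $\X_{1}$ with a simple pole of residue $-\alpha\neq 0$ at $N$ and no poles on the smooth locus of $\X_{1}$. As $\X_{1}$ is smooth, $\eta$ is therefore a meromorphic differential on a compact Riemann surface whose only pole is simple and has nonzero residue, which is impossible by the residue theorem. This contradiction proves the corollary.

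The only step needing real care is the identification of the two local normal forms feeding Lemma~\ref{lemme:noeudsPolaireAvecResiduFaible}, and in particular the verification that the residue $\alpha$ of $\omega|_{\PP^{1}}$ at $N$ cannot vanish; this comes down to the elementary nonvanishing of $\binom{2g-2}{2g_{1}-1}$ for $1\leq g_{1}\leq g-1$, after which the residue theorem closes the argument immediately. (One could instead invoke Corollary~\ref{corollaire:uniciteLimDiffSurTypeCompacte} to justify the explicit shape of $\omega|_{\PP^{1}}$, since $\X$ is of compact type, but the divisor count above already suffices.)
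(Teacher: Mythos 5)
Your proposal is correct and follows essentially the same route as the paper: force $\omega|_{\PP^{1}}$ to be $(z-1)^{2g-2}z^{-2g_{1}}\dz$ up to scale, observe its residue at the node is nonzero, and then use Lemma~\ref{lemme:noeudsPolaireAvecResiduFaible} to produce on an outer component a differential with a single simple pole, contradicting the residue theorem. The only difference is that you make explicit (via the binomial coefficient) the nonvanishing of the residue, which the paper asserts without computation.
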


\begin{proof}
If it was the case, then the 
restriction
of the form  $\omega$ to the projective line would be  $$\frac{(z-1)^{2g-2}}{z^{2g_{1}}}\dz$$ in a
coordinate $z$, where the nodes are $z=0$ and $z=\infty$. This form has always a nonzero residue at the
nodes. Let $\X_{1}$ be another irreducible component.
This would implies that $\X_{1}$ has a differential with a single pole, which is of order one.
\end{proof}

We now give conditions which are sufficient to smooth a candidate differential having poles of order $\geq2$
with nonzero residue. They are too
strong to be necessary.
Recall that for an irreducible component $\X_{\alpha}$ of $\X$ we denote by $\nodeSet[\X_{\alpha}]$ the set
of nodes of $\X$ meeting $\X_{\alpha}$. And if $N$ is a node between $\X_{\alpha}$ and $\X_{\beta}$, we
denote by $N_{\alpha}$ the point of $N$ belonging to $\X_{\alpha}$.

\begin{lemma}\label{lemme:PlomberieCylindriqueAvecResidu}
If $(\X,\omega,Z_{1},\cdots,Z_{n})$ is a candidate differential
 which satisfies the following properties, then it is plumbable. 

The Compatibility Condition~\eqref{equation:conditionDeCompatibiliteGeneral}
\[
 \ord_{N_{1}}(\omega)+ \ord_{N_{2}}(\omega)=-2
\]
 holds at every node of $\X$.
The Residue Condition~\eqref{equation:conditionDeCompatibiliteGeneral2}
\[
 \Res_{N_{1}}(\omega)+ \Res_{N_{2}}(\omega)=0,
 \] 
 holds at every node of $\X$ with a simple pole of $\omega$.

 There exists
$(\epsilon_{1},\cdots,\epsilon_{n})\in(\Delta^{\ast})^{\nodeSet}$
satisfying Equation~\eqref{equation:paramCylindrique} for every closed path $\gamma$ in the dual graph of
$(\X,\omega)$.

There exists a differential form $\eta_{\alpha}$ and a constant $c_{\alpha}\in\CC^{\ast}$ on every irreducible
component $\X_{\alpha}$ of
$\X$ satisfying the following properties. The differential~$\eta_{\alpha}$ has simple poles with residue
$-a_{i}$ at every node $N_{i}\in\nodeSet[\X_{\alpha}]$ where $\omega|_{\X_{\beta}}$ has a pole of order $k\geq
0$ with residue $a_{i}\neq0$. At
every other point $Q$ in $\X_{\alpha}$, the residue of $\eta_{\alpha}$ is zero and
$$\ord_{Q}(\eta_{\alpha}) \geq \ord_{Q}(\omega).$$
At every simple pole $N$ of $\eta_{\alpha}$, the parameter $\epsilon_{N}$
satisfies $$\epsilon_{N}^{k_{N}+1}=c_{\alpha}.$$

\end{lemma}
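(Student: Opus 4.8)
The plan is to mimic the proof of Theorem~\ref{theoreme:PlomberieCylindriqueSansResidu} as closely as possible; the only new phenomenon is that at a node where $\omega$ has a pole of order $\geq 2$ with nonzero residue, the local model furnished by Lemma~\ref{lemme:PlomberieCylindriqueOk} differs, on the \emph{zero} side, from $\omega$ by a simple--pole term of size $\epsilon^{k+1}$, and the differentials $\eta_{\alpha}$ are exactly what assembles all of these error terms, over a given component, into a single globally defined correction. As in Theorem~\ref{theoreme:PlomberieCylindriqueSansResidu}, it suffices to produce one plumbing with the parameters $(\epsilon_{1},\dots,\epsilon_{r})$ of condition~(iii): replacing $\epsilon_{i}$ by $\epsilon_{i}^{1/t}$ keeps Equation~\eqref{equation:paramCylindrique} satisfied, the resulting family has $\epsilon_{i}(t)\to0$, and the relation $\epsilon_{N}^{k_{N}+1}=c_{\alpha}$ of~(iv) gives $\epsilon_{N}(t)^{k_{N}+1}=c_{\alpha}^{1/t}$, which also tends to $0$ since $|\epsilon_{N}|<1$ and $k_{N}+1\geq1$ force $|c_{\alpha}|<1$.

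For the local data, choose as in Theorem~\ref{theoreme:PlomberieCylindriqueSansResidu}, using Lemma~\ref{lemme:formeLocalDesFormesDiff}, mutually disjoint neighbourhoods $U_{i}=V_{i}\cup W_{i}$ of the nodes $N_{i}$, disjoint from the $Z_{j}$, on which $\omega$ is in normal form. At a node carrying simple poles the Residue Condition~\eqref{equation:conditionDeCompatibiliteGeneral2} makes the two local forms opposite, and we invoke Lemma~\ref{lemme:PlomberieCylindriqueOk} with the datum $(0,0,-1)$. At a node where $\omega$ has a zero of order $k_{i}\geq0$ on a component $\X_{\alpha}$ and, by the Compatibility Condition~\eqref{equation:conditionDeCompatibiliteGeneral}, a pole of order $k_{i}+2$ with residue $a_{i}$ on the other component $\X_{\beta}$, we invoke Lemma~\ref{lemme:PlomberieCylindriqueOk} with the datum $(1,-1,k_{i})$ and with $a_{-1}$ equal, up to sign and a rescaling of the $W$--coordinate, to $a_{i}$, so that the $W$--side model is proportional to $\omega|_{\X_{\beta}}$. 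This yields cylinders $A_{i}$ with flat differentials $\eta_{i}$ and biholomorphisms $\varphi_{V_{i}},\varphi_{W_{i}}$, and the plumbed curve $\X'=B\cup\bigcup_{i}A_{i}$ is assembled verbatim as in Theorem~\ref{theoreme:PlomberieCylindriqueSansResidu}, the construction of the curve being independent of $\omega$.

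Now $\eta_{\alpha}$ enters. On the $W$--side each $\varphi_{W_{i}}^{\ast}\eta_{i}$ is a constant multiple of $\epsilon_{i}^{k_{i}+1}\omega$, as before; on the $V$--side, however, $\varphi_{V_{i}}^{\ast}\eta_{i}$ equals $\omega|_{\X_{\alpha}}$ plus a simple--pole term of residue $\epsilon_{i}^{k_{i}+1}a_{i}$. On a fixed component $\X_{\alpha}$ these corrections occur exactly at those nodes $N_{i}$ which are simple poles of $\eta_{\alpha}$, and condition~(iv) forces the scaling factor $\epsilon_{i}^{k_{i}+1}$ to be the \emph{same} constant $c_{\alpha}$ at all of them; moreover the existence of the meromorphic differential $\eta_{\alpha}$ on the compact component $\X_{\alpha}$ forces $\sum_{i}a_{i}=0$, so that the residues $\{\,c_{\alpha}a_{i}\,\}$ of the corrections sum to zero, as they must. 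I would therefore glue on $\X_{\alpha}$ a constant multiple of
\[
\omega_{\alpha}':=\omega|_{\X_{\alpha}}+c_{\alpha}^{1/t}\,\eta_{\alpha}.
\]
By the hypotheses on $\eta_{\alpha}$ — simple poles with the prescribed residues at exactly those nodes, and $\ord_{Q}(\eta_{\alpha})\geq\ord_{Q}(\omega)$ with vanishing residue at every other point $Q$ — the differential $\omega_{\alpha}'$ has, for $t$ small, a simple pole with the correct residue at each residue--carrying node, retains the prescribed zeros of order $k_{j}$ at the $Z_{j}$ and the higher--order poles at the remaining nodes, and tends to $\omega|_{\X_{\alpha}}$ as $t\to0$. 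Matching $\omega_{\alpha}'$ with the cylinder differentials on the overlaps $A_{i}^{V}$, $A_{i}^{W}$ then yields, for the ``$\omega$--part'' of each form, precisely the linear system of proportionality constants of Theorem~\ref{theoreme:PlomberieCylindriqueSansResidu}: one assigns a constant $a_{\alpha}$ to each component, the ratio across an edge $e_{i}$ being $\epsilon_{i}^{w(e_{i})}$ by Lemma~\ref{lemme:relationScaleAuNoeud}, and condition~(iii) is exactly what makes this assignment consistent around every cycle of the dual graph $\Gamma_{\omega}$.

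The step I expect to be the main obstacle is the bookkeeping near the residue--carrying nodes: Lemma~\ref{lemme:PlomberieCylindriqueOk} produces on the $V$--side the clean form $z^{k_{i}}\dz+\epsilon_{i}^{k_{i}+1}a_{i}\tfrac{\dz}{z}$, whereas $\omega_{\alpha}'$ in the same coordinate carries in addition the holomorphic part of $c_{\alpha}^{1/t}\eta_{\alpha}$. Since this discrepancy is $O(c_{\alpha}^{1/t})$ and $\eta_{i}$ is nowhere vanishing, I would absorb it by modifying $\eta_{i}$ by that $O(\epsilon_{i}^{k_{i}+1})$ holomorphic differential on the cylinder, checking that this destroys neither flatness nor the behaviour on the $W$--side (where the correction remains of lower order than $\epsilon_{i}^{k_{i}+1}\omega|_{\X_{\beta}}$); equivalently, one picks the plumbing coordinate on $V_{i}$ adapted to $\omega_{\alpha}'$ rather than to $\omega$. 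With this adjustment $(\X',\famomega(t))$ is a family of the type required by Definition~\ref{definiton:diffPlombable} whose limit differential is $(\X,\omega,Z_{1},\dots,Z_{n})$, and hence the candidate differential is plumbable.
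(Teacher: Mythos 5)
Your proposal follows essentially the same route as the paper's proof: on each component you glue (a scalar multiple of) $\omega|_{\X_{\alpha}}+c_{\alpha}\eta_{\alpha}$ with $c_{\alpha}$ shrunk along the family, you use the plumbing cylinder construction at each node so that hypothesis (iv), $\epsilon_{N}^{k_{N}+1}=c_{\alpha}$, makes the simple-pole corrections created at all residue-carrying nodes of $\X_{\alpha}$ assemble into the single global term $c_{\alpha}\eta_{\alpha}$, and you invoke condition (iii) exactly as in Theorem~\ref{theoreme:PlomberieCylindriqueSansResidu} to make the proportionality constants consistent around every cycle of the dual graph. The only divergence is that you explicitly flag and absorb the holomorphic remainder of $\eta_{\alpha}$ on the annular overlaps, a matching point the printed proof passes over silently, so your argument is if anything slightly more careful than the paper's.
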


We will prove that on the open set corresponding to $\X_{\alpha}$, the smoothed differential is proportional
to $\omega+c_{\alpha}\eta_{\alpha}$.

\begin{proof}
Let us first remark that if the parameters $\epsilon_{i}$ are small enough, then the orders of
$\omega_{\alpha}$
and $\omega_{\alpha}+c_{\alpha}\eta_{\alpha}$ coincide at every node of $\X$. By replacing the $\epsilon_{i}$
by~$\epsilon_{i}^{r}$, for
some $r>1$, we can
suppose that this is the case.
Let $N$ be a node  of~$\X$ between the components $\X_{\alpha}$ and $\X_{\beta}$. First suppose that the
pole of $\omega$ at $N$ has no residue. Then by plumbing the node $N$, we can
find a
differential which coincide with $$\omega_{\alpha}+c_{\alpha}\eta_{\alpha}$$ on the part of the cylinder
meting $\X_{\alpha}$ and with 
$$\epsilon_{N}^{\pm(k_{N}+1)} \left( \omega_{\beta}+c_{\beta}\eta_{\beta}\right)$$
on the other part of the cylinder.
Now suppose that $\omega_{\beta}$ has a pole of order $k\geq 2$ with a nonzero residue. Then the plumbing
cylinder construction gives a differential which coincide with 
 $$\omega_{\alpha}+\epsilon_{N}^{k_{N}+1}\eta_{\alpha}$$ on the part of the cylinder
meting $\X_{\alpha}$ and with 
$$\epsilon_{N}^{k_{N}+1} \left( \omega_{\beta}+c_{\beta}\eta_{\beta}\right)$$ on the other part.
Since by hypothesis $\epsilon_{N}^{k_{N}+1}=c_{\alpha}$, we can prolong this differential on the component
$\X_{\alpha}$.

Finally, it follows from the fact that the parameters satisfy Equation~\eqref{equation:paramCylindrique}
that the constants of proportionality are globally well defined (see the proof of
Lemma~\ref{lemme:ConditionPlomberieCheminsFermer} for details). 
\end{proof}

\begin{rem}
As said before, the hypotheses of this lemma are to strong. A way to generalise it is to
allow many differentials $(\eta_{\alpha,1},\cdots,\eta_{\alpha,r})$ on a component~$\X_{\alpha}$ with distinct
constants $c_{\alpha,i}$. We do not write the precise condition, because it becomes very technical and messy.

Let us remark that a necessary condition for the existence of the differentials~$\eta_{\alpha}$ is that the
following equation is satisfied
\begin{equation}\label{equation:sommeResidusNoeudsExterieur}
\sum_{N_{i}\in\nodeSet[\X_{\alpha}]}\Res_{N_{i,\beta_{i}}}(\omega)=0,
\end{equation}
where $N_{i}$ is a node between $\X_{\alpha}$ and a distinct component $\X_{\beta_{i}}$.
This condition should be necessary for every limit differentials, but we have no proof of it.
\end{rem}

\paragraph{Relationship with the {incidence variety compactification}.}

To conclude this section, we show how to obtain a stable pointed differential from a limit differential
without any pole of order $k\geq2$ with a nonzero residue. We even prove that this map extends
to the case of  plumbable differentials satisfying the hypotheses of
Lemma~\ref{lemme:PlomberieCylindriqueAvecResidu}.

\begin{defn}\label{definition:composentesPolairesConnectees}
 Let $(\X,\omega)$ be a limit or stable differential. Let $\X_{1}$ and $\X_{2}$ be two irreducible components
of $\X$. We say that $\X_{1}$ and $\X_{2}$ are {\em polarly related by $\omega$} if $\X_{1}=\X_{2}$ or the
differential $\omega$ has simple poles at the
nodes between $\X_{1}$ and $\X_{2}$.

The equivalence classes of this relation are the {\em polarly related components} of~$(\X,\omega)$.
\end{defn}

 There is a map from the set of limit differentials to the space of marked stable differentials 
$$\varphi:\kbarmodulilim[g,n](k_{1},\cdots,k_{n})\to\obarmoduli[g,n],$$
which is given by forgetting the differentials on the polarly related components which contain a pole of order
$\geq 2$. 
\begin{prop}\label{proposition:relationPlumStable}
Let $(\X,\omega,Z_{1},\cdots,Z_{n})$ be a plumbable differential satisfying the hypotheses of 
Lemma~\ref{lemme:PlomberieCylindriqueAvecResidu}.
The marked differential $\varphi \left(\X,\omega,Z_{1},\cdots,Z_{n}\right)$ is contained in
$\obarmoduliinc{k_{1},\cdots,k_{n}}$.
\end{prop}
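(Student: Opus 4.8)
The plan is to realize $\varphi(\X,\omega,Z_1,\dots,Z_n)$ as an honest limit of a family of pointed differentials in the stratum $\omoduliincp{k_1,\dots,k_n}$, and then invoke the definition of $\obarmoduliinc{k_1,\dots,k_n}$ as the closure of the incidence variety. The starting point is the plumbing family provided by Definition~\ref{definiton:diffPlombable}: since $(\X,\omega,Z_1,\dots,Z_n)$ is plumbable and satisfies the hypotheses of Lemma~\ref{lemme:PlomberieCylindriqueAvecResidu}, there is a family $(f:\famcurv\to\Delta^\ast, \famomega, \seczero_1,\dots,\seczero_n)$ whose limit differential is $(\X,\omega,Z_1,\dots,Z_n)$ and which near each node is given by the plumbing cylinder construction of Lemma~\ref{lemme:PlomberieCylindriqueOk}, with the restriction on each component $\X_\alpha$ proportional to $\omega_\alpha + c_\alpha\eta_\alpha$ as recorded in the proof of that lemma. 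First I would check that for generic (small) parameters $\epsilon_i$ the smoothed curve $\famcurv(t)$ is in fact smooth and the differential $\famomega(t)$ has exactly $n$ zeros, of the prescribed orders $k_1,\dots,k_n$, located at $\seczero_1(t),\dots,\seczero_n(t)$; this is where the "no residue at poles of order $\geq 2$" part of the hypothesis and the shape $\omega_\alpha+c_\alpha\eta_\alpha$ matter, because the auxiliary differentials $\eta_\alpha$ were chosen precisely so that they do not create extra zeros on the smooth locus (the condition $\ord_Q(\eta_\alpha)\geq\ord_Q(\omega)$) and so that the poles at the nodes are cancelled after plumbing.

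The second step is a bookkeeping argument on the zeros. On each component $\X_\alpha$ the zeros of $\omega_\alpha$ in the interior are the marked points $Z_j$ lying on $\X_\alpha$, with their orders $k_j$; after plumbing, the flat cylinders $A_i$ carry no zeros or poles (part (i) of Lemma~\ref{lemme:PlomberieCylindriqueOk}), so the zeros of $\famomega(t)$ in a neighbourhood of the plumbed region are the same as those of $\omega_\alpha$ away from the nodes. A zero of $\omega_\alpha$ at a node $N$ of weight $k+1\geq 1$ (i.e. order $k\geq 0$) is glued to a pole of $\omega_\beta$ of order $k+2$ on the other side; after plumbing this pair contributes nothing, since the cylinder is flat. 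Hence the total divisor of $\famomega(t)$ is $\sum_j k_j \seczero_j(t)$, so $(\famcurv(t), \famomega(t), \seczero_1(t),\dots,\seczero_n(t))$ lies in $\omoduliinc{k_1,\dots,k_n}$ for $t\neq 0$.

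The third step is to identify the limit. By construction the family is plumbed with parameters $\epsilon_i(t)$ tending to $0$ as $t\to 0$, the marked curve $(\famcurv(t),\seczero_\bullet(t))$ degenerates to $(\X,Z_1,\dots,Z_n)$ in $\barmoduli[g,n]$, and the differential $\famomega(t)$ restricted to the part of $\famcurv(t)$ corresponding to $\X_\alpha$ tends (after the appropriate scaling, which is exactly the one built into the plumbing construction) to $\omega_\alpha+c_\alpha\eta_\alpha$. Now the passage from the limit differential to $\varphi(\X,\omega,Z_1,\dots,Z_n)$ is precisely the operation of taking stable limits in $\obarmoduli[g,n]$: on a polarly related cluster of components containing a pole of order $\geq 2$, the global scaling chosen so that the differential does not blow up forces the differential to vanish on the subcomponents where $\omega$ had a high-order pole, which is exactly the recipe defining $\varphi$ (compare the argument in Lemma~\ref{lemme:ProlongAppOubli} for the exceptional components). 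So $\lim_{t\to 0}(\famcurv(t),\famomega(t),\seczero_\bullet(t)) = \varphi(\X,\omega,Z_1,\dots,Z_n)$ in $\pobarmoduli[g,n]$, and being a limit of points of the incidence variety it lies in $\obarmoduliinc{k_1,\dots,k_n}$.

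The main obstacle I anticipate is the second step: checking that the smoothed differential has no \emph{spurious} zeros and that the prescribed zeros retain the correct orders. Adding $c_\alpha\eta_\alpha$ to $\omega_\alpha$ could in principle lower the order of a zero $Z_j$ or create a zero where $\omega_\alpha$ had none; the hypotheses of Lemma~\ref{lemme:PlomberieCylindriqueAvecResidu} (residue $-a_i$ at the relevant nodes, zero residue and $\ord_Q(\eta_\alpha)\geq\ord_Q(\omega)$ elsewhere, and the normalization $\epsilon_N^{k_N+1}=c_\alpha$ at simple poles of $\eta_\alpha$) are tailored to exclude this, but one must verify carefully, node by node and component by component, that after choosing the $\epsilon_i$ small enough the divisor of the total differential on $\famcurv(t)$ is $\sum k_j\seczero_j(t)$ exactly, with no cancellation or accumulation of zeros near the plumbed cylinders. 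Everything else is a matter of reading off the conclusion from the definitions and from Lemma~\ref{lemme:PlomberieCylindriqueOk}.
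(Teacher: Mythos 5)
Your first two steps largely restate what is already packaged in the hypotheses (a plumbable differential comes, by definition, with a family in $\omoduliincp{k_{1},\cdots,k_{n}}$ converging to it, and Lemma~\ref{lemme:PlomberieCylindriqueAvecResidu} supplies such families), so the whole weight of the proposition lies in your third step, and that is where there is a genuine gap. You assert that the stable limit in $\pobarmoduli[g,n]$ of the given plumbing family is exactly $\varphi(\X,\omega,Z_{1},\cdots,Z_{n})$, because the global scaling forces the differential to vanish on the components where $\omega$ has a pole of order $\geq 2$. It is true that those components die in the stable limit, but the converse is not automatic: the stable limit is taken after a \emph{single} global rescaling, and it survives only on the components whose individual scalings are comparable to the maximal one. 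By Lemma~\ref{lemme:relationScaleAuNoeud}, the relative scaling between two components on which $\omega$ is holomorphic is a product of factors $\epsilon_{N}^{\pm(k_{N}+1)}$ along a path in the dual graph joining them, and when that path passes through pole-carrying components nothing forces this product to tend to a nonzero constant; condition~\eqref{equation:paramCylindrique} constrains only closed paths, so for a tree-like dual graph the parameters are completely unconstrained. For instance, on a chain $\X_{1}$--$\X_{2}$--$\X_{3}$ with $\omega$ holomorphic on $\X_{1}$ and $\X_{3}$ and with poles of different orders at the two nodes of $\X_{2}$, a plumbing family with $\epsilon_{1}=\epsilon_{2}=t$ has stable limit vanishing identically on one of $\X_{1},\X_{3}$, which is strictly smaller than $\varphi(\X,\omega,Z_{1},\cdots,Z_{n})$. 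So your argument only shows that \emph{some} stable differential, vanishing on the high-pole components and possibly on more, lies in $\obarmoduliinc{k_{1},\cdots,k_{n}}$.

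This missing point is exactly what the paper's proof is devoted to (``we do not have to forget the differentials on more irreducible components''). There, for each component $\X_{i}$ on which $\omega$ is holomorphic but the stable limit $\tilde\omega$ vanishes, one writes $\famomega|_{V_{i}\times\Delta}=h(t)\left(\omega|_{V_{i}}+c_{i}(t)\eta_{i}(t)\right)$ with $h(0)=0$, replaces the parameters at the nodes of $\X_{i}$ by $h(t)^{1/(k_{N_{i,j}}+1)}\epsilon_{N_{i,j}}(t)$, checks that Equation~\eqref{equation:paramCylindrique} still holds (the incoming and outgoing contributions at $\X_{i}$ cancel), re-plumbs with the new parameters, and verifies that the new family agrees, up to scaling, with the old one on the other components while its limit now restricts to $\omega$ on $\X_{i}$; iterating over all such components yields a family whose stable limit really is $\varphi(\X,\omega,Z_{1},\cdots,Z_{n})$. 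Some adjustment of the plumbing parameters of this kind, together with these verifications, is indispensable for the statement; your appeal to Lemma~\ref{lemme:ProlongAppOubli} does not supply it, since that lemma concerns contracting exceptional components, not the vanishing pattern of stable limits.
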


The point is to show that we do not have to forget the differentials more irreducible components of $\X$.   

\begin{proof}
Let $(\X,\omega,Z_{1},\cdots,Z_{n})$ be a plumbable differential in the closure of the stratum
$\omoduliincp{k_{1},\cdots,k_{n}}$.
Let us first remark that we can suppose that the polarly related components are the irreducible components of
$\X$. Otherwise, we use the plumbing cylinder construction at the nodes with poles of order one. The resulting
differential is still a plumbable differential satisfying the hypotheses of 
Lemma~\ref{lemme:PlomberieCylindriqueAvecResidu} and which lies in the closure of the stratum
$\omoduliincp{k_{1},\cdots,k_{n}}$ if and only if the previous differential lay in its closure.

Let $(\epsilon_{1}:\Delta^{\ast}\to\Delta^{\ast},\cdots,\epsilon_{n}:\Delta^{\ast}\to\Delta^{\ast})$ be
parameters at the nodes of $\X$ which satisfy Equation~\eqref{equation:paramCylindrique}. Let $c_{i}(t)$ and
$\eta_{i}(t)$ be the constants and differentials satisfying the hypotheses of
Lemma~\ref{lemme:PlomberieCylindriqueAvecResidu}. The family of
differentials which
is obtained by
plumbing the nodes with these parameters and such that the limit is a nonzero stable differential
$\tilde\omega$ is denoted
by $(\famcurv,\famomega,\seczero_{1},\cdots,\seczero_{n})$. 

Let $\X_{i}$ be an irreducible component of $\X$ such that $\omega|_{\X_{i}}$ is holomorphic, but the
differential $\tilde\omega|_{\X_{i}}$ is identically zero. Let $V_{i}$ be the subset of $\X_{i}$ which is the
complement of $\X_{i}\setminus\bigcup\limits_{j}(U_{j}\cap\X_{i})$, where the $U_{j}$ are the neighbourhoods
of the nodes in which the plumbing take place. In $V_{i}\times
\Delta$, the differential satisfies 
$$\famomega|_{V_{i}\times \Delta}= h(t)\left(\omega|_{V_{i}}+c_{i}(t)\eta_{i}(t)\right),$$
where $h$ is a function vanishing at the origin.

For every node $N_{i,j}$ of $\X_{i}$, we replace the parameter $\epsilon_{N_{i,j}}$  by
$$h(t)^{1/(k_{N_{i,j}}+1)}\cdot\epsilon_{N_{i,j}}(t),$$  where $k_{N_{i,j}}$ is the order of the zero of
$\omega$
at $N_{i,j}$.  The parameters remain unchanged at the other nodes of $\X$.

Let us show that these new parameters satisfy the conditions given by
Equation~\eqref{equation:paramCylindrique}. Let $\gamma$ be a closed path in the dual graph of
$(\X,\omega)$. Let us denote the vertex corresponding to $\X_{i}$ in the dual graph of $(\X,\omega)$ by
$V_{i}$. Since $\gamma$ is closed, it has the same number of edges which point to~$V_{i}$ as edges which come
from~$V_{i}$.
Using the fact that the component $\X_{i}$ has only holomorphic nodes for $\omega$, we deduce that an incoming
edge and an outgoing edge of
$\gamma$ contribute together to Equation~\eqref{equation:paramCylindrique} by
$$\left(h(t)^{1/(k_{N_{i,j}}+1)}\epsilon_{N_{i,j}}\right)^{(k_{N_{i,j}}+1)} \cdot
\left(h(t)^{1/(k_{N_{i,k}}+1)}
\epsilon_{N_{i,k}}\right)^{-(k_{N_{i,k}}+1)},$$
which is clearly equal to
$$(\epsilon_{N_{i,j}})^{(k_{N_{i,j}}+1)}\cdot(\epsilon_{N_{i,k}})^{-(k_{N_{i,k}}+1)}.$$
So the contribution to Equation~\eqref{equation:paramCylindrique} of the new parameters at the nodes
of~$\X_{i}$ is the same as the contribution with the old ones. This implies that this equation remains
satisfied
by the new parameters.
It is direct to check that the constants~$c_{j}$ coincide with the previous ones when $j\neq i$ and~$c_{i}$ is
replaced by new constants $c_{i}'$. It is easily verified that we may keep
the same differentials~$\eta_{i}$.

According to Lemma~\ref{lemme:PlomberieCylindriqueAvecResidu}, we can smooth the limit differential
$\omega$ using these new parameters. We scale the family of differentials in such a way that the new one
coincides with the old one on $V_{j}\times \Delta$, for  every irreducible component
$\X_{j}$ different
from $\X_{i}$. On the other hand, we claim that in a
neighbourhood of $V_{i}$, we have 
 $$\famomega_{{\rm new}}|_{V_{i}\times \Delta}= \omega|_{V_{i}}+c_{i}'\eta_{i},$$
for the family with the new parameters.
Indeed, let $\X_{k}$ be an irreducible component which meets $\X_{i}$ at $N_{l}$. Let $h_{j}:\Delta\to\Delta$
denotes the
function such that $$\famomega_{{\rm new}}|_{V_{j}\times \Delta}= h_{j}(t)
(\omega|_{V_{j}}+c_{j}'\eta_{j}).$$ 
Since for every irreducible component $\X_{j}$ distinct from $\X_{i}$, this equation holds for the family
$\famomega$, it follows from Lemma~\ref{lemme:PlomberieCylindriqueOk} that
$$\frac{h_{j}}{h}=(\epsilon_{N_{l}})^{(k_{N_{l}}+1)} \text{ and }
\frac{h_{j}}{h_{i}}=h\cdot (\epsilon_{N_{l}})^{(k_{N_{l}}+1)}.$$
It follows from these two equation that $c_{i}=1$ as claimed.
This implies that the stable limit of this family restricts to $\omega$ on $\X_{i}$ and to $\tilde\omega$ on
the
other
irreducible components of $\X$.

The Proposition follows by doing this procedure at every component of $\X$ where $\omega'$ vanishes but
$\omega$ restricts
to a holomorphic differential.
 \end{proof}

%%%%%%%%%%%%%%%%%%%%%%%%%%%%%%%%%%%%%%%%%%%%%%%%%%%%%%%%%%%%%%%%%%%%%%%%%%%%%%%%%%%%%%%%%%%%%%%%%%%%%%%%%%%%%%

\section{Parity at the Boundary of the Strata.}\label{section:Spin}

The notion of theta characteristic is an essential tool for the description of the connected components of the
strata of $\omoduli$. Indeed, every stratum of the form $\omoduli(2l_{1},\cdots,2l_{n})$ has at least two
connected components distinguished by the parity of the theta characteristic associated to the differential.
It would be nice to show that this invariant can be extended for all limit
differentials in the closure of such strata. 
 However, we will show (see Corollary~\ref{corollaire:intersectHypOddGenreTrois})
that such extension is not possible in general. Indeed, the {incidence variety
compactifications} of the even and the odd components of $\pomoduli[3,1](4)$ meet each other.

 In this section, we will nevertheless extend this invariant to two important cases. In the first part, we
treat the case of limit differentials above curves of compact
type (see Theorem~\ref{theoreme:BordDesStratesDansSpin}). This uses the theory of spin structures introduced
by Cornalba, which  we will recall at the beginning of this section. In the second part, we extend this
invariant to the
case of irreducible stable pointed differentials (see Theorem~\ref{theoreme:InvDeArfGene}). For this purpose,
we generalise the Arf invariant to such differentials
(see Definition~\ref{definition:ArfGen}).

\subsection{Differentials of Compact Type.}

Let us begin this section by some preliminary paragraphs about line bundles on (semi) stable curves
 and Cornalba theory of spin structures. 

\paragraph{Some basic facts about line bundles on stable curves.}

The material of this paragraph comes mostly from \cite{MR2807457} and \cite{MR1631825}. Let us recall
that the
normalisation of a (semi) stable curve $\X$ is denoted by $\nu:\tilde{\X}\to\X$. We denote by
$\Irr(\X) := \left\{
\X_{i}
\right\}$ the set of irreducible components of $\X$ and by $\nu_{i}:\tilde{\X_{i}}\to\X_{i}$ the restriction 
of the normalisation to $\X_{i}$. The set of nodes~$\nodeSet$ of~$\X$ is of cardinality $\mathfrak{n}$ and for
each node $N_{i}$
of
$\X$, its preimage by $\nu$ is~$\lbrace N_{i,1},N_{i,2} \rbrace$.

The key to describe the Picard group of $\X$ is the exact sequence
\begin{equation}\label{equation:suiteExCourteFibDroiteNodal}
1\to\Ox^{\ast}\to\nu_{\ast}\Ox[\tilde{\X}]^{\ast}\xrightarrow{e}
\prod_{N\in\nodeSet}\CC_{N}^{\ast}\to 1,
\end{equation}
where the map $e$ is defined in the following way. For every $h\in\nu_{\ast}\Ox[\tilde{\X}]^{\ast}$, the
$\CC_{N_{i}}^{\ast}$-component of $e(h)$ is $\frac{h(N_{i,1})}{h(N_{i,2})}$. The long exact
sequence associated to the short exact sequence \eqref{equation:suiteExCourteFibDroiteNodal} is
\begin{equation}\label{equation:suiteExLongFibDroiteNodal}
1\to\CC^{\ast}\to(\CC^{\ast})^{|\Irr(\X)|}\to (\CC^{\ast})^{\mathfrak{n}}\to
\Pic(\X)\xrightarrow{\alpha^{\ast}} \Pic(\tilde{\X}) \to 1   .
\end{equation}
The interpretation, from the right to the left, of this sequence is the following.
\begin{itemize}
\item[i)] To describe a line bundle $L$ on $\X$ it suffices to give a line bundle $\tilde{L}$ on $\tilde{\X}$
and an identification $\varphi_{N_{i}}:\tilde{L}_{N_{i,1}}\to\tilde{L}_{N_{i,2}}$ of the
fibres above the preimages of each node $N_{i}\in\nodeSet$. The second part of the data are usually called the
{\em descent data} of $L$. Let us remark that the descent data can be interpreted as a condition for a section
of
$L$ to be a lift of a section of $\tilde{L}$.

\item[ii)] If $\tilde{L}$ is trivial, a choice of trivialisation identifies each $\varphi_{N}$ with a well
defined non-zero complex number. So, two line bundles $L_{1}$ and $L_{2}$ such that
$\tilde{L_{1}}=\tilde{L_{2}}$ differ only by a tuple of $\mathfrak{n}$ nonzero complex numbers.
\item[iii)] Let $\tilde{L}$ be a line bundle on $\tilde{\X}$. If two $\mathfrak{n}$-tuple describe in ii)
differ only by 
multiplicative constants on each irreducible component, then the line bundles associated to $\tilde{L}$ and
these descent data
are the same. 
\item[iv)] The descent data are well defined up to a global multiplicative constant.
\end{itemize}

Let us discuss two examples in which we will be particularly interested.
\begin{ex}
If the curve $\X$ is of compact type, then the sequence \eqref{equation:suiteExLongFibDroiteNodal}  implies
that the Picard groups of $\X$ and $\tilde\X$ are isomorphic. Therefore in this case, we will define line
bundles
by specifying their restrictions on every irreducible components of $\X$.
\end{ex}

\begin{ex}
Let us now suppose that the curve $\X$ is an irreducible  nodal curve with $r$ nodes. Then the sequence
\eqref{equation:suiteExLongFibDroiteNodal} gives the sequence
\begin{equation*}
1\to (\CC^{\ast})^{r}\to
\Pic(\X)\xrightarrow{\alpha^{\ast}} \Pic(\tilde{\X}) \to 1   .
\end{equation*}
Hence in this case a line bundle on $\X$ is described by a line bundle on $\tilde\X$ and a $r$-tuple of non
zero
complex
numbers.
\end{ex}

We now give a description of the limit of a line bundle over a smooth family of generically smooth curves such
that the special fibre is of compact type.  The proof is given at
the beginning of \cite[Section~5.C]{MR1631825}. 
\begin{theorem}\label{theoreme:ProlongFibreEnDroite}
 Let $f:\famcurv\to\Delta$ be a smooth family such that for every $t\neq 0$, the curve
$\famcurv(t)$ is a
smooth curve of genus $g$ and $\famcurv(0)$ is a reduced curve of compact type.

Let $\mathscr{L}$ be a line bundle of relative degree $d$ on $\famcurv\setminus\famcurv(0)$ and 
$$\alpha:\Irr(\famcurv(0))\to\ZZ^{|\Irr(\famcurv(0))|}$$ be any map such that
$$\sum_{\X_{i}\in\Irr(\famcurv(0))}\alpha(\X_{i})=d.$$ 
Then there exists a unique extension $\mathscr{L}_{\alpha}$ of~$\mathscr{L}$ to
$\famcurv$ such that 
$$\deg(\mathscr{L}_{\alpha}\otimes\Ox[\X_{i}])=\alpha(\X_{i})$$ 
on every irreducible components $\X_{i}$ of $\famcurv(0)$.

Moreover, if $N$ is a node between two irreducible components $\X_{i}$ and $\X_{j}$, and $\beta$ is
obtained from $\alpha$ by adding $1$ to $\alpha(\X_{i})$ and subtracting $1$ from $\alpha(\X_{j})$, then
\begin{eqnarray}
 \mathscr{L}_{\beta}\otimes\Ox[\X_{i}]=\mathscr{L}_{\alpha}\otimes\Ox[\X_{i}](N),
\label{equation:TenseurFibreDroiteCourbeSpecial1} \\
\mathscr{L}_{\beta}\otimes\Ox[\X_{j}]=\mathscr{L}_{\alpha}\otimes\Ox[\X_{j}](-N).
\label{equation:TenseurFibreDroiteCourbeSpecial2}
\end{eqnarray}
\end{theorem}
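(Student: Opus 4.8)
The plan is to run the standard surface--geometry argument recalled in \cite[Section~5.C]{MR1631825}, exploiting that $\famcurv$ is a smooth surface near $\famcurv(0)$, that $\famcurv(0)=f^{\ast}(0)$ is a principal divisor (so $\Ox[\famcurv](\famcurv(0))\cong\Ox[\famcurv]$ and $\famcurv(0)$ is numerically equivalent to every general fibre $\famcurv(t)$), and that a line bundle on a compact-type curve is pinned down by its restrictions to the components, as in \eqref{equation:suiteExLongFibDroiteNodal}.

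First I would extend $\mathscr{L}$ in an \emph{arbitrary} way to a line bundle $\overline{\mathscr{L}}$ on $\famcurv$; this bare existence is where the smoothness of $\famcurv$ enters. Its multidegree $\alpha_{0}$ on $\Irr(\famcurv(0))$ has total degree $\overline{\mathscr{L}}\cdot\famcurv(0)=\overline{\mathscr{L}}\cdot\famcurv(t)=d$. Next, for each node $N\in\nodeSet$ of $\famcurv(0)$ whose branches lie on the components $\X_{i}$ and $\X_{j}$, I use the compact-type hypothesis: the dual graph of $\famcurv(0)$ is a tree, so deleting the edge $N$ disconnects it into two subtrees, and I let $D_{N}$ be the sum of the irreducible components lying in the subtree containing $\X_{j}$. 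Since $\famcurv$ is a smooth surface one has $\X_{a}\cdot\X_{b}=\#\{\text{nodes joining }\X_{a},\X_{b}\}$ for $a\neq b$ and $\X_{a}^{2}=-\deg(\X_{a})$ (from $\X_{a}\cdot\famcurv(0)=0$), and a short computation with these numbers --- using that $D_{N}$ meets $\X_{i}$ transversally in the single point $N$, contains $\X_{j}$, and is disjoint from every other component on the $\X_{i}$-side while containing all neighbours of every other component on the $\X_{j}$-side --- gives
\[
\Ox[\famcurv](D_{N})\otimes\Ox[\X_{i}]\cong\Ox[\X_{i}](N),\qquad
\Ox[\famcurv](D_{N})\otimes\Ox[\X_{j}]\cong\Ox[\X_{j}](-N),
\]
and $\Ox[\famcurv](D_{N})\otimes\Ox[\X_{k}]\cong\Ox[\X_{k}]$ for every other component $\X_{k}$; moreover $D_{N}\subset\famcurv(0)$ is disjoint from $\famcurv(t)$ for $t\neq 0$, so $\Ox[\famcurv](D_{N})$ is trivial there. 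Hence tensoring with $\Ox[\famcurv](D_{N})$ leaves the restriction to $\famcurv\setminus\famcurv(0)$ unchanged and shifts the multidegree exactly by $e_{\X_{i}}-e_{\X_{j}}$ (the opposite choice of subtree gives the inverse twist).

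To produce $\mathscr{L}_{\alpha}$ for a prescribed $\alpha$ with $\sum_{i}\alpha(\X_{i})=d$, I note that $\alpha-\alpha_{0}$ lies in the kernel of the summation map $\ZZ^{\Irr(\famcurv(0))}\to\ZZ$, which --- because the dual graph is a tree --- is freely generated over $\ZZ$ by the vectors $e_{\X_{i}}-e_{\X_{j}}$ indexed by the nodes (a one-line induction peeling off a leaf). Writing $\alpha-\alpha_{0}$ as an integral combination of these and performing the corresponding twists on $\overline{\mathscr{L}}$ yields an extension with multidegree $\alpha$, and the displayed formulas applied to a single such twist are precisely the asserted relations between $\mathscr{L}_{\alpha}$ and $\mathscr{L}_{\beta}$. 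For uniqueness, if $\mathscr{L}_{\alpha}$ and $\mathscr{L}'_{\alpha}$ are two extensions of the same multidegree, then $\mathscr{M}:=\mathscr{L}_{\alpha}\otimes(\mathscr{L}'_{\alpha})^{-1}$ is trivial off $\famcurv(0)$, hence $\mathscr{M}\cong\Ox[\famcurv]\!\left(\sum_{i}m_{i}\X_{i}\right)$, and it has degree $0$ on every $\X_{i}$; since $\famcurv(0)$ is connected its intersection matrix has one-dimensional kernel, spanned by $(1,\dots,1)$, so $\sum_{i}m_{i}\X_{i}$ is a multiple of $f^{\ast}(0)$ and $\mathscr{M}\cong\Ox[\famcurv]$. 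I do not expect a genuine obstacle: the only points needing care are to make sense of these divisor-class manipulations for a possibly non-algebraic family (handled by restricting to a neighbourhood of $\famcurv(0)$, where $\famcurv$ is an honest smooth surface) and to keep the orientations and signs in the twisting construction consistent.
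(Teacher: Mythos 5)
Your argument is correct and is essentially the proof the paper relies on: the paper does not reprove this statement but refers to the beginning of \cite[Section~5.C]{MR1631825}, whose argument is exactly your extend-arbitrarily-then-twist-by-components construction (using that $\Ox[\famcurv](\famcurv(0))$ is trivial and, for uniqueness, Zariski's lemma on the intersection matrix of the fibre). The only caveat, which you already flag, is the analytic-versus-algebraic extension of $\mathscr{L}$ across $\famcurv(0)$; at the paper's level of rigour (and in its applications, where $\mathscr{L}$ is given by a divisor) this is harmless.
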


If the special fibre is not of compact type, there is not such a precise description. However, the idea at the
beginning of \cite[Section~5.C]{MR1631825} remains true for families of curves with more general special
fibre.

\begin{theorem}
\label{theoreme:ProlongFibreEnDroiteGen}
 Let $f:\famcurv\to\Delta$ be a smooth family such that for every $t\neq 0$, the curve
$\famcurv(t)$ is a
smooth curve of genus $g$ and $\famcurv(0)$ is a (semi) stable curve.

Let $\mathscr{L}$ be a line bundle of relative degree $d$ on $\famcurv$ such that the restriction
of~$\mathscr{L}$ to  $\famcurv(0)$ is a line bundle. 
Let $\X_{i}$ be an irreducible
component and let~$\lbrace N_{j,k} \rbrace_{k}$ be the set of nodes between $\X_{i}$ and $\X_{j}$.

Then, we have the relations
\begin{eqnarray}
 \mathscr{L}\otimes\Ox[\famcurv](\X_{i})|_{\X_{i}}=\mathscr{L}|_{\X_{i}}\otimes\Ox[\X_{i}]\left(\sum_{j,k}-N_{
j,k}\right), \label{equation:TenseurFibreDroiteCourbeSpecialGen1} \\
\mathscr{L}\otimes\Ox[\famcurv](\X_{i})|_{\X_{j}}=\mathscr{L}|_{\X_{j}}\otimes\Ox[\X_{j}]\left(\sum_{k}N_{j,k}
\right).
\label{equation:TenseurFibreDroiteCourbeSpecialGen2}
\end{eqnarray}
\end{theorem}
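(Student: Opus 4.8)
The plan is to reduce both identities to the single fact that the special fibre $\famcurv(0)$ is a principal divisor on $\famcurv$, exactly as in the compact-type situation of Theorem~\ref{theoreme:ProlongFibreEnDroite}. Since $\famcurv(0)$ is the fibre of $f$ over the origin, it is cut out on $\famcurv$ by the pull-back of a uniformiser of $\Delta$; as $\famcurv(0)$ is a reduced nodal curve this means that, as Cartier divisors on $\famcurv$, one has $\famcurv(0)=\sum_{\X_{j}\in\Irr(\famcurv(0))}\X_{j}$, and therefore $\Ox[\famcurv]\bigl(\famcurv(0)\bigr)\isom\Ox[\famcurv]$. Consequently
\begin{equation*}
\Ox[\famcurv](\X_{i})\isom\Ox[\famcurv]\Bigl(-\sum_{j\neq i}\X_{j}\Bigr).
\end{equation*}

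Next I would compute how the divisors $\X_{j}$ restrict to the components of $\famcurv(0)$. For $j\neq i$ the component $\X_{i}$ is not contained in $\X_{j}$, so $\X_{j}$ restricts to a Cartier divisor $\X_{j}\cap\X_{i}$ on $\X_{i}$ with $\Ox[\famcurv](\X_{j})|_{\X_{i}}=\Ox[\X_{i}](\X_{j}\cap\X_{i})$; this intersection is supported on the nodes of $\famcurv(0)$ lying on both $\X_{j}$ and $\X_{i}$, and near such a node $\famcurv$ has the standard local model $xy=t$ with $\X_{i}$ and $\X_{j}$ the branches $\{y=0\}$ and $\{x=0\}$. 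Hence the two components meet transversally, the branch of $\X_{i}$ through the node is smooth, and $\X_{j}\cap\X_{i}=\sum_{k}N_{j,k}$ as a reduced effective divisor on $\X_{i}$, where the $N_{j,k}$ are the nodes between $\X_{i}$ and $\X_{j}$. Restricting the isomorphism of the first paragraph to $\X_{i}$ and summing over $j\neq i$ then yields
\begin{equation*}
\Ox[\famcurv](\X_{i})|_{\X_{i}}=\Ox[\X_{i}]\Bigl(-\sum_{j,k}N_{j,k}\Bigr),
\end{equation*}
while restricting $\Ox[\famcurv](\X_{i})$ directly to $\X_{j}$ ($j\neq i$) gives, by the same transversality discussion with the roles of $\X_{i}$ and $\X_{j}$ exchanged,
\begin{equation*}
\Ox[\famcurv](\X_{i})|_{\X_{j}}=\Ox[\X_{j}]\Bigl(\sum_{k}N_{j,k}\Bigr).
\end{equation*}
Finally, since $\mathscr{L}$ restricts to a line bundle on $\famcurv(0)$ and hence on each $\X_{i}$ and $\X_{j}$, I would tensor these two isomorphisms with $\mathscr{L}|_{\X_{i}}$, respectively $\mathscr{L}|_{\X_{j}}$, using that restriction to a subscheme commutes with tensor product of line bundles; this gives precisely Equations~\eqref{equation:TenseurFibreDroiteCourbeSpecialGen1} and~\eqref{equation:TenseurFibreDroiteCourbeSpecialGen2}.

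I do not expect any serious obstacle in this argument: the whole content is the adjunction-type computation of the normal bundle of a component of $\famcurv(0)$. The only points demanding a little care are that the divisors $\X_{j}$ and their restrictions really are Cartier and that the scheme-theoretic intersections $\X_{j}\cap\X_{i}$ are reduced, both of which follow from the local model $xy=t$, i.e.\ from the smoothness of the total space $\famcurv$ near the nodes. That local model also clarifies a point left implicit in the statement: a self-node of $\X_{i}$ contributes to neither formula, since it never appears in $\sum_{j\neq i}\X_{j}$, so the sums $\sum_{j,k}$ must be read as running only over the components $\X_{j}$ distinct from $\X_{i}$ that meet it.
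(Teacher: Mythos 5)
Your proof is correct and coincides with the argument the paper itself relies on: the theorem is stated there without its own proof, with a pointer to the beginning of \cite[Section~5.C]{MR1631825}, and that twisting computation is exactly yours --- the reduced special fibre is a principal divisor, so $\Ox[\famcurv](\X_{i})\isom\Ox[\famcurv]\bigl(-\sum_{j\neq i}\X_{j}\bigr)$, and the transverse local model $xy=t$ at the nodes (forced by smoothness of $\famcurv$) identifies the restrictions to $\X_{i}$ and $\X_{j}$ as $\Ox[\X_{i}]\bigl(-\sum_{j,k}N_{j,k}\bigr)$ and $\Ox[\X_{j}]\bigl(\sum_{k}N_{j,k}\bigr)$. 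Your closing remark that self-nodes of $\X_{i}$ contribute to neither formula is also the intended reading of the sums.
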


\paragraph{Abstract spin stable curves.}

Following the article of Cornalba \cite{MR1082361}, we now extend the notion of spin structure to the case of
stable curves.  Recall that a spin structure is a pair $(\X,\mathcal{L})$, where $\X$ is a
smooth curve and $\mathcal{L}$ is a theta characteristic on $\X$.
The following curves are the base of the construction.
\begin{defn}
Let $\bar\X$ be a semi stable curve and $\X$ its stable model. A {\em exceptional component} of $\X$ is an
irreducible component which is contracted by the map $\pi:\bar{\X}\to \X$. The non-exceptional components of
$\bar\X$ are called the {\em stable components}.

  A {\em decent curve} is a semi stable curve in which every exceptional component  meets precisely two other
irreducible components such that two exceptional curves are not allowed to meet. In particular, the
exceptional
components have no self intersection.
\end{defn}

We can think of decent curves as stable curves with some of its nodes blowed up. 
Now we can define the notion of spin structure on decent curves.

\begin{defn}\label{definition:spinCurveb}
 A {\em spin curve} is a triple $(\X,\mathcal{L},\alpha)$, where $\X$ is a decent curve, 
$\mathcal{L}$ is a line bundle of degree $g-1$ on $\X$ and $\alpha$ is a map from $\mathcal{L}^{\otimes 2}$ 
to the dual sheave $\dualsheave$, which satisfies the following two properties.
\begin{itemize}
 \item [i)] The line bundle $\mathcal{L}$ has degree $1$ on every exceptional component of $\X$.
 \item [ii)]  The map $\alpha$ is not zero at a general point of every stable component of $\X$.
\end{itemize}
\end{defn}

Now we explain why this is the right generalisation of the notion of smooth spin curves.

 First of all it is easy to verify that for  smooth curves,
this definition coincide  with the usual one, since $\alpha$ is uniquely determined by $\mathcal{L}$. 

Let $\X$ be a curve of compact type and $\mathcal{L}$ a spin structure on it. It follows easily from the
definition of spin structures that the restriction of $\mathcal{L}$ to every irreducible component $\X_{i}$ of
$\X$ of genus $g\geq 1$ is a theta-characteristic
on $\X_{i}$. But the sum of the degrees of these restrictions is the genus of
$\X$ minus the number of irreducible components of $\X$.
 To have a line bundle of degree $g-1$, the curve $\X$ has to be a decent curve with a projective line at
every node.

An expected property of the notion of spin structure is that there exist $2^{2g}$ isomorphism classes of spin
structures on a given
decent curve.
However, there exist in general infinitely many non isomorphic line bundles $\mathcal{L}$ satisfying the
first
part of Definition~\ref{definition:spinCurveb} (this follows from the exact sequence
\eqref{equation:suiteExLongFibDroiteNodal}). The morphism~$\alpha$ rigidify this notion and the
following proposition shows that it gives the right number of spin structure on a decent curve.

\begin{prop}\label{proposition:nbrThetaChar}(\cite[Paragraph~6]{MR1082361})
Let $\X$ be a stable curve, then the number of non isomorphic spin structures on (the set of decent curves
stably equivalent to) $\X$ is $2^{2g}$. Moreover, the number of even ones is $2^{g-1}(2^{g}+1)$ and the number
of odd ones is  $2^{g-1}(2^{g}-1)$. 
\end{prop}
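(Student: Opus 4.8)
The plan is to reduce the count of spin structures on a decent curve to the classical count on a smooth curve by a two-step degeneration argument, following Cornalba. First I would fix a stable curve $\X$ and reduce to the case where $\X$ has a single node $N$: since the set of decent curves stably equivalent to $\X$ is built by independently blowing up each node, an induction on the number of nodes will suffice, the inductive step being local at one node. So write $\X_0$ for the stable model and $\X$ for the decent curve obtained by inserting a projective line $E$ at $N$, meeting the rest of the curve at two points $N_1, N_2$.

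Next I would use the exact sequence~\eqref{equation:suiteExLongFibDroiteNodal} to parametrize candidate line bundles. A line bundle $\mathcal{L}$ on $\X$ of degree $g-1$, with degree $1$ on $E$, is determined by a theta-type line bundle on the partial normalization together with one nonzero complex gluing parameter at each node of $\X$ lying on $E$ (there are two such nodes, $N_1$ and $N_2$). The morphism $\alpha:\mathcal{L}^{\otimes 2}\to\dualsheave$ being nonzero at a general point of each stable component forces, after restricting to $E\cong\PP^1$, that $\mathcal{L}|_E=\Ox[\PP^1](1)$ and that $\alpha|_E$ vanishes identically (since $\dualsheave|_E=\Ox[\PP^1](-2+2)=\Ox[\PP^1]$ has no section with the required vanishing pattern matching $\mathcal{L}^{\otimes 2}|_E=\Ox[\PP^1](2)$, so $\alpha|_E=0$ is forced and this is consistent). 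On the stable part, the data $(\mathcal{L}|_{\X_0},\alpha|_{\X_0})$ must be such that $\mathcal{L}|_{\X_0}^{\otimes 2}(N_1+N_2)\cong\dualsheave[\X_0]$; equivalently $\mathcal{L}|_{\tilde\X_0}$ is a theta characteristic twisted appropriately. The key linear-algebra point is then that the gluing parameters at $N_1$ and $N_2$ can be normalized — two nonzero scalars modulo the $\CC^\ast$ acting on $\mathcal{L}|_E$ leaves exactly one scalar, but the requirement that $\alpha$ glue correctly across the node pins this down, so the isomorphism classes of spin structures on $\X$ are in bijection with the theta characteristics on the normalization $\tilde\X_0$ at the two new marked points, i.e. with the $2^{2g}$ square roots of $\dualsheave[\tilde\X_0](N_1+N_2)$ in $\Pic(\tilde\X_0)$.

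Finally, for the parity count I would invoke the semicontinuity of $h^0(\mathcal{L})\bmod 2$ under the degeneration: a spin structure on $\X$ is declared even or odd according to the parity of $h^0(\X,\mathcal{L})$, and one checks $h^0(\X,\mathcal{L})=h^0(\X_0,\mathcal{L}')$ for the corresponding spin structure $\mathcal{L}'$ on the partial smoothing, using that $E$ contributes no sections (its $\Ox[\PP^1](1)$ has sections, but the gluing condition at $N_1,N_2$ kills the extra dimension) — this is exactly the content of \cite[Paragraph~6]{MR1082361}. Hence the even/odd split $2^{g-1}(2^g+1)$ versus $2^{g-1}(2^g-1)$ is inherited from the classical Atiyah--Mumford count on a smooth curve of genus $g$ via a degeneration to $\X$, and the induction over the nodes completes the argument. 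The main obstacle I anticipate is the bookkeeping in the middle step: verifying that the gluing/$\alpha$-compatibility conditions at $N_1$ and $N_2$ cut down the a priori infinite family of line bundles (from~\eqref{equation:suiteExLongFibDroiteNodal}) to precisely the finite set of theta characteristics, with no over- or under-counting of isomorphism classes, and checking that the parity is genuinely constant under the smoothing rather than merely semicontinuous.
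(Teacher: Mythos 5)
The paper itself offers no proof of this proposition: it is quoted verbatim from Cornalba's paper, so the only thing to assess is whether your sketch would actually establish the count, and it would not. The central gap is that you only ever consider the decent curve in which the node is blown up, whereas the statement counts spin structures on \emph{all} decent curves stably equivalent to $\X$, and for a non-separating node the stable curve $\X$ itself (no exceptional component) carries spin structures: line bundles with $\mathcal{L}^{\otimes 2}\cong\omega_{\X}$, which pull back to square roots of $\omega_{\tilde\X}(N_{1}+N_{2})$ on the normalisation together with a gluing scalar whose square is fixed by $\alpha$, hence a genuine extra factor of $2$. These configurations contribute $2^{2g-1}$ classes and cannot be seen on the fully blown-up model, so your induction "blow up each node" misses them outright. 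Your bijection on the blown-up curve is also miscounted and internally inconsistent: from your own restriction computation $\mathcal{L}|_{\X_{0}}^{\otimes2}(N_{1}+N_{2})\cong\omega_{\X_{0}}$ the relevant square roots are of $\omega_{\tilde\X_{0}}$, not of $\omega_{\tilde\X_{0}}(N_{1}+N_{2})$; and in any case the number of square roots of a fixed line bundle on a curve of genus $g-1$ is $2^{2(g-1)}$, not $2^{2g}$, while for a separating node $\omega_{\tilde\X_{0}}(N_{1}+N_{2})$ has odd degree on each component and admits no square root at all. Finally, the claim that "$\alpha$ pins down the gluing" at the exceptional component is wrong: $\alpha$ vanishes identically on $E$ and at $N_{1},N_{2}$, so it imposes no condition on the two gluing scalars; what normalises them is the $\CC^{\ast}$ of inessential automorphisms of $(E,0,\infty)$, and conversely at a node that is \emph{not} blown up $\alpha$ is what cuts the $\CC^{\ast}$ of gluings down to two. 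Getting $2^{2g}$ requires summing over the admissible subsets of nodes supporting exceptional components, with the $\pm$ gluings at the remaining nodes and with Cornalba's multiplicity/automorphism bookkeeping at the exceptional ones; none of this is in your sketch, and without it the numbers do not add up even for a single node.

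The parity step has a similar problem: the parity is $h^{0}(\mathcal{L})\bmod 2$, and upper semicontinuity of $h^{0}$ says nothing modulo $2$; the deformation invariance of this parity (Mumford--Atiyah on smooth curves, extended by Cornalba across the boundary) is exactly the nontrivial input you would have to quote, not something you can extract from the degeneration for free, and the split $2^{g-1}(2^{g}+1)$ versus $2^{g-1}(2^{g}-1)$ on a fixed stable curve further requires knowing how the parities distribute over all the boundary configurations, including the non-blown-up ones you omitted. The honest fix is either to cite the precise fibre description and counting argument in Cornalba's Paragraph~6, or to redo the fibre analysis node by node as indicated above, keeping track of which nodes carry exceptional components, of the gluing data at the others, and of the inessential automorphisms.
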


Before recalling that all these properties are well behaved in families, we discuss a basic but typical
example.

\begin{ex}\label{exemple:spin1}
 Let $\X$ be a curve of genus $g$, which is the union of $\X_{1}$ and $\X_{2}$ of genus
$i$ and $g-i$ meeting at a unique point $N$. 

Let us blow up $\X$ at $N$ and denote by $E$ the exceptional component. Let~$\mathcal{L}$ be a line bundle on
$\bar{\X}$ such that $\mathcal{L}|_{\X_{1}}$ and $\mathcal{L}|_{\X_{2}}$ are theta characteristics on~$\X_{1}$
and $\X_{2}$ respectively, and $\mathcal{L}|_{E}=\Ox[E](1)$.
The degree  of $\mathcal{L}$ is $g-1$ on $\bar\X$. The morphism
$\alpha:\mathcal{L}^{2}\to\dualsheave[\bar{X}]$ vanishes on $E$ and is the
isomorphism between $\mathcal{L}_{i}^{2}$ and~$\dualsheave[\X_{i}]$ on $\X_{i}$.
             
Moreover, the spin structure $\mathcal{L}$ is odd if the parities of $\mathcal{L}|_{\X_{1}}$ and 
$\mathcal{L}|_{\X_{2}}$ are distinct, and even otherwise.
\end{ex}

Let $\barspin$ be the {\em moduli space of stable spin curves}. It is a natural compactification which
projects to
$\barmoduli$.  Let us recall some important properties of $\barspin$.

\begin{prop}\label{proposition:Cor5.2}(\cite[Proposition~5.2]{MR1082361})
 The variety $\barspin$ is normal, projective and  is the disjoint union of the even part $\barspin^{+}$  and
the odd part $\barspin^{-}$. Moreover the  forgetful map $\pi:\barspin\to\barmoduli$ is a finite map.
\end{prop}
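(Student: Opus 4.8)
The plan is to realise $\barspin$ as a proper moduli space parametrising stable spin curves (Definition~\ref{definition:spinCurveb}), and then to extract every assertion of the proposition from the structure of the forgetful map $\pi\colon\barspin\to\barmoduli$. Concretely I would proceed in the order: (i) write down an \'etale-local model of $\pi$ near a boundary point; (ii) deduce that $\pi$ is finite; (iii) deduce projectivity; (iv) deduce normality; (v) establish the decomposition $\barspin=\barspin^{+}\sqcup\barspin^{-}$ with the stated cardinalities.

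For (i), one works over the versal deformation of a stable spin curve $(\X,\mathcal{L},\alpha)$. Away from the boundary a theta characteristic lifts uniquely along deformations of the curve, so there $\pi$ is \'etale of degree $2^{2g}$ over $\moduli$ by the classical count of theta characteristics. The new point at the boundary is that a theta characteristic on the general fibre of a one-parameter smoothing $\famcurv\to\Delta$ of a node need not extend across the special fibre: by Theorem~\ref{theoreme:ProlongFibreEnDroite}, and Theorem~\ref{theoreme:ProlongFibreEnDroiteGen} in the non-compact-type case, the line bundle admits a $\ZZ$-indexed family of extensions differing by twists at the node, and once a $\PP^{1}$-bridge is inserted at that node none of them has the degree behaviour required in Definition~\ref{definition:spinCurveb}. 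The remedy, following Cornalba \cite{MR1082361}, is a quadratic base change $t=s^{2}$ in the smoothing parameter: on the pulled-back family one then does find, on the decent model carrying a $\PP^{1}$ of degree $1$ at the node, a genuine family of stable spin curves. This produces, \'etale-locally near the boundary point, a chart on which $\barspin$ is a finite quotient of a smooth formal scheme $\Spec\CC[[s_{1},\dots,s_{k},u_{1},\dots]]$ and $\pi$ is given by $t_{i}=s_{i}^{2}$ for the smoothing parameters $t_{i}$ of the $k$ blown-up nodes, the remaining coordinates mapping identically.

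Granting (i), step (ii) is immediate: $\pi$ is quasi-finite because, by Proposition~\ref{proposition:nbrThetaChar}, each stable curve carries exactly $2^{2g}$ spin structures on its decent models, and it is proper because the base-change construction of (i) verifies the valuative criterion, namely every family of smooth spin curves over a punctured disc extends, after a finite base change, to a family of stable spin curves. A proper quasi-finite morphism is finite, so $\pi$ is finite. Step (iii): a finite morphism is projective and $\barmoduli$ is projective, so $\barspin$ is projective. Step (iv): normality is \'etale-local, and in the local model of (i) the space $\barspin$ is the quotient of a smooth (hence normal) formal scheme by a finite group of automorphisms, so it is normal.

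Finally, for (v) I would take the parity of a stable spin curve to be $h^{0}(\X,\mathcal{L})\bmod 2$; on the smooth locus this agrees with the classical parity of a theta characteristic, which is locally constant there by the argument of Mumford and Atiyah. To see it stays locally constant across the boundary, one uses the local model: in a smoothing the jump of $h^{0}$ is controlled by a symmetric pairing and is therefore even, so the parity does not change — concretely, Example~\ref{exemple:spin1} exhibits how the parity of a boundary spin curve is assembled additively from the parities of its components and matches that of a nearby smooth spin curve. Hence $\barspin^{+}$ and $\barspin^{-}$ are open and closed, which gives the disjoint union, and the counts $2^{g-1}(2^{g}+1)$ and $2^{g-1}(2^{g}-1)$ then follow from the corresponding count on a single smooth curve together with the connectedness of $\barmoduli$. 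I expect the main obstacle to be steps (i)--(ii): pinning down that spin structures extend over degenerations only after blowing up the node and performing the quadratic base change, and checking that the resulting chart really is a finite quotient of a smooth scheme; once that is in place, finiteness, projectivity, normality, and the even/odd splitting are essentially formal.
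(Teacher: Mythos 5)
This proposition is not proved in the paper at all: it is quoted verbatim from Cornalba's article \cite[Proposition~5.2]{MR1082361}, so the "paper's proof" is simply that citation. Your outline is essentially Cornalba's own argument — blow up the relevant nodes, pass to the quadratic base change $t_i=s_i^2$ to get finite-quotient-of-smooth local charts, deduce finiteness, projectivity and normality, and use constancy of the parity across the boundary for the splitting $\barspin=\barspin^{+}\sqcup\barspin^{-}$ — so it follows the same route as the cited source, with the genuinely hard steps (the local model and the parity constancy) being exactly the lemmas Cornalba establishes there.
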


In the rest of this section, we will not precise the morphism $\alpha$ and we will suppose that  our spin
structures are square roots of the canonical bundle.

\paragraph{Spin structure associated to limit differentials on curves of compact type.}

In this paragraph we compute the spin structure associated to a limit differential  (see
Definition~\ref{definiton:diffLimite}) on a curve of compact type
which has only zeros and poles of even orders. But a limit differential of type $(2l_{1},\cdots,2l_{n})$
on a stable marked curve of compact type is determined, up to
multiplication by constants, by the marked curve (see
Corollary~\ref{corollaire:uniciteLimDiffSurTypeCompacte}). Hence the invariant that we will construct will
only depends on  the marked curve, and be well defined for the limit pointed differentials of compact type.

On a smooth curve $\X$, we can associate a spin structure to an Abelian differential
with only even orders of zeros by
\begin{equation} \label{fonction:StrateGeneraleVersSpin}
\varphi:\omoduli[g](2l_{1},\cdots,2l_{n})\to\spin;\quad \left(\X,\omega \right)\mapsto
\left(\X,\mathcal{L}_{\omega}:=\Ox\left(\frac{1}{2}\divisor{\omega}\right)\right).
\end{equation}
We extend this definition to the case of limit differentials on curves of compact type.

\begin{defn}\label{definition:spinStructureSurCourbeTypeCompact}
 Let $(\X,\omega,Z_{1},\cdots,Z_{n})$ be a limit differential in the closure of the stratum
$\omoduliincp{2l_{1},\cdots,2l_{n}}$. Let $\pi:\bar\X\to\X$ be the blow-up of $\X$ at every node of $\X$. Then
the {\em spin structure $\mathcal{L}_{\omega}$ associated to $\omega$} is defined by the following
restrictions on $\bar\X$.
\begin{itemize}
 \item If $E$ is an exceptional component of $\bar\X$, then  $\mathcal{L}_{\omega}|_{E}=\Ox[E](1)$.
 \item If $\X_{i}$ is an irreducible component of $\X$, then
$\mathcal{L}_{\omega}|_{\X_{i}}=\Ox[\X_{i}]\left(\frac{1}{2}\divisor{\omega}\right)$.
\end{itemize}
\end{defn}

We now verify that the line bundle $\mathcal{L}_{\omega}$ associated to $\omega$ is indeed a spin structure
in the sense of Definition~\ref{definition:spinCurveb}.
\begin{proof}
Let $\X_{i}$ be an irreducible component of $\X$. The line bundle $\mathcal{L}_{\omega}|_{\X_{i}}$ is by
definition a square root
of the canonical bundle of $\X_{i}$. It remains to check that the degree of $\mathcal{L}_{\omega}$ is $g-1$.
We denote by $\nodeSet[e]\subset\nodeSet$ the subset of nodes of $\X$ which have been blown up to give the
decent
curve. At each node $N_{1}\sim N_{2}$, the compatibility
condition~\eqref{equation:conditionDeCompatibiliteGeneral}
$\deg_{N_{1}}(\omega)+\deg_{N_{2}}(\omega)=-2$ implies that 
\begin{eqnarray*}
\deg(\mathcal{L}_{\omega}) &=  \sum_{\X_{i}\in\Irr(\X)} \deg(\mathcal{L}_{\omega}|_{\X_{i}}) +
\#\nodeSet[e] \\
 &= g-1-\#\nodeSet +\# \nodeSet[e].
\end{eqnarray*}
It follows from this equation that $\deg(\mathcal{L}_{\omega})=g-1$ if and only if every node of~$\X$ is blown
up.
\end{proof}

Of course, this notion could be useful only if it behaves well in families. This is the content of the
following lemma.
\begin{lemma}\label{lemme:spinEnFamille}
Let $(f:\famcurv\to\Delta^{\ast},\famomega,\seczero_{1},\cdots,
\seczero_{n})$ be a family of pointed
differentials in $\omoduliincp{2l_{1},\cdots,2l_{n}}$. Let
$(f:\famcurv\to\Delta^{\ast},\mathcal{L}_{\famomega}\to\famcurv)$ be the associated family of theta
characteristics inside $\spin$.

If the stable limit of $\famcurv$ is of compact type, then the spin
structure associated to the pointed limit differentials of this family  coincides
with the restriction to the special curve of the completion of  $\mathcal{L}_{\famomega}$ inside $\barspin$.
\end{lemma}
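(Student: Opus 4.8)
The goal is to identify two spin structures on $\bar\X$: the one $\mathcal{L}_{\omega}$ attached by Definition~\ref{definition:spinStructureSurCourbeTypeCompact} to the limit differential $\omega$ of the family, and the one $\mathcal{M}$ appearing as the special fibre of the completion $\overline{\mathcal{L}_{\famomega}}\colon\Delta\to\barspin$. First I would make the setup precise. Since $\pi\colon\barspin\to\barmoduli$ is finite (Proposition~\ref{proposition:Cor5.2}), the map $\Delta^{\ast}\to\barspin$, $t\mapsto(\famcurv(t),\mathcal{L}_{\famomega}(t))$, extends to $\Delta$ after a finite base change $t\mapsto t^{m}$; performing semistable reduction on $\famcurv$ we obtain a family $\famcurv\to\Delta$ (with smooth total space, and sections $\seczero_{j}$ meeting the special fibre at the smooth points $Z_{j}$) to which Theorem~\ref{theoreme:ProlongFibreEnDroite} applies, because its special fibre is of compact type: $\X$ is, and blowing up a necessarily separating node keeps the dual graph a tree. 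Contracting the exceptional components irrelevant to the restrictions below, we may take this special fibre to be $\bar\X$, the blow-up of $\X$ at every node, so that $\mathcal{M}$ is a line bundle on $\bar\X$. As $\bar\X$ is of compact type, a line bundle on it is determined by its restrictions to the irreducible components (the first Example after~\eqref{equation:suiteExLongFibDroiteNodal}), and since here spin structures are identified with their underlying line bundle, it suffices to prove $\mathcal{M}|_{\mathscr{C}}\cong\mathcal{L}_{\omega}|_{\mathscr{C}}$ for every component $\mathscr{C}$ of $\bar\X$.

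For an exceptional $\mathscr{C}=E\cong\PP^{1}$ this is immediate from Definition~\ref{definition:spinCurveb}(i): $\mathcal{M}|_{E}$ has degree $1$, hence $\mathcal{M}|_{E}\cong\Ox[E](1)=\mathcal{L}_{\omega}|_{E}$. The substance is the restriction to a stable component $\X_{i}$. The key remark is that $\mathcal{M}$ is simultaneously a Cornalba spin structure on $\bar\X$ and an extension to $\famcurv$ of the single line bundle $\mathcal{L}_{\famomega}|_{\Delta^{\ast}}=\Ox[\famcurv/\Delta^{\ast}]\big(\sum_{j}l_{j}\seczero_{j}\big)$. As a spin structure (see the discussion following Definition~\ref{definition:spinCurveb}), $\mathcal{M}$ has degree $1$ on every exceptional component and degree $g_{i}-1$ on every $\X_{i}$. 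By Theorem~\ref{theoreme:ProlongFibreEnDroite} the extension of a fixed line bundle with prescribed multidegree is unique; comparing $\mathcal{M}$ with the obvious extension $\mathscr{L}_{0}:=\Ox[\famcurv]\big(\sum_{j}l_{j}\seczero_{j}\big)$, which has degree $d_{i}:=\sum_{Z_{j}\in\X_{i}}l_{j}$ on $\X_{i}$ and $0$ on each $E$, and using the twisting rules~\eqref{equation:TenseurFibreDroiteCourbeSpecial1}--\eqref{equation:TenseurFibreDroiteCourbeSpecial2}, one obtains integers $c_{\mathscr{C}}$ (one per component of $\bar\X$, defined up to a common constant) with
\[
\mathcal{M}|_{\X_{i}}\;\cong\;\mathscr{L}_{0}|_{\X_{i}}\otimes\Ox[\X_{i}]\Big(\textstyle\sum_{N\in\nodeSet[\X_{i}]}(c_{E}-c_{i})\,N\Big)\;=\;\Ox[\X_{i}]\Big(\textstyle\sum_{Z_{j}\in\X_{i}}l_{j}Z_{j}+\sum_{N}(c_{E}-c_{i})\,N\Big),
\]
where $E$ is the exceptional component through the node $N$, and the $c_{\mathscr{C}}$ are the unique solution of the discrete ``Laplace'' system on the tree $\bar\X$ recording the prescribed change of multidegree, i.e. $\sum_{N\in\nodeSet[\X_{i}]}(c_{E(N)}-c_{i})=g_{i}-1-d_{i}$ at each $\X_{i}$ and $(c_{i}-c_{E})+(c_{i'}-c_{E})=1$ at each exceptional $E$ lying between $\X_{i}$ and $\X_{i'}$.

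It remains to show $c_{E}-c_{i}=\tfrac12\ord_{N}(\omega|_{\X_{i}})$ for every $N\in\nodeSet[\X_{i}]$. Since $\bar\X$ is a tree, the values $c_{E}-c_{i}:=\tfrac12\ord_{N}(\omega|_{\X_{i}})$ prescribed on its oriented edges define a vertex function $c_{\mathscr{C}}$, unique up to a constant, and I would check that it solves the system above. At a vertex $\X_{i}$ one computes $\sum_{N\in\nodeSet[\X_{i}]}\tfrac12\ord_{N}(\omega|_{\X_{i}})=\tfrac12\big(\deg\divisor{\omega|_{\X_{i}}}-\sum_{Z_{j}\in\X_{i}}2l_{j}\big)=g_{i}-1-d_{i}$, using that $\divisor{\omega|_{\X_{i}}}$ has degree $2g_{i}-2$; at a vertex $E$ between $\X_{i}$ and $\X_{i'}$ the two contributions sum to $-\tfrac12\big(\ord_{N}(\omega|_{\X_{i}})+\ord_{N}(\omega|_{\X_{i'}})\big)=1$ by the Compatibility Condition~\eqref{equation:conditionDeCompatibiliteGeneral}. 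By uniqueness of the solution (Theorem~\ref{theoreme:ProlongFibreEnDroite}) this is the correct function, whence $\mathcal{M}|_{\X_{i}}\cong\Ox[\X_{i}]\big(\sum_{Z_{j}\in\X_{i}}l_{j}Z_{j}+\tfrac12\sum_{N}\ord_{N}(\omega|_{\X_{i}})N\big)=\Ox[\X_{i}]\big(\tfrac12\divisor{\omega|_{\X_{i}}}\big)=\mathcal{L}_{\omega}|_{\X_{i}}$. Here $\mathcal{L}_{\omega}$ is well defined because, by Corollary~\ref{corollaire:uniciteLimDiffSurTypeCompacte}, on a compact-type curve the limit differential — in particular its orders at the nodes — depends only on the marked curve, up to scaling on each component. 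Together with the exceptional case and the first paragraph's reduction this gives $\mathcal{M}\cong\mathcal{L}_{\omega}$, and the matching of the restrictions over $\Delta^{\ast}$ is built into the construction, so there is nothing further to verify.

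The main obstacle is organisational rather than conceptual: arranging the base change and semistable reduction so that Theorem~\ref{theoreme:ProlongFibreEnDroite} applies with special fibre (stably equivalent to) $\bar\X$, and then keeping the signs straight when resolving the multidegree change into the component twists $c_{\mathscr{C}}$ — the single-node case $\X=\X_{1}\cup\X_{2}$ of genera $g_{1},g_{2}$ is a useful consistency check, where $\ord_{N}(\omega|_{\X_{1}})=-2g_{2}$ and $c_{E}-c_{1}=-g_{2}$. The one genuinely enlightening point is that, among all extensions of the fixed bundle $\mathcal{L}_{\famomega}|_{\Delta^{\ast}}$, the spin extension $\mathcal{M}$ is pinned down by a multidegree that is known abstractly from Cornalba's theory, which is exactly what reduces the statement to the bookkeeping above.
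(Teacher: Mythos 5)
Your proof is correct and follows essentially the same route as the paper: existence of the limit spin structure on $\bar\X$ via the finiteness of $\barspin\to\barmoduli$ (Proposition~\ref{proposition:Cor5.2}) combined with the uniqueness of extensions with prescribed multidegree from Theorem~\ref{theoreme:ProlongFibreEnDroite}. The only difference is that you spell out, through the twist bookkeeping on the dual tree (the degree count on each component plus the Compatibility Condition~\eqref{equation:conditionDeCompatibiliteGeneral}), the step the paper compresses into the one-line assertion that the bundle of Definition~\ref{definition:spinStructureSurCourbeTypeCompact} ``is such an extension''.
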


\begin{proof}
Let $ \left(\famcurv,\famomega,\seczero_{1},\cdots,\seczero_{n})\right)$ be a family inside
$\omoduliincp{2l_{1},\cdots,2l_{n}}^{\epsilon}$, and $(\X,\omega,Z_{1},\cdots,Z_{n})$ be its limit
differential.
Above $\Delta^{\ast}$, the associated theta characteristics are given by the bundle
$\Ox[\famcurv](\frac{1}{2}\divisor{\famomega})$. Let us remark that according to
Proposition~\ref{proposition:Cor5.2}, there exists an extension of $\mathcal{L}$ above the decent curve
$\bar\X$ in such
a way that $\mathcal{L}|_{\bar\X}$ is a spin structure on $\bar\X$. By
Theorem~\ref{theoreme:ProlongFibreEnDroite}, there exists only one such extension. 
Since the line bundle defined in
Definition~\ref{definition:spinStructureSurCourbeTypeCompact} is such extension, this concludes the proof .
\end{proof}

A direct application of this result, is the fact that the {incidence variety compactifications} of the even
and odd components of $\omoduliincp{2l_{1},\cdots,2l_{n}}$ remain disjunct above the set of curves of
compact type.

\begin{theorem}\label{theoreme:BordDesStratesDansSpin}
  Let $n\geq3$ and $(\X,\omega,Z_{1},\cdots,Z_{n})$ be a stable differential of compact type in 
$\obarmoduliincp{2l_{1},\cdots,2l_{n}}$. Then the parity of the spin structure~$\mathcal{L}_{\omega}$
associated
to $(\X,\omega,Z_{1},\cdots,Z_{n})$ is $\epsilon$ if and only if $(\X,\omega,Z_{1},\cdots,Z_{n})$ is in
$\obarmoduliincp{2l_{1},\cdots,2l_{n}}^{\epsilon}$.
\end{theorem}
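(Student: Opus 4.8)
The plan is to deduce Theorem~\ref{theoreme:BordDesStratesDansSpin} from Lemma~\ref{lemme:spinEnFamille} together with the properties of the moduli space of spin curves recalled in Proposition~\ref{proposition:Cor5.2}. First I would observe that it suffices to treat a pointed differential $(\X,\omega,Z_{1},\cdots,Z_{n})$ lying in the closure of the stratum which actually supports a limit differential structure: by Corollary~\ref{corollaire:uniciteLimDiffSurTypeCompacte}, since $(\X,Z_{1},\cdots,Z_{n})$ is in the image of $\obarmoduliincp{2l_{1},\cdots,2l_{n}}$ under the forgetful map, there is a (unique up to scaling on each component) limit differential on $\X$, and the spin structure $\mathcal{L}_{\omega}$ of Definition~\ref{definition:spinStructureSurCourbeTypeCompact} is well defined and independent of the choice. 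So $\mathcal{L}_{\omega}$ is genuinely an invariant of the point of $\obarmoduliincp{2l_{1},\cdots,2l_{n}}$ over curves of compact type.

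Next, for the ``only if'' direction I would start from a family $(f:\famcurv\to\Delta^{\ast},\famomega,\seczero_{1},\cdots,\seczero_{n})$ inside the stratum $\omoduliincp{2l_{1},\cdots,2l_{n}}$ whose stable limit is $(\X,\omega,Z_{1},\cdots,Z_{n})$, which exists by the definition of the incidence variety compactification. Over $\Delta^{\ast}$ this produces a family of theta characteristics $\mathcal{L}_{\famomega}=\Ox[\famcurv](\tfrac12\divisor{\famomega})$, hence a map $\Delta^{\ast}\to\spin$. Since $\barspin$ is projective (Proposition~\ref{proposition:Cor5.2}) this map extends, after a finite base change, to $\Delta\to\barspin$, and the parity $\epsilon$ of the general fibre is constant along the family because $\barspin$ is the disjoint union $\barspin^{+}\sqcup\barspin^{-}$. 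By Lemma~\ref{lemme:spinEnFamille}, since the limit curve $\X$ is of compact type, the restriction of this extension to the special fibre $\bar\X$ is exactly $\mathcal{L}_{\omega}$. Therefore the parity of $\mathcal{L}_{\omega}$ equals $\epsilon$, which is what we had to show: a point of $\obarmoduliincp{2l_{1},\cdots,2l_{n}}^{\epsilon}$ over a compact type curve has associated spin structure of parity $\epsilon$.

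For the converse I would argue that the two sets $\obarmoduliincp{2l_{1},\cdots,2l_{n}}^{+}$ and $\obarmoduliincp{2l_{1},\cdots,2l_{n}}^{-}$ are disjoint and jointly cover the locus of compact type differentials in $\obarmoduliincp{2l_{1},\cdots,2l_{n}}$: joint covering is automatic since $\omoduliincp{2l_{1},\cdots,2l_{n}}=\omoduliincp{2l_{1},\cdots,2l_{n}}^{+}\sqcup\omoduliincp{2l_{1},\cdots,2l_{n}}^{-}$ and closure commutes with finite unions, while disjointness over the compact type locus is precisely the content of the ``only if'' direction, because a point lying in both closures would have $\mathcal{L}_{\omega}$ simultaneously even and odd. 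Hence over the compact type locus the two closures partition the set, and a compact type pointed differential with $\mathcal{L}_{\omega}$ of parity $\epsilon$ cannot lie in the other component; being in the stratum's closure it must lie in $\obarmoduliincp{2l_{1},\cdots,2l_{n}}^{\epsilon}$.

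The main obstacle I anticipate is making the extension-and-restriction step fully rigorous: one must check that the section $\Delta^{\ast}\to\spin$ really does extend over $0$ (this needs the finite base change allowed by properness of $\barspin\to\barmoduli$, Proposition~\ref{proposition:Cor5.2}) and, more delicately, that the limit in $\barspin$ has underlying curve the decent curve $\bar\X$ obtained by blowing up \emph{every} node of $\X$ — this is exactly where compact type is used, via the degree computation in the proof following Definition~\ref{definition:spinStructureSurCourbeTypeCompact} and the uniqueness statement of Theorem~\ref{theoreme:ProlongFibreEnDroite}. Once one knows the limiting spin curve is supported on the correct blow-up, Theorem~\ref{theoreme:ProlongFibreEnDroite} pins down the line bundle uniquely and Lemma~\ref{lemme:spinEnFamille} identifies it with $\mathcal{L}_{\omega}$; the role $n\geq3$ plays is only to guarantee the marked curves are stable so that $\modulin$ and the constructions above make sense.
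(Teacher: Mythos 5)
Your argument is correct and is essentially the paper's own proof: the paper likewise combines Corollary~\ref{corollaire:uniciteLimDiffSurTypeCompacte} (uniqueness of the limit differential, so $\mathcal{L}_{\omega}$ is a well-defined invariant of the point) with Lemma~\ref{lemme:spinEnFamille} and the decomposition $\barspin=\barspin^{+}\sqcup\barspin^{-}$ of Proposition~\ref{proposition:Cor5.2}, and your extension-and-partition bookkeeping only makes explicit what the paper leaves implicit. The one inaccuracy is your closing remark on $n\geq3$: it is not needed for stability of the marked curve, but rather to guarantee that the stratum $\omoduli(2l_{1},\cdots,2l_{n})$ has exactly two connected components distinguished by parity (no hyperelliptic component), which is precisely what your covering-and-disjointness step for the converse uses.
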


Let us remark that Theorem~\ref{theoreme:BordDesStratesDansSpin} remains true with minor modifications even
for $n\leq 2$ zeros. But the fact that in these cases
the strata contain three connected components complicates the statement.
 
\begin{proof}
 By Corollary~\ref{corollaire:uniciteLimDiffSurTypeCompacte}, we can associated a unique (up to multiplicative
constants) limit differential to $(\X,\omega,Z_{1},\cdots,Z_{n})$. By Lemma~\ref{lemme:spinEnFamille}, this
limit differential has parity $\epsilon$ if and only if it lies in the
closure of $\omoduliincp{2l_{1},\cdots,2l_{n}}^{\epsilon}$
\end{proof}

Let us conclude this paragraph by  describing  the spin structures associated to the limit differentials
of
the minimal strata above the generic curves of $\delta_{i}$ for~$i\geq1$.

\begin{prop}\label{proposition:BordStratesMinimalDansSpin}
Let $\X:=\X_{1}\cup\X_{2}/N_{1}\sim N_{2}$ be a curve in $\delta_{i}$ and let
$\bar\X:=\X_{1}\cup E\cup\X_{2}$ the blow-up of $\X$ at the node. 

The spin structure $\mathcal{L}$ associated to the limit differential $(\X,\omega,Z)$ in the
boundary of the minimal stratum is given by
\begin{equation}\label{equation:LimitSpinMinimalCasZeroLisse}
 \mathcal{L}|_{\X_{i}}=\Ox[\X_{i}]((g-1)Z-g_{j}N_{i}),\quad
\mathcal{L}|_{\X_{j}}=\Ox[\X_{j}]((g_{j}-1)N_{j}),\quad
\mathcal{L}|_{E}=\Ox[E](1),
\end{equation}
where $(i,j)=(1,2)$ or $(i,j)=(2,1)$.
\end{prop}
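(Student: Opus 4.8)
The plan is to identify the limit differential $(\X,\omega,Z)$ explicitly on each component, compute its divisor, and then read off $\mathcal{L}_\omega = \Ox(\tfrac12 \divisor\omega)$ componentwise using Definition~\ref{definition:spinStructureSurCourbeTypeCompact}. First I would pin down the candidate differential. Since $\X \in \delta_i$ is in the boundary of the minimal stratum $\omoduli(2g-2)$, the marked point $Z$ (the zero of order $2g-2$) must lie on one of the two components; say $Z \in \X_i$. By the Compatibility Condition~\eqref{equation:conditionDeCompatibiliteGeneral} at the node $N_1 \sim N_2$, the orders of $\omega$ at the two branches sum to $-2$. On $\X_j$ (which carries no marked point), the restriction $\omega|_{\X_j}$ is a differential of degree $2g_j - 2$ with all of its zeros concentrated at $N_j$ (it can have at most one other zero or pole, forced to sit at the node, by the leaf-by-leaf argument of Corollary~\ref{corollaire:uniciteLimDiffSurTypeCompacte}); hence $\ord_{N_j}(\omega) = 2g_j - 2$, so $\divisor{\omega|_{\X_j}} = (2g_j-2)N_j$. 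The Compatibility Condition then forces $\ord_{N_i}(\omega) = -2 - (2g_j - 2) = -2g_j$, and since $\omega|_{\X_i}$ has degree $2g_i - 2$ with a zero of order $2g-2$ at $Z$, we get $\divisor{\omega|_{\X_i}} = (2g-2)Z - 2g_j N_i$, consistent because $(2g-2) - 2g_j = 2g_i - 2$.

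Next I would apply the definition of the associated spin structure. Blowing up the node gives $\bar\X = \X_1 \cup E \cup \X_2$, and by Definition~\ref{definition:spinStructureSurCourbeTypeCompact} we set $\mathcal{L}_\omega|_E = \Ox[E](1)$ and $\mathcal{L}_\omega|_{\X_k} = \Ox[\X_k](\tfrac12\divisor{\omega|_{\X_k}})$ for $k = 1,2$. Plugging in the divisors computed above yields
\[
\mathcal{L}_\omega|_{\X_i} = \Ox[\X_i]\!\left((g-1)Z - g_j N_i\right), \qquad
\mathcal{L}_\omega|_{\X_j} = \Ox[\X_j]\!\left((g_j - 1)N_j\right),
\]
which is exactly~\eqref{equation:LimitSpinMinimalCasZeroLisse}. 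Since $Z$ may sit on either component, both labellings $(i,j) = (1,2)$ and $(i,j) = (2,1)$ occur, matching the statement.

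The only genuine content beyond bookkeeping is justifying that $\omega|_{\X_j}$ really has its full canonical divisor supported at $N_j$, i.e.\ that a limit differential on this curve takes this shape; I expect this to be the main (though mild) obstacle. It follows from Corollary~\ref{corollaire:uniciteLimDiffSurTypeCompacte}: $\X_j$ is a leaf of the dual graph of $\X$, it carries no marked points, and so the restriction of any limit differential to $\X_j$ is determined up to scalar with all zeros (and any pole) forced to the unique node; degree reasons ($\deg \omega|_{\X_j} = 2g_j - 2 \geq 0$) rule out a pole and force the zero of order $2g_j - 2$ at $N_j$. I would also remark that one should double-check the existence of such a limit differential — but this is guaranteed since the curve $(\X,Z)$ lies in the image of the incidence variety compactification of the minimal stratum under the forgetful map, so Corollary~\ref{corollaire:uniciteLimDiffSurTypeCompacte} applies directly and delivers both existence and uniqueness up to the constants $c_k$, which do not affect the isomorphism class of $\mathcal{L}_\omega$ on each component.
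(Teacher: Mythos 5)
Your divisor computation and the final appeal to Definition~\ref{definition:spinStructureSurCourbeTypeCompact} are exactly the paper's argument: the paper likewise determines $\omega$ on the component carrying $Z$ (zero of order $2g-2$ at $Z$, pole of order $2g_{j}$ at the node, by degree and the compatibility condition), deduces the zero of order $2g_{j}-2$ at the other branch, and then reads off $\mathcal{L}_{\omega}$ componentwise, with existence and uniqueness up to scalars supplied by Corollary~\ref{corollaire:uniciteLimDiffSurTypeCompacte} just as you say.

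The one step you omit is the one the paper's proof does first: you assert without justification that $Z$ ``must lie on one of the two components''. A priori the zero of order $2g-2$ can collide with the node, in which case the stable marked limit is not $\X$ itself but the semistable model $\X_{1}\cup\PP^{1}\cup\X_{2}$ with $Z$ on the exceptional bridge; this is not a vacuous worry, since precisely this configuration occurs over irreducible curves (Theorem~\ref{theoreme:bordHypCasIrr}, where $Z$ sits on the exceptional divisor). The paper excludes it for curves of compact type by Corollary~\ref{corollaire:zeroJamaisSurLePontFaible}: the would-be differential on the bridge, $\frac{(z-1)^{2g-2}}{z^{2g_{1}}}\dz$, has nonzero residues at both nodes, which would force an adjacent component to carry a meromorphic differential with a single simple pole, an impossibility. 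With that citation inserted at the start, your proof is complete and coincides with the paper's.
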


\begin{proof}
The fact that the point $Z$ is not contained in $E$ has been proved in
Corollary~\ref{corollaire:zeroJamaisSurLePontFaible}. So we can suppose that $Z\in\X_{1}$. Then 
$\omega$ is a limit differential with a zero of order $2g-2$ at $Z$, if it has a pole of order $2g_{2}$ at
 $N_{1}$. But by Theorem~\ref{theoreme:PlomberieCylindriqueSansResidu} the form $\omega$ has a zero of order
$2g_{2}-2$ at
$N_{2}$. The description of the restrictions of $\mathcal{L}$ is now given in
Definition~\ref{definition:spinStructureSurCourbeTypeCompact}.
\end{proof}

\subsection{Irreducible Pointed Differentials.}\label{section:InvariantDeArf}

The main purpose of this paragraph is to extend the Arf invariant to the set of irreducible  marked
curves(see
Definition~\ref{definition:ArfGen}). This implies that the {incidence variety
compactifications} of the
even and odd connected components of every strata remain disjoint above this locus of curves (see Theorem~\ref{theoreme:InvDeArfGene}).

We first recall some basic facts about the Arf invariant of Abelian differentials. It was first
investigated in \cite{MR588283} (see also \cite{MR2261104}). 

Through this paragraph, we will use the following notations. The pair $(\X,\omega)$ denotes an Abelian
differential
or an irreducible stable differential with only meromorphic
nodes. For a smooth simple closed path
$\gamma:\left[0,1 \right]\to \X,$
we denote by 
$$G(\gamma):\left[0,1 \right]\to S^{1}$$
  the {\em Gauss map} associated to $\gamma$ by the differential $\omega$ and by
$$\ind(\gamma):=\deg(G(\gamma)) \mod 2$$
 the {\em index} of $\gamma$.

\begin{defn}
 Let $(\X,\omega)$ be an Abelian differential of genus $g$ and let
$\left(a_{1},\cdots,a_{g},b_{1},\cdots,b_{g}\right)$
be a symplectic basis of $H_{1}(\X,\ZZ)$ composed by smooth and simple curves which miss the zeros of
$\omega$. The {\em Arf invariant} of
$(\X,\omega)$ is 
\begin{equation}
 \Arf(\X,\omega):=\sum_{i=1}^{g}(\ind(a_{i})+1)(\ind(b_{i})+1)  \mod 2.
\end{equation} 
\end{defn}

Johnson has shown that for every differential in $\omoduli(2l_{1},\cdots,2l_{n})$, the
Arf invariant is independent of the choice of the symplectic basis. Moreover, he
showed that the Arf invariant coincides with the   parity of the theta characteristic associated to
the differential $\omega$ (see Equation~\eqref{fonction:StrateGeneraleVersSpin}).

We now generalise the Arf invariant in the case of irreducible pointed stable
differentials. Note that such differentials have only poles of order one at every node.

First we define the set of curves which generalises the symplectic basis.
 Let us recall that the normalisation of a nodal curve $\X$ is denoted by $\nu:\tilde{\X}\to\X$ and the
preimages of a node $N_{i}$ by $\nu$ are denoted by $N_{i,1}$ and $N_{i,2}$.
 
 \begin{defn}
 Let $\X$ be an irreducible stable curve of genus $g$ with $k$ nodes $N_{1},\cdots,N_{k}$.
An {\em admissible symplectic system of curves} $(a_{1},\cdots,a_{g},b_{1},\cdots,b_{g})$ on $\X$ is
an ordered set of simple smooth curves on $\X$ satisfying the three following properties. 
 \begin{itemize}
 \item[i)] The curves $(\nu^{\ast}a_{k+1},\cdots,\nu^{\ast}a_{g},\nu^{\ast}b_{k+1},\cdots,\nu^{\ast}b_{g})$
form a basis of $H_{1}(\tilde{\X},\ZZ)$.
 \item[ii)] For every $i,j\in\left\{1,\cdots,g\right\}$ we have
 $$a_{i}\cdot b_{j}=\delta_{ij},\quad a_{i}\cdot a_{j}=0, \text{ and }b_{i}\cdot b_{j}=0.$$
 \item[iii)] For $i\leq k$, we have $\nu^{\ast}a_{i}(0)=N_{i,1}$, $\nu^{\ast}a_{i}(1)=N_{i,2}$ and the limits
$$\lim_{t=0}\frac{\partial\nu^{\ast}a_{i}}{\partial t}(t) \text{ and }
\lim_{t=1}\frac{\partial\nu^{\ast}a_{i}}{\partial t}(t)$$ exist.
 \end{itemize}
  The curve $a_{i}$ is called an {\em admissible path of the node $N_{i}$}.
 \end{defn}

Note that  an admissible symplectic system of curves on a smooth curve $\X$ is a symplectic basis of 
$H_{1}(\X,\ZZ)$.

We now describe the behaviour of the Gauss map of the  admissible paths.
\begin{lemma}\label{lemme:AppGaussAuxNoeuds}
Let $(\X,\omega)$ be an irreducible stable differential with only meromorphic nodes, let $N_{0}$ be a node of
$\X$ and
let $\gamma$ be an admissible path for $N_{0}$.

Then, the limits $$\lim_{t\to 0}G(\gamma)(t) \text{ and }\lim_{t\to 1}G(\gamma)(t)$$ exist and coincide with
the
direction of the flat cylinder associated to $N_{0}$.
\end{lemma}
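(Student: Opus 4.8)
The plan is to reduce everything to the local normal form of $\omega$ at the node. Set $a_{-1}:=\Res_{N_{0,1}}(\omega)$. Because $N_0$ is a meromorphic node, $\omega$ has a simple pole on each of its branches, so $a_{-1}\in\CC^{\ast}$, and since the two residues at a node of an irreducible stable differential are opposite, $\Res_{N_{0,2}}(\omega)=-a_{-1}$. By Lemma~\ref{lemme:formeLocalDesFormesDiff} there are coordinates $z$ at $N_{0,1}$ and $w$ at $N_{0,2}$ on the normalisation, vanishing at those points, in which
\[
\omega=\frac{a_{-1}}{z}\,\dz \text{ near } N_{0,1}, \qquad \omega=-\frac{a_{-1}}{w}\,\dw \text{ near } N_{0,2}.
\]
By the plumbing construction of Lemma~\ref{lemme:PlomberieCylindriqueOk}, the flat cylinder attached to $N_0$ is obtained by gluing punctured neighbourhoods of $N_{0,1}$ and $N_{0,2}$ carrying these forms, and its direction is the one determined by $a_{-1}$. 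Recalling that $G(\gamma)(t)=\omega(\gamma'(t))/|\omega(\gamma'(t))|$ is the direction of $\gamma'(t)$ measured by $\omega$, the goal is to prove $\lim_{t\to 0}G(\gamma)(t)=\lim_{t\to 1}G(\gamma)(t)=a_{-1}/|a_{-1}|$, which is precisely that direction.

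I would first treat the endpoint $t=0$. Write $\tilde\gamma:=\nu^{\ast}\gamma$ for the lift of $\gamma$ to the normalisation. By the third defining property of an admissible symplectic system, $\tilde\gamma(0)=N_{0,1}$ and $\tfrac{\partial\tilde\gamma}{\partial t}$ has a limit at $t=0$; together with the smoothness of the curve this means that $t\mapsto z(\tilde\gamma(t))$ is of class $C^{1}$ near $0$, with $z(\tilde\gamma(0))=0$ and with a nonzero derivative $c_{0}:=\tfrac{d}{dt}z(\tilde\gamma(t))\big|_{t=0}$. Hence $z(\tilde\gamma(t))=c_{0}t+o(t)$ and $\tfrac{d}{dt}z(\tilde\gamma(t))=c_{0}+o(1)$ as $t\to 0^{+}$, so that
\[
\omega\bigl(\gamma'(t)\bigr)=\frac{a_{-1}}{z(\tilde\gamma(t))}\cdot\frac{d}{dt}z(\tilde\gamma(t))=\frac{a_{-1}\bigl(c_{0}+o(1)\bigr)}{c_{0}t+o(t)}=\frac{a_{-1}}{t}\bigl(1+o(1)\bigr).
\]
The key point is that the direction $c_{0}$ in which $\gamma$ enters the node cancels. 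In particular $\omega(\gamma'(t))\neq 0$ for small $t>0$, and $\tilde\gamma(t)$ stays in a punctured coordinate disc around the pole $N_{0,1}$, hence away from the zeros of $\omega$; so $G(\gamma)$ is defined there, and
\[
G(\gamma)(t)=\frac{\omega(\gamma'(t))}{\bigl|\omega(\gamma'(t))\bigr|}=\frac{(a_{-1}/t)\bigl(1+o(1)\bigr)}{(|a_{-1}|/t)\bigl|1+o(1)\bigr|}\longrightarrow\frac{a_{-1}}{|a_{-1}|}\quad\text{as }t\to 0^{+}.
\]

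The endpoint $t=1$ would be handled in the same way, now in the coordinate $w$ at $N_{0,2}$: one has $w(\tilde\gamma(t))=c_{1}(t-1)+o(t-1)$ with $c_{1}\neq 0$, and using $\omega=-a_{-1}\,\dw/w$ one finds $\omega(\gamma'(t))=\dfrac{-a_{-1}}{t-1}\bigl(1+o(1)\bigr)$; since $t-1<0$ as $t\to 1^{-}$, the coefficient $-a_{-1}/(t-1)$ is a positive real multiple of $a_{-1}$, so once more $G(\gamma)(t)\to a_{-1}/|a_{-1}|$. Both one-sided limits therefore exist and coincide with the direction of the flat cylinder attached to $N_0$, which is the assertion.

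The computation is elementary; I expect the only delicate points to be the following. First, one must justify $c_{0},c_{1}\neq 0$ — this is exactly what the smoothness of the admissible path, sharpened by property (iii) of an admissible symplectic system, guarantees (the path meets the node transversally to first order); should one wish to work with weaker regularity, the same conclusion holds as soon as $z\circ\tilde\gamma$ admits a well-defined lowest-order term $c_{0}t^{m}+o(t^{m})$ with $c_{0}\neq0$, for then the order $m$ cancels together with $c_{0}$. Second, one should make the final identification literal by unwinding the plumbing model of Lemma~\ref{lemme:PlomberieCylindriqueOk} at $N_0$, so that $a_{-1}/|a_{-1}|$ is recognised as the cylinder direction in the normalisation fixed by the paper. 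Everything else is a one-variable Taylor expansion at a simple pole.
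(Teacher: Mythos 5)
Your proof is correct, and it takes a more explicit route than the paper's. The paper's own argument is a three-sentence qualitative one: continuity of the Gauss map, together with the hypothesis that the tangent vector of the lifted path has limits at the endpoints, is invoked to rule out infinite turning around the node, whence the limit of $G(\gamma)$ must be the direction of the flat cylinder. You instead work in the local normal form $\omega=a_{-1}\,\dz/z$ at the simple pole (Lemma~\ref{lemme:formeLocalDesFormesDiff}) and compute $\omega(\gamma'(t))=\tfrac{a_{-1}}{t}\bigl(1+o(1)\bigr)$, identifying the limit concretely as $a_{-1}/|a_{-1}|$ at both endpoints; this buys an explicit reason why the two one-sided limits exist and agree, and makes visible exactly where property (iii) of an admissible system enters. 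One caveat, which you flag yourself: the cancellation needs the limiting tangent vector $c_{0}$ (resp. $c_{1}$) to be nonzero, or at least a finite vanishing order of $z\circ\tilde\gamma$. This is not literally written in the definition of an admissible path, but it is the natural reading of ``simple smooth curve'' (a regular parametrisation), and the paper's sketch relies on the same fact implicitly: if the tangent limit were allowed to vanish, a path with $z(t)=t^{2}e^{-i\log t}$ spirals infinitely around the node while its derivative tends to $0$, and the Gauss limit, though it exists, is a rotated direction, so both arguments need this transversality. Up to that shared convention (and up to the paper's convention for what ``the direction of the cylinder'' means, which your residue $a_{-1}$ pins down), your computation is complete and, if anything, more robust than the paper's proof.
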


\begin{proof}
Since the Gauss map of a smooth path is continuous, there exist limits of $G(\gamma)(t)$ for $t\to 0$ and
$t\to 1$. Since the tangent vector of $\gamma$ has a limit, the path cannot turn around the node
infinitely many times. This implies that the limit for the Gauss map is the direction of the flat cylinder
associated to this node. 
\end{proof}

Lemma~\ref{lemme:AppGaussAuxNoeuds} allows us to define the index of the paths intersecting the nodes in an
admissible system of curves.

\begin{defn}\label{definition:ArfGen}
Let $(\X,\omega)$ be an irreducible differential with meromorphic nodes, $N_{0}$ be a node of $\X$ and
$\gamma$
be an admissible path for $N_{0}$.

The {\em index} of $\gamma$ is $$\ind(\gamma):=\deg(G(\gamma)) \mod 2.$$
\end{defn}

We can now extend the notion of Arf invariant. 

\begin{defn}\label{definition:extensionInvariantArf}
Let $(\X,\omega)$ be a stable differential such that $\X$ is irreducible and $\omega$ has a simple pole at every node of $\X$.
Let $\left(a_{1},\cdots,a_{g},b_{1},\cdots,b_{g}\right)$ be an admissible symplectic system of  curves for $(\X,\omega)$.

The {\em generalised Arf invariant} of $(\X,\omega)$ is
\begin{equation}
 \Arf(\X,\omega):=\sum_{i=1}^{g}(\ind(a_{i})+1)(\ind(b_{i})+1) \mod 2.
\end{equation} 
\end{defn} 

We show that the generalised Arf invariant does not depend on the choice of the
admissible system.

\begin{theorem}\label{theoreme:InvDeArfGene}
Let $(\X,\omega,Z_{1},\cdots,Z_{n})\in\obarmoduliincp{2d_{1},\cdots,2d_{n}}$ be a stable differential such
that $\X$ is irreducible with $k$ nodes $N_{1},\cdots,N_{k}$.

Then the generalised Arf invariant only depends on $(\X,\omega)$ and $\Arf(\X,\omega)=\epsilon$ if and only if
$(\X,\omega)$ is in the closure of a component of
$\omoduli(2d_{1},\cdots,2d_{n})$ with associated spin structure of parity $\epsilon$.
\end{theorem}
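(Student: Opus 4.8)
The plan is to deduce the statement from Johnson's theorem on smooth differentials \cite{MR588283} by smoothing $(\X,\omega)$ and tracking how an admissible symplectic system degenerates.

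Since $\X$ is irreducible, the stable differential $\omega$ has only simple poles at the nodes, so the associated candidate differential $(\X,\omega,Z_{1},\cdots,Z_{n})$ satisfies the Compatibility and Residue Conditions and every edge of $\Gamma_\omega$ has weight $0$; condition iii) of Theorem~\ref{theoreme:PlomberieCylindriqueSansResidu} is therefore vacuous and $(\X,\omega,Z_{1},\cdots,Z_{n})$ is plumbable. Being plumbable, it is the limit of a family $(f\colon\famcurv\to\Delta^{\ast},\famomega,\seczero_{1},\cdots,\seczero_{n})$ in $\omoduliincp{2d_{1},\cdots,2d_{n}}$ in which a neighbourhood of each node $N_{j}$ is a flat plumbing cylinder in the direction of $\omega$ at $N_{j}$ (Definition~\ref{definiton:diffPlombable}); over the connected base $\Delta^{\ast}$ this family lies in a single connected component $S$ of the stratum, of parity $\epsilon_{0}$. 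Now fix any admissible symplectic system $(a_{1},\cdots,a_{g},b_{1},\cdots,b_{g})$ on $\X$. The curves $b_{i}$ for all $i$ and $a_{i}$ for $i>k$ lie in the smooth locus of $\X$, hence lift to curves on $\famcurv(t)$ for small $t$; for $i\le k$ the admissible path $a_{i}$ joins $N_{i,1}$ to $N_{i,2}$, and I close it up by a geodesic arc crossing the $i$-th cylinder, obtaining a loop $a_{i}(t)$ on $\famcurv(t)$. Since the intersection numbers are preserved and $\famcurv(t)$ has genus $g$, the $2g$ curves $(a_{i}(t),b_{i}(t))$ form a symplectic basis of $H_{1}(\famcurv(t),\ZZ)$.

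The technical heart, and the step I expect to be the main obstacle, is the claim that $\ind(a_{i}(t))\equiv\ind(a_{i})$ and $\ind(b_{i}(t))\equiv\ind(b_{i})\pmod 2$ for $t$ small, the right-hand indices being those of Definition~\ref{definition:ArfGen}. For the curves disjoint from the nodes this is clear, since their Gauss maps converge uniformly. For $a_{i}$ with $i\le k$, the degree of the Gauss map of the loop $a_{i}(t)$ splits into the contribution of the part of $a_{i}(t)$ lying in $\X$ away from $N_{i}$, which converges to the total rotation of $G(a_{i})$ over $[0,1]$; the contribution of the geodesic arc inside the cylinder, which is $0$ because there the tangent direction is constant and equal to the cylinder direction; and the two contributions at the junctions, which stabilise as $t\to0$ because, by Lemma~\ref{lemme:AppGaussAuxNoeuds}, the limits of $G(a_{i})$ at $t=0$ and $t=1$ both equal that cylinder direction. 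Hence $\deg(G(a_{i}(t)))$ converges, and its value mod $2$ is $\ind(a_{i})$. Making the junctions and the geodesic arc explicit uses the local description of the plumbing cylinder in Lemma~\ref{lemme:PlomberieCylindriqueOk}; the argument in fact applies to an arbitrary family converging to $(\X,\omega)$, since near a node any such family has the local form $xy=t^{a}$ and the corresponding region is a long flat cylinder in the direction of $\omega$ at that node.

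Finally, by Johnson's theorem \cite{MR588283} the quantity $\sum_{i=1}^{g}(\ind(a_{i}(t))+1)(\ind(b_{i}(t))+1)$ is independent of the symplectic basis of $H_{1}(\famcurv(t),\ZZ)$ and equals the parity of the theta characteristic of $(\famcurv(t),\famomega(t))$; as the parity is locally constant on the stratum and $\famcurv(t)\in S$, this is the constant $\epsilon_{0}$. Letting $t\to0$ and using the convergence of indices gives $\sum_{i=1}^{g}(\ind(a_{i})+1)(\ind(b_{i})+1)\equiv\epsilon_{0}$, i.e. $\Arf(\X,\omega)=\epsilon_{0}$, for the chosen admissible system. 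Since $\epsilon_{0}$ does not refer to the system, the generalised Arf invariant is well defined. The same computation carried out for any family converging to $(\X,\omega)$ shows that such a family can only lie in a component of parity $\Arf(\X,\omega)$; as $(\X,\omega)$ lies in the closure of at least one component of $\omoduli(2d_{1},\cdots,2d_{n})$ (being a point of $\obarmoduliincp{2d_{1},\cdots,2d_{n}}$), the equivalence $\Arf(\X,\omega)=\epsilon\iff(\X,\omega)$ lies in the closure of a component of parity $\epsilon$ follows.
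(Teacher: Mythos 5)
Your argument is sound, but it is organised differently from the paper's. The paper proves Theorem~\ref{theoreme:InvDeArfGene} by induction on the number of nodes: it plumbs a single node via Lemma~\ref{lemme:PlomberieCylindriqueOk}, replaces only the admissible path $a_{k}$ by a closed curve $\tilde{a}_{k}$ crossing the new flat cylinder, checks (exactly your Gauss-map estimate, with the tangent direction confined to an interval of length $\pi$ around the cylinder direction) that $\ind(\tilde a_{k})=\ind(a_{k})$, and then invokes the inductive statement for the resulting differential with $k-1$ nodes, so that basis-independence is inherited step by step and Johnson's theorem is only used at the smooth base case. You instead smooth all nodes simultaneously using Theorem~\ref{theoreme:PlomberieCylindriqueSansResidu} (correctly observing that for an irreducible curve all node orders are $-1$, all weights of $\Gamma_{\omega}$ vanish and condition iii) is vacuous), transport the whole admissible system to a nearby smooth fibre, verify by unimodularity of the intersection form that the transported curves form a symplectic basis, and then apply Johnson's theorem directly on that fibre; well-definedness of $\Arf(\X,\omega)$ then falls out because the answer equals the parity $\epsilon_{0}$ of the component containing the plumbing family, a quantity that does not refer to the chosen system. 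The analytic heart — stability of the index of the admissible paths under plumbing — is the same in both proofs, and your junction/geodesic-arc discussion is at the same level of rigour as the paper's. Two remarks: your one-shot version makes explicit something the paper leaves implicit, namely that the converse direction requires running the index argument for an \emph{arbitrary} family degenerating to $(\X,\omega)$ (your appeal to the local form $xy=t^{a}$, where the differential is only approximately a flat cylinder, should be said with a word about the invertible factor $h$ as in Lemma~\ref{lemme:CondCompPlomberieCyl}); and your opening step tacitly uses that on an irreducible curve in $\obarmoduliincp{2d_{1},\cdots,2d_{n}}$ the stable differential has a simple pole at \emph{every} node and vanishes to order exactly $2d_{i}$ at $Z_{i}$ — a short degree count on the normalisation which is also implicit in the paper and worth recording. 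The trade-off: the paper's induction localises the analysis to one node at a time, while your route avoids the induction, gets basis-independence directly from Johnson, and handles the if-and-only-if more transparently.
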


We prove the result by recurrence on the number of nodes. The main tool for the recurrence step is the
Plumbing cylinder construction of Section~\ref{section:PlomberieCylindrique} (see in particular
Theorem~\ref{theoreme:PlomberieCylindriqueSansResidu}).

\begin{proof}
If $\X$ has no nodes, then the generalised Arf invariant of $\X$ coincides with the usual Arf invariant. This
implies the result for a smooth differential.

Let us suppose that  Theorem~\ref{theoreme:InvDeArfGene} has been proved in the case of $k-1$
nodes and let $(\X,\omega,Z_{1},\cdots,Z_{n})$ be a
differential with $k$ nodes satisfying the hypothesis of Theorem~\ref{theoreme:InvDeArfGene}. Let
$\left(a_{1},\cdots,a_{g},b_{1},\cdots,b_{g}\right)$ be an admissible symplectic system for $(\X,\omega)$.

Let $V$ and $W$ be neighbourhoods of $N_{k,1}$ and $N_{k,2}$ respectively, such that $U:=V\cup W$ and
$\omega|_{U}$ satisfy the
hypothesis of Lemma~\ref{lemme:PlomberieCylindriqueOk}. Without loss of generality, we can suppose that $U\cap
a_{i}=\emptyset$ for all $i\neq k$ and $U\cap b_{j}=\emptyset$ for all~$i\in\lbrace 1,\cdots, g\rbrace$.  
Moreover,
let $\theta_{k}$
be the direction of the cylinders associated to~$\omega$ at $N_{k}$. We can suppose that
$G(a_{k})(t)\in\left]\theta_{k}-\frac{\pi}{4},\theta_{k}+\frac{\pi}{4}\right[$ for every $t$ such that
$a_{k}(t)\in
U$. In particular, the path~$a_{k}$ meets only one time the
boundaries of $V$ and $W$.

Since $(\X,\omega,Z_{1},\cdots,Z_{n})$ verifies the hypotheses of Lemma~\ref{lemme:PlomberieCylindriqueOk},
we can smooth this differential. In particular, the set $U$ is replaced by a
flat cylinder~$U'$ and $a_{k}$
by any simple closed smooth curve which coincide with $a_{k}$ outside of~$U'$.

By induction, the generalised Arf invariant is well defined on this curve. In particular, it does not depend
on the chose of $a_{k}$.
Hence it remains to show that the
index of every  curve in the new admissible symplectic system coincide with the  index of the corresponding
curve in the old admissible system. The indices of every
curve distinct from $a_{k}$ are clearly invariant under the plumbing cylinder construction. It remains to
show that the index of $a_{k}$ and $\tilde{a_{k}}$ coincide. But we can choose $\tilde{a_{k}}$  such that in
$U'$ the Gauss map satisfies  $G(\tilde{a_{k}})(t)\in\left]\theta-\frac{\pi}{2},\theta+\frac{\pi}{2}\right[$.
In particular, it is clear that the index of $\tilde{a_{k}}$ coincide with the index of $a_{k}$.

This shows that the generalised Arf invariant is a well defined invariant of~$(\X,\omega)$ and coincide with
the Arf invariant of any partial smoothing of $(\X,\omega)$ at a node. By induction these smoothings are in
the closure of a component of $\omoduli(2d_{1},\cdots,2d_{n})$ with associated theta characteristic of parity
$\epsilon$.
\end{proof}

%%%%%%%%%%%%%%%%%%%%%%%%%%%%%%%%%%%%%%%%%%%%%%%%%%%%%%%%%%%%%%%%%%%%%%%%%%%%%%%%%%%%%%%%%%%%%%%%%%%%%%%%%%%%%%

\section{Kodaira Dimension of Some Strata of $\pomoduli$.}

In this section, we compute the Kodaira dimension of some strata of $\pomoduli$.  We show in
Theorem~\ref{theoreme:DimKodairaStraPeutConditions} that the
strata which 
 `impose few conditions on the differentials' (see the theorem loc. cit. for a
precise definition) have negative Kodaira dimension.  
In Theorem~\ref{theoreme:DimensionProjectionStrates}, we compute  the dimension of the
projection of every connected component of every stratum of $\omoduli$ to $\moduli$. This result implies that
 the strata $\pomoduli(k_{1},\cdots,k_{g-1})$ different from $\pomoduli^{\even}(2,\cdots,2)$ are of general
type
when $\moduli$ is of general type (see Theorem~\ref{theoreme:DimKodairaRevetementFini}).  
 
The end of this section is devoted to the computation of the Kodaira dimension of other strata. In 
Proposition~\ref{proposition:DimKodaira(g-1,1)}, we show that $\pomoduli(g-1,1,\cdots,1)$ is of general type 
when $\moduli$ is of general type. We give the Kodaira dimension of $\pomoduli^{\hyp}(g-1,g-1)$ in
Proposition~\ref{proposition:dimKodairaHypNeg}.
Moreover, we give the Kodaira dimension of every odd (see Corollary~\ref{corollaire:dimKodairaOddDeux}) and
every even (see Proposition~\ref{proposition:dimKodairaEvenDeux}) component of $\pomoduli(2,\cdots,2)$. 

\paragraph{Generalities.}

We first recall the definition of the Kodaira dimension of complex varieties $Y$ following \cite{MR0506253}.
The (complex) dimension of
$Y$ will be denoted by $\dim Y$.
\begin{defn}
 Let $Y$ be a smooth irreducible compact complex variety. The {\em Kodaira dimension $\kappa(Y)$ of $Y$} is 
\begin{equation}
\kappa(Y) =\left\{
  \begin{array}{l}
   -\infty,  \text{if } H^{0}(Y,mK_{Y})=0  \text{ for all $m\geq0$}\\
  \min  \left\{n\in\NN\cup\lbrace0\rbrace :\frac{h^{0}(Y,mK_{Y})}{m^{n}}  \text{ is bounded} \right\},  
\text{ otherwise.}
  \end{array}\right.
% \max_{m\geq 0} \left( \dim \Phi_{mK}(Y) \right)       & \text{otherwise.}
\end{equation}

The variety $Y$ is of {\em general type} if $\kappa(Y)=\dim(Y)$.
\end{defn}

Since we will be mainly interested in singular non-compact varieties, we extend the notion of Kodaira
dimension to
singular and non-compact varieties.
  If $Y$ is a singular compact complex variety, then its {\em Kodaira dimension  $\kappa(Y)$} is the
Kodaira dimension of any non-singular model of $Y$.
  If $Y$ is a non-compact complex variety, then its {\em Kodaira dimension  $\kappa(Y)$} is the Kodaira
dimension of any non singular model of any compactification of $Y$.
Let us remark that, as the Kodaira dimension is a birational invariant, the two preceding definitions make
sense.

The Kodaira dimension of a given complex variety $Y$ is in general difficult to compute. On the other
hand it is easily proved that $\kappa(Y_{1}\times Y_{2})=\kappa(Y_{1})+\kappa(Y_{2})$. One could hope that a
similar statement holds for more general fibre spaces and for maps $\pi:Y\to Z$ which behave like bundle maps.
This
is what we explain now.

The first important notion is the one of {\em fibre space} of complex varieties. This is a proper and
surjective morphism $\pi:Y\to Z$ of reduced analytic spaces such that the general fibre of $\pi$ is connected.
Moreover, a  meromorphic mapping $\varphi:Y\to Z$ is {\em generically surjective} if the projection
$\pi_{\varphi}:G_{\varphi}\to Z$ of the graph of $\varphi$ to $Z$ induced by the projection of $Y\times Z$ to
$Z$ is surjective.

Let us recall, that a fibre space $\pi:Y\to Z$ is {\em uniruled} if a generic fibre $Y_{z}$ of $\pi$ is a
projective line. If a space is uniruled, then its Kodaira dimension is negative.

Let us recall that the Kodaira dimension of a fibre space can not be larger than the Kodaira
dimension of the base plus the Kodaira dimension of a generic fibre (see \cite[Theorem~6.12]{MR0506253}).
\begin{theorem}\label{theoreme:SousAdditiviteKodaria}
Let $\pi:Y\to Z$ be a fibre space of complex varieties. There exists an open dense set $V\subset Z$ such that
for any point $z\in V$ the inequality
\begin{equation}
\kappa(Y) \leq  \dim (Z) + \kappa(\pi^{-1}(z)) 
\end{equation}
holds.
\end{theorem}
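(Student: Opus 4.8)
This is \cite[Theorem~6.12]{MR0506253}; I sketch the argument I would give. The plan is to reduce everything to smooth projective data and then run the classical Iitaka fibre--dimension count on the pluricanonical map.

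First I would reduce to the case where $Y$ and $Z$ are smooth projective and $\pi$ is a surjective morphism with connected fibres. Since $\kappa$ is a birational invariant computed on a smooth compact model, one may pass to such models and replace $\pi$ by a morphism between them (resolving the graph of the induced rational map); under this operation the general fibre is only replaced by a smooth projective model of itself, so none of $\kappa(Y)$, $\dim Z$ or $\kappa(\pi^{-1}(z))$ (for general $z$) changes. Set $L:=K_{Y}$ and $d:=\dim Z$; if $\kappa(Y)=-\infty$ the inequality is vacuous, so assume $\kappa(Y)\ge 0$. For a general $z\in Z$ the fibre $F_{z}=\pi^{-1}(z)$ is smooth with trivial normal bundle $N_{F_{z}/Y}\cong\pi^{*}T_{z}Z$, so adjunction gives $K_{Y}|_{F_{z}}\cong K_{F_{z}}$; hence $\kappa(L|_{F_{z}},F_{z})=\kappa(F_{z})$, and over some open dense $V\subseteq Z$ this is a constant $k$. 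If $k=-\infty$ I would argue directly: a section of $mK_{Y}$ restricts to a section of $mK_{F_{z}}=0$ on each general $F_{z}$, hence vanishes on a dense subset of $Y$ and so is zero; thus $\kappa(Y)=-\infty$, a contradiction. So it remains to show $\kappa(Y)\le d+k$ with $k\ge 0$.

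The core step is a dimension count for the Iitaka map. Choose $m$ with $h^{0}(Y,mL)>0$ and so divisible that $\Phi_{m}:=\Phi_{|mL|}\colon Y\dashrightarrow\mathbb{P}^{N_{m}}$ has $\dim\overline{\Phi_{m}(Y)}=\kappa(L,Y)=\kappa(Y)$. Let $W$ be the closure of the image of $(\pi,\Phi_{m})\colon Y\dashrightarrow Z\times\mathbb{P}^{N_{m}}$, with projections $q\colon W\to Z$ and $p\colon W\to\mathbb{P}^{N_{m}}$. Then $q$ is surjective (since $\pi$ is) and $p(W)=\overline{\Phi_{m}(Y)}$, so $\dim W\ge\kappa(Y)$; on the other hand $\dim W=d+\dim q^{-1}(z)$ for general $z$, and $q^{-1}(z)$ is the closure of $\Phi_{m}(F_{z})$. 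The restriction map $H^{0}(Y,mL)\to H^{0}(F_{z},mL|_{F_{z}})$ exhibits $\Phi_{m}|_{F_{z}}$ as a linear projection of the map attached to a (possibly incomplete) linear subsystem of $|mL|_{F_{z}}|$, so $\dim q^{-1}(z)\le\kappa(mL|_{F_{z}},F_{z})=\kappa(L|_{F_{z}},F_{z})=\kappa(F_{z})\le k$ for general $z$. Combining the two estimates gives $\kappa(Y)\le\dim W\le d+k$. Finally I would shrink $V$ so that smoothness of $F_{z}$, triviality of its normal bundle, the equality $\kappa(F_{z})=k$, and the bound $\dim\overline{\Phi_{m}(F_{z})}\le k$ all hold for every $z\in V$; then the desired inequality holds for all $z\in V$.

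The geometric content --- $\dim W\le d+\dim q^{-1}(z)$ together with $\dim q^{-1}(z)\le k$ --- is immediate once the maps are set up, and the case $k=-\infty$ is elementary. The main obstacle is the bookkeeping: carrying out the reduction to smooth projective models while keeping the birational class of the general fibre (hence $\kappa(\pi^{-1}(z))$ for general $z$) under control, and upgrading statements about the scheme-theoretic generic fibre to statements valid over an open dense $V$ (via semicontinuity of $h^{0}$ in the fibres and constancy of $\dim\overline{\Phi_{m}(F_{z})}$ on a dense open). These points are standard but require attention.
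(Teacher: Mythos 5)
The paper does not actually prove this statement: it is quoted from Ueno with the proof delegated to \cite[Theorem~6.12]{MR0506253}, and your sketch is precisely the standard ``easy addition'' argument behind that citation (adjunction on a general fibre plus the fibre-dimension count for the image of $(\pi,\Phi_{m})$ in $Z\times\PP^{N_{m}}$), so it follows essentially the same route and is correct in substance. One small remark: the constancy of $\kappa(\pi^{-1}(z))$ on an open dense set is neither needed nor easy to justify without deep results; it suffices to observe that for every $z$ in the open dense set where $F_{z}=\pi^{-1}(z)$ is smooth, $K_{Y}|_{F_{z}}\cong K_{F_{z}}$ and $F_{z}$ is not contained in the base locus of $|mK_{Y}|$, one has $\dim\overline{\Phi_{m}(F_{z})}\leq\kappa(F_{z})$, which yields the inequality pointwise on that set (and also disposes of the case $\kappa(F_{z})=-\infty$).
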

In particular, if the Kodaira dimension of a generic fibre or of the basis of a fibre space is
negative, then the total space has negative Kodaira dimension.

A very important open problem is to determine determine the best lower bound in the preceding settings.
\begin{conj}[Iitaka conjecture or $C_{n}$ conjecture.]
   Let   $\pi: Y\to Z$  be a fibre space of an $n$-dimensional
algebraic manifold   $Y$  over an algebraic manifold  $Z$. Then we have
\begin{equation}\label{equation:ConjectureIitaka}
 \kappa(Y) \geq  \kappa (Z) + \kappa(Y_{z}),
\end{equation}
 for a generic fibre $Y_{z}:=\pi^{-1}(z)$.
\end{conj}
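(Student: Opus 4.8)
The statement \eqref{equation:ConjectureIitaka} is the celebrated $C_n$ conjecture of Iitaka, which remains open in general; what follows is therefore a program rather than a complete argument, and in the range actually needed in this paper it reduces to a known theorem. The plan is to follow the approach initiated by Iitaka and developed by Viehweg, Kawamata and Koll\'ar, deducing additivity of the Kodaira dimension from positivity properties of the sheaves of relative pluricanonical forms. First I would reduce to the case where $\pi$ is a smooth projective morphism over a Zariski-open subset of $Z$ (legitimate since $\kappa$ is a birational invariant, as recalled above, and since removing a divisor and resolving singularities does not change $\kappa(Y)$), and write $K_Y = \pi^*K_Z + K_{Y/Z}$, so that for $m$ large and divisible one is led to estimate $h^0(Y, mK_Y)$ in terms of the pluricanonical sections of a generic fibre and of $h^0\bigl(Z, \pi_*\omega_{Y/Z}^{\otimes m}\otimes mK_Z\bigr)$.

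The key ingredient would be the \emph{weak positivity} (in Viehweg's sense) of the direct images $\pi_*\omega_{Y/Z}^{\otimes m}$, which ultimately rests on the theory of variation of Hodge structure: the case $m=1$ is Fujita's semipositivity theorem for $\pi_*\omega_{Y/Z}$, and the general case is due to Viehweg and Kawamata. Granting this, combining weak positivity with the pluricanonical sections of a generic fibre $Y_z$ and passing to suitable Iitaka models would give a bound of the shape $\kappa(Y)\geq \dim(\pi(W)) + \kappa(Y_z)$, where $W$ is the image of the Iitaka fibration attached to $K_{Y/Z}$. The hard part would then be to upgrade $\dim(\pi(W))$ to the full $\kappa(Z)$, which requires the merely weakly positive sheaves $\pi_*\omega_{Y/Z}^{\otimes m}$ to interact favourably with the pluricanonical system of $Z$ itself.

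In the case where $Z$ is of general type this last step is exactly the theorem of Viehweg quoted above, so the conjecture holds and one simply invokes it; this is the only case used in the present paper, for the strata whose forgetful map is generically surjective onto a moduli space $\moduli$ of general type. For a general base the argument stalls, because the positivity of $K_Z$ is no longer strong enough to absorb the weakly positive direct image sheaves, and one is thrown back onto the finer additivity conjectures $C_{n,m}$ and $C_{n,m}^+$, or onto an induction on $\dim Z$ via the Iitaka fibration of $Z$, where the same obstruction resurfaces over a base of intermediate Kodaira dimension. This is the main obstacle, and the reason the full conjecture is still unproved.
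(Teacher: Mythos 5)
This statement is labelled a conjecture in the paper, and the paper gives no proof of it: it is only invoked through its two known special cases, namely Theorem~\ref{theoreme:DimKodairaRevetementFini} (generically surjective maps between varieties of the same dimension) and Viehweg's Theorem~\ref{theoreme:KodairaViehweg} (base of general type), which are exactly the cases used for the strata. Your proposal is therefore correct in spirit and matches the paper's usage: you rightly present the $C_n$ conjecture as open, sketch the standard Viehweg--Kawamata--Koll\'ar strategy via weak positivity of $\pi_{*}\omega_{Y/Z}^{\otimes m}$, and observe that the only instance needed here is Viehweg's theorem for a base of general type. One caveat worth adding: as stated, without algebraicity assumptions on the fibre space, the inequality can actually fail --- the paper itself points to \cite[Remark~15.3]{MR0506253} for counterexamples in the non-algebraic setting --- so any "proof program" must use the hypothesis that $\pi$ is a fibre space of algebraic manifolds in an essential way; your outline implicitly does so (via projectivity in the weak positivity arguments), but it deserves to be made explicit.
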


Even if the conjecture is known to be false in general (see \cite[Remark~15.3]{MR0506253}), it holds in very
important cases. The first one is when $\pi:Y\to Z$ is a generically surjective map of complex varieties of
the same dimension (see \cite[Theorem~6.10]{MR0506253}).
\begin{theorem}\label{theoreme:DimKodairaRevetementFini}
Let $\pi:Y\to Z$ be a generically surjective meromorphic mapping of complex varieties such that $\dim Y =\dim
Z$. 
Then we have the inequality
\begin{equation}
\kappa(Y)\geq \kappa (Z).
\end{equation}
\end{theorem}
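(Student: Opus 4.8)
\emph{Proof strategy.} The plan is to reduce to the case of a generically finite \emph{morphism} between smooth projective varieties and then to pull back pluricanonical forms.

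First I would pass to nonsingular projective models. Since the Kodaira dimension is a birational invariant — and is by definition computed on a nonsingular model of a compactification — I may replace $Z$ by a smooth projective variety $Z'$ with $\kappa(Z')=\kappa(Z)$ and replace $Y$ by a smooth projective variety $Y'$ with $\kappa(Y')=\kappa(Y)$, the map $\varphi$ inducing a dominant meromorphic map $Y'\dashrightarrow Z'$ (which is precisely what ``generically surjective'' means, via the graph $G_{\varphi}$). Resolving the indeterminacy of this map, i.e.\ desingularising the closure of its graph, yields a smooth projective variety $W$ with a birational morphism $W\to Y'$ and a morphism $q\colon W\to Z'$. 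Then $\kappa(W)=\kappa(Y')=\kappa(Y)$, and since $\dim W=\dim Y=\dim Z=\dim Z'=:n$ and $q$ is proper and dominant, the morphism $q$ is surjective and generically finite. Thus it suffices to prove $\kappa(W)\ge\kappa(Z')$ for a generically finite surjective morphism $q\colon W\to Z'$ of smooth projective varieties of the same dimension $n$.

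The key step is to produce, for every $m\ge 0$, an injection $H^{0}(Z',mK_{Z'})\hookrightarrow H^{0}(W,mK_{W})$. The differential of $q$ gives a natural homomorphism $q^{*}\Omega^{1}_{Z'}\to\Omega^{1}_{W}$; taking $n$-th exterior powers yields $q^{*}\omega_{Z'}\to\omega_{W}$, i.e.\ a nonzero map of line bundles exhibiting $\omega_{W}\cong q^{*}\omega_{Z'}\otimes\mathcal{O}_{W}(R)$ with $R$ the effective ramification divisor of $q$ (equivalently $K_{W}=q^{*}K_{Z'}+R$, $R\ge 0$). Tensoring $m$ copies gives $q^{*}\omega_{Z'}^{\otimes m}\to\omega_{W}^{\otimes m}$, an isomorphism over the \'etale locus of $q$, which is a dense open subset of $W$ because we are in characteristic zero and $q$ is generically finite and dominant. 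Hence for any nonzero $s\in H^{0}(Z',\omega_{Z'}^{\otimes m})$ the pull-back $q^{*}s$ is nonzero (as $q$ is dominant), and its image in $H^{0}(W,\omega_{W}^{\otimes m})$ under the above homomorphism is nonzero; the resulting $\CC$-linear map $H^{0}(Z',mK_{Z'})\to H^{0}(W,mK_{W})$ is then injective. Consequently $h^{0}(W,mK_{W})\ge h^{0}(Z',mK_{Z'})$ for all $m\ge 0$.

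Finally I would read off the inequality from the definition of Kodaira dimension. If $\kappa(Z')=-\infty$ there is nothing to prove. Otherwise, for any integer $n_{0}$ such that $h^{0}(W,mK_{W})/m^{n_{0}}$ is bounded, the inequality $h^{0}(Z',mK_{Z'})\le h^{0}(W,mK_{W})$ forces $h^{0}(Z',mK_{Z'})/m^{n_{0}}$ to be bounded as well, so $\kappa(Z')\le n_{0}$; taking the infimum over all such $n_{0}$ gives $\kappa(Z')\le\kappa(W)$ (the infimum being finite, so that $\kappa(W)\ne-\infty$, precisely because $h^{0}(W,mK_{W})\ge h^{0}(Z',mK_{Z'})>0$ for infinitely many $m$ in this case). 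Tracing back through the reductions, $\kappa(Y)=\kappa(W)\ge\kappa(Z')=\kappa(Z)$. The genuinely delicate points — where I would take most care — are the reduction step (compactifying the possibly non-compact varieties and resolving the meromorphic map without disturbing $\kappa$) and the claim that the map $q^{*}\omega_{Z'}^{\otimes m}\to\omega_{W}^{\otimes m}$ does not kill the pulled-back section, which rests on $q$ being generically \'etale over $\CC$; this is, in essence, the classical argument behind \cite[Theorem~6.10]{MR0506253}.
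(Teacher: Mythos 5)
Your proof is correct. The paper does not prove this statement itself — it imports it as \cite[Theorem~6.10]{MR0506253} — and your argument is essentially the classical proof of that theorem: reduce, via compactification and resolution of the graph, to a generically finite surjective morphism $q\colon W\to Z'$ of smooth compact models, write $K_{W}=q^{\ast}K_{Z'}+R$ with $R$ effective, and inject pluricanonical sections to compare plurigenera. The only point worth flagging is that your reduction tacitly assumes the meromorphic map extends to the chosen compactifications; this is automatic in the quasi-projective setting in which the paper applies the theorem (and in Ueno's setting the varieties are already compact), so the argument goes through as stated, and "projective" can everywhere be weakened to "compact" without change.
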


The second important case of this conjecture has been proved by Viehweg. He proved that the
Iitaka conjecture holds as soon as $Z$ is of general type.
\begin{theorem}[\cite{MR641815}]\label{theoreme:KodairaViehweg}
Let $\pi:Y\to Z$ be a generically surjective meromorphic mapping of complex varieties such that $\kappa(Z)
=\dim Z$. 
Then  we have  the inequality
\begin{equation}
 \kappa(Y) \geq  \kappa (Z) + \kappa(Y_{z}),
\end{equation}
for a generic fibre $Y_{z}:=\pi^{-1}(z)$
\end{theorem}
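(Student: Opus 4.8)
The statement is Viehweg's subadditivity theorem, which this paper quotes from \cite{MR641815}; below is the strategy behind its proof. First I would reduce to the case of a surjective morphism $f\colon Y\to Z$ between \emph{smooth projective} varieties with connected fibres: replace $\varphi$ by a resolution of its graph, resolve the singularities of source and target, and use that $\kappa(Y)$, $\kappa(Z)$ and $\kappa(Y_z)$ are birational invariants. If $\kappa(Y_z)=-\infty$ there is nothing to prove, so assume $\kappa(Y_z)\geq 0$. Note also that the reverse inequality $\kappa(Y)\leq\dim Z+\kappa(Y_z)=\kappa(Z)+\kappa(Y_z)$ is already available from Theorem~\ref{theoreme:SousAdditiviteKodaria} together with the hypothesis $\kappa(Z)=\dim Z$, so the real content is the matching lower bound.

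The cornerstone is Viehweg's weak positivity theorem: for every $m\geq 1$ the direct image sheaf $\mathcal{F}_m:=f_*\omega_{Y/Z}^{\otimes m}$ is weakly positive over a dense open subset of $Z$. This is the deep input (proved via variations of Hodge structure and Kawamata--Viehweg vanishing), and essentially all of the difficulty lives there; I would quote it rather than reprove it. By cohomology and base change, the generic rank $r_m$ of $\mathcal{F}_m$ equals $h^0(Y_z,mK_{Y_z})$ for a general fibre $Y_z$, so by definition of the Kodaira dimension of the fibre one can choose $m$ along a suitable sequence with $r_m\geq c\,m^{\kappa(Y_z)}$ for a fixed $c>0$.

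Next I would invoke the projection formula $f_*\omega_Y^{\otimes m}=\mathcal{F}_m\otimes\omega_Z^{\otimes m}$, giving $H^0(Y,mK_Y)=H^0(Z,\mathcal{F}_m\otimes\omega_Z^{\otimes m})$ and, more generally, $H^0(Y,kmK_Y)\supseteq H^0\bigl(Z,\widehat{S^k}(\mathcal{F}_m)\otimes\omega_Z^{\otimes km}\bigr)$. Since $\kappa(Z)=\dim Z$, the line bundle $\omega_Z$ is big, so $\omega_Z^{\otimes a}\cong A\otimes\mathcal{O}_Z(E)$ for some $a$, with $A$ ample and $E$ effective. The key elementary lemma, also due to Viehweg, is that the tensor product of a weakly positive sheaf with a big line bundle is big; combining the weak positivity of $\mathcal{F}_m$ with the bigness of $\omega_Z$ yields that $h^0\bigl(Z,\widehat{S^k}(\mathcal{F}_m)\otimes\omega_Z^{\otimes km}\bigr)$ grows at least like a positive constant times $r_m\cdot k^{\dim Z}$. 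Feeding in $r_m\geq c\,m^{\kappa(Y_z)}$ and letting first $k$ and then $m$ run through the admissible values shows $h^0(Y,NK_Y)\gtrsim N^{\dim Z+\kappa(Y_z)}$ along a suitable sequence of $N$, whence $\kappa(Y)\geq\dim Z+\kappa(Y_z)=\kappa(Z)+\kappa(Y_z)$.

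The main obstacle is the weak positivity theorem for $f_*\omega_{Y/Z}^{\otimes m}$ itself; granting it, the remaining difficulty is the careful bookkeeping in the ``weakly positive $\otimes$ big $=$ big'' step and in extracting the precise polynomial growth rate $N^{\dim Z+\kappa(Y_z)}$ from the symmetric powers together with the base-change computation of the ranks $r_m$.
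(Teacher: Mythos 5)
The paper does not prove this statement at all: it is quoted directly from Viehweg \cite{MR641815} (as the citation in the theorem header indicates), so there is no internal proof to compare against. Your outline is precisely the standard argument of Viehweg's original proof --- reduce to a fibre space of smooth projective varieties, invoke weak positivity of $f_*\omega_{Y/Z}^{\otimes m}$, use the projection formula together with the bigness of $\omega_Z$ and the ``weakly positive tensor big is big'' lemma to count sections of $mK_Y$ --- and it is a correct sketch, with the one genuinely deep ingredient (the weak positivity theorem) quoted rather than reproved, exactly as the paper itself treats the whole theorem.
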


\paragraph{The strata of $\pomoduli$.}
The rest of this section is devoted to the computation of the Kodaira dimension of several strata of the
moduli
space of Abelian differentials.

Let us first remark that the Kodaira dimension of the principal stratum follows directly from
Theorem~\ref{theoreme:SousAdditiviteKodaria}.
\begin{prop}
The Kodaira dimension of the moduli spaces  $\pomoduli$ and the principal strata $\pomoduli(1,\cdots,1)$ is
$-\infty$.
\end{prop}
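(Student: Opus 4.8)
The plan is to exhibit $\pomoduli$ and $\pomoduli(1,\dots,1)$ as the total space of a fibre space over $\moduli$ whose generic fibre has negative Kodaira dimension, and then invoke Theorem~\ref{theoreme:SousAdditiviteKodaria}. First I would recall that the forgetful map $\pi:\pomoduli\to\moduli$ is the projectivisation of the Hodge bundle, so its fibre over a generic curve $\X$ is the projective space $\PP(H^0(\X,\dualsheave))\cong\PP^{g-1}$. A projective space is uniruled (indeed, rational), hence has Kodaira dimension $-\infty$; so by Theorem~\ref{theoreme:SousAdditiviteKodaria} applied to the fibre space $\pi:\pomoduli\to\moduli$ we get
\[
\kappa(\pomoduli)\le \dim(\moduli)+\kappa(\PP^{g-1})=-\infty,
\]
i.e.\ $\kappa(\pomoduli)=-\infty$. (For $g=1$ one treats the case separately: $\pomoduli[1]$ is then just $\moduli[1]$, which is rational, so again $\kappa=-\infty$.)

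For the principal stratum, I would observe that $\pomoduli(1,\dots,1)$ is the open dense subset of $\pomoduli$ where $\omega$ has only simple zeros, so it is a nonempty open subset of $\pomoduli$, and the forgetful map $\pi$ restricts to a fibre space $\pi:\pomoduli(1,\dots,1)\to\moduli$ whose generic fibre is the complement in $\PP^{g-1}$ of the (generically proper closed) locus of differentials with a zero of order $\ge 2$. Since Kodaira dimension is defined via compactifications and is a birational invariant, the Kodaira dimension of this fibre is that of $\PP^{g-1}$, namely $-\infty$; alternatively one notes that $\pomoduli(1,\dots,1)$ is birational to $\pomoduli$ as soon as the principal stratum is dense (which holds for all $g\ge 2$), and birational invariance of $\kappa$ together with the first part gives the result directly. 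Again Theorem~\ref{theoreme:SousAdditiviteKodaria} applies verbatim to the fibre space $\pi:\pomoduli(1,\dots,1)\to\moduli$ and yields $\kappa(\pomoduli(1,\dots,1))=-\infty$.

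The only genuinely delicate point is the hypothesis check for Theorem~\ref{theoreme:SousAdditiviteKodaria}: one must verify that $\pi$ (and its restriction to the principal stratum) is a \emph{fibre space} in the sense of the excerpt, i.e.\ a proper surjective morphism of reduced analytic spaces with connected general fibre. Properness is slightly subtle because $\pomoduli$ is not compact and the strata are even less so; the clean way around this is to pass to the Deligne–Mumford compactification $\pomoduli\hookrightarrow\pobarmoduli$ with its proper map to $\barmoduli$, apply the subadditivity theorem there (its generic fibre is still $\PP^{g-1}$, hence of Kodaira dimension $-\infty$), and then use that $\pomoduli$ and $\pomoduli(1,\dots,1)$ are open dense in $\pobarmoduli$ so share its Kodaira dimension by birational invariance. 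Surjectivity and connectedness of the generic fibre are immediate since the Hodge bundle has rank $g\ge 1$ and its projectivised fibres $\PP^{g-1}$ are connected. I expect this bookkeeping about properness and the passage to a compactification to be the main (minor) obstacle; the core of the argument is the one-line observation that a generic fibre is a projective space.
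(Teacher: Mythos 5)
Your proposal is correct and follows essentially the same route as the paper: the paper likewise uses that $\pobarmoduli\to\barmoduli$ is a $\PP^{g-1}$-bundle, applies Theorem~\ref{theoreme:SousAdditiviteKodaria}, and handles the principal stratum via the fact that its closure is all of $\pobarmoduli$ (so the Kodaira dimension is computed from that compactification). Your extra bookkeeping about properness is precisely the paper's implicit passage to the compactified bundle, so there is no substantive difference.
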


\begin{proof}
Since $\pobarmoduli\to\barmoduli$ is a bundle with fibre $\PP^{g-1}$, the result follows from
Theorem~\ref{theoreme:SousAdditiviteKodaria}. Since the closure of the principal stratum is $\pobarmoduli$,
this implies the result for the principal stratum.
\end{proof}

In order to apply the Theorem of Iitaka-Viehweg, we have to determine for which strata the forgetful map
$\pi:\omoduli\to\moduli$ is generically surjective. In fact, we compute the dimension of the image of
every connected
component of the strata of $\omoduli$ via the forgetful map.
This theorem greatly generalises a previous result of Chen (see \cite[Proposition~4.1]{MR2746470}).

\begin{theorem}\label{theoreme:DimensionProjectionStrates}
Let $g\geq2$ and $S$ be a connected component of the stratum $\omoduli(k_{1},\cdots,k_{n})$. The
dimension $d_{\pi(S)}$ of the
projection of $S$ by the forgetful map $\pi:\omoduli\to\moduli$ is 
\begin{equation}\label{equation:DimensionProjectionStrates}
% \dim\left(\pi( S) \right) =
d_{\pi(S)} =
  \begin{cases}
   2g-1, & \text{if } S=\omoduli(2d,2d)^{\hyp} \\
   3g-4, & \text{if } S=\omoduli(2,\cdots,2)^{\even} \\
   2g-2+n,       & \text{if } n< g-1 \text{ and }S\neq\omoduli(2d,2d)^{\hyp} \\
   3g-3,   & \text{if } n\geq g-1 \text{ and the parity of $S$ is not even.}
  \end{cases}
\end{equation}
\end{theorem}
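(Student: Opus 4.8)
The plan is to compute $d_{\pi(S)}$ by combining an upper bound (dimension reasons, plus the special behaviour of hyperelliptic and even components) with a lower bound obtained by exhibiting an explicit subvariety of $\moduli$ over which the stratum dominates. The key dichotomy is whether the canonical divisor $\sum k_i Z_i$ imposes independent conditions on a generic curve. Recall $\dim S = 2g-2+n$, so the generic fibre of $\pi|_S$ has dimension $2g-2+n-d_{\pi(S)}$, and the fibre over a curve $\X$ is the set of tuples $(Z_1,\dots,Z_n)$ with $\sum k_i Z_i \in |K_\X|$, together with the choice of differential (a point in a $\PP$ once the divisor is fixed, contributing $+1$).

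\textbf{Upper bound.} For the generic case (line 3), I would argue that over a generic curve $\X$ the locus of $(Z_1,\dots,Z_n) \in \X^n$ with $\sum k_i Z_i$ canonical is nonempty of the expected dimension $n-g$ (when $n \geq g$) or empty of differentials supported that way unless we're careful — but in fact here the forgetful map is dominant precisely when $n \geq g-1$ is false, i.e.\ the stratum does not fill $\moduli$, and the fibre dimension is controlled by the Brill--Noether-type count: the zeros of a generic section of $K_\X$ move in a family of dimension $\max(0, n - g + \text{something})$. When $n < g-1$, I expect $\pi$ is not dominant and $d_{\pi(S)} = 2g-2+n$, meaning the generic fibre is finite (in $\omoduli$, before projectivising) — equivalently, a generic curve in $\pi(S)$ carries only finitely many differentials with the prescribed zero profile. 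For the hyperelliptic component $\omoduli(2d,2d)^{\hyp}$: such a differential is the pullback of a differential on $\PP^1$ under the double cover, so the two zeros are conjugate under the hyperelliptic involution and the curve must be hyperelliptic; $\dim \hyperell[g] = 2g-1$, giving the first line. For $\omoduli(2,\cdots,2)^{\even}$: an even theta characteristic with $h^0 \geq 1$ is a constraint — the effective even theta characteristics form a divisor in the universal theta-characteristic cover, hence the image has dimension $3g-4$ (line 2). For line 4, $n \geq g-1$ and parity not even forces $\pi$ dominant onto all of $\moduli$, dimension $3g-3$.

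\textbf{Lower bound.} For lines 3 and 4, I would exhibit, over a suitable family of curves of the relevant dimension, a differential with exactly the prescribed zero orders. The cleanest tool is the degeneration/smoothing machinery of Section~\ref{section:PlomberieCylindrique}: take a generic curve $\tilde\X$ of genus $g-1$ (or appropriate genus), attach a rational tail carrying a meromorphic differential with the desired zero profile and a high-order pole at the node matched by a zero on $\tilde\X$, and apply Theorem~\ref{theoreme:PlomberieCylindriqueSansResidu} to smooth — exactly as in the proof of Theorem~\ref{theoreme:ExcesAuBordStratePrincipal}. This shows the stratum contains smoothings surjecting onto a family of dimension $2g-2+n$ (resp.\ $3g-3$). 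For the hyperelliptic lower bound, one constructs explicit hyperelliptic curves with a differential $\pi^*(\text{diff on }\PP^1)$ having a zero of order $d$ at two Weierstrass-conjugate points.

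\textbf{The main obstacle} will be the even component, line 2: showing that the image of $\omoduli^{\even}(2,\cdots,2)$ has dimension exactly $3g-4$ and not $3g-3$. This requires knowing that imposing an effective even theta characteristic is exactly one condition — i.e.\ that the even theta-null divisor $\theta_{\mathrm{null}} \subset \spin^{+}$ maps to a divisor in $\moduli$, which is a classical but nontrivial fact (the even-theta-null locus is an irreducible divisor for $g \geq 1$, and the generic point has a unique vanishing even theta characteristic with $h^0 = 2$). I would invoke this, together with the observation that the generic curve in $\theta_{\mathrm{null}}$ carries a differential with all zeros of even order realizing the $(2,\dots,2)$ profile via a generic section of the squared theta characteristic. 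The subtle point is ruling out that a sub-locus of higher codimension could still dominate via the even structure — handled by the irreducibility of $\theta_{\mathrm{null}}$. I would also need to carefully separate the $n \geq g-1$ odd/general case (line 4) from the case where the parity is even but $S \neq \omoduli^{\even}(2,\cdots,2)$, where presumably line 4's value $3g-3$ still holds because a single even vanishing condition on a theta characteristic of degree $\neq g-1$ in the relevant twisted sense is no condition at all; this case-checking, while not deep, is where the bookkeeping is heaviest.
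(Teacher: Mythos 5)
Your upper bounds for the first two lines (the hyperelliptic locus has dimension $2g-1$; an even differential of type $(2,\ldots,2)$ forces an effective even theta characteristic, a divisorial condition on $\moduli$) agree with the paper's, and you correctly identify that the content of the remaining lines is a lower bound obtained by degeneration and smoothing. The gap is in the lower bound itself. The construction you propose --- a generic curve $\tilde\X$ of genus $g-1$ with a rational tail carrying all the zeros and a high-order pole at the node --- cannot be set up: the differential on the tail must have a pole of order $2g$ at the node, so the compatibility condition forces a zero of order $2g-2$ on $\tilde\X$ at the attaching point, impossible on a curve of genus $g-1$ (canonical degree $2g-4$); moreover the nodal curve would have arithmetic genus $g-1$, not $g$. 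Taking $\tilde\X$ of genus $g$ instead (this is exactly Theorem~\ref{theoreme:ExcesAuBordStratePrincipal}) repairs the local picture, but then $\tilde\X$ is constrained to the image of the minimal stratum, a locus of dimension only $2g-2$ in $\moduli$, and you give no count of the dimension swept out in $\moduli$ by the smoothings; so neither $2g-2+n$ nor $3g-3$ is established. The paper instead degenerates to a chain of $g$ elliptic curves with the $n$ zeros placed on the first $n$ components and the attaching points of the remaining components forced to be torsion points, and reads the dimension off the plumbing coordinates on $\barmoduli$: $g$ moduli of elliptic components, $n-1$ freely movable nodes, $g-1$ smoothing parameters, i.e. exactly $2g-2+n$ (hence $3g-3$ once $n\geq g-1$), with Theorem~\ref{theoreme:PlomberieCylindriqueSansResidu} smoothing the limit differential into the stratum. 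The torsion constraint on the unmarked components is precisely the mechanism producing the drop to $2g-2+n$ when $n<g-1$, and it is absent from your sketch.

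A second, independent gap is that the theorem is a statement about each connected component, and your argument never controls in which component the smoothed differentials land. For $\omoduli(2l_{1},\ldots,2l_{n})$ with $2\le n\le g-2$ both the odd and the even component must be shown to reach dimension $2g-2+n$, for $(g-1,g-1)$ the non-hyperelliptic components must be separated from the hyperelliptic one, and the lower bounds $3g-4$ for $\omoduli^{\even}(2,\ldots,2)$ and $3g-3$ for the odd component require knowing the parity of the limit. The paper handles this by computing the spin structure of the compact-type limit explicitly (choosing the zero on the first elliptic component to be a $2$-torsion or a primitive torsion point switches the parity) and invoking the deformation invariance of the parity, Theorem~\ref{theoreme:BordDesStratesDansSpin}. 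Your alternative route for the even $(2,\ldots,2)$ lower bound --- take a generic curve in the theta-null divisor and square a generic section of the even theta characteristic --- is viable and more classical than the paper's induction on the genus, but it still needs the verification that a generic member of the pencil has $g-1$ distinct zeros, and it does not address any of the other components.
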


This theorem is  proved by degeneration.
The main ingredients are the plumbing cylinder construction of Section~\ref{section:PlomberieCylindrique}, the
explicit description of the spin structures
on the curves of compact type (see Section~\ref{section:Spin}) and the local parametrisation of $\barmoduli$
given by \cite[Theorem~3.17]{MR2807457}.

Before proving the theorem let us introduce the main type of stable curve that we  use in the proof.
\begin{defn}
 Let $(\X_{1},N_{1,1})$ and $(\X_{g},N_{g-1,2})$ be $2$ one-marked elliptic curves and  let
$(\X_{2},N_{1,2},N_{2,1}),\cdots,(\X_{g-1},N_{g-2,2},N_{g-1,1})$ be $g-2$ two-marked elliptic curves.
The {\em snake curve} $\X$ defined by these elliptic curves (see~Figure~\ref{figure:CourbeSerpent}) is 
 $$\X := \left(\bigcup_{i=1}^{g} \X_{i}\right)/\left(N_{i,1}\sim N_{i,2}\right).$$

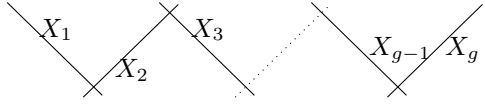
\begin{figure}[ht]
\centering
 \begin{tikzpicture}[>=stealth',shorten >=1pt,auto,node distance=2.8cm]

 \foreach \i in {1,3,...,5}
     \node(A\i) at (\i-.2,0){};
\foreach \i in {1,3,...,5}
     \node[above,right](B\i) at (\i+1.2,1.5){};

 \draw(A1) edge node[below]{$\X_{2}$} (B1);

 \draw[dotted] (A3) to (B3);
 \draw (A5) edge node[right]{$\X_{g}$} (B5);

\foreach \i in {0,2,...,4}
     \node(C\i) at (\i-.2,1.5){};
\foreach \i in {0,2,...,4}
     \node[below,right](D\i) at (\i+1.2,0){};

 \draw (C0) edge node[above]{$\X_{1}$} (D0);
 \draw (C2) edge node[above]{$\X_{3}$} (D2);
\draw (C4) edge node[right]{$\X_{g-1}$} (D4);
\end{tikzpicture}
\caption{The snake curve $\X$.}
\label{figure:CourbeSerpent}
\end{figure}
\end{defn}

\begin{proof} We begin the proof by treating the case of the hyperelliptic strata $\hyperell$. 

\paragraph{The hyperelliptic strata.}
The  hyperelliptic locus $\hyperell\subset\moduli$ of genus $g$ has dimension $2g-1$. Since the projections 
of each of the hyperelliptic strata $\omoduli(2g-2)^{\hyp}$ and 
$\omoduli(2d,2d)^{\hyp}$ to $\moduli$ are $\hyperell$, they have dimension $2g-1$.

\paragraph{}
From now on, $S$ will be a   an non
hyperelliptic connected component of the stratum  $\omoduli(k_{1},\cdots,k_{n})$. 

\paragraph{The strata $\omoduli(k_{1},\cdots,k_{n})$ with $n\geq g$.}
Let us remark that if $n\geq g$, then  the stratum $S':=\omoduli(k_{1}+k_{n},\cdots,k_{n-1})$
lies in the boundary of $S$. So if the  dimension of the projection of $S$ is $d$, the dimension of the
projection of $S'$ is at least $d$. This implies that it suffices to prove the theorem for the strata with at
most $g-1$ zeros. 

From now on, we suppose that $n\leq g-1$. 

\paragraph{The connected strata $\omoduli(k_{1},\cdots,k_{n})$.}
Let $\X$ be the snake curve from above where the points
$N_{i-1,2}$ are points of $2(g-i)$-torsion of $(\X_{i},N_{i,1})$.

Let $\omega$ be the differential  on $\X$
defined by the following restrictions. 
\begin{itemize}
 \item For $i=1$, let $\omega|_{\X_{1}}$ be a differential on $\X_{1}$ with a pole of order $k_{1}$ at
$N_{1,1}$ and
a zero
$Z_{1}$ of order $k_{1}$.
\item For $i\in\left\{ 2,\cdots,n\right\}$, let $\omega|_{\X_{i}}$ be a differential such that the divisor is 
$$\divisor{\omega_{i}}=k_{i}Z_{i}+\left(\sum_{j<i}k_{j}-2(i-1)\right)N_{i-1,2} -\left(\sum_{j\leq
i}k_{j}-2(i-1)\right)N_{i,1},$$ 
where $Z_{i}\in\X_{i}\setminus\left\{N_{i-1,2},N_{i,1}\right\}$.
\item For $i\in\left\{ n+1,\cdots,g-1\right\}$, the differential $\omega|_{\X_{i}}$ is the differential with
divisor 
$$\divisor{\omega_{i}}=2(g-i)N_{i-1,2} -2(g-i)N_{i,1}.$$ 
\item For $i=g$, the differential $\omega_{g}$ is simply the holomorphic differential of $\X_{g}$.
\end{itemize}

Let us remark that the differentials $\omega|_{\X_{i}}$ exist and satisfy the Compatibility
Condition~\eqref{equation:conditionDeCompatibiliteGeneral}, that is 
$\ord_{N_{i,1}}(\omega|_{\X_{i}})=\ord_{N_{i,2}}(\omega|_{\X_{i+1}})=-2$ for every node~$N_{i,1}\sim
N_{i,2}$. 
Moreover, the differentials $\omega_{i}$ have no residues, so according
 to Theorem~\ref{theoreme:PlomberieCylindriqueSansResidu}, they form a limit differential $\omega$ 
which can be smoothed in the stratum $\omoduli(k_{1},\cdots,k_{n})$.

We now construct a neighbourhood of $\X$ of dimension $2g-2+n$ such that every curve in
this neighbourhood  possesses a limit differential of type $(k_{1},\cdots,k_{n})$. 

Let us first give a parametrisation of a small neighbourhood $U$ of $\X$ in $\barmoduli$ (see
\cite[Theorem~3.17]{MR2807457}). Let $(t_{1},\cdots,t_{3g-3})\in\Delta^{3g-3}$ be a
parametrisation of $U$ 
such that the coordinates of $\X$ are $(0,\cdots,0)$ and satisfying the following properties. 
\begin{itemize}
 \item The first $g$ variables $t_{1},\cdots,t_{g}$ parametrise the deformations of the elliptic curves
$(\X_{1},N_{1,1}),\cdots,(\X_{g},N_{g,1})$. 
\item The $g-2$ variables $t_{g+1},\cdots,t_{2g-2}$ parametrise the deformations of the nodes
$N_{1},\cdots,N_{g-1}$. Alternatively, they parametrise the deformations of $(\X_{i},N_{i-1,2},N_{i,1})$
which leave the curve $\X_{i}$ fixed. 
\item The $g-1$ last parameters $t_{2g-1},\cdots,t_{3g-3}$ parametrise the smoothings of the nodes of $\X$.
\end{itemize}

Observe that the existence of a limit differential as previously defined does not depend on the normalisation
of the elliptic curves. Therefore, we can deform the differential $\omega$ above  the curves of parameter
equal to $(t_{1},\cdots,t_{g},0,\cdots,0)$   in such a way that it remains a limit differential of type
$(k_{1},\cdots,k_{n})$.

%we could not in general deform every node of $\X$. Indeed,
Now  let us remark that for $i\in
\left\{n+1,\cdots,g-1\right\}$, the points
$N_{i-1,2}$ have to be points of $2(g-i)$-torsion of $(\X_{i},N_{i,1})$. On the other hand, the
points~$N_{i,2}$ and $N_{i+1,1}$ can move freely on $\X_{i}$ for $i\in
\left\{1,\cdots,n\right\}$.
Hence, using the second characterisation
of the
deformations of the nodes, this means that only the deformations of the $n-1$ first nodes of $\X$ are allowed.
 
It follows from Theorem~\ref{theoreme:PlomberieCylindriqueSansResidu}, that the smoothings
of the nodes at the limit differential $(\X',\omega')$ of parameter $(t_{1},\cdots,t_{g+n-1},0,\cdots,0)$
is a differential in $S$.

Summarising this discussion, we have shown, that every curve with coordinates
$(t_{1},\cdots,t_{g+n-1},0,\cdots,0,t_{2g-1},\cdots,t_{3g-3})\subset\Delta^{3g-3}$ has a limit differential
in the closure of $S$. Since this neighbourhood of $\X$ has dimension $2g-2+n$, this proves
Theorem~\ref{theoreme:DimensionProjectionStrates} in the case of connected strata.

\paragraph{The non-connected strata.}
Next, we deal with the non-connected strata of $\omoduli$ determined in \cite{MR2000471}. The problem of the
last
argument is that we do not know a priori in the
boundary of which connected component is the limit differential $(\X,\omega)$ that we have constructed.

Recall from Definition~\ref{definition:spinStructureSurCourbeTypeCompact} that on a curve of compact type
$\X$, a spin structure is
determined by its restrictions on every irreducible component of $\X$. More precisely, if $\omega$ is a limit
differential on $\X$ with only  zeros and poles of even orders, then the
theta characteristic on an irreducible component $\X_{i}$ of $\X$ is $\Ox[\X_{i}]\left(
\frac{1}{2}\divisor{\omega|_{\X_{i}}}\right)$. Moreover, we have shown in
Theorem~\ref{theoreme:BordDesStratesDansSpin} that the parity of a spin
structure is given by the sum of the parities of these restrictions and is invariant under deformation.

\paragraph{The components of the strata $\omoduli(2,\cdots,2)$.}
We first prove that the dimension of the image of $\omoduli^{\odd}(2,\cdots,2)$ under the forgetful map is
$3g-3$. The construction of the differential on the snake curve in the case of connected strata can be
performed in the case of the strata  $\omoduli(2,\cdots,2)$. Hence it suffices to
show that this differential has odd parity to prove this case. On the $g-1$ first curves
$\X_{1},\cdots,\X_{g-1}$, the theta characteristics are given by the line bundles
$\Ox[\X_{i}](Z_{i}-N_{i,1})$. In particular,
they have even parity. On the other hand, the theta characteristic on the  curve $\X_{g}$ is $\Ox[\X_{g}]$,
which has odd parity. Since the parity of $\omega$ is given by the sum of the parities, it has odd parity. 

We now deal with the case of the component  $\omoduli^{\even}(2,\cdots,2)$.
Let us remark that the dimension of the projection of this component is at most
$3g-4$. Indeed, let
$(\X,\omega)\in\omoduli^{\even}(2,\cdots,2)$. Then clearly, $\omega\in H^{0}(\X,\frac{1}{2}\divisor{\omega})$.
This implies that $h^{0}(\X,\frac{1}{2}\divisor{\omega})\geq 2$. The locus of curves having such theta
characteristic is  a divisor of $\moduli$ according to \cite{MR937985}. So it remains to show that
$\dim(\pi(\omoduli^{\even}(2,\cdots,2)))\geq 3g-4$. We prove this by induction on the genus of the curve.

%The case $\pomoduli[3,1]^{\odd}(2,2)$ follows from the well known fact that a generic quartic in $\PP^{2}$
%has
%28 double tangents (or see Lemma~\ref{proposition:isoOm322AvecS3+}). 
In genus $3$, the even stratum $\omoduli[3]^{\even}(2,2)$ coincides with the hyperelliptic stratum
$\omoduli[3]^{\hyp}(2,2)$. So the claim follows from the description of the hyperelliptic strata.

Let us do the induction step.
Let $\tilde\X$  be generic curve in the image of $\omoduli[g-1]^{\even}(2,\cdots,2)$ under the forgetful map.
Let
$\tilde{N}\in\tilde{\X}$ be a generic point of $\tilde{\X}$. Let
$(\X_{1},N_{1})$ be an elliptic curve. We define the genus $g$ curve $\X$ by 
$$\X:=(\tilde{\X}\cup\X_{1})/(\tilde{N}\sim N_{1}).$$

We now construct a limit differential $(\X,\omega)$ in the closure of the connected component 
$\omoduli^{\even}(2,\cdots,2)$. Let  $(\tilde\X,\tilde\omega)$ be a differential in the connected
component $\omoduli[g-1]^{\even}(2,\cdots,2)$. Let $\omega_{1}$  be a meromorphic differential on $\X_{1}$
which has a pole of order $2$ at
$N_{1}$ and a zero of order two.  The differential $\omega$ is given by the differential $\tilde\omega$ on
$\tilde\X$ and the differential $\omega_{1}$ on $\X_{1}$. Since $\tilde{N}$ is a general point, it is not a
zero of $\tilde\omega$. This implies that $\omega$ verifies the compatibility
condition~\eqref{equation:conditionDeCompatibiliteGeneral}. Hence  $\omega$ is a limit differential in
the closure of the connected component $\omoduli^{\even}(2,\cdots,2)$.

% Remark that since $h^{0}(\X_{1},\frac{1}{2}\divisor{\omega_{1}})=0$, the
%differential $\omega_{\even}$ is even.

The end of the proof is similar to the case of connected strata. We can parametrise a neighbourhood of
$\X$ by $(t_{1},\cdots,t_{3g-3})\in\Delta^{3g-3}$ such that the locus of nodal curves is given by
$t_{3g-3}=0$. Only the deformations of $\X_{g-1}$ which
stay inside the projection of $\omoduli[g-1]^{\even}(2,\cdots,2)$ are allowed. The dimension of such
deformations is $3(g-1)-1$ by the induction hypothesis.  
To conclude, we use
a similar deformation-smoothing argument as in the case of connected strata. We can deform the point of
attachment on $\X_{g-1}$, the elliptic curve~$(\X_{1},N_{1})$ and the node.  Thus we deduce by induction, that
the dimension of
the projection of the component $\omoduli^{\even}(2,\cdots,2)$ is $3g-4$.

\paragraph{The components of the strata $\omoduli(2l_{1},\cdots,2l_{n})$ for $2\leq n\leq g-2$ and
$(2l_{1},2l_{2})\neq
(g-1,g-1)$.}
Observe that these strata  have only two connected components
which are determined by the parity of the associated theta characteristics.

Let $\X$ be the snake curve defined in the case of connected strata. We show that we can choose a limit
differential in two ways, such that one is in the boundary of the odd component and the other in the even
component of $\omoduli(2l_{1},\cdots,2l_{n})$. Choose a limit differential $\omega$ on $\X$, and
denote by $\omega_{1}$ its restriction on
$\X_{1}$. The divisor of the differential $\omega_{1}$ is $\divisor{\omega_{1}}=2l_{1}Z_{1}-2l_{1}N_{1,1}$. So
the associated theta characteristic is $\mathcal{L}_{\omega_{1}}:=\Ox[\X_{1}](l_{1}Z_{1}-l_{1}N_{1,1})$. 

There are two cases to consider: the first one is when $l_{1}=2$ and the second one when $l_{1}\geq3$. If
$l_{1}=2$, the
theta characteristic $\mathcal{L}_{\omega_{1}}$ is odd if $Z_{1}$ is a $2$-torsion of $(\X_{1},N_{1,1})$ and
even
if $Z_{1}$ is a primitive  $4$-torsion of $(\X_{1},N_{1,1})$.
If $l_{i}\geq 3$, the
theta characteristic $\mathcal{L}_{\omega_{1}}$ is even if $Z_{1}$ is a $2$-torsion of $(\X_{1},N_{1,1})$ and
odd if $Z_{1}$ is a primitive $l_{1}$-torsion of $(\X_{1},N_{1,1})$.

The parity of $\omega$ is the sum of the parities of the restrictions $\omega|_{\X_{i}}$ on every
irreducible curve $\X_{i}$ of $\X$. This implies that fixing $\omega$ on the $g-1$ components~$\X_{i}$ for
$i\geq 2$, we can define a differential $\omega$ in the boundary of both components of 
$\omoduli(2l_{1},\cdots,2l_{n})$ by changing the parity of $\omega_{1}$. 
The deformation-smoothing argument of the connected strata now implies the claim.

\paragraph{The non-hyperelliptic components of $\omoduli(g-1,g-1)$.} 
Since we have already dealt with the hyperelliptic case, it remains  the case of the other connected
component if $g$ is even or the two other components if $g$ is odd.

Let $(\X_{g-1},\omega_{g-1},Z_{g-1},N_{g-1})$  be a generic pointed differential in the stratum
$\omoduli[g-1](g-1,g-3)$ and $(\X_{1},\omega_{1},Z_{1},N_{1})$ be an elliptic curve  with a
differential~$\omega_{1}$ such that $\divisor{\omega_{1}}=(g-1)Z_{1}-(g-1)N_{1}$. Then the pointed
differential
$(\X,\omega,Z_{1},Z_{g-1})$ is a limit differential on the boundary of $\omoduli(g-1,g-1)$. 
Let us remark that the curve $\X_{g-1}$ is not hyperelliptic, because the
dimension of the projection of $\omoduli[g-1](g-1,g-3)=2(g-1)$ is strictly larger
than the dimension of the hyperelliptic locus $\hyperell[g-1]$. In particular, the limit differential
$(\X,\omega,Z_{1},Z_{g-1})$ is not in the
boundary of the hyperelliptic component of these strata.
Moreover, if $g-1$ is
even, then this pointed differential is either in the boundary of the even or in the boundary of the  odd
strata according to the parity
of $\omega_{g-1}$.  

The conclusion of this case uses the same deformation-smoothing argument as previously in this proof.

\paragraph{The non-hyperelliptic minimal strata.}
 The zero of a differential $(\X,\omega)$ in the strata $\omoduli(2g-2)$ is Weierstrass point. Since
there exists only finitely many Weierstrass points on a curve, the projection from every component
of $\pomoduli(2g-2)$ to $\moduli$ is finite. It is known that the dimension of $\pomoduli(2g-2)$ is $2g-2$,
so the dimension of its projection has dimension $2g-2$ too.

This concludes the proof of Theorem~\ref{theoreme:DimensionProjectionStrates}.
\end{proof}

As a corollary of Theorem~\ref{theoreme:DimensionProjectionStrates} we obtain the Kodaira dimension of all the
strata $\omoduli(k_{1},\cdots,k_{g-1})$  different from $\pomoduli^{\even}(2,\cdots,2)$ when $\moduli$ is of
general type.

\begin{cor}\label{corollaire:dimKodairaStratesProjFini}
The strata of the form $\pomoduli(k_{1},\cdots,k_{g-1})$ different from $\pomoduli^{\even}(2,\cdots,2)$  are
of general type for $g=22$ and $g\geq 24$.
\end{cor}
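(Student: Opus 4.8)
The plan is to combine the dimension computation of Theorem~\ref{theoreme:DimensionProjectionStrates} with the easy direction of the Iitaka conjecture, Theorem~\ref{theoreme:DimKodairaRevetementFini}, using the classical fact (Harris--Mumford, and subsequent work for $g=22$) that $\moduli$ is of general type for $g=22$ and $g\geq 24$; see \cite{MR664324}.

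Fix $g$ with $g=22$ or $g\geq 24$, so that $\kappa(\moduli)=\dim\moduli=3g-3$, and let $S$ be a connected component of a stratum $\pomoduli(k_{1},\cdots,k_{g-1})$ with $S\neq\pomoduli^{\even}(2,\cdots,2)$; I will show every such $S$ is of general type. Set $n:=g-1$; since $g\geq 22$ we have $n\geq 21\geq 3$. By the Kontsevich--Zorich classification of connected components, the only disconnected stratum with $n=g-1$ zeros is $\pomoduli(2,\cdots,2)$, whose two components are the even and the odd one (there is no hyperelliptic component, the partition $(2,\dots,2)$ being neither $(2g-2)$ nor $(g-1,g-1)$ since $g\geq 4$); moreover the hyperelliptic strata $\pomoduli(2d,2d)^{\hyp}$ occurring as an exception in Theorem~\ref{theoreme:DimensionProjectionStrates} have $n=2\neq g-1$, so they do not arise here. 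Hence $S$ is either a connected stratum or the component $\pomoduli^{\odd}(2,\cdots,2)$; in either case the parity of $S$ is not even.

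Now I would invoke Theorem~\ref{theoreme:DimensionProjectionStrates}. Since $n=g-1$ we have $n\geq g-1$, and the parity of $S$ is not even, so the fourth case of that theorem applies: the image of $S$ in $\moduli$ under the forgetful map has dimension $3g-3$, i.e. $\pi|_{S}$ is dominant. By Lemma~\ref{lemme:IsoEntreIncEtStrate}, $\dim S=2g-2+n=3g-3=\dim\moduli$, so $\pi|_{S}\colon S\to\moduli$ is a generically surjective morphism between varieties of equal dimension. Theorem~\ref{theoreme:DimKodairaRevetementFini} then yields
\[
\kappa(S)\ \geq\ \kappa(\moduli)\ =\ 3g-3\ =\ \dim S ,
\]
and since always $\kappa(S)\leq\dim S$, we conclude $\kappa(S)=\dim S$, i.e. $S$ is of general type.

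Both ingredients being already at hand, there is no genuine obstacle; the only point demanding care is to check that each stratum or component in the statement is covered by the ``parity not even'' case of Theorem~\ref{theoreme:DimensionProjectionStrates}, which is precisely the classification bookkeeping in the second paragraph — ruling out, for $n=g-1\geq 3$, both the hyperelliptic strata $\pomoduli(2d,2d)^{\hyp}$ and the excluded component $\pomoduli^{\even}(2,\cdots,2)$.
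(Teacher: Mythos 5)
Your proposal is correct and follows essentially the same route as the paper: Theorem~\ref{theoreme:DimensionProjectionStrates} gives generic surjectivity onto $\moduli$, Theorem~\ref{theoreme:DimKodairaRevetementFini} gives $\kappa(S)\geq\kappa(\moduli)=3g-3=\dim S$, and the Harris--Mumford/Farkas results supply general type of $\moduli$ for $g=22$ and $g\geq 24$. Your extra bookkeeping with the Kontsevich--Zorich classification (ruling out hyperelliptic components and isolating the odd component of $(2,\dots,2)$ when $n=g-1$) is only spelled out more explicitly than in the paper, which leaves it implicit in the hypothesis "parity not even".
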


\begin{proof}
 It has been proved that $\moduli$ is of general type for $g\geq 24$ by Harris and Mumford and for $g=22$ by
Farkas. According to Theorem~\ref{theoreme:DimensionProjectionStrates} and
Theorem~\ref{theoreme:DimKodairaRevetementFini} we have
$$\kappa(\moduli)\leq\kappa(\pomoduli(k_{1},\cdots,k_{g-1}))\leq\dim \pomoduli(k_{1},\cdots,k_{g-1}) .$$
Since the left and the right term of this inequality are equal to $3g-3$, the inequalities are equalities and
the corollary follows.
\end{proof}

Using the subadditivity of the Kodaira dimension (see Theorem~\ref{theoreme:SousAdditiviteKodaria}), we can
determine the Kodaira dimension of the strata which impose few conditions on the differential.

\begin{theorem}\label{theoreme:DimKodairaStraPeutConditions}
For any  $g\geq2$, let $(k_{1},\cdots,k_{n})$ be a tuple of positive numbers  of the form
$(k_{1},\cdots,k_{l},1,\cdots,1)$ with $k_{i}\geq 2$ for $i\leq l$  such that
$$\sum_{i=1}^{n}k_{i}=2g-2 \text{ and } \sum_{i=1}^{l}k_{i}\leq g-2.$$ 
Then  the Kodaira dimension of the stratum $\pomoduli(k_{1},\cdots,k_{n})$ is $-\infty$.
\end{theorem}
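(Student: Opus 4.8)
The plan is to realise $\pomoduli(k_{1},\cdots,k_{n})$ as the total space of a fibre space over a moduli space of pointed curves whose generic fibre is a dense open subset of a projective space of positive dimension, and then to apply the subadditivity inequality of Theorem~\ref{theoreme:SousAdditiviteKodaria}. Put $d:=\sum_{i=1}^{l}k_{i}$, so that $d\le g-2$ by hypothesis, and let $m:=n-l$ be the number of simple zeros; from $\sum k_{i}=2g-2$ one gets $m=2g-2-d\ge g\ge2$. In particular there is at least one simple zero, $n\ge g$, and the signature $(k_{1},\cdots,k_{l},1,\cdots,1)$ is neither totally even nor one of the hyperelliptic signatures $(2g-2)$ or $(g-1,g-1)$, so by the classification of \cite{MR2000471} the stratum $\omoduli(k_{1},\cdots,k_{n})$ is connected.

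Consider the map that remembers only the zeros of order $\ge 2$,
$$p\colon\ \pomoduli(k_{1},\cdots,k_{n})\ \longrightarrow\ \modulin[g,l],\qquad (\X,\omega)\ \longmapsto\ (\X,Z_{1},\cdots,Z_{l})$$
(for $l=0$ this is the forgetful map to $\moduli$, and $\kappa(\pomoduli(1,\cdots,1))=-\infty$ is already known; when some $k_{i}$ with $i\le l$ coincide one should read $\modulin[g,l]$ modulo the symmetric group permuting the corresponding points, or equivalently run the argument on the ordered incidence variety and pass to the finite quotient at the end, which does not affect the Kodaira dimension). The first step is to identify the generic fibre of $p$. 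Over a generic $(\X,Z_{1},\cdots,Z_{l})$, an element of the fibre is, up to scaling, a nonzero section of $\dualsheave[\X]$ vanishing to order at least $k_{i}$ at $Z_{i}$, i.e. a point of $\PP H^{0}\bigl(\X,\dualsheave[\X](-\sum_{i=1}^{l}k_{i}Z_{i})\bigr)$. Since $d<g$, a generic effective divisor of degree $d$ is non-special, so $h^{0}(\sum k_{i}Z_{i})=1$, and Riemann--Roch gives $h^{0}\bigl(\dualsheave[\X](-\sum k_{i}Z_{i})\bigr)=g-d\ge2$. A standard incidence count then shows that, for $(\X,Z_{1},\cdots,Z_{l})$ generic, a general member of this linear system vanishes to order exactly $k_{i}$ at $Z_{i}$ and has its remaining $m$ zeros simple and disjoint from the $Z_{i}$; hence the fibre of $p$ over a generic point is a dense open subset of $\PP^{\,g-d-1}$, with $g-d-1\ge1$. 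In particular $p$ is dominant, and its generic fibre is connected and rational, so has Kodaira dimension $-\infty$.

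It remains to feed this into Theorem~\ref{theoreme:SousAdditiviteKodaria}, applied to (a compactification of) the fibre space $p$: for $v$ in a dense open subset of $\modulin[g,l]$,
$$\kappa\bigl(\pomoduli(k_{1},\cdots,k_{n})\bigr)\ \le\ \dim\modulin[g,l]\ +\ \kappa\bigl(p^{-1}(v)\bigr)\ =\ -\infty,$$
which is the assertion. Equivalently, the same linear systems exhibit $\pomoduli(k_{1},\cdots,k_{n})$ as uniruled --- through a general point $(\X,\omega)$ with zeros $\sum k_{i}Z_{i}+W_{1}+\cdots+W_{m}$ passes the non-constant rational curve cut out by a line through $[\omega]$ in $\PP H^{0}(\dualsheave[\X](-\sum k_{i}Z_{i}))$, keeping $\X$ and the $Z_{i}$ fixed --- which is the concrete meaning of ``a generic fibre of the forgetful map has negative Kodaira dimension''.

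The only genuinely delicate point is the incidence count identifying the generic fibre of $p$ with a dense open subset of $\PP^{g-d-1}$: that, for $(\X,Z_{1},\cdots,Z_{l})$ generic, each of the conditions ``$\omega$ vanishes beyond order $k_{i}$ at some $Z_{i}$'', ``two of the remaining zeros collide'' and ``a remaining zero meets some $Z_{i}$'' defines a proper closed subset of $\bigl|\dualsheave[\X](-\sum k_{i}Z_{i})\bigr|$. Each of these reduces to a Brill--Noether genericity statement on a general curve whose validity hinges precisely on $d\le g-2$ (for instance $h^{0}(\sum k_{i}Z_{i}+2P)=1$ for general $P$, which requires $d+2\le g$); the borderline value $d=g-1$ yields a $0$-dimensional fibre and is exactly why $\pomoduli(g-1,1,\cdots,1)$ needs the separate argument of Proposition~\ref{proposition:DimKodaira(g-1,1)}.
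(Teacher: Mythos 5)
Your argument is correct and is essentially the paper's: both rest on the Riemann--Roch count $h^{0}\bigl(\dualsheave[\X](-\sum_{i\leq l} k_{i}Z_{i})\bigr)=g-d\geq 2$ (valid precisely because $d=\sum_{i\leq l}k_{i}\leq g-2$), producing positive-dimensional rational fibres, followed by the subadditivity inequality of Theorem~\ref{theoreme:SousAdditiviteKodaria}. The only differences are harmless: the paper fibres over $\moduli$ and gets uniruledness of the fibre via the vanishing incidence variety of Definition~\ref{definition:incidenceAnulation}, whereas you fibre over $\modulin[g,l]$ so the generic fibre is directly a dense open subset of $\PP^{g-d-1}$; and your connectedness aside misstates the case $g=2$, signature $(1,1)$, which is of the form $(g-1,g-1)$ yet still a connected stratum --- a cosmetic slip the argument does not actually need.
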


The proof make an essential use of the following space. 
\begin{defn}\label{definition:incidenceAnulation}
 Let $\X$ be a curve  of genus $g$  and  $i=(i_{1},\cdots,i_{l})\in\NN^{l}$ be a $l$-tuple of positive
numbers. The {\em vanishing incidence of order $i$ of $\X$} is 
 $$I_{i}(\X):=\left\{((Q_{1},\cdots,Q_{l}),\omega)\in\X^{l}\times\PP
H^{0}(\X,\holoneform):\ord_{Q_{j}}(\omega)\geq i_{j}
\right\}.$$
\end{defn}

\begin{proof}[Proof of Theorem~\ref{theoreme:DimKodairaStraPeutConditions}.]
 Let $\X$ be a generic curve of genus $g$. We show that the fibre $\pi^{-1}(\X)$ over $\X$  by the
forgetful map $\pi:\pomoduli(k_{1},\cdots,k_{n})\to\moduli$ is connected and has Kodaira dimension $-\infty$.
The theorem follows readily from
this fact combined with Theorem~\ref{theoreme:SousAdditiviteKodaria}.

Recall that by hypothesis the $n$-tuple $(k_{1},\cdots,k_{n})=(k_{1},\cdots,k_{l},1,\cdots,1)$, with
$k_{i}\geq2$ for $i\leq l$.
Let us denote $k:=(k_{1},\cdots,k_{l})$  and let $r:=\sum_{i=1}^{l}k_{i}$ be the sum of these
orders.
We show that the vanishing incidence of order $k$ is an  algebraic fibre space with generic fibre
$\PP^{g-r}$. Indeed, it follows from Riemann-Roch that for any $l$-tuple of points
$(Z_{1},\cdots,Z_{l})\in\X^{l}$,
the vector space  $$H^{0}\left(\X,\holoneform\left(-\sum_{i=1}^{l}(k_{i}Z_{i})\right)\right)$$ is of dimension
at least $g-r$.
%Moreover, for a generic choice of points in $\X^{l}$, the dimension is exactly $g-r+1$.
Since $\X$ is generic, the space corresponding to differentials having order exactly $k_{i}$ at $Z_{i}$ and
one otherwise is an open subset of this space. This implies the claim that the vanishing incidence variety
of order~$k$ is an algebraic fibre space with generic fibre isomorphic to $\PP^{g-r-1}$.

Now, the second projection of the vanishing incidence variety of order $k$ to~$\PP^{g-1}$ is clearly
surjective on the
closure of $\pi^{-1}(\X)$ inside $\pobarmoduli(k_{1},\cdots,k_{n})$. Moreover, this map does not factorise
through the first projection. This implies that the generic fibre of
$\pi:\pomoduli(k_{1},\cdots,k_{n})\to\moduli$ is uniruled. Therefore, its Kodaira dimension is $-\infty$.
\end{proof}

\paragraph{Some other strata.}

We determine the Kodaira dimension of some other strata. Let us remark that if $\moduli$ is of general type
and $n\geq g$,  it
suffices to determine the Kodaira dimension of a generic fibre of the map from the stratum
$S:=\pomoduli(k_{1},\cdots,k_{n})$ to $\moduli$ in order to compute the Kodaira dimension of~$S$. However,
this seems to be a quite subtle problem in general.

\paragraph{The strata $\pomoduli(g-1,1,\cdots,1)$, when $\moduli$ is of general type.}

According to
Theorem~\ref{theoreme:DimensionProjectionStrates}, the generic fibres of the forgetful map
 $$\pi:\pomoduli(g-1,1,\cdots,1)\to\moduli$$ are curves. Let us determine these curves.
\begin{lemma}
 Let $\X$ be a generic curve of genus $g\geq 3$. If $g\geq 4$, the closure of the fibre at $\X$ by $\pi$ is a
curve isomorphic to $\X$.
If $g=3$, then the closure of the fibre at $\X$ by $\pi$ is a singular curve such that $\X$ is its stable
model. 
\end{lemma}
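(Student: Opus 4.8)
The plan is to describe the fibre over a generic curve $\X$ through the canonical embedding, identify its points with osculating hyperplanes, and recognise its closure. A point of $\pomoduli(g-1,1,\cdots,1)$ lying over $\X$ is the class of a holomorphic differential $\omega$ on $\X$ with $\divisor{\omega}=(g-1)Z_{1}+Z_{2}+\cdots+Z_{g}$ for distinct $Z_{1},\dots,Z_{g}$, where $Z_{1}$ carries the zero of order $g-1$. Under the canonical embedding $\X\hookrightarrow\PP^{g-1}=\PP\bigl(H^{0}(\X,\holoneform)^{\vee}\bigr)$ the differential $\omega$, up to scalar, is the hyperplane $H_{\omega}$ cutting $\divisor{\omega}$ on $\X$; the membership condition says precisely that $H_{\omega}$ is the osculating hyperplane of $\X$ at $Z_{1}$ (contact of order $g-1$) and meets $\X$ in $g-1$ further distinct points. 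Thus the fibre lies in the fibre $\PP H^{0}(\X,\holoneform)=\PP^{g-1}$ of $\pobarmoduli\to\barmoduli$ over $[\X]$, and I would work with the \emph{heavy zero} map $\rho\colon\pi^{-1}(\X)\to\X$, $\omega\mapsto Z_{1}$.

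First I would build an inverse to $\rho$. Since $\X$ is generic, each of its Weierstrass points has weight one, with gap sequence $(1,2,\cdots,g-1,g+1)$, and Riemann--Roch then gives $h^{0}\bigl(\X,\holoneform(-(g-1)Z)\bigr)=1$ for every $Z\in\X$. Hence for each $Z$ there is, up to a scalar, a unique differential $\omega_{Z}$ vanishing to order $\geq g-1$ at $Z$, and $\varphi\colon Z\mapsto[\omega_{Z}]$ is a morphism $\X\to\PP^{g-1}$: it is the morphism attached to the line subbundle $p_{1*}\bigl(p_{2}^{*}\holoneform(-(g-1)\Delta)\bigr)$ of the trivial bundle $H^{0}(\X,\holoneform)\otimes\mathcal{O}_{\X}$ on $\X\times\X$, which is locally free of rank one by cohomology and base change. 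On the dense open $U\subset\X$ where $\omega_{Z}$ has a zero of order exactly $g-1$ and $g-1$ further distinct simple zeros, $\varphi$ lands in $\pi^{-1}(\X)$ and is inverse to $\rho$; so $\pi^{-1}(\X)$ is a smooth curve isomorphic to $U$, and its closure is the dual curve $\X^{\vee}=\varphi(\X)$ of osculating hyperplanes of the canonical model.

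For $g\geq4$ the key point is that $\varphi$ is injective, hence an isomorphism onto $\X^{\vee}$. If $\omega_{Z}=\omega_{Z'}$ with $Z\neq Z'$, this differential vanishes to order $\geq g-1$ at both $Z$ and $Z'$, so by degree $\divisor{\omega}=(g-1)Z+(g-1)Z'$, i.e. $(\X,\omega)\in\omoduli(g-1,g-1)$ and $\X\in\pi\bigl(\omoduli(g-1,g-1)\bigr)$; but here $n=2<g-1$ and, for generic $\X$, this stratum is not the hyperelliptic one, so by Theorem~\ref{theoreme:DimensionProjectionStrates} its image has dimension $2g<3g-3=\dim\moduli$, contradicting genericity. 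Injectivity removes the bisecant nodes of $\X^{\vee}$; for the remaining tangency degenerations I would pass to the incidence variety compactification $\PP\obarmoduliincp{g-1,1,\cdots,1}$, in which $Z_{1}$ is remembered: there $\rho$ extends to $\overline{\pi^{-1}(\X)}$ and $\varphi$ extends to a morphism $\sigma\colon\X\to\overline{\pi^{-1}(\X)}$ with $\rho\circ\sigma=\mathrm{id}_{\X}$; being a section of a separated map, $\sigma$ is a closed immersion onto a dense irreducible closed subset, hence an isomorphism, so $\overline{\pi^{-1}(\X)}\cong\X$.

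For $g=3$ the generic $\X$ is a smooth non-hyperelliptic plane quartic and $\pi^{-1}(\X)$ is the set of its simply tangent lines, i.e. the smooth locus of the degree $12$ dual curve $\X^{\vee}\subset(\PP^{2})^{\vee}$ with flex-tangents and bitangents deleted. Now the count of Theorem~\ref{theoreme:DimensionProjectionStrates} goes the other way: $\omoduli[3](2,2)$ dominates $\moduli[3]$ (its odd component has image of dimension $3g-3=6$; equivalently the $28$ odd theta characteristics give $28$ differentials with two double zeros on each curve), so $\varphi$ identifies the $28$ pairs of contact points of the bitangents, and $\X^{\vee}$ acquires $28$ nodes besides its $24$ cusps at the flexes. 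Thus $\X^{\vee}$ is singular; its normalisation is the smooth curve $\X$, which is already stable, so $\X$ is the stable model of $\overline{\pi^{-1}(\X)}=\X^{\vee}$. The main obstacle is the compactification bookkeeping: making precise in which space the closure of the fibre is formed, checking that its boundary is exactly the set of degenerate differentials just described (so that the closure really is the dual curve), and, for $g\geq4$, verifying that no further singularity is introduced — for which the passage to the incidence variety compactification is the cleanest tool.
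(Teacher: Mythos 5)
Your construction is essentially the paper's: you parametrise the fibre over a generic $\X$ by the point carrying the zero of order $g-1$, using that $h^{0}(\X,\holoneform(-(g-1)Z))=1$ for every $Z$ (the paper packages this as the projection of the vanishing incidence variety $I_{g-1}(\X)$ into $\PP^{g-1}$), and your genus-three analysis --- the dual quartic with $24$ cusps at the Weierstrass points and $28$ nodes at the bitangents --- is exactly the paper's. Your use of Theorem~\ref{theoreme:DimensionProjectionStrates} to exclude a differential with divisor $(g-1)Z+(g-1)Z'$ on a generic curve of genus $g\geq4$, hence to rule out nodes of the image, is correct and makes explicit a point the paper leaves implicit.

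The gap is in your final step for $g\geq4$. The closure in question is formed inside the fibre $\PP^{g-1}=\PP H^{0}(\X,\holoneform)$ of $\pobarmoduli\to\barmoduli$ (this is where the paper's proof takes it), and there the map $\varphi\colon Z\mapsto[\omega_{Z}]$ is ramified at every Weierstrass point: at a normal Weierstrass point the contact order jumps from $g-1$ to $g$, and the Wronskian minors giving the coordinates of $\varphi$ vanish to orders $2,3,\dots$, so the image has a cusp there --- the very mechanism producing your $24$ cusps for $g=3$, and it does not disappear for $g\geq4$. Hence the closure in $\PP^{g-1}$ acquires $g^{3}-g$ cusps and is only birational to $\X$ (its normalisation is $\X$); this weaker conclusion is in fact what the paper's proof asserts (``birationally equivalent'') and is all that the general-type application, Proposition~\ref{proposition:DimKodaira(g-1,1)}, requires. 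Your isomorphism argument gets around this only by silently replacing that closure with the closure inside the incidence variety compactification $\PP\obarmoduliincp{g-1,1,\cdots,1}$, where the heavy zero is remembered; that is a different object (it surjects onto the $\PP^{g-1}$-closure but resolves its cusps), so the section trick proves nothing about the closure the lemma refers to --- and even for that object you would still need to verify that $\rho$ and $\sigma$ extend as morphisms across the boundary points where marked zeros collide and a bubble appears. Keep the injectivity argument, but replace the claimed isomorphism in $\PP^{g-1}$ by birational equivalence.
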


The proof uses the vanishing incidence of order $g-1$ of $\X$ that we introduce in 
Definition~\ref{definition:incidenceAnulation}. Let us recall that we denote the set of Weierstrass points of
an algebraic curve $\X$ by $\WP(\X)$.

\begin{proof} 
Let $\X$ be a generic curve in $\moduli$. The preimage of $\X$ by the forgetful map
$\pi:\pomoduli(g-1,1,\cdots,1)\to\moduli$ is
 isomorphic to an open subset of the image of the  projection of $I_{g-1}(\X\setminus\WP(\X))$ into
$\PP^{g-1}$. The closure of this locus is isomorphic to
 the projection in $\PP^{g-1}$ of the  closure  of $I_{g-1}(\X\setminus\WP(\X))$.

 Let $\X$ be a generic curve of genus $3$. Then the fibre of the forgetful map
$\pi:\pomoduli[3](2,1,1)\to\moduli[3]$ above $\X$ is isomorphic to an open subset $U$ of $\X$. The closure of
$U$ has $24$ cusps (at the Weierstrass points of $\X$) and
 $28$ nodes (at the double tangents of order $(2,2)$).
 
 Let $\X$ be a generic curve of genus $g\geq 4$. The fibre of the forgetful map
$\pi:\pomoduli(g-1,1,\cdots,1)\to\moduli$ at $\X$ is an open subset $U$ of $\X$. The points of $\X\setminus U$
is the union of the Weierstrass points of $\X$
together with the points~$Q\in\X$ such that there exist $\omega\in H^{0}(\X,K_{\X})$ and $R\in\X$ such that
$$\divisor{\omega}\geq (g-1)Q + 2R.$$ The closure of $U$ in $\PP^{g-1}$ is also a  curve birationally
equivalent
to $\X$.
\end{proof}
 
It follows that the generic fibres of the forgetful map $\pi$ are of
general type. Therefore,   Theorem~\ref{theoreme:KodairaViehweg} implies that $\pomoduli(g-1,1,\cdots,1)$ is
of
general type when $\moduli$ is of general type:

 \begin{prop}\label{proposition:DimKodaira(g-1,1)}
   The strata $\pomoduli(g-1,1,\cdots,1)$ are of general type for $g\geq 24$ or $g=22$.
 \end{prop}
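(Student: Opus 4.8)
The plan is to combine the dimension computation from Theorem~\ref{theoreme:DimensionProjectionStrates} with the Iitaka--Viehweg theorem (Theorem~\ref{theoreme:KodairaViehweg}), exactly as in the proof of Corollary~\ref{corollaire:dimKodairaStratesProjFini}, but now using the Lemma that identifies the generic fibre of the forgetful map. First I would recall that $\moduli$ is of general type for $g\geq 24$ (Harris--Mumford) and for $g=22$ (Farkas), so $\kappa(\moduli)=\dim\moduli=3g-3$. Next, by Theorem~\ref{theoreme:DimensionProjectionStrates}, since the tuple $(g-1,1,\cdots,1)$ has $n=2g-3\geq g-1$ zeros (for $g\geq 2$) and is not of even type, the forgetful map $\pi:\pomoduli(g-1,1,\cdots,1)\to\moduli$ is generically surjective; in particular it is a fibre space onto (a dense open subset of) $\moduli$ after passing to a suitable model, and its generic fibre has dimension $(2g-2+2g-3)-(3g-3)=g-2$.

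The key input is the preceding Lemma: for a generic curve $\X$ of genus $g\geq 3$, the closure of $\pi^{-1}(\X)$ is birationally equivalent to $\X$ itself (it is $\X$ for $g\geq 4$, and a singular curve with stable model $\X$ for $g=3$). Wait --- here the fibre must be accounted for correctly: for $g\geq 4$ the dimension of the fibre is $1$, so we need $g-2=1$, i.e.\ this only directly applies when the relevant fibre is the curve described. Actually for the strata $\pomoduli(g-1,1,\cdots,1)$ the generic fibre of $\pi$ is a curve (this is precisely what Theorem~\ref{theoreme:DimensionProjectionStrates} gives, since $2g-2+n - (3g-3) = 1$ when $n=2g-3$... no: $2g-2+(2g-3)-(3g-3)=g-2$). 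I would therefore first double-check that the relevant statement is that the generic fibre is a curve birational to $\X$; granting the Lemma as stated, the generic fibre is birationally $\X$, hence a smooth projective curve of genus $g\geq 22$, which is of general type, so $\kappa(\pi^{-1}(\X))=1$.

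Then I would apply Theorem~\ref{theoreme:KodairaViehweg}: since $\kappa(\moduli)=\dim\moduli$, and $\pi$ is a generically surjective meromorphic map of complex varieties, we get
\[
\kappa(\pomoduli(g-1,1,\cdots,1)) \;\geq\; \kappa(\moduli) + \kappa(\X) \;=\; (3g-3) + 1.
\]
On the other hand, $\dim\pomoduli(g-1,1,\cdots,1) = 2g-2+n = 2g-2+(2g-3) = 4g-5$. Hmm, this does not match $3g-2$ unless I have miscounted the fibre. The resolution is that the Lemma's fibre curve, together with the base, must have total dimension equal to $\dim\pomoduli(g-1,1,\cdots,1)$; if the generic fibre is genuinely a curve then $\dim\pomoduli(g-1,1,\cdots,1)=3g-2$, forcing $n$ such that $2g-2+n=3g-2$, i.e.\ $n=g$ --- but the stratum is $(g-1,1,\cdots,1)$ which, to sum to $2g-2$, needs $g-1$ ones, hence $n=g$. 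So indeed $n=g$, the dimension is $3g-2$, and the generic fibre is a curve. Good. So the inequality reads $\kappa \geq 3g-3+1 = 3g-2 = \dim\pomoduli(g-1,1,\cdots,1)$, and combined with the trivial upper bound $\kappa\leq\dim$ this gives equality, i.e.\ the stratum is of general type.

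The main obstacle is purely bookkeeping: one must correctly count that the stratum $\pomoduli(g-1,1,\cdots,1)$ has exactly $n=g$ marked zeros (one of order $g-1$ and $g-1$ of order $1$), so its dimension is $2g-2+g=3g-2$, the generic fibre of $\pi$ over $\moduli$ is a curve, and by the Lemma that curve is (birationally) the generic curve $\X$, which has genus $\geq 22$ and hence is of general type with $\kappa=1$. No new geometric idea is needed beyond invoking Theorem~\ref{theoreme:KodairaViehweg}; the only subtlety is ensuring the hypotheses of that theorem are met --- namely that after replacing the spaces by smooth projective models the map $\pi$ is still generically surjective and that $\moduli$ of general type implies its model is too, both of which are standard birational-invariance arguments already used earlier in this section. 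Concretely, the proof is: \emph{For $g=22$ and $g\geq 24$, $\moduli$ is of general type, so $\kappa(\moduli)=3g-3$; by the Lemma the generic fibre of $\pi:\pomoduli(g-1,1,\cdots,1)\to\moduli$ is a curve birational to the generic genus-$g$ curve, which is of general type, so $\kappa(\text{fibre})=1$; by Theorem~\ref{theoreme:KodairaViehweg}, $\kappa(\pomoduli(g-1,1,\cdots,1))\geq 3g-3+1=3g-2=\dim\pomoduli(g-1,1,\cdots,1)$, whence equality.}
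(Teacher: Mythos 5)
Your proposal is correct and follows essentially the same route as the paper: identify the generic fibre of $\pi:\pomoduli(g-1,1,\cdots,1)\to\moduli$ as a curve birational to the generic curve $\X$ (the preceding Lemma), note it is of general type, and apply Theorem~\ref{theoreme:KodairaViehweg} together with $\kappa(\moduli)=3g-3$ for $g=22$ and $g\geq24$. The bookkeeping you eventually settle on ($n=g$, $\dim=3g-2$, fibre of dimension one) is the correct one, so despite the intermediate miscounts the argument matches the paper's proof.
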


\paragraph{The hyperelliptic strata $\pomoduli(g-1,g-1)$.}

We show that the hyperelliptic components of the strata $\pomoduli(g-1,g-1)$ are uniruled.

\begin{prop}  \label{proposition:dimKodairaHypNeg}
The connected component $\pomoduli^{\hyp}(2d,2d)$ is uniruled for every genus $g\geq 2$. 
\end{prop}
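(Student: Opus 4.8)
The plan is to describe the forgetful map explicitly over the hyperelliptic locus and to read off from the fibres that $\pomoduli^{\hyp}(2d,2d)$ is birational to a bundle over $\hyperell$ with fibre $\PP^{1}$. This component occurs only for $g=2d+1$. Fix a hyperelliptic curve $\X$, write $\iota$ for its hyperelliptic involution and $h\colon\X\to\X/\iota\cong\PP^{1}$ for the degree-two quotient, branched over $2g+2$ points. The first step is the classical description of the holomorphic differentials on $\X$: since $K_{\X}=h^{\ast}\Ox[\PP^{1}](g-1)=h^{\ast}\Ox[\PP^{1}](2d)$ and $h^{\ast}\colon H^{0}(\PP^{1},\Ox[\PP^{1}](2d))\to H^{0}(\X,K_{\X})$ is an injection between spaces of equal dimension $g$, every $\omega\in H^{0}(\X,K_{\X})$ is of the form $\omega=h^{\ast}(s)$ for a unique binary form $s$ of degree $2d$, with $\divisor{\omega}=h^{\ast}(\divisor{s})$.

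The second step singles out which of these differentials lie in the stratum $\omoduli(2d,2d)$. Writing $\divisor{s}=\sum_{i}m_{i}p_{i}$ on $\PP^{1}$, the divisor $h^{\ast}(\divisor{s})$ has degree $4d$ and support of cardinality $2k+\ell$, where $k$ (resp. $\ell$) counts the $p_{i}$ that are not (resp. are) branch points of $h$. Imposing that $h^{\ast}(\divisor{s})$ be of the form $2dZ_{1}+2dZ_{2}$ with $Z_{1}\neq Z_{2}$ forces $(k,\ell)=(1,0)$, i.e. $s=\ell_{p}^{2d}$ for the linear form $\ell_{p}$ vanishing at a point $p\in\PP^{1}$ that is not a branch point; the only other option, $(k,\ell)=(0,2)$, produces the finitely many differentials whose two zeros are Weierstrass points. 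Hence, over a fixed $\X$, the fibre of the forgetful map $\pi\colon\pomoduli^{\hyp}(2d,2d)\to\moduli$ is, away from this finite subset, the image of $p\mapsto(\X,\,h^{\ast}(\ell_{p}^{2d}))$, an isomorphism from $\PP^{1}$ minus the $2g+2$ branch points onto a dense open subset of a rational normal curve of degree $2d$ inside $\PP H^{0}(\X,K_{\X})$; its closure is a projective line.

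The third step is to run this construction in families. Over (an orbifold chart of) the hyperelliptic locus $\hyperell$, the universal curve quotiented by the fibrewise involution is a family with fibre $\PP^{1}$, and $(\X,p)\mapsto(\X,h^{\ast}(\ell_{p}^{2d}))$ defines a rational map from it to $\pobarmoduli$ whose image is the closure of $\pomoduli^{\hyp}(2d,2d)$. By the fibrewise analysis this map is generically one-to-one (since $h^{\ast}(p)=h^{\ast}(p')$ forces $p=p'$), and by Theorem~\ref{theoreme:DimensionProjectionStrates}, which gives $\dim\pi(\omoduli(2d,2d)^{\hyp})=\dim\hyperell$, it is dominant. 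Therefore $\pomoduli^{\hyp}(2d,2d)$ is birational to an open subset of a $\PP^{1}$-bundle over $\hyperell$; in particular it is uniruled, and since the Kodaira dimension is a birational invariant it follows that $\kappa(\pomoduli^{\hyp}(2d,2d))=-\infty$.

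I expect the only genuinely delicate point to be the fibre computation of the second step — the elementary but necessary bookkeeping showing that among all pullbacks $h^{\ast}(s)$ exactly the forms $s=\ell_{p}^{2d}$ with $p$ non-branch produce the zero profile $(2d,2d)$ — together with the routine check that these rational curves vary algebraically over $\hyperell$ rather than merely pointwise; everything else is the standard geometry of rational normal curves and of the Hodge bundle over the hyperelliptic locus.
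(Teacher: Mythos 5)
Your argument is correct and follows essentially the same route as the paper: the paper's proof simply observes that the fibre of $\pomoduli^{\hyp}(2d,2d)\to\hyperell$ is a projective line with the $2g+2$ Weierstrass points removed, so the closure of a generic fibre is $\PP^{1}$ and uniruledness follows. Your pullback-of-binary-forms computation just supplies the details of that fibre identification, and the family/dimension check matches the paper's implicit use of the fibration over $\hyperell$.
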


\begin{proof}
The fibre of the morphism $\pomoduli^{\hyp}(2d,2d)\to\hyperell$ is a projective line without $2g+2$ points
(corresponding to the Weierstrass points). So the closure of the generic fibre is a projective line. The
Kodaira dimension of the component $\pomoduli^{\hyp}(2d,2d)$ follows from
Theorem~\ref{theoreme:SousAdditiviteKodaria}.
\end{proof}

\paragraph{The even connected component of $\pomoduli(2,\cdots,2)$.}

\begin{prop} \label{proposition:dimKodairaEvenDeux}
The connected component $\pomoduli^{\even}(2,\cdots,2)$ is uniruled for every genus $g\geq 2$. 
\end{prop}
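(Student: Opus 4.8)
The plan is to argue exactly as in the proof of Proposition~\ref{proposition:dimKodairaHypNeg}, replacing the hyperelliptic locus by the theta-null locus: I would exhibit $\pomoduli^{\even}(2,\cdots,2)$ as a fibre space over a variety of dimension $3g-4$ whose general fibre is a projective line, and then conclude by Theorem~\ref{theoreme:SousAdditiviteKodaria}, which forces $\kappa=-\infty$ as soon as the generic fibre has $\kappa=-\infty$.

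First I would set up the base and the fibration. For $(\X,\omega)\in\omoduli^{\even}(2,\cdots,2)$ one has $\divisor{\omega}=2\sum_{i=1}^{g-1}Z_{i}$ with the $Z_{i}$ pairwise distinct, and the associated theta characteristic $\mathcal{L}_{\omega}=\Ox\left(\sum_{i=1}^{g-1}Z_{i}\right)$ of Equation~\eqref{fonction:StrateGeneraleVersSpin} is even, hence $h^{0}(\X,\mathcal{L}_{\omega})\geq 2$. The key elementary point is that, under the canonical isomorphism $\mathcal{L}_{\omega}^{\otimes 2}\cong\holoneform$, the form $\omega$ is a nonzero multiple of the square of the section of $\mathcal{L}_{\omega}$ vanishing on $\sum Z_{i}$; conversely, if $(\X,\mathcal{L})$ is a smooth even spin curve and $s\in H^{0}(\X,\mathcal{L})$ has $g-1$ distinct zeros, then $s^{2}$ is a point of $\omoduli^{\even}(2,\cdots,2)$, with parity that of $\mathcal{L}$, and $s^{2}$, $s'^{2}$ agree in $\pomoduli$ iff $s$, $s'$ are proportional. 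Thus $(\X,[\omega])\mapsto(\X,\mathcal{L}_{\omega})$ defines a morphism $p$ from $\pomoduli^{\even}(2,\cdots,2)$ to the locus $\mathcal{D}\subset\spin^{+}$ of even spin curves with $h^{0}\geq 2$, and the fibre of $p$ over $(\X,\mathcal{L})$ is the Zariski-open subset of $\PP H^{0}(\X,\mathcal{L})$ consisting of sections with reduced zero divisor.

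Next I would identify the generic fibre. By \cite{MR937985} the locus $\mathcal{D}$ is an irreducible divisor in $\spin^{+}$, so $\dim\mathcal{D}=3g-4$, and its general point satisfies $h^{0}(\X,\mathcal{L})=2$. Since $\dim\pomoduli^{\even}(2,\cdots,2)=2g-2+(g-1)=3g-3=\dim\mathcal{D}+1$ and $p$ is dominant, the general fibre of $p$ has dimension $1$; being a nonempty Zariski-open in $\PP H^{0}(\X,\mathcal{L})\cong\PP^{1}$, its closure is $\PP^{1}$. Hence $p$ presents $\pomoduli^{\even}(2,\cdots,2)$ as a fibre space with general fibre a projective line, so it is uniruled and $\kappa=-\infty$ by Theorem~\ref{theoreme:SousAdditiviteKodaria}. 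For the small genera $g=2,3$, where $\mathcal{D}$ degenerates, I would instead invoke the known descriptions: $\pomoduli[2](2)$ is rational (it is the Weierstrass divisor in $\moduli[2,1]$, a finite quotient of $\moduli[0,6]$), and $\pomoduli[3]^{\even}(2,2)=\pomoduli[3]^{\hyp}(2,2)$ is covered directly by Proposition~\ref{proposition:dimKodairaHypNeg}.

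The main obstacle is the input from \cite{MR937985}: I need that the even theta-null locus is an irreducible divisor in $\spin^{+}$ whose generic point $(\X,\mathcal{L})$ has $h^{0}(\X,\mathcal{L})=2$. This is Teixidor's theorem; the remaining genericity statement, that a general section of such an $\mathcal{L}$ has $g-1$ distinct zeros, is then automatic, since otherwise the dimension count above would give the wrong value for $\dim\pomoduli^{\even}(2,\cdots,2)$. Everything else — that $p$ is a morphism, that $\omega$ is literally the square of a section of $\mathcal{L}_{\omega}$, and the stratum dimension — is routine.
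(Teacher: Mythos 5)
Your proposal is correct and is essentially the paper's own argument: the paper also presents $\pomoduli^{\even}(2,\cdots,2)$ as fibred in projective lines $\PP H^{0}\bigl(\X,\Ox\bigl(\frac{1}{2}\divisor{\omega}\bigr)\bigr)$ over a base of dimension $3g-4$ (it projects to $\moduli$, landing in the theta-null divisor, rather than to $\spin^{+}$ as you do) and concludes uniruledness from $h^{0}=2$ at a generic point. Your extra bookkeeping (Teixidor's divisor statement, the dimension count, the low-genus cases) only makes explicit what the paper leaves implicit.
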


\begin{proof}
Let $X$ be a generic curve  in the projection of $\pomoduli^{\even}(2,\cdots,2)$ and $\omega$ an even
differential on $\X$. By definition, we have $h^{0}(\X,\frac{1}{2}\divisor{\omega})=2$. In particular, the
fibre of $\pomoduli^{\even}(2,\cdots,2)\to\moduli$ at $\X$ is a projective line. This proves the proposition.
\end{proof}

\paragraph{The odd connected component of $\pomoduli(2,\cdots,2)$.}

We show that the strata $\pomoduli^{\odd}(2,\cdots,2)$ are birationally equivalent to the moduli space of odd
spin structures $\spin^{-}$. This allows
us to deduce the Kodaira dimensions of these strata using the work of Farkas and Verra \cite{MR2984107}.

\begin{prop}\label{proposition:isoOdd}
 There exists a birational morphism 
 \begin{align*}
\varphi:
\pomoduli^{\odd}(2,\cdots ,2)&\to\spin^{-}\\
(\X,\omega)&\mapsto\left(\X,\Ox\left(\frac{1}{2}\divisor{\omega}\right)\right).
\end{align*}
\end{prop}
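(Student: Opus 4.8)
The map $\varphi$ is well-defined on the smooth locus: for $(\X,\omega)\in\pomoduli^{\odd}(2,\cdots,2)$, the divisor $\divisor{\omega}=\sum 2Z_i$ is even, so $\Ox(\frac12\divisor{\omega})$ is a theta characteristic, and by Johnson's theorem (quoted just before Definition~\ref{definition:ArfGen}) its parity equals the Arf invariant of $(\X,\omega)$, which is odd by hypothesis; hence the image lands in $\spin^-$. The content of the proposition is that $\varphi$ is birational, i.e.\ dominant with generically one point in each fibre. I would establish this in two steps: (i) a dimension count showing $\dim\pomoduli^{\odd}(2,\cdots,2)=\dim\spin^-$, and (ii) an argument that a generic odd theta characteristic $\mathcal{L}$ on a generic curve $\X$ has a unique (up to scalar) global section whose divisor is of the form $2(Z_1+\cdots+Z_{g-1})$ with distinct $Z_i$, and that this section yields an abelian differential back in the odd stratum.

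For (i): the stratum $\pomoduli(2,\cdots,2)$ has $n=g-1$ zeros, hence dimension $2g-2+n-1 = 3g-4$ after projectivising (using $\dim\PP\obarmoduliincp{k_1,\cdots,k_n}=2g-2+n$ from the remark after Lemma~\ref{lemme:IsoEntreIncEtStrate}, minus one for the extra $\CC^\ast$); and $\spin^-$ is finite over $\barmoduli$ by Proposition~\ref{proposition:Cor5.2}, so $\dim\spin^- = 3g-3$. That is off by one, which is exactly right: $\pomoduli^{\odd}(2,\cdots,2)$ already incorporates the projectivisation, whereas I should compare $\omoduli^{\odd}(2,\cdots,2)$, of dimension $2g-2+(g-1)=3g-3$, with $\spin^-$; after projectivising the Hodge-type fibre on the left the comparison is $\pomoduli^{\odd}(2,\cdots,2)$ versus $\spin^-$ only if the generic fibre of $\varphi$ is positive-dimensional. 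The cleaner route, and the one I would actually follow, is to note from Theorem~\ref{theoreme:DimensionProjectionStrates} that $\dim(\pi(\omoduli^{\odd}(2,\cdots,2)))=3g-3$, so the forgetful map to $\moduli$ is generically finite on the odd stratum; this forces the generic fibre of $\varphi$ over $\spin^-$ to be finite, and then I must pin it down to be a single point.

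For (ii), the heart of the matter: let $\mathcal{L}$ be a generic point of $\spin^-$, lying over a generic curve $\X\in\moduli$. Since $\mathcal{L}$ is odd, $h^0(\X,\mathcal{L})$ is odd and, for generic $\X$, equals $1$ (this is the generic behaviour of odd theta characteristics; one can cite that the locus where $h^0\ge 3$ has higher codimension, or argue directly on a chosen example and use upper-semicontinuity). Thus $\mathcal{L}$ has a unique section $s$ up to scalar, with effective divisor $D$ of degree $g-1$; then $2D$ is a canonical divisor, so $s^{\otimes 2}$ gives a distinguished abelian differential $\omega$ with $\divisor{\omega}=2D$. For generic $\X$ and generic $\mathcal{L}$ the divisor $D$ is reduced with $g-1$ distinct points, so $\omega\in\omoduli(2,\cdots,2)$, and its parity is that of $\mathcal{L}$, i.e.\ odd. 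This constructs a rational inverse $\mathcal{L}\mapsto(\X,\omega)$ to $\varphi$, proving birationality; projectivising kills the scalar ambiguity in $s$. The main obstacle is precisely verifying that the generic odd theta characteristic has $h^0=1$ \emph{and} reduced base divisor — the first is classical (generically odd spin structures are non-vanishing in a stronger sense only for $h^0=1$) and the second follows because the $Z_i$ vary in a $(g-1)$-dimensional family (one per elliptic-tail deformation in the snake-curve argument of Theorem~\ref{theoreme:DimensionProjectionStrates}), so coincidences are non-generic; I would isolate these two transversality facts as the one genuinely non-formal input, everything else being a formal consequence of Johnson's theorem and the dimension count.
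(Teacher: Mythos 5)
Your proposal is correct and follows essentially the same route as the paper: check that $\varphi$ is well defined, then construct a rational inverse by sending a generic odd theta characteristic $\mathcal{L}$ (with $h^{0}(\X,\mathcal{L})=1$) to the differential whose divisor is twice the unique effective divisor of $\mathcal{L}$, using the dimensions of the images in Theorem~\ref{theoreme:DimensionProjectionStrates} to guarantee that for a generic curve this differential has $g-1$ distinct double zeros and lies in the odd component, exactly as in the paper's argument. (One harmless slip: $\dim\pomoduli(2,\cdots,2)=2g-2+n=3g-3$, since the formula $2g-2+n$ is already the projectivised dimension, so there is no mismatch with $\dim\spin^{-}$; this does not affect your argument, which you rightly base on the rational inverse rather than on that comparison.)
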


\begin{proof}
It is clear that the map $\varphi$ is well defined.
To prove the proposition, we construct a birational inverse for $\varphi$.

Let $\X$ be a curve in $\moduli$ such that the preimage of the forgetful map $\pi:\pomoduli^{\odd}(2,\cdots
,2)\to\moduli$ is finite, and has no differential in the connected components
$\omoduli^{\odd}(2l_{1},\cdots,2l_{n})$ for $n\leq g-2$ or in any even component
$\omoduli^{\even}(2l_{1},\cdots,2l_{n})$. Moreover, we suppose that every theta characteristic $\mathcal{L}$
on $\X$ satisfies $h^{0}(\X,\mathcal{L})=1$. According to Theorem~\ref{theoreme:DimensionProjectionStrates},
this set is an open dense set inside $\moduli$. Hence it suffices to give an inverse to $\varphi$ above this
set
of curves. 

Let $(\X,\mathcal{L})$ be an odd theta characteristic on $\X$. It suffices to show
that there exists  a unique $(g-1)$-tuple $(Q_{1},\cdots,Q_{g-1})$ such that 
$$ 2\sum_{i=1}^{g-1} Q_{i}\sim K_{\X} \text{, and } \mathcal{L}\sim\Ox\left(\sum_{i=1}^{g-1} Q_{i}\right).$$ 
The inverse of $\varphi$ would then be
given by
$$\varphi^{-1}(\X,\mathcal{L})=(\X,\omega),$$
where $\omega$ is the differential with divisor $\divisor{\omega}=\sum 2Q_{i}$. Indeed, by hypothesis on $\X$,
the differential $\omega$ is not in a connected component of the form 
$\omoduli^{\even}(2l_{1},\cdots,2l_{n})$ or $\omoduli^{\odd}(2l_{1},\cdots,2l_{n})$ for $n\leq g-2$. Thus the
differential $\omega$ is in the stratum $\omoduli^{\odd}(2,\cdots,2)$.

Let us remark that since by definition $h^{0}(\X,\mathcal{L})\geq 1$, the line bundle $\mathcal{L}$ is
effective.
Moreover, every effective line bundle of degree $g-1$ on $\X$ can be represented by $\Ox\left(\sum
Q_{i}\right)$ for $Q_{i}\in\X$. Since by definition
$\mathcal{L}^{\tensor 2}= \Ox(K_{\X})$ 
the divisor  $2\left(\sum Q_{i}\right)$ is linearly equivalent to $K_{\X}$. 

To conclude the proof, it suffices to show that this $(g-1)$-tuple is unique up to permutation.
Let  $(R_{1},\cdots,R_{g-1})\in\X^{g-1}$ such that  $2\left(\sum R_{i}\right)\sim K_{\X}$ and
$\mathcal{L}=\Ox\left(\sum R_{i}\right)$.
Since $\mathcal{L}=\Ox(\sum Q_{i})=\Ox(\sum R_{i})$, the line bundle $\Ox(\sum Q_{i}-\sum R_{i})$ is the
trivial bundle $\Ox$.
Applying the Theorem of Riemann-Roch to this line bundle gives:
$$h^{0}\left(\sum Q_{i}-\sum R_{i}\right)-h^{0}\left(K_{\X}-\sum Q_{i}+\sum R_{i}\right)=1-g.$$
Now it follows from the linear equivalence $2\left(\sum Q_{i}\right)\sim K_{\X}$ that
$$1-h^{0}\left(\sum Q_{i} + \sum R_{i}\right)=1-g.$$
 More explicitly, we have the equation 
$$h^{0}\left(\sum Q_{i} + \sum R_{i}\right)=g.$$
In particular, $\Ox(\sum Q_{i} + \sum R_{i})$ is a $\grd{g-1}{2g-2}$ on $\X$, so is the canonical line bundle
of $\X$. But since $\Ox\left(K_{\X}-\sum Q_{i}\right)$ is a theta characteristic, the dimension of its space
of section is by hypothesis
$$h^{0}\left(K_{\X}-\sum Q_{i}\right)=1.$$
 In particular, any differential  which vanishes at the  $Q_{i}$ is proportional to $\omega$.  In
particular, the points $R_{1},\cdots,R_{g-1}$ coincide with the points $Q_{1},\cdots,Q_{g-1}$.
\end{proof}

Therefore, we can deduce the Kodaira dimension of these connected components from the work of Farkas and
Verra
(see \cite{MR2984107}).
\begin{cor}\label{corollaire:dimKodairaOddDeux}
The stratum $\pomoduli(2,\cdots,2)^{\odd}$ is uniruled if $g\leq 11$ and is of general type for $g\geq 12$.
\end{cor}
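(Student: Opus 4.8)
The plan is to deduce the statement from Proposition~\ref{proposition:isoOdd} together with the computation of the Kodaira dimension of the moduli space of odd spin curves by Farkas and Verra. Proposition~\ref{proposition:isoOdd} produces a birational morphism
\[
\varphi:\pomoduli^{\odd}(2,\cdots,2)\to\spin^{-},
\]
so the two varieties are birationally equivalent. Since both the Kodaira dimension and the property of being uniruled are birational invariants, and since, by the conventions fixed above for non-compact and singular varieties, $\kappa(\spin^{-})$ is by definition the Kodaira dimension of any smooth model of any compactification of $\spin^{-}$, we get
\[
\kappa\bigl(\pomoduli^{\odd}(2,\cdots,2)\bigr)=\kappa(\spin^{-})=\kappa\bigl(\barspin^{-}\bigr),
\]
and $\pomoduli^{\odd}(2,\cdots,2)$ is uniruled if and only if $\barspin^{-}$ is.

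The second step is to invoke the main results of Farkas and Verra~\cite{MR2984107} on the birational geometry of $\barspin^{-}$: the space $\barspin^{-}$ is uniruled for $g\leq 11$ and of general type for $g\geq 12$. Feeding this into the identification of the previous paragraph yields at once that $\pomoduli^{\odd}(2,\cdots,2)$ is uniruled — hence has Kodaira dimension $-\infty$ — for $g\leq 11$, and is of general type for $g\geq 12$, which is the assertion of the corollary.

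There is essentially no obstacle left at this point: the genuine content has already been established in Proposition~\ref{proposition:isoOdd}. The only mild subtlety is that the birational map of that proposition is a priori defined only over the dense open locus of curves meeting the genericity hypotheses used in its proof (finiteness of the fibre over $\moduli$, $h^{0}(\X,\mathcal{L})=1$ for every theta characteristic, and the absence of differentials in the smaller even or odd components). But this is immaterial for our purposes: Kodaira dimension and uniruledness depend only on the birational class, and a dominant, generically injective morphism between irreducible varieties identifies their function fields, hence the birational classes — and therefore the Kodaira dimensions and the uniruledness — of (any smooth models of any compactifications of) source and target. Thus the corollary follows formally.
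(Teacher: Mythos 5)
Your proposal is correct and is exactly the paper's intended argument: the corollary is deduced from the birational morphism of Proposition~\ref{proposition:isoOdd} combined with the Farkas--Verra results on $\barspin^{-}$, using birational invariance of Kodaira dimension and of uniruledness. Your remark about the map only being defined over a dense open locus, and why this does not matter, is a reasonable extra precision but does not change the argument.
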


%%%%%%%%%%%%%%%%%%%%%%%%%%%%%%%%%%%%%%%%%%%%%%%%%%%%%%%%%%%%%%%%%%%%%%%%%%%%%%%%%%%%%%%%%%%%%%%%%%%%%%%%%%%%%%

\section{Hyperelliptic Minimal Strata $\PP\obarmoduliinc[g,1]{2g-2}^{\hyp}$.}
\label{section:hyperelliptique}

The main result of this section is Theorem~\ref{theoreme:isoEntreHypEtWeier}, where we relate the
{incidence variety compactification}
$\PP\obarmoduliinc[g,1]{2g-2}^{\hyp}$ of the
hyperelliptic minimal strata with the locus $\overline{\WP(\hyperell)}$ of Weierstrass points of
hyperelliptic
curves. We show that the fibres of the forgetful map
$\pi:\PP\obarmoduliinc[g,1]{2g-2}^{\hyp}\to\overline{\WP(\hyperell)}$ are projective spaces.

 For sake of concreteness, we describe the hyperelliptic curves with one node in
Theorem~\ref{theoreme:locusHyperell} and the closure of the locus of Weierstrass points of hyperelliptic
curves in $\barmoduli[g,1]$ in Theorem~\ref{theoreme:limitDesPtsDeWeiCasHyp}. 
 Moreover, we describe the pointed differential in
the {incidence variety compactification} of the hyperelliptic minimal strata in the most simple cases in
Theorem~\ref{theoreme:bordHypAvec2Courbes} and Theorem~\ref{theoreme:bordHypCasIrr}.

\paragraph{Admissible covers.}
The key tool to study hyperelliptic curves is the theory of admissible covers. Let us quickly recall its
definition and relationship with hyperelliptic curves. For more details see \cite[Section~3.G]{MR1631825}.
\begin{defn}
Let $(B;Q_{1},\cdots,Q_{n})$ be a stable $n$-pointed curve of arithmetic genus zero and $N_{1},\cdots,N_{k}$
the nodes
of the curve $B$. An {\em admissible cover of the curve $B$} is a nodal curve $\X$ and a regular map
$\pi:\X\to B$ such that the following two conditions hold.
\begin{itemize}
 \item[i)] The preimage of the smooth locus of $B$ is the smooth locus of $\X$ and the restriction of the map
$\pi$ to this open set is simply branched over the points~$Q_{i}$ and otherwise unramified.
 \item[ii)] The preimage of the singular locus of $B$ is the singular locus of $\X$ and for every node $N$ of
$B$ and every node $\tilde N$ of $\X$ lying over it, the two branches of $\X$ near $\tilde N$ map to the
branches of $B$ near $N$ with the same ramification index. 
 \end{itemize}
\end{defn}

This notion is particularly adapted to describe the closure of the loci of $k$-gonal curves inside
$\barmoduli$.
\begin{defn}
 Let $\X$ be a stable curve. We say that $\X$ is $k$-gonal if and only if it is a limit of smooth $k$-gonal
curves.
\end{defn}

The following theorem allows us to characterise the $k$-gonal curves (see \cite[Theorem~3.160]{MR1631825}).
\begin{theorem}\label{theoreme:limitDeCourbekGonale}
A stable curve $\X$ is $k$-gonal if and only if 
there exists a $k$-sheeted admissible cover $\X'\to B$ of a stable pointed curve of genus $0$ which  is stably
equivalent to $\X$.
\end{theorem}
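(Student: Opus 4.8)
Since this is a theorem of Harris and Mumford, I will only sketch the strategy; the plan is to prove the two implications separately, the substantive point being a valuative-criterion argument for the space of admissible covers.

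For the forward implication, suppose $\X$ is a limit of smooth $k$-gonal curves, so there is a family $f:\famcurv\to\Delta$ with $\famcurv(t)$ smooth $k$-gonal for $t\neq 0$ and $\famcurv(0)$ stably equivalent to $\X$, together with a degree-$k$ morphism $\famcurv(t)\to\PP^1$ for each $t\neq 0$. First I would assemble these into a single finite morphism $\famcurv\setminus\famcurv(0)\to\PP^1\times\Delta^{\ast}$ over $\Delta^{\ast}$; after shrinking $\Delta$ and a finite base change one may assume the branch divisor, which has constant degree $2k+2g-2$ by Riemann--Hurwitz, is cut out by sections $Q_i:\Delta^{\ast}\to\PP^1$. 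The key step is then to invoke the properness of the Harris--Mumford space of admissible covers: after a further finite base change $t\mapsto t^m$ and a semistable modification of the total space, the family extends to a flat family of covers over $\Delta$ whose central fibre is an admissible cover $\pi:\X'\to B$ of a genus-$0$ stable pointed curve $B$ (a tree of projective lines carrying the $2k+2g-2$ limiting branch points). Finally I would compare $\X'$ and $\X$: both arise as central fibres of semistable modifications of $\famcurv$, hence share the same stable model and are stably equivalent, which completes this direction.

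For the reverse implication, suppose $\pi:\X'\to B$ is a $k$-sheeted admissible cover of a genus-$0$ stable pointed curve $(B;Q_1,\dots,Q_n)$ with $\X'$ stably equivalent to $\X$. The plan is to smooth the admissible cover. Deform $B$ to a family of smooth rational curves with $n=2k+2g-2$ moving marked points over $\Delta$; the standard local picture of admissible covers says that near a node of $B$ of local equation $uv=t$ and a point of $\X'$ over it with ramification index $e$ the cover looks like $x^{e}=u$, $y^{e}=v$, $xy=s$ with $s^{e}=t$. Performing the base change $t=s^{e}$ simultaneously at all nodes, with $e$ a common multiple of the ramification indices, these charts glue to a family $\famcurv'\to\Delta$ with smooth total space whose generic fibre is a smooth curve admitting a degree-$k$ map to $\PP^1$ simply branched over the now-smoothed configuration of $2k+2g-2$ points; a Riemann--Hurwitz count shows this fibre has genus $g$. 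Hence $\X'$, and therefore its stable model $\X$, lies in the closure of the smooth $k$-gonal locus.

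The main obstacle is the forward implication, and within it precisely the claim that a one-parameter family of $k$-gonal covers degenerates, after base change and semistable modification, to an admissible cover and not to something worse: this is exactly the properness of the Harris--Mumford space of admissible covers, whose proof requires controlling the limits of the branch divisor and of the source curve simultaneously. The bookkeeping identifying the limiting source curve with the stable model of $\X$ is the secondary technical point.
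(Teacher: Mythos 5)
Your sketch is correct, but note that the paper does not prove this statement at all: it is quoted as background from Harris--Morrison \cite[Theorem~3.160]{MR1631825}, so there is no internal proof to compare against. What you outline --- extending the $\grd{1}{k}$'s over $\Delta^{\ast}$, invoking properness of the Harris--Mumford space of admissible covers after base change and semistable modification, identifying the limiting source curve with the stable model of $\X$ by uniqueness of stable limits, and conversely smoothing an admissible cover via the local model $x^{e}=u$, $y^{e}=v$, $t=s^{e}$ at each node --- is exactly the argument of that reference, with the one genuinely hard input (properness of the admissible-cover space) correctly identified and, reasonably for a quoted theorem, not reproved.
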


In particular, since the smooth hyperelliptic curves are exactly the smooth $2$-gonal curves, the stable
hyperelliptic curves will by given by the $2$-sheeted admissible covers. 

\paragraph{The hyperelliptic locus $\barhyperell$ in $\barmoduli$.}
A hyperelliptic curve with one node is described in the
following theorem.

\begin{theorem}\label{theoreme:locusHyperell}
  Let $\X\in\barhyperell$ be a hyperelliptic curve of genus $g$ with one node.
  \begin{itemize}
   \item If $\X$ is irreducible,  the normalisation $\tilde\X$ of $\X$ is hyperelliptic and
   the preimage of the node is  a pair of points conjugated by the hyperelliptic involution. 
\item If $\X$ is of compact type,  the curve $\X$ is given by $\X_{1}\cup\X_{2}/(N_{1}\sim N_{2})$, where
the $\X_{j}$
are
hyperelliptic and $N_{j}$ are Weierstrass points of $\X_{j}$ respectively.
  \end{itemize}
\end{theorem}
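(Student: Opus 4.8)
The plan is to use the theory of admissible covers (Theorem~\ref{theoreme:limitDeCourbekGonale}) to control the degenerations of smooth hyperelliptic curves to nodal ones. By Theorem~\ref{theoreme:limitDeCourbekGonale}, a hyperelliptic curve $\X$ with one node is stably equivalent to (the stabilisation of) a $2$-sheeted admissible cover $\pi:\X'\to B$, where $B$ is a stable pointed curve of arithmetic genus $0$ with the $2g+2$ branch points $Q_1,\cdots,Q_{2g+2}$. Since $\X$ has exactly one node, $\X'$ and hence $B$ are forced to be very simple: either $B$ is irreducible (a single $\PP^1$) and the admissible cover has one node coming from a node of $B$, or $B=B_1\cup B_2$ is a chain of two projective lines. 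The first step is thus to enumerate the possible shapes of $B$ and the distribution of the branch points among its components, using that a node of $B$ over which $\pi$ has ramification index $2$ contributes a node to $\X$ (with smooth preimage locally of the form $u^2=v^2$, i.e.\ the cover is unramified over the node and the node of $\X'$ maps isomorphically under stabilisation), while a node of $B$ over which $\pi$ has ramification index $1$ contributes two nodes to $\X'$ exchanged by the involution.

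Concretely, in the \emph{irreducible} case, $B$ must be a single $\PP^1$ carrying all $2g+2$ branch points, and the single node of $\X$ comes from identifying two smooth points $P_1,P_2$ of $\X'$ (a double cover of $\PP^1$ branched at $2g+2$ points, hence smooth of genus $g$) that lie over the same point of $\PP^1$ and are swapped by the hyperelliptic involution $\sigma$; so the normalisation $\tilde\X$ is this smooth genus-$g$ hyperelliptic curve and $\nu^{-1}(\mathrm{node})=\{P_1,P_2\}$ with $\sigma(P_1)=P_2$. This gives exactly the first bullet. In the \emph{compact type} case, $\X=\X_1\cup\X_2/(N_1\sim N_2)$ with $\X_j$ smooth of genus $g_j$, $g_1+g_2=g$; here $B=B_1\cup B_2$ is a chain of two $\PP^1$'s joined at a node $n_0$, with $2g_1+1$ of the branch points on $B_1$ and $2g_2+1$ on $B_2$ (the parities being forced by the Riemann--Hurwitz count on each component so that the partial covers have genera $g_1,g_2$), and $\pi$ is \emph{ramified} of index $1$... rather, one checks the only way to get a \emph{connecting} node rather than two is that $\pi$ is totally ramified over $n_0$, i.e.\ $n_0$ is an additional branch point on each side. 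Then $N_j\in\X_j$ is the unique ramification point of $\X_j\to B_j$ lying over $n_0$, which is precisely a Weierstrass point of $\X_j$. This yields the second bullet.

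The main obstacle — and the place where care is needed — is the bookkeeping of ramification indices at the node of $B$: one must argue that only the two configurations above are compatible with $\X$ having \emph{exactly one} node, ruling out e.g.\ chains of length $\geq 3$ or an irreducible $B$ with an extra node, and one must correctly match ``ramification index $1$ at the node'' with ``$\X$ irreducible with two conjugate points identified'' versus ``ramification index $2$ (total ramification) at the node'' with ``$\X$ of compact type with Weierstrass points glued''. I would settle this by a direct local analysis of admissible covers near a node (the standard model $\{xy=t\}\to\{uv=t^k\}$, $x\mapsto x^k=u$) combined with a genus count via Riemann--Hurwitz on each irreducible component of $B$, which pins down the number of branch points per component and hence forces the Weierstrass-point condition. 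The converse direction in each bullet — that any such configuration does arise as a limit of smooth hyperelliptic curves — follows by exhibiting the evident one-parameter smoothing of $B$ (moving the node apart, respectively smoothing the chain) and lifting it through the admissible cover, which is routine.
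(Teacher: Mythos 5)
Your compact-type case is essentially the paper's argument and is fine: $B$ is a two-component chain with $2g_{1}+1$ and $2g_{2}+1$ branch points on the two sides, the cover must be ramified over the node of $B$ (otherwise the preimage of the node would disconnect into two nodes and $\X$ would acquire a nonseparating node), and the attaching points $N_{j}$ are then ramification points of $\X_{j}\to B_{j}$, i.e.\ Weierstrass points.

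The irreducible case, however, contains a genuine error. You take $B$ to be a single smooth $\PP^{1}$ carrying all $2g+2$ branch points, so that $\X'$ is a \emph{smooth} hyperelliptic curve of genus $g$, and you produce the node of $\X$ by ``identifying two conjugate points of $\X'$''. This is not what Theorem~\ref{theoreme:limitDeCourbekGonale} gives you: the admissible cover $\X'$ must be \emph{stably equivalent} to $\X$, and stable equivalence only contracts semistable rational components; it never glues a pair of points. A smooth $\X'$ is stably equivalent only to itself, so it cannot be stably equivalent to the nodal curve $\X$. The numerics also fail: identifying two points of a genus-$g$ curve yields arithmetic genus $g+1$, and it would force the normalisation $\tilde\X$ to have genus $g$, whereas for an irreducible one-nodal curve of arithmetic genus $g$ the normalisation has genus $g-1$. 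The correct configuration — the one the paper extracts — is again a two-component chain $B=B_{1}\cup B_{2}$, with $2$ branch points on $B_{1}$ and the remaining $2g$ on $B_{2}$; the cover is \emph{unramified} over the node of $B$, so the preimage of $B_{1}$ is a semistable $\PP^{1}$ meeting the genus-$(g-1)$ double cover of $B_{2}$ in the two points lying over the node, which are conjugate under its hyperelliptic involution; contracting this $\PP^{1}$ bridge produces the irreducible curve of the statement. Your proposed local analysis at the node of $B$ cannot rescue the case as you set it up, because the configuration you start from (smooth $B$, smooth $\X'$) is excluded from the outset; relatedly, your dictionary between ramification index over a node of $B$ and the resulting nodes of $\X$ is inverted in the parenthetical remark, which is precisely the point that needs to be kept straight to separate the two bullets.
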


Let us recall that the {\em Weierstrass locus} inside $\moduli[g,1]$ is defined by $$\WP(\moduli):=\left\{
(\X,W)|\  W \text{ is a Weierstrass point of $\X$} \right\}.$$
The {\em hyperelliptic Weierstrass locus} is simply the restriction of this locus above the hyperelliptic
locus of $\moduli$:
\begin{equation*}
 \WP(\hyperell):=\left\{ (\X,W)\in\WP(\moduli)|\ \X \text{ is hyperelliptic} \right\}.
\end{equation*}
We describe now  the marked curves  in the closure of $\WP(\hyperell)$ which are generic in $\delta_{i}$.

\begin{theorem}\label{theoreme:limitDesPtsDeWeiCasHyp}
Let $(\X,W)\in\overline{\WP(\hyperell)}\subset\barmoduli[g,1]$ be a marked curve in the closure of the
hyperelliptic Weierstrass locus, such that $\X$ is stably equivalent to a generic curve in $\delta_{i}$.

The pair $(\X,W)$ is of one of the following form.
\begin{itemize}
 \item The curve $\X$ is stably equivalent to a curve in $\delta_{0}$. Then $X$ is either irreducible and
$W$
is in
$\WP(\X)$, or $\X$ is the blow-up at the node of an irreducible curve and
$W$ is in the exceptional component. %The forgetful map
%$\varphi:\overline{\WP(\hyperell)}\to\barhyperell$ has degree one in the first case
%and $two$ in the second.
\item The curve $\X$ is generic in the divisor $\delta_{1}$ and the point $W$ is one of the  $2g-1$
smooth Weierstrass points of the curve of genus $g-1$ (or
a $2$-torsion point if $g=2$) or a $2$-torsion point of the elliptic curve. 
%The forgetful map has degree one at these
%curves.
\item The curve $\X$ is generic in the divisor $\delta_{i}$ for $i\geq 2$ and the points $W$ are smooth
Weierstrass points of the irreducible components of $\X$.
%The forgetful map has degree one at  these curves.
\end{itemize}

\end{theorem}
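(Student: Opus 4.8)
The plan is to parametrise $\overline{\WP(\hyperell)}$ by admissible double covers with a marked ramification point and then run through its boundary divisors. Since the smooth hyperelliptic curves are exactly the smooth $2$-gonal ones, a point of $\overline{\WP(\hyperell)}$ lifts (by Theorem~\ref{theoreme:limitDeCourbekGonale}) to the datum of a $2$-sheeted admissible cover $\pi:\X'\to B$ of a stable genus zero curve $B$ carrying $2g+2$ marked branch points, together with a choice of one ramification point $W$ of $\pi$ over one of those branch points; the stable model of $\X'$ is the underlying curve $\X$. If $\X$ is stably equivalent to a \emph{generic} curve in $\delta_i$, then $\X$ has exactly one node. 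Since $\X'$ can acquire nodes only over the nodes of $B$, and generic curves in $\delta_i$ are not obtained by contracting rational tails, the base $B$ must itself have exactly one node, so $B=B_1\cup B_2$ with $B_1\cong B_2\cong\PP^1$ glued at one point and with $a$, resp. $b=2g+2-a$, marked branch points on $B_1$, resp. $B_2$; without loss of generality $a\le b$.

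Next I would analyse the cover over the node of $B$. The ramification index must agree on the two branches, so either $\pi$ is unramified over the node --- and then, a double cover of $\PP^1$ branched over an odd number of points not existing, $a$ and $b$ are even --- or $\pi$ is ramified over the node, and then $a$ and $b$ are odd. Riemann--Hurwitz applied to $\X'_j:=\pi^{-1}(B_j)$ gives, in the ramified case, $g(\X'_1)=(a-1)/2$ and $g(\X'_2)=(b-1)/2$ with $\X'_1,\X'_2$ glued at the single ramification point over the node, so $\X'$ is of compact type; in the unramified case each genus drops by one and the two pieces are glued at the two points over the node. For $\X$ to have a single node after stabilisation one needs: in the unramified case $a=2$, so that $\X'_1\cong\PP^1$ meets the rest in two points and is contracted (unless it carries $W$), landing in $\delta_0$; in the ramified case $a\ge 3$, so that both $\X'_1$ and $\X'_2$ have positive genus, $\X'$ is already stable, and $\X$ lands in $\delta_i$ with $i=(a-1)/2\ge 1$. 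The remaining even values $4\le a\le 2g-2$ give two nodes on $\X$, hence a point of codimension at least two, excluded by hypothesis; the values $a\in\{0,1\}$ do not degenerate $\X$ at all.

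Finally I would read off where $W$ sits. Over each of the $a$, resp. $b$, branch points on $B_1$, resp. $B_2$, lies a ramification point which becomes a smooth Weierstrass point of the hyperelliptic component $\X'_1$, resp. $\X'_2$; the only ramification point that is a node of $\X'$ is the one over the node of $B$. Since $W$ lies over a branch point, never over the node of $B$, it is always a smooth Weierstrass point of one of the two components. In the case $a=2$: if $W$ lies over one of the $2g$ branch points on $B_2$, then after contracting $\X'_1\cong\PP^1$ the curve $\X$ is irreducible and $W\in\WP(\X)$; if $W$ lies over one of the two branch points on $B_1$, then $\X'_1\cong\PP^1$ carries $W$ together with the two points over the node, hence is stable and is \emph{not} contracted, and it is exactly the exceptional component of the blow-up at the node. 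In the ramified case with $i=(a-1)/2$: for $i\ge 2$ one has $\X=\X'_1\cup\X'_2\in\delta_i$ with $W$ a smooth Weierstrass point of $\X'_1$ or $\X'_2$; for $i=1$ the component $\X'_1$ is an elliptic curve and $\X'_2$ has genus $g-1$, so $W$ is either one of the three non-nodal $2$-torsion points of the elliptic curve or one of the $2g-1$ smooth Weierstrass points of the genus $g-1$ curve (and when $g=2$ that genus $g-1$ curve is itself elliptic, so ``smooth Weierstrass point'' reads ``$2$-torsion point''). These are precisely the three forms in the statement.

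The hard part will be the stabilisation step: one must argue carefully which components of $\X'$ get contracted and, in particular, why a rational component carrying the marked point $W$ survives --- this is what produces the ``blow-up at the node'' alternative in the $\delta_0$ case. A secondary point to nail down is that the admissible-cover boundary strata enumerated above are exactly those whose image in $\barmoduli[g,1]$ is a divisor $\delta_i$; this follows either from the contraction analysis or from the dimension count $\dim\overline{\WP(\hyperell)}=2g-1$ with each listed boundary stratum of dimension $2g-2$, so that no deeper degeneration of $B$ can specialise to a generic point of some $\delta_i$.
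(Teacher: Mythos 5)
Your proposal is correct and follows essentially the same route as the paper: invoke Theorem~\ref{theoreme:limitDeCourbekGonale} to realise the degenerate hyperelliptic curve as a $2$-sheeted admissible cover of a nodal genus-zero base, use Riemann--Hurwitz and the ramification/parity analysis over the node(s) of $B$ to pin down the components and their genera, and identify $W$ as a limit of Weierstrass points, hence a ramification point in the smooth locus of the cover (with the blown-up exceptional component appearing exactly when $W$ sits on the rational piece over the two-pointed component of $B$). The two points you flag as needing care --- that $B$ has a single node for a generic curve in $\delta_i$, and the bookkeeping of which components survive stabilisation --- are treated just as briefly in the paper's own proof, so your sketch is at the same level of completeness while being somewhat more explicit in the case analysis.
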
 

These two theorems are consequences of the theory of admissible covers and in particular, we will use
Theorem~\ref{theoreme:limitDeCourbekGonale} in a crucial way.

\begin{proof}[Proof of Theorem~\ref{theoreme:locusHyperell} and
Theorem~\ref{theoreme:limitDesPtsDeWeiCasHyp}.] Let us first suppose that  $1\leq i\leq
\left[\frac{g}{2}\right]$ and let $\X$ be a hyperelliptic curve in  $\delta_{i}$ as given in the theorem. By
Theorem~\ref{theoreme:limitDeCourbekGonale}, the curve $\X$ is stably equivalent to
 an admissible cover $\pi:\X'\to B$ of degree two above a stable marked curve of
genus zero $(B;x_{1},\cdots,x_{2g+2})$. Let $B_{0}$ be  an irreducible
 component of $B$ which meets only one other component and denote $\X_{0}:=\pi^{-1}(B_{0})$.
Remark that there exists such a $B_{0}$  since  $B$ is of compact type. 
Since $(B;x_{1},\cdots,x_{2g+2})$ is a stable marked curve, at least two marked points lie on $B_{0}$. 
Moreover the cardinality of the preimage
 of the node is one because otherwise $\X$ would have a nonseparating node.
 Let us call this point~$N_{0}$. It is a ramification point of the map to $B_{0}$, so
 by Riemann-Hurwitz the curve~$\X_{0}$ has genus at least $1$. And since $\X$ is generic in $\delta_{i}$, the
component~$\X_{0}$ has genus $i$ or $g-i$. We will suppose that $\X_{0}$ has genus $i$. Then the curve~$B_{0}$
has $2i+1$ marked points and the preimages of these points together with $N_{0}$ are the
Weierstrass points of $\X_{0}$. Now there is at least  one other extremal component and the same argument show
that it has genus $g-i$. This concludes the proof of both theorems in the case $1\leq i\leq
\left[\frac{g}{2}\right]$.

The case  $i=0$ is similar. Let $\pi:\X'\to B$ be an admissible cover of degree two stably equivalent to $\X$.
This time, for every irreducible component $B_{0}$ of $B$ which meets one other component of $B$, the
preimage of the node contains two distinct points.
As in the previous case, the curve $B$ has only two components: one of them contains two
marked points
and
the other the $2g$ reminding ones. The curve $\X$ is obtained from $\X'$ by blowing down the preimage of the
projective line which contains only two marked points. The restriction of the projection to this second
component implies that the
two
preimages of the node are conjugated by the hyperelliptic involution.

Since the Weierstrass points are the ramification points of the map to $\PP^{1}$,
their limits are the ramification points of the smooth locus of the admissible cover.
\end{proof}

Let us conclude this paragraph by describing the ramification locus of the forgetful map
$\pi:\WP(\barhyperell)\to\barhyperell$ from the hyperelliptic Weierstrass locus to the hyperelliptic locus.
This is a direct application of Theorem~\ref{theoreme:limitDesPtsDeWeiCasHyp}.
\begin{cor}\label{corollaire:RamWeiLocusHyperell}
The map $\pi:\WP(\barhyperell)\to\barhyperell$ is unramified above the generic locus of the divisors
$\delta_{i}$ for $i\geq1$. On the other hand, above an irreducible curve~$\X$ with $k$
nodes there are $2g-2-2k$ unramified points  and $k$ ramification points of order two.
\end{cor}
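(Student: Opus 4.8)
The plan is to derive everything from the enumeration of limit Weierstrass points in Theorem~\ref{theoreme:limitDesPtsDeWeiCasHyp}, together with a degree count for $\pi$. Since a smooth hyperelliptic curve of genus $g$ carries exactly $2g+2$ Weierstrass points (the branch points of its double cover), the generically finite map $\pi:\WP(\barhyperell)\to\barhyperell$ has degree $2g+2$; over the (dense) locus where $\pi$ has finite fibres it is finite flat of degree $2g+2$, since both $\WP(\barhyperell)$ and $\barhyperell$ are admissible‑cover spaces and hence smooth orbifolds at generic points of the divisors $\delta_i$. Consequently, a point of $\barhyperell$ over which the fibre of $\pi$ consists of $2g+2$ distinct reduced geometric points is a point of étaleness, i.e.\ all of its preimages are unramified; conversely any deficiency in the fibre cardinality must be compensated by ramification.

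First I would handle the compact‑type boundary. Let $\X=\X_1\cup\X_2/(N_1\sim N_2)$ be a generic curve of $\delta_i$ with $i\geq1$, so by Theorem~\ref{theoreme:locusHyperell} the $\X_j$ are hyperelliptic and the $N_j$ are Weierstrass points of the $\X_j$. By Theorem~\ref{theoreme:limitDesPtsDeWeiCasHyp} the fibre $\pi^{-1}(\X)$ is exactly the set of Weierstrass points of $\X_1$ other than $N_1$ together with those of $\X_2$ other than $N_2$ (with the usual reading for the elliptic factor when $i=1$, and for the genus‑one factor when $g=2$). If $\X_j$ has genus $g_j$ this yields $(2g_1+1)+(2g_2+1)=2g+2$ points, all of them distinct smooth points of $\X$. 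By the degree count above, $\pi$ is unramified over the generic locus of $\delta_i$ for every $i\geq1$.

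Next, the irreducible case. Let $\X$ be a generic irreducible hyperelliptic curve with $k$ nodes; by Theorem~\ref{theoreme:locusHyperell} the normalisation $\tilde\X$ is hyperelliptic of genus $g-k$ and the $k$ nodes come from identifying $k$ conjugate pairs, none of whose points is a Weierstrass point of $\tilde\X$. By Theorem~\ref{theoreme:limitDesPtsDeWeiCasHyp} the fibre $\pi^{-1}(\X)$ has two types of points: the $2(g-k)+2$ Weierstrass points of $\tilde\X$, which map to distinct smooth points of $\X$; and, for each node, points lying on the exceptional $\PP^1$ of the blow‑up at that node. At a point of the first kind the deformation that simultaneously moves the curve inside $\barhyperell$ and follows the chosen Weierstrass point is transverse to the fibre, so $\pi$ is étale there; this accounts for the $2(g-k)+2=2g+2-2k$ unramified points. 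Since the degree is $2g+2$, the remaining $2k$ sheets must sit over the $k$ nodes, i.e.\ $2$ per node, and the claim is that at each node these two sheets come together into a single ramification point of index $2$.

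The main obstacle is exactly this last point: one must show that the two Weierstrass points of a nearby smooth curve which collide onto a given node $N$ limit to the \emph{same} point of the exceptional $\PP^1$, rather than to two distinct points. I would argue this locally: near $N$ the double cover is of the form $y^{2}=(x-r'(t))(x-r''(t))\,u(x)$ with $u$ a unit, the node of $\X$ occurring when $r'(0)=r''(0)=r_0$, and the two Weierstrass points in question are $(r'(t),0)$ and $(r''(t),0)$; the smoothing parameter of the node is $s\sim(r'(t)-r''(t))^{2}$, which may be taken as a coordinate on $\barhyperell$ transverse to $\delta_0$, so that $r'-r''\sim\sqrt{s}$. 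In the stable limit in $\barmoduli[g,1]$ the marked point records the direction of approach to the node, and both $(\pm(r'-r'')/2,0)$ approach the same point of the exceptional $\PP^1$ (the "$y$‑to‑transverse‑coordinate ratio $=0$" point); moreover the two sheets are interchanged by the monodromy around $s=0$ that sends $r'-r''$ to $-(r'-r'')$. Hence $\WP(\barhyperell)$ is irreducible through this point, a local coordinate there is $\sqrt{s}$, and $\pi$ is locally $z\mapsto z^{2}$: one ramification point of order two per node. Combining this with the previous steps gives the $2g+2-2k$ unramified points and the $k$ order‑two ramification points, and the bookkeeping $(2g+2-2k)+2k=2g+2$ checks against the degree, proving the corollary.
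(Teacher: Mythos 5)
Your proof is correct and follows essentially the route the paper intends: the paper deduces the corollary as ``a direct application of Theorem~\ref{theoreme:limitDesPtsDeWeiCasHyp}'' (together with Theorem~\ref{theoreme:locusHyperell}), and your argument is exactly that fibre description, supplemented by the degree-$(2g+2)$ bookkeeping over the normal base and by the local model $y^{2}=(x-r'(t))(x-r''(t))u(x)$, $s\sim(r'-r'')^{2}$, which shows that the two colliding Weierstrass points limit to a single point of the exceptional $\PP^{1}$ and that $\pi$ is locally $z\mapsto z^{2}$ there — details the paper leaves implicit. One remark: the number of unramified points you obtain over an irreducible curve with $k$ nodes is $2(g-k)+2=2g+2-2k$, not the $2g-2-2k$ of the statement; since the degree is $2g+2$ and each node contributes one point of local degree two, your count is the one actually forced by Theorem~\ref{theoreme:limitDesPtsDeWeiCasHyp}, so the printed $2g-2-2k$ is evidently a misprint and you should state explicitly that you prove the corrected count. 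You also tacitly extend Theorems~\ref{theoreme:locusHyperell} and~\ref{theoreme:limitDesPtsDeWeiCasHyp} from a single node to $k$ nodes (via the admissible-cover picture); this is harmless, but deserves a sentence, since the theorems as stated only treat generic curves in the $\delta_{i}$.
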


\paragraph{The relationship between the hyperelliptic Weierstrass locus and the hyperelliptic minimal
strata.}

We now describe the {incidence variety compactification}
of the hyperelliptic minimal
strata. We will describe precisely its relationship with the hyperelliptic Weierstrass locus. Before, let
us recall that
two irreducible components $\X_{1}$ and $\X_{2}$
of $\X$ are {\em polarly related} by a differential~$\omega$ if $\X_{1}=\X_{2}$ or~$\omega$ has simple
poles at the nodes between $\X_{1}$ and $\X_{2}$ (see
Definition~\ref{definition:composentesPolairesConnectees}).

\begin{theorem}\label{theoreme:isoEntreHypEtWeier}
Let $(\X,Z)\in\overline{\WP(\hyperell)}\subset\barmoduli[g,1]$ be a pair consisting of a hyperelliptic curve
$\X$ together with a Weierstrass point $Z$. 

Then there exists a stable differential $\omega$ on $\X$, such that for every pointed stable differential
$(\X,\omega',Z)$  in $\PP\obarmoduliinc[g,1]{2g-2}^{\hyp}$ we have the following two properties.
\begin{itemize}
 \item  If $\omega\equiv 0$ on an irreducible component $\X_{i}$, then $\omega'\equiv 0$ on $\X_{i}$.
 \item  There exists
$(\alpha_{1},\cdots,\alpha_{r})\in\PP^{r-1}$ such that 
$$\omega|_{\tilde{\X_{i}}}=\alpha_{i}\omega'|_{\tilde{\X_{i}}},$$
 where $\left\{ \tilde{\X_{i}} \right\}_{i=1,\cdots,r}$ is the set of polarly related
components of the differential $(\X,\omega)$ such that $\omega|_{\tilde\X_{i}}\not\equiv 0$.
\end{itemize}

In particular, the fibres of the forgetful map 
\begin{align*}
\pi:\PP\obarmoduliinc[g,1]{2g-2}^{\hyp} &\to \overline{\WP(\hyperell)}\\
(\X,\omega,Z)&\mapsto\left(\X,Z \right).
\end{align*}
are isomorphic to $\PP^{r-1}$.
\end{theorem}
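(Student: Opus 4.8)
The plan is to analyze the structure of the limit differential on a hyperelliptic curve with nodes and to show that, for a fixed pair $(\X,Z) \in \overline{\WP(\hyperell)}$, the pointed stable differentials over it form exactly a projective space of dimension $r-1$, where $r$ counts the polarly related components on which the canonical limit differential does not vanish. First I would invoke Theorem~\ref{theoreme:limitDesPtsDeWeiCasHyp} (and, for arbitrary nodal degenerations, the admissible-cover description behind Theorem~\ref{theoreme:locusHyperell}) to fix the combinatorial type of $(\X,Z)$: the dual graph, the position of $Z$ (on an exceptional component when a node degenerates a Weierstrass point, on a smooth Weierstrass point of an irreducible component otherwise), and which branches carry ramification. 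Using Corollary~\ref{corollaire:uniciteLimDiffSurTypeCompacte}, there is a limit differential on $(\X,Z)$ of type $(2g-2)$, unique up to a constant on each irreducible component; the iterative degree-chasing argument in that corollary pins down the order of vanishing / pole of $\omega$ at every node, hence determines which components get simple poles (the polarly related relation) and which components the differential must vanish on (those whose degree forces identically zero, e.g.\ rational bridges). This produces the canonical $\omega$ in the statement and the integer $r$.

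Next I would set up the forgetful map $\pi$ and compute its fibre. Given $(\X,\omega',Z) \in \PP\obarmoduliinc[g,1]{2g-2}^{\hyp}$, I would show that $\omega'$ is a limit differential on $(\X,Z)$ — this uses Proposition~\ref{proposition:relationPlumStable} together with the fact that a stable pointed differential in the incidence variety compactification arises as the stable limit of a family whose limit differential exists, and that passing from the limit differential to the stable differential only kills the components carrying higher-order poles. Since by Corollary~\ref{corollaire:uniciteLimDiffSurTypeCompacte} any two limit differentials on the same pointed curve agree component-by-component up to scalars $c_i \in \CC^\ast$, the data of $\omega'$ reduces to a tuple of scalars, one per irreducible component on which $\omega$ is nonzero. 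The polarly related condition forces these scalars to be equal within each polarly related class (because a simple pole at a node with matching opposite residues links the two branches: rescaling one side by $c$ forces the other by $c$, as is visible from the residue condition and the descent-data discussion around \eqref{equation:suiteExLongFibDroiteNodal}). Globally rescaling all scalars simultaneously is the $\CC^\ast$-action we have quotiented by. Hence the fibre is parametrized by $(\alpha_1,\dots,\alpha_r) \in \PP^{r-1}$, and the two bulleted properties follow.

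The part I expect to be the main obstacle is showing that the fibre is \emph{exactly} $\PP^{r-1}$ and not smaller — i.e.\ that \emph{every} choice of $(\alpha_1,\dots,\alpha_r)\in\PP^{r-1}$ is actually realized by a pointed stable differential lying in $\PP\obarmoduliinc[g,1]{2g-2}^{\hyp}$, the hyperelliptic component (not merely the full incidence variety). Surjectivity requires producing, for each $\alpha$, a one-parameter family of smooth hyperelliptic pointed differentials in $\omoduliincp[g,1]{2g-2}^{\hyp}$ degenerating to $(\X, \sum \alpha_i \omega|_{\tilde\X_i}, Z)$; the natural tool is the plumbing cylinder construction of Section~\ref{section:PlomberieCylindrique}, but one must check that the candidate limit differential meets the hypotheses of Theorem~\ref{theoreme:PlomberieCylindriqueSansResidu} (compatibility at nodes — automatic by the degree computation — residue vanishing at simple-pole nodes, and the existence of plumbing parameters $(\epsilon_j)$ satisfying Equation~\eqref{equation:paramCylindrique}), \emph{and} that the smoothing can be performed \emph{hyperelliptically}, i.e.\ equivariantly for the involution. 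This last point is where the admissible-cover picture must be used: one plumbs the admissible cover $\X' \to B$ equivariantly by plumbing the base curve $B$ at the corresponding nodes, which automatically produces a family of genuine hyperelliptic curves; transporting the differential through this equivariant plumbing yields the desired family. Granting this realization, the fibre is $\PP^{r-1}$, all fibres have this form, and the theorem follows. I would also remark that when $\X$ is smooth one has $r=1$ and the fibre is a point, recovering Lemma~\ref{lemme:IsoEntreIncEtStrate} on the interior, which serves as a consistency check.
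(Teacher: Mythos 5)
Your overall architecture matches the paper's: admissible covers to control the combinatorics, a limit differential on $(\X,Z)$ unique up to component-wise scalars, scalars tied together inside each polarly related class by the residue matching at simple poles, and an equivariant plumbing argument (in the spirit of Proposition~\ref{proposition:relationPlumStable}) to realise every scalar tuple inside the hyperelliptic component. The realisation step, which you single out as the main obstacle and propose to handle by plumbing the admissible cover $\X'\to B$ equivariantly, is indeed how the hyperellipticity of the smoothings is guaranteed.

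However, there is a genuine gap at the step where you construct and rigidify the limit differential: you invoke Corollary~\ref{corollaire:uniciteLimDiffSurTypeCompacte} directly for $(\X,Z)$, but that corollary is a compact-type statement. Its proof proceeds by propagating from a leaf of the dual graph and therefore needs the dual graph to be a tree, while the curves arising in $\overline{\WP(\hyperell)}$ are typically \emph{not} of compact type: the boundary of the hyperelliptic minimal stratum contains irreducible curves with nodes and, as in Theorem~\ref{theoreme:bordHypCasIrr}, a genus $g-1$ component joined to a $\PP^{1}$ at two points $N_{1},N_{2}$, whose dual graph has a cycle. On such a curve pure degree-chasing does not pin down the orders at the nodes: with $Z\in\PP^{1}$ one only gets poles of orders $p_{1},p_{2}$ with $p_{1}+p_{2}=2g$ on the $\PP^{1}$ and zeros of orders $p_{1}-2,p_{2}-2$ on the other component, so neither existence of a distinguished $\omega$ nor uniqueness up to component-wise constants follows from the corollary. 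The paper closes exactly this hole by working on the admissible double cover $\pi:\bar\X\to B$: the base $B$ has genus $0$, hence compact type, so the leaf-propagation runs on $B$, and the anti-invariance of the differential under the hyperelliptic involution forces equal orders at conjugate points (giving $p_{1}=p_{2}=g$ in the example) and opposite residues at conjugate points. The latter is also what guarantees that the residue condition needed for the plumbing (Theorem~\ref{theoreme:PlomberieCylindriqueSansResidu}, via Lemma~\ref{lemme:relationScaleAuNoeud}) can be arranged by adjusting the component-wise constants, a point you list among the hypotheses to check but do not actually verify. So you should replace the appeal to Corollary~\ref{corollaire:uniciteLimDiffSurTypeCompacte} by this involution-equivariant construction on the admissible cover; with that in place the rest of your argument goes through along the lines of the paper.
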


The proof is similar to the one of corollary~\ref{corollaire:uniciteLimDiffSurTypeCompacte}, where we show a
related result for curves of compact type. In fact, since hyperelliptic curves are covers of degree two above
a curve of compact type, many ideas will work in this case.

\begin{proof}
Let $(\X,Z)$ be a hyperelliptic curve together with a Weierstrass point of~$\X$. There exists a family
$(\famcurv,\seczero)$ of hyperelliptic curves with a Weierstrass section which converges to $(\X,\omega)$.
Let
$\famomega$ be a family of differentials on $\famcurv$ such that $\famomega(t)$ has a zero of order $2g-2$ at
$\seczero(t)$. It turns out that the limit differential of this family only depends on $(\X,Z)$ as we show in
the following.

 According to
Theorem~\ref{theoreme:limitDeCourbekGonale}, there exists a semi stable curve $\bar\X$ stably equivalent to
$\X$ such that $\pi:\bar\X\to B$ is an admissible cover of degree two. Moreover, the point $Z$ is a
ramification point of the map $\pi$.
We will now define a differential on $\bar\X$ unique up to scaling on the components of $\bar\X$ such that by
contracting the exceptional components we can associate a limit differential  on
$\X$.

Since $B$ is of compact type, the set of irreducible components of $B$ which meet one other component is not
empty. Let us denote this set of irreducible components by $\Irr_{1}(B)$. The  irreducible
components of $\bar\X$ which map to $\Irr_{1}(B)$ are denoted by $\Irr_{1}(\bar\X)$. By definition, the
irreducible components in $\Irr_{1}(\bar\X)$ have at most two nodes. If a component has one node, then it is a
Weierstrass point of this component.
Otherwise, the two nodes are conjugated by the hyperelliptic involution.

 Let $\X_{1}$ be an irreducible component of genus $g_{1}$ in $\Irr_{1}(\X)$. If $\X_{1}$ is an exceptional
component, then we associate the differential with two simple poles at the nodes and which is holomorphic
outside of the nodes. If $\X_{1}$ is not an exceptional component, there is a unique way (up to scaling) to
associate a
differential which can be the
restriction of a limit differential according to these four cases. 
\begin{itemize}
 \item[i)] If $\X_{1}$  contains the point $Z$ and has a unique node. Then the differential on~$\X_{1}$ is
the differential with a zero of order $2g-2$ at $Z$ and a pole of order $2(g-g_{1})$ at the node.

 \item[ii)] If $\X_{1}$  contains the point $Z$ and has two nodes. Then the differential on~$\X_{1}$ is the
differential with a zero of order $2g-2$ at $Z$ and two
poles of order $(g-g_{1})$ at both nodes.

 \item[iii)] If $\X_{1}$ does not contain the point $Z$ and has a unique node. Then the differential on
$\X_{1}$ is the differential with a zero of order $2g_{1}-2$ at the node.

 \item[iv)] If $\X_{1}$ does not contain the point $Z$ and has two nodes. Then the differential on $\X_{1}$ is
the
differential with two zeros of order $g_{1}-1$ at both nodes.
\end{itemize}
Indeed,  the only zeros and poles of the differentials are contained in the marked locus. Moreover, the fact
that the differential is anti-invariant under the hyperelliptic involution implies that the orders of the
differentials
have to coincide at a pair of points conjugated by the hyperelliptic involution.

Now we can continue this process in the following way. We remove to the dual graph $\Gamma_{B}$ of $B$ the
vertices corresponding to $\Irr_{1}(B)$ and the edges pointing to them. This new graph is denoted by
$\Gamma_{B}^{1}$. Either $\Gamma_{B}^{1}$ is empty and we have achieved the construction of the differential.
Or
 $\Gamma_{B}^{1}$ is a non empty
tree. In this case the set of irreducible components $\Irr_{2}(B)$ of $B$ corresponding to the leafs
of~$\Gamma_{B}^{1}$ is not empty. The irreducible components of $\bar\X$ mapping to the components of 
$\Irr_{2}(B)$ are denoted by $\Irr_{2}(\bar\X)$.

The
description
of the differential on these components is similar to the previous one. To be more precise, because of the
compatibility condition \eqref{equation:conditionDeCompatibiliteGeneral}, the sum of the degrees of the
differentials at the nodes
with the components
of $\Irr_{1}(\bar\X)$ is $-2$. The only other zeros or poles allowed on an irreducible component are at the
marked
points and the orders have to be invariant by the hyperelliptic involution. 

We continue this process and eventually obtain a differential on the curve $\bar\X$. 
Then we can associate a differential $\tilde\omega$ on $(\X,Z)$ by contracting the exceptional components of
$(\bar\X,Z)$. 

Let us remark that at every pair of  points conjugated by the hyperelliptic involution, the
residues of $\tilde\omega$ at these points are opposite. This has two consequences. The first one is that
nodes
corresponding to loops on the dual graph of $\X$ satisfy the residue condition.
 The second consequence is that we can multiply the restrictions on the irreducible components of the form
$\tilde\omega$ by
constants in such a way that the residue condition is satisfied at every node. 

 Hence we obtain a unique differential up to
multiplicative constants on
each polarly related component of $(\X,\tilde\omega,Z)$.

To conclude,  we obtained a stable differential   $\omega$  by imposing $$\omega|_{\tilde\X_{i}}=0$$ when
$\tilde\omega|_{\tilde\X_{i}}$  has a meromorphic node of degree greater or equal to $2$ in the polarly
component $\tilde\X_{i}$ of $(\X,\tilde\omega)$, and otherwise $$\omega|_{\X_{i}}=\tilde\omega|_{\X_{i}}.$$
By an argument similar to the one in Proposition~\ref{proposition:relationPlumStable}, we can deduce that
there exists a family in $\omoduliinc[g,1]{2g-2}^{\hyp}$ which has $(\X,\omega,Z)$ as stable limit.
Moreover, every other stable differential on $(\X,Z)$ in the closure of the
connected component $\omoduliinc[g,1]{2g-2}^{\hyp}$
differs only by multiplicative constants on the polarly related components of $(\X,\omega)$.
\end{proof}

For sake of concreteness, let us describe explicitly the stable differentials inside
$\obarmoduliinc[g,1]{2g-2}                                                           ol^{\hyp}$ when the curve
has at most two irreducible components. First we look at
differentials such that the underlying curve is
in $\delta_{i}$ for~$i\geq 1$. 

\begin{theorem}\label{theoreme:bordHypAvec2Courbes}
Let $(\X,\omega,Z)$ be a stable differential in  $\PP\obarmoduliinc[g,1]{2g-2}^{\hyp}$ such that
$\X:=\X_{1}\cup\X_{2}/(N_{1}\sim N_{2})$ is in the divisor $\delta_{i}$. We suppose without
lose of generality that $Z\in \X_{1}$.

Then $(\X,\omega,Z)$ is characterised by the following three properties.
\begin{itemize}
\item[i)] The curves $\X_{j}$ are hyperelliptic and the points $N_{1}$ and $N_{2}$ are Weierstrass points of
$\X_{1}$ and $\X_{2}$ respectively.
\item[ii)] The point $Z$ is a Weierstrass point of $\X_{1}$. 
\item[iii)] The differential $\omega$ is identically zero on the component  of $\X$ that contains~$Z$ 
and  is the holomorphic differential with a zero of order $2g_{2}-2$ at~$N_{2}$ on the component $\X_{2}$.
 \end{itemize}
\end{theorem}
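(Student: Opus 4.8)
The plan is to treat this as the explicit, two-component incarnation of Theorem~\ref{theoreme:isoEntreHypEtWeier}. Let $(\X,\omega,Z)\in\PP\obarmoduliinc[g,1]{2g-2}^{\hyp}$ with $\X=\X_1\cup\X_2/(N_1\sim N_2)$ lying over $\delta_i$ for $i\ge1$ and $Z\in\X_1$; write $g_j$ for the genus of $\X_j$, so that $g_1,g_2\ge1$ by stability. First I would use Theorem~\ref{theoreme:isoEntreHypEtWeier}, which tells us that the image $(\X,Z)$ of the forgetful map lies in $\overline{\WP(\hyperell)}$. Then Theorem~\ref{theoreme:locusHyperell} applied to a curve stably equivalent to a generic point of $\delta_i$ gives that $\X_1$ and $\X_2$ are hyperelliptic and that $N_1$, $N_2$ are Weierstrass points of $\X_1$ and $\X_2$ respectively, which is (i), and the corresponding clause of Theorem~\ref{theoreme:limitDesPtsDeWeiCasHyp} gives that $Z$ is a Weierstrass point of $\X_1$, which is (ii).

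For (iii), note that $\X$ is of compact type with dual graph a segment whose two leaves are $\X_1$ and $\X_2$. By Corollary~\ref{corollaire:uniciteLimDiffSurTypeCompacte} (or by the construction inside the proof of Theorem~\ref{theoreme:isoEntreHypEtWeier}) there is, up to scaling on each component, a unique limit differential $\tilde\omega$ on $(\X,Z)$ of type $(2g-2)$. Its restriction $\tilde\omega|_{\X_1}$ has a zero of order $2g-2$ at $Z$; since $\deg(\tilde\omega|_{\X_1})=2g_1-2$ and $N_1$ is the only node of $\X_1$, it has a pole of order $2g_2$ at $N_1$. The compatibility condition~\eqref{equation:conditionDeCompatibiliteGeneral} forces $\ord_{N_2}(\tilde\omega|_{\X_2})=2g_2-2$, and because $\deg(\tilde\omega|_{\X_2})=2g_2-2$ the form $\tilde\omega|_{\X_2}$ is holomorphic with a single zero of order $2g_2-2$ at $N_2$ — the unique such holomorphic differential on the hyperelliptic curve $\X_2$, since $(2g_2-2)N_2\sim(g_2-1)\grd{1}{2}\sim K_{\X_2}$. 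As the pole of $\tilde\omega$ at $N_1$ has order $2g_2\ge2$, the components $\X_1$ and $\X_2$ are not polarly related, so each is a polarly related component by itself; passing to the associated stable differential $\omega$ via the map $\varphi$ of Proposition~\ref{proposition:relationPlumStable} forgets $\tilde\omega|_{\X_1}$ and keeps $\tilde\omega|_{\X_2}$. This is exactly (iii), and in the notation of Theorem~\ref{theoreme:isoEntreHypEtWeier} one has $r=1$, consistent with the fibre of $\pi$ over $(\X,Z)$ being a point.

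Conversely, assuming $(\X,\omega,Z)$ satisfies (i)--(iii), I would reconstruct a limit differential by putting $\tilde\omega|_{\X_2}:=\omega|_{\X_2}$ and letting $\tilde\omega|_{\X_1}$ be the differential on $\X_1$ with divisor $(2g-2)Z-2g_2N_1$. It exists because on the hyperelliptic curve $\X_1$ one has $2Z\sim2N_1\sim\grd{1}{2}$, so $(2g-2)Z-2g_2N_1\sim(2g_1-2)Z\sim K_{\X_1}$, and it is anti-invariant under the hyperelliptic involution (its divisor is symmetric and supported at fixed points), so its residue at $N_1$ vanishes, exactly as in the proof of Theorem~\ref{theoreme:isoEntreHypEtWeier}. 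The candidate differential $\tilde\omega=(\tilde\omega|_{\X_1},\tilde\omega|_{\X_2})$ then satisfies the compatibility condition at $N_1\sim N_2$, there is no node carrying a simple pole, and the dual graph of $\X$ is a tree, so the path condition of Theorem~\ref{theoreme:PlomberieCylindriqueSansResidu} is vacuous. Hence $\tilde\omega$ is plumbable and, by Proposition~\ref{proposition:relationPlumStable}, $\varphi(\X,\tilde\omega,Z)=(\X,\omega,Z)$ lies in $\obarmoduliinc[g,1]{2g-2}$; the smoothing can moreover be performed equivariantly for the hyperelliptic involution, exactly as in the proof of Theorem~\ref{theoreme:isoEntreHypEtWeier} via the admissible-cover presentation of $\X$, so $(\X,\omega,Z)\in\PP\obarmoduliinc[g,1]{2g-2}^{\hyp}$.

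The step I expect to carry the real content is the hyperelliptic equivariance of the smoothing in the converse direction; but since Theorem~\ref{theoreme:isoEntreHypEtWeier} already establishes that every $(\X,Z)\in\overline{\WP(\hyperell)}$ lies in the image of $\pi$ with fibre $\PP^{r-1}$, this is essentially in hand, and the genuinely new input here is only the order and degree bookkeeping of the second paragraph that pins down the unique element of the fibre when $r=1$.
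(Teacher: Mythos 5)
Your argument is correct and is essentially the route the paper intends: the paper omits the proof of this theorem, saying it is relatively similar to the proof of Theorem~\ref{theoreme:isoEntreHypEtWeier}, and your deduction of (i)--(ii) from Theorems~\ref{theoreme:locusHyperell} and~\ref{theoreme:limitDesPtsDeWeiCasHyp}, of (iii) from the uniqueness of the limit differential on a compact-type curve together with the compatibility/degree bookkeeping, and of the converse via Theorem~\ref{theoreme:isoEntreHypEtWeier} (with $r=1$) is precisely that argument. One small precision for the converse: the proof of Theorem~\ref{theoreme:isoEntreHypEtWeier} does not perform an involution-equivariant plumbing but starts from a family of hyperelliptic curves with a Weierstrass section degenerating to $(\X,Z)$, so you should first record that (i)--(ii) place $(\X,Z)$ in $\overline{\WP(\hyperell)}$ (e.g.\ by gluing the two admissible double covers at the branch points over the node, via Theorem~\ref{theoreme:limitDeCourbekGonale}), after which the nonemptiness and uniqueness of the fibre given by Theorem~\ref{theoreme:isoEntreHypEtWeier} yield exactly the differential described in (iii).
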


Now we look at differentials such that the underlying curve is stably equivalent to a curve in $\delta_{0}$.

\begin{theorem}\label{theoreme:bordHypCasIrr}
Let $\X$ be either an irreducible curve or an irreducible curve blown up at a node.

Then $(\X,\omega,Z)$ is in the {incidence variety compactification} of the connected component
$\pomoduli[g,1]^{\hyp}(2g-2)$ if
and only if it is of one of the following two forms. 
\begin{itemize}
\item The point $Z$ is in the smooth locus of the irreducible curve $\X$ and the
differential $\omega$ is a section of $\dualsheave$ which vanishes  at $Z$ with order $2g+2$. 

\item The point  $Z$ is in the exceptional divisor coming from the blow-up of a
node  $N_{1}\sim N_{2}$, and the differential $\omega$ is the stable differential with a zero of
order $g-2$ at both $N_{1}$ and $N_{2}$.
\end{itemize}
\end{theorem}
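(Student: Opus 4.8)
The plan is to read the statement off the two structural results of this section. Theorem~\ref{theoreme:isoEntreHypEtWeier} presents $\PP\obarmoduliinc[g,1]{2g-2}^{\hyp}$ as a space fibred over $\overline{\WP(\hyperell)}$ by the forgetful map $\pi:(\X,\omega,Z)\mapsto(\X,Z)$, whose fibre over $(\X,Z)$ is $\PP^{r-1}$, where $r$ is the number of polarly related components of a \emph{canonical} pointed differential $(\X,\omega)$ attached to $(\X,Z)$ on which that differential is not identically zero; moreover that canonical differential is produced explicitly from the degree-two admissible cover of $\X$ (cases (i)--(iv) in the proof of Theorem~\ref{theoreme:isoEntreHypEtWeier}), and the plumbing argument at the end of that proof, via Lemma~\ref{lemme:PlomberieCylindriqueAvecResidu} and Proposition~\ref{proposition:relationPlumStable}, shows that the resulting stable differential does lie in $\PP\obarmoduliinc[g,1]{2g-2}^{\hyp}$. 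So it suffices to (a) use Theorem~\ref{theoreme:limitDesPtsDeWeiCasHyp} to list the pairs $(\X,Z)\in\overline{\WP(\hyperell)}$ whose stable model is irreducible, and (b) compute, for each of these, the canonical differential and the integer $r$. By the $\delta_0$ part of Theorem~\ref{theoreme:limitDesPtsDeWeiCasHyp} there are exactly two such shapes: $\X$ irreducible with $Z\in\WP(\X)$, and $\X$ the blow-up of an irreducible curve at a node with $Z$ on the exceptional component $E$; these are precisely the two alternatives in the statement, so the ``only if'' direction reduces to matching the differential in each case, using that $r=1$ will force the fibre to be a single point.

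For the first shape I would write the degree-two admissible cover $\bar\X=\tilde\X\cup_{N_1,N_2}E\to B$ of Theorem~\ref{theoreme:locusHyperell}, with $E\cong\PP^1$ the rational bridge contracted to the node and $Z$ a Weierstrass point lying on $\tilde\X$. The construction of Theorem~\ref{theoreme:isoEntreHypEtWeier} assigns to $E$ (genus $0$, two nodes, not containing $Z$: case (iv)) the differential with simple poles at $N_1$ and $N_2$, and to $\tilde\X$ (genus $g-1$, two nodes, containing $Z$: case (ii)) a differential with a zero at $Z$ and simple poles at $N_1,N_2$, the order at $Z$ being forced by the Compatibility Condition~\eqref{equation:conditionDeCompatibiliteGeneral} together with $\deg\dualsheave[\tilde\X]=2g-4$. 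Since all poles at the nodes between $E$ and $\tilde\X$ are simple, $E$ and $\tilde\X$ are polarly related, $\omega$ is nowhere identically zero, and nothing is forgotten by the map $\varphi$ of Proposition~\ref{proposition:relationPlumStable}; contracting $E$ yields a single section $\omega$ of $\dualsheave$ with the asserted zero at $Z$ (and a simple pole at the node, the two residues cancelling because the total residue of $\omega|_{\tilde\X}$ vanishes). Hence $r=1$, the fibre over $(\X,Z)$ is a point, and $(\X,\omega,Z)$ is exactly the first alternative.

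For the second shape, $\X=\tilde\X\cup E/(N_1\sim N_2)$ with $Z\in E$, I would first, if $Z$ is not already a ramification point of the admissible cover, enlarge the semistable model by inserting a rational component through $Z$, and then apply the same prescription. The component carrying $Z$ has genus $0$ and receives (case (ii) with $g_1=0$) a differential with a zero at $Z$ and poles of order $g\geq2$ at its nodes; it is its own polarly related class and carries a pole of order $\geq2$, so $\varphi$ forgets its differential, i.e. $\omega\equiv0$ on $E$ and on every inserted component. Propagating through the Compatibility Condition to $\tilde\X$ (genus $g-1$, two nodes, not containing $Z$: case (iv)) forces $\omega|_{\tilde\X}$ to have a zero of order $(g-1)-1=g-2$ at each of $N_1$ and $N_2$, which is exactly the second alternative; only $\tilde\X$ survives, so again $r=1$ and the fibre is a point. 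The ``if'' direction in both cases is then immediate: the differentials just described are the canonical ones, so by the construction in the proof of Theorem~\ref{theoreme:isoEntreHypEtWeier} each is a limit of smooth hyperelliptic minimal differentials.

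The step I expect to be most delicate is the bookkeeping in the blow-up case when $Z$ is an \emph{arbitrary} point of $E$ rather than a ramification point of the original cover: one must choose an enlarged semistable model, run the prescription on it, and verify both that every rational component introduced carries a pole of order $\geq2$ — so that exactly one polarly related component survives and the fibre is genuinely a single point — and that the differential obtained on the stable part $\tilde\X$ is independent of the enlargement and equals the one in the statement. Checking the residue and degree relations at each node to pin down the precise orders ($g-2$ at $N_1,N_2$ in the blow-up case, and the maximal order at $Z$ in the irreducible case) is then a routine local computation.
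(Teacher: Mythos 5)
Your argument is correct and is essentially the route the paper intends: the paper omits this proof, saying it is ``relatively similar'' to that of Theorem~\ref{theoreme:isoEntreHypEtWeier}, and your derivation --- combining the $\delta_{0}$ case of Theorem~\ref{theoreme:limitDesPtsDeWeiCasHyp} to list the pairs $(\X,Z)$ with the canonical differential and the count $r=1$ from Theorem~\ref{theoreme:isoEntreHypEtWeier}, whose proof already supplies the smoothing via Lemma~\ref{lemme:PlomberieCylindriqueAvecResidu} and Proposition~\ref{proposition:relationPlumStable} --- is exactly that. Two small remarks: your degree count gives a zero of order $2g-2$ at $Z$ in the first case (as it must, since a section of $\dualsheave$ has degree $2g-2$, and consistently with the order $4$ in the genus-three analogue, Theorem~\ref{theoreme:bordZeroOrdre4Irr}), so the ``$2g+2$'' in the statement is a typo which your computation silently corrects rather than reproduces. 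Also, the step you single out as delicate is vacuous: since $(\PP^{1};N_{1},N_{2},Z)$ has no moduli, one may always choose the admissible cover so that $Z$ is a ramification point --- exactly as is done at the start of the proof of Theorem~\ref{theoreme:isoEntreHypEtWeier} --- so no enlargement of the semistable model is needed.
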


We omit the proofs of both theorems. They are relatively similar to the proof of
Theorem~\ref{theoreme:isoEntreHypEtWeier}, and the reader can look at the proofs of the main theorems of
Section~\ref{section:omoduli3,4,odd} for similar computations.

%%%%%%%%%%%%%%%%%%%%%%%%%%%%%%%%%%%%%%%%%%%%%%%%%%%%%%%%%%%%%%%%%%%%%%%%%%%%%%%%%%%%%%%%%%%%%%%%%%%%%%%%%%%%%%

\section{The Boundary of $\PP\obarmoduliinc[3,1]{4}^{\odd}$.}\label{section:omoduli3,4,odd}

In this section, we give a precise description of the geometry of the pointed differentials which lie in the
boundary of the {incidence variety compactification} of $\pomoduli[3,1]^{\odd}(4)$. Since this description
depends in an essential way on the dual graph of the underlying curve, we will  restrict ourself to the
most simple cases.
We define a {\em generic curve in the divisor $\delta_{i}$} to be a curve 
 in the divisor $\delta_{i}$ with a single node. 

For a generic curve in $\delta_{1}$, the description of the limit differentials in the boundary of
$\pomoduli[3,1]^{\odd}(4)$ is given in Theorem~\ref{theoreme:linEqOfLimit} and the stable differentials in
Corollary~\ref{corollaire:stabDiffgTroisRed}. This description implies (see
Corollary~\ref{corollaire:compactGenreTroisDeltaZero}) that the
 {incidence variety compactification}  of the connected component $\pomoduli[3,1]^{\odd}(4)$ is better than
the Deligne-Mumford
compactification $\obarmoduli[3](4)$.

For a curve stably equivalent to a generic curve in $\delta_{0}$, the description of the limit differentials
in the boundary of
$\pomoduli[3,1]^{\odd}(4)$ is given in Theorem~\ref{theoreme:bordZeroOrdre4Irr} and
Theorem~\ref{theoreme:bordZeroOrdre4Irrbis} and the stable differentials in Theorem~\ref{theoreme:bordZeroOrdre4Irr} and in
Corollary~\ref{corollaire:stabDiffgTroisIrr}. In the first theorem we investigate the case where the
underlying curve is stable, and in the second only semi stable. 

To conclude, we give two examples of families in $\obarmoduliinc[3,1]{4}$. In the first example, the
underlying
curve is given by a quartic in the projective plane. In the second, we deform the polygonal representation of
differentials belonging to~$\obarmoduli[3](4)$.

\subsection{The underlying curve  is generic in $\delta_{1}$.}

In order to describe the limit differential of $\pomoduli[3,1]^{\odd}(4)$, let us introduce the following
definition.
\begin{defn}
 Let $(\X,Q)$ be an elliptic curve, $k\geq 2$ be an integer and $l$ be a non-trivial divisor of $k$. The
points
of $\X\setminus Q$ which are $k$-torsion but not $l$-torsion of $(\X,Q)$ are {\em primitive $k$-torsion} of
$(\X,Q)$.
\end{defn}

Moreover, let us give the definition of what we mean with a {\em generic curve in a divisor}. 
\begin{defn} Let $\delta_{i}$ be a divisor of $\barmoduli$.
 A {\em generic curve in  $\delta_{i}$} is a curve in the divisor $\delta_{i}$ with a single
node. 
\end{defn}

In this section $\X$ will denote a generic curve in $\delta_{1}$ and will be given by the union of a curve
$\X_{1}$ of genus one and a curve $\X_{2}$ of genus two
meeting together at $N_{1}\in\X_{1}$ and $N_{2}\in\X_{2}$.

We now give a precise description of the limit differentials in the boundary of the connected component
$\omoduliinc[3,1]{4}^{\odd}$ such
that the projection to $\moduli[3]$ is a generic curve of the divisor~$\delta_{1}$.

\begin{theorem}\label{theoreme:linEqOfLimit}
Let $(\X,\omega,Z)$ be a limit differential at the boundary of the odd component of the stratum
$\omoduli[3,1](4)$.

If the curve $\X$
is  stably-equivalent to a generic curve
 in the divisor $\delta_{1}$, then the curve $\X$ is a generic curve in $\delta_{1}$ and $(\X,\omega,Z)$ is of
one of the following two forms.
\begin{itemize}
 \item The point $Z$ is a primitive $4$-torsion point of $(\X_{1},N_{1})$ and the point of attachment
$N_{2}\in\X_{2}$ is a Weierstrass point of $\X_{2}$. 

The restriction of $\omega$ to $\X_{1}$ is the meromorphic
differential with a zero of order $4$ at $Z$ and a pole of order $4$ at $N_{1}$. The restriction of $\omega$
to $\X_{2}$ is the Abelian differential with a zero of order $2$ at $N_{2}$.

\item
The point $Z$ is not a Weierstrass
point of
$\X_{2}$ and the pair $(Z,N_{2})$ satisfies the relation
\begin{equation}\label{equation:LimitDesZerosGenre3}
 4Z-2N_{2}\sim K_{\X_{2}}.
\end{equation}

The restriction of $\omega$ to $\X_{1}$ is an Abelian differential. The restriction of $\omega$
to $\X_{2}$ is the meromorphic differential with a zero of order $4$ at $Z$ and a pole of order $2$ at
$N_{2}$.
\end{itemize}
\end{theorem}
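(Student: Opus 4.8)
The plan is to combine the rigidity of limit differentials on compact-type curves, the Compatibility Condition at the node, and the spin-parity invariant of Section~\ref{section:Spin}.

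\textbf{Shape of the curve.} First I would observe that, since $\X$ is stably equivalent to a compact-type curve with two components, the only way $\X$ could fail to be a generic curve in $\delta_{1}$ is a rational bridge bubbled off at the node and carrying the marked point $Z$; but Corollary~\ref{corollaire:zeroJamaisSurLePontFaible} forbids exactly this configuration for the minimal stratum (here $2g-2=4$). Hence $\X=\X_{1}\cup\X_{2}/(N_{1}\sim N_{2})$ with $g(\X_{1})=1$ and $g(\X_{2})=2$, the node is the unique node of $\X$, and $Z$ lies in the smooth locus of exactly one of the two components. By Corollary~\ref{corollaire:uniciteLimDiffSurTypeCompacte} the restrictions $\omega|_{\X_{1}}$ and $\omega|_{\X_{2}}$ are then each determined up to a scalar by $(\X,Z)$.

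\textbf{The two geometric forms.} Next I would split according to the position of $Z$. If $Z\in\X_{1}$, then $\omega|_{\X_{1}}$ has a zero of order $4$ at $Z$; as $\deg(\omega|_{\X_{1}})=0$, its only other zero or pole lies at $N_{1}$ and must be a pole of order $4$, and such a differential exists on the elliptic curve $(\X_{1},N_{1})$ precisely when $Z$ is a $4$-torsion point. The Compatibility Condition~\eqref{equation:conditionDeCompatibiliteGeneral} then forces $\ord_{N_{2}}(\omega|_{\X_{2}})=2$, so $\divisor{\omega|_{\X_{2}}}=2N_{2}$ and $N_{2}$ is a Weierstrass point of $\X_{2}$. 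If instead $Z\in\X_{2}$, then $\omega|_{\X_{2}}$ has a zero of order $4$ at $Z$ and, by the degree constraint, a pole of order $2$ at $N_{2}$, which exists exactly when $4Z-2N_{2}\sim K_{\X_{2}}$, i.e.\ Equation~\eqref{equation:LimitDesZerosGenre3}; compatibility now gives $\ord_{N_{1}}(\omega|_{\X_{1}})=0$, so $\omega|_{\X_{1}}$ is a nowhere vanishing holomorphic differential. In both cases the pole at the node, when of order $\geq2$, is the only pole on its component and therefore has vanishing residue; since the dual graph of $\X$ is a tree the closed-path condition is vacuous, so Theorem~\ref{theoreme:PlomberieCylindriqueSansResidu} applies and shows conversely that each such candidate is a genuine (plumbable) limit differential.

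\textbf{The parity computation.} The last and, I expect, most delicate step is to determine which connected component each form belongs to. By Definition~\ref{definition:spinStructureSurCourbeTypeCompact} the associated spin structure on the blow-up $\bar\X=\X_{1}\cup E\cup\X_{2}$ restricts to $\Ox[E](1)$ on $E$ and to $\Ox[\X_{i}]\left(\frac{1}{2}\divisor{\omega|_{\X_{i}}}\right)$ on $\X_{i}$; by Example~\ref{exemple:spin1} its parity is odd exactly when the parities of $\mathcal{L}_{\omega}|_{\X_{1}}$ and $\mathcal{L}_{\omega}|_{\X_{2}}$ disagree, and by Theorem~\ref{theoreme:BordDesStratesDansSpin} this parity is the component label. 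For the first form, $\mathcal{L}_{\omega}|_{\X_{2}}=\Ox[\X_{2}](N_{2})$ is the odd theta characteristic of the Weierstrass point $N_{2}$, while $\mathcal{L}_{\omega}|_{\X_{1}}=\Ox[\X_{1}](2Z-2N_{1})$ is the trivial (hence odd) theta characteristic of $\X_{1}$ if $Z$ is $2$-torsion and a non-trivial (hence even) one if $Z$ is primitive $4$-torsion; so the odd component is selected precisely by $Z$ being primitive $4$-torsion. For the second form, $\mathcal{L}_{\omega}|_{\X_{1}}=\Ox[\X_{1}]$ is odd, so the odd component is selected by $\mathcal{L}_{\omega}|_{\X_{2}}=\Ox[\X_{2}](2Z-N_{2})$ being even, i.e.\ non-effective; using that two effective divisors of degree two on the hyperelliptic genus-two curve $\X_{2}$ are linearly equivalent only if they coincide or both belong to the $g^{1}_{2}$, one checks that, under $4Z-2N_{2}\sim K_{\X_{2}}$, this theta characteristic is effective exactly when $Z$ is a Weierstrass point of $\X_{2}$ — which is therefore the case to exclude. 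Combining the two cases gives the two asserted forms, and the plumbing step above shows that each of them actually occurs.
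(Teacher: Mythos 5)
Your proposal is correct and follows essentially the same route as the paper's proof: excluding the bridged configuration via Corollary~\ref{corollaire:zeroJamaisSurLePontFaible}, pinning down the two restrictions by the degree constraints and the Compatibility Condition (with Theorem~\ref{theoreme:PlomberieCylindriqueSansResidu} applicable since the higher-order pole is the unique pole and hence residue-free), and then using the compact-type spin structure and Theorem~\ref{theoreme:BordDesStratesDansSpin} to force $Z$ primitive $4$-torsion in the first case and non-Weierstrass in the second. The only differences are cosmetic: you spell out the genus-two linear-equivalence argument where the paper simply counts sections of $\Ox[\X_{2}](2Z-N_{2})$, and you add the (not strictly required) converse plumbing step.
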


The main tools of the proof consist of the theory of limits differentials and the spin structure on stable
curves.

\begin{proof}
Since $\X$ is stably-equivalent to a generic curve in $\delta_{1}$, the marked curve $(\X,Z)$ must be of one
of the following three forms, where the genus of $\X_{i}$ is $i$.

\begin{figure}[ht]
\centering
 \begin{tikzpicture}[>=stealth',shorten >=1pt,auto,node distance=2.8cm]
     
    \node(E1) at (-6,.2){$\X_{1}$};
     \node(E2) at (-5.2,1.8){};

     \node(F1) at (-4.8,1.8){};
     \node(F2) at (-4,.2){$\X_{2}$};

 \draw(E1) edge node[left]{} (F1);
 \draw(E2) edge node[right]{} (F2);

 \node[right](Z2) at (-4.6,1){$Z$};  \fill (-4.6,1) circle (1pt);

     \node(C1) at (-3,.2){$\X_{1}$};
     \node(C2) at (-2.2,1.8){};

     \node(D1) at (-1.8,1.8){};
     \node(D2) at (-1,.2){$\X_{2}$};

 \draw(C1) edge node[left]{} (D1);
 \draw(C2) edge node[right]{} (D2);

 \node[left](Z1) at (-2.4,1){$Z$};  \fill (-2.4,1) circle (1pt);

     \node(A1) at (0,.2){$\X_{1}$};
     \node(A2) at (.5,1.5){};
     \node(A3) at (2,1.8){};

     \node(B1) at (1,1.8){};
      \node[right](B2) at (2.4,1.5){$\PP^{1}$};
     \node(B3) at (3,.2){$\X_{2}$};

 \draw(A1) edge node[left]{} (B1);
 \draw(A2) edge  (B2);
 \draw(A3) edge node[right]{} (B3);

     \node[below](Z) at (1.5,1.5){$Z$};  \fill (1.5,1.5) circle (1pt);
\end{tikzpicture}
\end{figure}

The third case does not occur according to 
 Corollary~\ref{corollaire:zeroJamaisSurLePontFaible}.
 
Let us remark that since $\omega|_{\X_{i}}$ has at most one pole, this pole cannot have a residue. Therefore,
the
limit differentials on the curve $\X$ are characterised in
Theorem~\ref{theoreme:PlomberieCylindriqueSansResidu}. In the case at hand, observe that the only relevant
condition of Theorem~\ref{theoreme:PlomberieCylindriqueSansResidu} is the Compatibility
Condition~\eqref{equation:conditionDeCompatibiliteGeneral}
$$\ord_{N_{1}}(\omega)+ \ord_{N_{2}}(\omega) =-2,$$
 at the node of $\X$.

Let us now treat the case where $Z\in\X_{1}$. Since $Z$ is a limit differential in the boundary of
$\omoduliinc[3,1]{4}$ the restriction of $\omega$ to $\X_{1}$ has a zero of order  $4$ at $Z$ and a pole
of the same order at $N_{1}$. It follows from the Compatibility
Condition~\eqref{equation:conditionDeCompatibiliteGeneral}  that
the order of $\omega|_{\X_{2}}$ at $N_{2}$ is
$2$. Thus $N_{2}$ is a Weierstrass point of $\X_{2}$.

It remains to show that $Z$ is a primitive $4$-torsion point of $(\X_{1},N_{1})$. By the continuity of the
parity of the spin structure (see Theorem~\ref{theoreme:BordDesStratesDansSpin}) the parity of the spin
structure associated to $\omega$ has to be odd. But since the parity of~$\omega|_{\X_{2}}$ is odd, the parity
of $\omega|_{\X_{1}}$ has to be even. This concludes the first case by
observing that for a $4$-torsion $Z$, we have $h^{0}(\X_{1},\Ox[\X_{1}](2Z-2N_{1}))=0$ if $Z$ is primitive
and $h^{0}(\X_{1},\Ox[\X_{1}](2Z-2N_{1}))=1$ otherwise.

The case where $Z\in\X_{2}$ is very similar, hence we do not write every detail. Since $\omega$ has
a zero of order $4$ at $Z$, it has to have a pole of order
$2$ at $N_{2}$. Therefore the points $Z$ and $N_{2}$  satisfy Equation~\eqref{equation:LimitDesZerosGenre3}.

Let us now show that the
point $Z$ cannot be a Weierstrass point. First let us remark that in this case, the point $N_{2}$ would be a
Weierstrass point too. Indeed Equation~\eqref{equation:LimitDesZerosGenre3} would be equivalent to 
$$2Z\sim 2N_{2} \sim K_{\X_{2}},$$
which clearly implies that $N_{2}$ is a Weierstrass point.
Now the claim follows again from the continuity of the spin structure.
Since in this case the restriction of $\omega$ to $\X_{1}$ is odd, the restriction $\omega|_{\X_{2}}$ has to
be even. Since the associated theta characteristic is $\Ox[\X_{2}](2Z-N_{2})$, it would have exactly one
section if $Z$ (and therefore $N_{2}$) were a Weierstrass point, contradicting
Theorem~\ref{theoreme:BordDesStratesDansSpin}.
\end{proof}

\begin{rem}
An interesting fact is that there are only a finite number of points in $\X_{1}$ which are in the closure of
the zero of
order $4$ of $\omoduliinc[3,1]{4}$. This has to be compared with \cite[Theorem~5.45]{MR1631825}
 which tells us that  when $N_{2}$ is a Weierstrass
point, then every point of $\X_{1}$ is in the closure
of the Weierstrass locus.
\end{rem}

We can characterise the pointed differentials in this case from  Theorem~\ref{theoreme:linEqOfLimit}
and
Proposition~\ref{proposition:relationPlumStable}.
\begin{cor}\label{corollaire:stabDiffgTroisRed}
Let $(\X,\omega,Z)$ be a stable pointed differential in $\obarmoduliinc[3,1]{4}^{\odd}$.

If the curve $\X$
is  stably-equivalent to a generic curve
 in the divisor $\delta_{1}$, then~$\X$ is a stable curve in $\delta_{1}$ and $(\X,\omega,Z)$ is of one of the
following two forms.
\begin{itemize}
 \item 
The point $Z$ is a primitive $4$-torsion point of $(\X_{1},N_{1})$ and $N_{2}$ is a Weierstrass point of
$\X_{2}$.

The restriction of $\omega$ to $\X_{1}$ vanishes identically. The restriction of $\omega$
to~$\X_{2}$ is the Abelian differential with a zero of order $2$ at $N_{2}$.

\item 
The point $Z$ is not a Weierstrass
point of
$\X_{2}$ and the pair $(Z,N_{2})$ satisfies  the relation
$ 4Z-2N_{2}\sim K_{\X_{2}}$.

The restriction of $\omega$ to $\X_{1}$ is an holomorphic differential. The restriction of $\omega$
to $\X_{2}$ vanishes identically.
\end{itemize}
\end{cor}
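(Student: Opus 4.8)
The plan is to deduce the statement from Theorem~\ref{theoreme:linEqOfLimit}, which describes the limit differentials sitting over a curve stably equivalent to a generic curve in $\delta_{1}$, together with Proposition~\ref{proposition:relationPlumStable}, which relates such limit differentials to the stable differentials that actually appear in the incidence variety compactification.

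First I would pass from $(\X,\omega,Z)$ to a limit differential. Since $(\X,\omega,Z)$ lies in $\obarmoduliinc[3,1]{4}^{\odd}$, it is the stable limit of some family $(\famcurv,\famomega,\seczero)$ inside $\omoduliinc[3,1]{4}^{\odd}$, and this family carries an associated limit differential $(\X,\omega^{\mathrm{lim}},Z)$ in the closure of the stratum. Comparing the stable limit of the family with its limit differential by means of the scaling analysis of Section~\ref{section:PlomberieCylindrique}, one finds that $\omega$ is obtained from $\omega^{\mathrm{lim}}$ by the map $\varphi$ of Proposition~\ref{proposition:relationPlumStable}: on each polarly related component of $(\X,\omega^{\mathrm{lim}})$ the differential $\omega$ agrees with $\omega^{\mathrm{lim}}$ up to a nonzero constant, except on the polarly related components carrying a pole of order $\geq 2$, where $\omega$ vanishes identically. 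Every limit differential occurring here has at most one pole on each irreducible component, hence no residue there, so it is plumbable by Theorem~\ref{theoreme:PlomberieCylindriqueSansResidu} and Proposition~\ref{proposition:relationPlumStable} applies.

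Next I would invoke Theorem~\ref{theoreme:linEqOfLimit}: since $\X$ is stably equivalent to a generic curve in $\delta_{1}$, the curve $\X$ is in fact a generic single-node curve $\X_{1}\cup\X_{2}/(N_{1}\sim N_{2})$ in $\delta_{1}$ with $g(\X_{i})=i$, and $(\X,\omega^{\mathrm{lim}},Z)$ is of one of the two forms listed there. In the first form $Z$ is a primitive $4$-torsion point of $(\X_{1},N_{1})$, the point $N_{2}$ is a Weierstrass point of $\X_{2}$, the differential $\omega^{\mathrm{lim}}|_{\X_{1}}$ has a zero of order $4$ at $Z$ and a pole of order $4$ at $N_{1}$, and $\omega^{\mathrm{lim}}|_{\X_{2}}$ is the holomorphic differential with a double zero at $N_{2}$. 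In the second form $Z$ is not a Weierstrass point of $\X_{2}$ and $4Z-2N_{2}\sim K_{\X_{2}}$, while $\omega^{\mathrm{lim}}|_{\X_{1}}$ is a holomorphic differential and $\omega^{\mathrm{lim}}|_{\X_{2}}$ has a zero of order $4$ at $Z$ and a pole of order $2$ at $N_{2}$.

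Finally I would apply $\varphi$ in each case. The node $N_{1}\sim N_{2}$ is never a simple pole of $\omega^{\mathrm{lim}}$, since one of its branches carries a zero of nonnegative order; hence $\X_{1}$ and $\X_{2}$ each form their own polarly related component. In the first form the component $\X_{1}$ carries a pole of order $4\geq 2$, so $\varphi$ forces $\omega|_{\X_{1}}\equiv 0$, whereas $\omega|_{\X_{2}}=\omega^{\mathrm{lim}}|_{\X_{2}}$ is the Abelian differential with a double zero at $N_{2}$; this is the first case of the corollary. In the second form the component $\X_{2}$ carries a pole of order $2$, so $\varphi$ forces $\omega|_{\X_{2}}\equiv 0$, whereas $\omega|_{\X_{1}}=\omega^{\mathrm{lim}}|_{\X_{1}}$ is holomorphic; this is the second case. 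For the reverse inclusions, i.e. that both configurations genuinely occur in $\obarmoduliinc[3,1]{4}^{\odd}$, I would run the argument backwards: produce the limit differential by Theorem~\ref{theoreme:linEqOfLimit}, note it is plumbable by Theorem~\ref{theoreme:PlomberieCylindriqueSansResidu}, and conclude by Proposition~\ref{proposition:relationPlumStable} that $\varphi$ of it lies in $\obarmoduliinc[3,1]{4}^{\odd}$. The delicate point is the identification $\omega=\varphi(\omega^{\mathrm{lim}})$, namely that the stable differential underlying a point of the incidence variety compactification is recovered from its limit differential exactly by zeroing out the polarly related components with higher-order poles; this is precisely what the proof of Proposition~\ref{proposition:relationPlumStable} establishes in the plumbable case, and the plumbable case is all that is needed here.
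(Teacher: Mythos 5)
Your proposal is correct and follows essentially the same route as the paper, which obtains this corollary directly by combining Theorem~\ref{theoreme:linEqOfLimit} (the classification of the limit differentials over a generic curve in $\delta_{1}$) with Proposition~\ref{proposition:relationPlumStable} and the scaling comparison of Section~\ref{section:PlomberieCylindrique}, so that the stable differential is the limit differential with the polarly related component carrying the higher-order pole set to zero. Your extra verification of the reverse inclusion via plumbing is not needed for the statement as phrased, but it is consistent with the paper's framework.
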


These properties illustrate  that the {incidence variety compactification} of the connected component
$\omoduli[3]^{\odd}(4)$ is
better than its
Deligne-Mumford compactification.

\begin{cor}\label{corollaire:compactGenreTroisDeltaZero}
Let $\X$ be a generic curve in $\delta_{1}$ such that the nodal point of the curve of genus two is a
Weierstrass point. Let $(\X,\omega)$ be a differential in $\obarmoduli[3](4)$ where $\omega$ is of one of the
following two kinds. 
\begin{itemize}
\item[i)] The restriction of $\omega$ is identically zero on $\X_{1}$ and is a holomorphic
differential with a zero of order two at $N_{2}$ on $\X_{2}$.
\item[ii)] The restriction of $\omega$ is identically zero on $\X_{2}$ and is
holomorphic on $\X_{1}$. 
\end{itemize}
Then the stable differential $(\X,\omega)$ lies in the boundary of
both connected components of the minimal strata in $\obarmoduli[3]$.
However, the closure of the two connected components of $\obarmoduliinc[3,1]{4}$ are disjoint over the generic
locus of
$\delta_{1}$.
\end{cor}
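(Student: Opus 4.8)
The statement contains two independent assertions: first, that $(\X,\omega)$ — in either form (i) or (ii) — lies in the closure inside $\pobarmoduli[3]$ of two distinct connected components of the minimal stratum $\pomoduli[3](4)$; second (the ``however''), that the closures of the odd and even components of $\obarmoduliinc[3,1]{4}$ inside $\pobarmoduli[3,1]$ stay disjoint over the locus of generic curves of $\delta_1$. The plan is to treat these separately, the second being essentially a packaging of the spin-structure results of Section~\ref{section:Spin}.

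For the disjointness assertion, recall that a generic curve of $\delta_1$ is of compact type. By Corollary~\ref{corollaire:uniciteLimDiffSurTypeCompacte} a pointed compact-type curve $(\X,Z)$ in the image of the forgetful map carries a limit differential of type $(4)$ unique up to rescaling on each component, hence a well-defined associated spin structure $\mathcal{L}_\omega$ (Definition~\ref{definition:spinStructureSurCourbeTypeCompact}); by Lemma~\ref{lemme:spinEnFamille} this $\mathcal{L}_\omega$ is the restriction to $\X$ of the limit in $\barspin[3]$ of the family of theta characteristics, so its parity is locally constant. Thus, exactly as in the proof of Theorem~\ref{theoreme:BordDesStratesDansSpin} (with the variant for $n\le 2$ mentioned in the remark following it, which is the relevant case here), $(\X,\omega,Z)$ lies in the odd, resp.\ even, incidence variety closure iff this parity is odd, resp.\ even. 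A single $(\X,Z)$ cannot carry both parities, so the two incidence variety closures are disjoint over compact-type curves, in particular over generic $\delta_1$; this is the ``however''.

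For the first assertion, the odd side is immediate from what precedes: by Corollary~\ref{corollaire:stabDiffgTroisRed}, over the curve $\X$ of the statement there is a pointed stable differential in $\obarmoduliinc[3,1]{4}^{\odd}$ whose underlying stable differential is exactly of form (i) (first case, $Z$ a primitive $4$-torsion point of $(\X_1,N_1)$) resp.\ of form (ii) (second case). Since the forgetful map $\varphi$ of Lemma~\ref{lemme:ProlongAppOubli} is a morphism restricting on the interior to the isomorphism of Lemma~\ref{lemme:IsoEntreIncEtStrate}, it carries $\PP\obarmoduliincp[3,1]{4}^{\odd}$ into $\overline{\pomoduli[3]^{\odd}(4)}\subset\pobarmoduli[3]$; applying $\varphi$, which forgets $Z$, gives $(\X,\omega)$ of forms (i) and (ii) in the closure of the odd component. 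For a second component one instead produces, over $(\X,Z')$ for a suitable marked point $Z'$, a candidate differential of type $(4)$ with the \emph{opposite} spin parity: for form (i) take $Z'$ a nontrivial $2$-torsion point of $(\X_1,N_1)$, with the meromorphic differential of a zero of order $4$ at $Z'$ and a pole of order $4$ at $N_1$ on $\X_1$ (residue zero, $\X_1$ carrying a single pole) and the holomorphic differential with a double zero at $N_2$ on $\X_2$; for form (ii) take $Z'$ a Weierstrass point of $\X_2$ with $Z'\neq N_2$, with the holomorphic differential on $\X_1$ and on $\X_2$ the meromorphic differential of a zero of order $4$ at $Z'$ and a pole of order $2$ at $N_2$ (so $4Z'-2N_2\sim K_{\X_2}$, as $N_2,Z'$ are Weierstrass, and again with vanishing residue). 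Each satisfies the compatibility condition, has no residue at its higher-order pole, and has a dual graph — a tree — with no closed path, so Theorem~\ref{theoreme:PlomberieCylindriqueSansResidu} makes it plumbable; its spin structure has total parity $\bmod\,2$ equal to $0$, opposite to the odd case; and by Proposition~\ref{proposition:relationPlumStable} the associated stable differential, obtained by forgetting the differential on the component carrying the pole of order $\ge 2$, is again of form (i), resp.\ (ii). Smoothing and forgetting $Z'$ via $\varphi$ then places $(\X,\omega)$ of each form in the closure of an even-parity component.

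The main obstacle is that $\pomoduli[3](4)$ has \emph{three} connected components $\mathcal{H}^{\hyp}_3(4)$, $\mathcal{H}^{\even}_3(4)$, $\mathcal{H}^{\odd}_3(4)$ (\cite{MR2000471}), while the spin parity separates only $\mathcal{H}^{\odd}_3(4)$ from the other two: a Weierstrass point $W$ of a hyperelliptic genus-$3$ curve satisfies $2W\sim\grd{1}{2}$, so the theta characteristic $\Ox(2W)=\mathcal{L}_\omega$ of a differential with a $4$-fold zero at $W$ has $h^0=2$, i.e.\ $\mathcal{H}^{\hyp}_3(4)$ also has \emph{even} parity. One must therefore still check that the even-parity smoothings above can be taken inside $\mathcal{H}^{\even}_3(4)$ rather than $\mathcal{H}^{\hyp}_3(4)$. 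The point is that the plumbing, carried out in the coordinates adapted to the limit differential, is compatible with the hyperelliptic involutions of $\X_1$ and $\X_2$ exactly when the vanishing point $Z'$ is one of their fixed points — which is the situation above and produces a hyperelliptic smoothing — but that this symmetric plumbing family can be perturbed inside $\obarmoduli[3]$ in a direction transverse to the hyperelliptic locus while preserving a $4$-fold zero (possible over hyperflex curves, which is where $\mathcal{H}^{\even}_3(4)$ and $\mathcal{H}^{\odd}_3(4)$ sit), yielding a non-hyperelliptic smoothing of the same even parity; verifying that $(\X,\omega)$ remains the stable limit of such a perturbed family is the technical heart of the argument. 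Combining the two sides gives that $(\X,\omega)$ lies in the closures of $\mathcal{H}^{\odd}_3(4)$ and of a second (even) component inside $\pobarmoduli[3]$, while the second assertion keeps the odd and even incidence variety closures apart over generic $\delta_1$; note that the same parity computation also keeps the odd and hyperelliptic incidence variety closures apart there, so the conclusion is robust under either reading of ``both connected components''.
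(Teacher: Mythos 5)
Your argument's decisive step rests on a false premise: you assert that $\pomoduli[3](4)$ has three connected components and you make the ``technical heart'' of the proof a perturbation of the symmetric plumbing family, transverse to the hyperelliptic locus, designed to land an even-parity smoothing with a single zero of order $4$ in a non-hyperelliptic component $\mathcal{H}^{\even}_{3}(4)$. No such component exists: by the classification of \cite{MR2000471} the minimal stratum in genus $3$ has exactly two components, $\omoduli[3]^{\hyp}(4)$ and $\omoduli[3]^{\odd}(4)$, and the hyperelliptic one \emph{is} the even one. Concretely, if $\X$ is smooth of genus $3$ and $\divisor{\omega}=4Z$ with $\Ox(2Z)$ of even parity, then $h^{0}(\X,\Ox(2Z))\geq 2$ (it is at least $1$ because $2Z$ is effective, and evenness then forces it to be at least $2$), so $|2Z|$ is a $\grd{1}{2}$ and $\X$ is hyperelliptic with $Z$ a Weierstrass point. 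Hence the perturbation you postpone cannot be carried out, and as written the proof of the first assertion is incomplete exactly where you locate its technical heart: every even-parity smoothing your plumbing produces is automatically hyperelliptic.

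The repair is the reading the paper intends: ``both connected components'' are the hyperelliptic and the odd ones. The odd side of your argument (Corollary~\ref{corollaire:stabDiffgTroisRed} together with the forgetful map of Lemma~\ref{lemme:ProlongAppOubli}) is fine, and your even-parity candidate differentials --- $Z'$ a $2$-torsion point of $(\X_{1},N_{1})$, resp.\ a Weierstrass point $Z'\neq N_{2}$ of $\X_{2}$ --- do prove the hyperelliptic side once you accept that even $=$ hyperelliptic here; alternatively, the paper simply reads forms (i) and (ii) off the boundary description of the hyperelliptic minimal stratum (Theorem~\ref{theoreme:isoEntreHypEtWeier}, Theorem~\ref{theoreme:bordHypAvec2Courbes}), which together with Theorem~\ref{theoreme:linEqOfLimit} is its stated proof. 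For the ``however'', your spin-parity argument over compact-type curves is a legitimate alternative (in genus $3$ the two components have distinct parities, so the caveat about $n\leq 2$ following Theorem~\ref{theoreme:BordDesStratesDansSpin} is harmless), whereas the paper obtains disjointness by comparing the admissible positions of the marked zero: a primitive $4$-torsion point of $(\X_{1},N_{1})$, or a non-Weierstrass $Z\in\X_{2}$ with $4Z-2N_{2}\sim K_{\X_{2}}$, for the odd closure, versus $Z$ a Weierstrass point of its component for the hyperelliptic closure.
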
 

This corollary follows readily from Theorem~\ref{theoreme:linEqOfLimit} and the description of the boundary of
the closure of the hyperelliptic minimal strata as given in Theorem~\ref{theoreme:isoEntreHypEtWeier}.

\subsection{The underlying curve  is generic in $\delta_{0}$.}

In this section we denote a generic curve in $\delta_{0}$ by $\tilde\X/(N_{1}\sim N_{2})$, where $\tilde\X$
is a smooth curve of genus and $N_{1}$, $N_{2}$ are distinct points of $\tilde\X$.

The following two theorems give the description of the limit differentials which lie in the {incidence
variety compactification} $\PP\obarmoduliinc[3,1]{4}^{\odd}$ such
that the underlying curve is generic in $\delta_{0}$.

 First we give the case
where the zero of the differential lies in the smooth part. Observe that in this case the limit differentials
in the closure of $\omoduliinc[3,1]{4}^{\odd}$ coincide with the stable differentials in 
$\obarmoduliinc[3,1]{4}^{\odd}$.
In the following theorem, we denote by $\X$ the
curve $\tilde\X/(N_{1}\sim N_{2})$.

\begin{theorem}\label{theoreme:bordZeroOrdre4Irr}
Let $Z$ be anon Weierstrass point of $\tilde\X$. There exists a unique pair of distinct points
$(N_{1},N_{2})\in\tilde{\X}^{2}$ and a unique differential $\omega\in
H^{0}(\X,\dualsheave)$ with a zero of order $4$ at $Z$ and a simple pole at $N_{1}$ and $N_{2}$ such that the
triple $(\X,\omega,Z)$ is in $\PP\obarmoduliinc[3,1]{4}^{\odd}$.

The set of triples $$C:=\left\{(N_{1},N_{2},Z): (\X,Z)\in\pi\left(\obarmoduliinc[3,1]{4}^{\odd}\right)\right\}$$
 is a curve in $\tilde\X^{3}$.
Moreover,
for a given pair among the three points $N_{1}$, $N_{2}$ and~$Z$ from the curve $C$, there
exists exactly one point of
$\tilde\X$ such that the triple lies in $C$.
\end{theorem}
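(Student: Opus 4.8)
The plan is to reduce everything to linear algebra of divisors on the smooth genus two curve $\tilde\X$, then to invoke the plumbing criterion of Section~\ref{section:PlomberieCylindrique} and the parity results of Section~\ref{section:Spin}. First I would observe that a stable differential $\omega\in H^{0}(\X,\dualsheave)$ pulls back along the normalisation $\nu\colon\tilde\X\to\X$ to a meromorphic differential $\tilde\omega$ on $\tilde\X$ with at most simple poles at $N_{1}$ and $N_{2}$ and opposite residues; requiring that $\omega$ vanish to order $4$ at the smooth point $Z$ and have simple poles at $N_{1}$ and $N_{2}$ forces $\divisor{\tilde\omega}=4Z-N_{1}-N_{2}$, since a canonical divisor on $\tilde\X$ has degree $2$ and no further zero or pole is possible. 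Equivalently, $N_{1}+N_{2}\sim 4Z-K_{\tilde\X}$. The residue condition $\Res_{N_{1}}(\tilde\omega)+\Res_{N_{2}}(\tilde\omega)=0$ is then automatic, because $N_{1}$ and $N_{2}$ are the only poles of $\tilde\omega$ and the sum of all residues of a meromorphic differential vanishes.

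Fixing $Z$ not a Weierstrass point of $\tilde\X$, I would apply Riemann--Roch to the degree $2$ class $D:=4Z-K_{\tilde\X}$ on $\tilde\X$: one gets $h^{0}(D)=1+h^{0}(2K_{\tilde\X}-4Z)$, and $2K_{\tilde\X}-4Z$ has degree $0$ and is non-trivial exactly because $2Z\not\sim K_{\tilde\X}$, i.e.\ because $Z$ is not a Weierstrass point. Hence $h^{0}(D)=1$, so there is a unique effective divisor in $|D|$, i.e.\ a unique unordered pair $\{N_{1},N_{2}\}$; for $Z$ outside a proper closed subset this pair consists of two distinct points, since the divisor degenerates to $2N$ only when $[N]-2[Z]\in\Jac[\tilde\X][2]$, a closed condition on $Z$. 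For such a pair the differential $\tilde\omega$ with $\divisor{\tilde\omega}=4Z-N_{1}-N_{2}$ is unique up to a scalar (two such differ by a global holomorphic function, hence a constant), which gives the uniqueness of $\omega$ in $\PP H^{0}(\X,\dualsheave)$.

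Next I would check that $(\X,\omega,Z)$ lies in $\PP\obarmoduliinc[3,1]{4}^{\odd}$. The tuple $(\X,\tilde\omega,Z)$ is a candidate differential on an irreducible curve with one node, with no pole of order $\geq 2$; the Compatibility Condition~\eqref{equation:conditionDeCompatibiliteGeneral} reads $(-1)+(-1)=-2$, the Residue Condition~\eqref{equation:conditionDeCompatibiliteGeneral2} is the automatic vanishing above, and the closed path condition~\eqref{equation:paramCylindrique} is vacuous because the dual graph of $(\X,\omega)$ is a single vertex with one loop of weight $\ord(N)+1=0$. By Theorem~\ref{theoreme:PlomberieCylindriqueSansResidu} the tuple is plumbable, and since $\X$ is irreducible the stable limit of a plumbing family equals the limit differential itself, so $(\X,\omega,Z)\in\obarmoduliinc[3,1]{4}$ (one may also invoke Proposition~\ref{proposition:relationPlumStable}, whose hypotheses from Lemma~\ref{lemme:PlomberieCylindriqueAvecResidu} hold here with $\eta$ a differential of the third kind on $\tilde\X$ with simple poles at $N_{1},N_{2}$ and $c=1$). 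Thus $(\X,\omega,Z)$ is in the closure of $\omoduli[3](4)$, which has exactly two connected components: the hyperelliptic (equivalently even) one and $\omoduli[3]^{\odd}(4)$. By Theorem~\ref{theoreme:limitDesPtsDeWeiCasHyp} together with Theorem~\ref{theoreme:isoEntreHypEtWeier}, a point of $\obarmoduliinc[3,1]{4}^{\hyp}$ with irreducible underlying curve and $Z$ in the smooth locus forces $Z$ to be a Weierstrass point of $\tilde\X$; as our $Z$ is not, $(\X,\omega,Z)\notin\obarmoduliinc[3,1]{4}^{\hyp}$, hence it lies in $\obarmoduliinc[3,1]{4}^{\odd}$ (equivalently, by Theorem~\ref{theoreme:InvDeArfGene} the generalised Arf invariant of $(\X,\omega)$ is odd). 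This settles the first assertion.

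For the second part, the map $Z\mapsto(N_{1}(Z),N_{2}(Z),Z)$, with $N_{1}(Z)+N_{2}(Z)$ the unique effective divisor in $|4Z-K_{\tilde\X}|$, is a morphism on $\tilde\X\setminus\WP(\tilde\X)$: it factors through the non-constant morphism $Z\mapsto[4Z-K_{\tilde\X}]$ to $\mathrm{Pic}^{2}(\tilde\X)\setminus\{[K_{\tilde\X}]\}$ followed by the inverse of the contraction $\Sym^{2}\tilde\X\to\mathrm{Pic}^{2}(\tilde\X)$ on the complement of the canonical $\PP^{1}$, so $C$, the closure of its image, is an irreducible curve in $\tilde\X^{3}$ (symmetric in the first two coordinates). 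Fixing the pair $(N_{1},Z)$ or $(N_{2},Z)$ in $C$, the remaining point is determined by $N_{1}+N_{2}\sim 4Z-K_{\tilde\X}$ and the uniqueness of the effective divisor in that class; fixing $(N_{1},N_{2})$, any $Z$ with $(N_{1},N_{2},Z)\in C$ satisfies $4Z\sim N_{1}+N_{2}+K_{\tilde\X}$, a fixed class, so $Z$ lies in a fibre of $Z\mapsto[4Z]$, which is a single point away from the finite non-injectivity locus of that morphism. The main obstacle is exactly this last point, together with the companion claim that $N_{1}\neq N_{2}$: both reduce to showing that, away from an explicit finite set of $Z$, a multiplication-by-an-integer translate of the image of $\tilde\X$ inside $\Jac[\tilde\X]$ meets $\tilde\X$ only where expected; I would either restrict the statement to $Z$ outside such a finite set, or argue via the degrees of the maps involved that this exceptional locus is empty.
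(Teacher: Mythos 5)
Your route is the same as the paper's: translate membership of $(\X,\omega,Z)$ into the divisor relation $N_{1}+N_{2}\sim 4Z-K_{\tilde\X}$ on the normalisation, prove uniqueness of the pair for $Z$ non-Weierstrass, smooth with Theorem~\ref{theoreme:PlomberieCylindriqueSansResidu} (the loop in the dual graph indeed has weight $0$, so condition iii) is vacuous), and identify the component by excluding the hyperelliptic one. However, there is a genuine gap in your Riemann--Roch step. You assert that $2K_{\tilde\X}-4Z$ is non-trivial ``exactly because $2Z\not\sim K_{\tilde\X}$''. This is not an equivalence: $2K_{\tilde\X}-4Z\sim 0$ means $2(K_{\tilde\X}-2Z)\sim 0$, i.e.\ $[K_{\tilde\X}-2Z]\in\Jac[\tilde\X][2]$, which is a priori weaker than $K_{\tilde\X}\sim 2Z$. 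To conclude $h^{0}(4Z-K_{\tilde\X})=1$ you must rule out $[K_{\tilde\X}-2Z]$ being a \emph{non-trivial} $2$-torsion class. This is exactly the point the paper takes care of: writing $K_{\tilde\X}\sim Z+\iota Z$ one has $K_{\tilde\X}-2Z\sim \iota Z-Z$, and if $2(\iota Z-Z)\sim 0$ with $\iota Z\neq Z$ there is a function with divisor $2\iota Z-2Z$, hence a degree-two map to $\PP^{1}$ ramified at $Z$, forcing $Z$ to be a Weierstrass point --- contradiction. (The same argument shows $N_{1}\neq\iota N_{2}$, which is what the paper uses, more cheaply than your detour through Theorems~\ref{theoreme:isoEntreHypEtWeier} and \ref{theoreme:bordHypCasIrr}, to see that $\X$ is not a hyperelliptic stable curve, so that small smoothings cannot be hyperelliptic and the family lands in the odd component.) It is a fixable gap, but as written the key numerical input $h^{0}=1$ is justified by a false equivalence, and notably it is the very $2$-torsion subtlety that you do spot later in your last paragraph.

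On the second half, you do not actually prove the final claims (that $N_{1}\neq N_{2}$, and that for a fixed pair $(N_{1},N_{2})$ on $C$ there is \emph{exactly one} $Z$), and you propose to weaken the statement instead. For comparison, the paper argues uniqueness of the unordered pair via the contraction $\tilde\X^{(2)}\to\Jac[\tilde\X]$, whose only positive-dimensional fibre is the canonical pencil of conjugate pairs, excluded since $4Z-K_{\tilde\X}$ is not canonical; and for the fibre over $(N_{1},N_{2})$ it only establishes \emph{finiteness}, by showing no fixed pair can satisfy $K_{\tilde\X}+Q_{1}+Q_{2}-4Q\sim 0$ for an open set of $Q$ (nondegeneracy of $\tilde\X\to\Jac[\tilde\X]$, and the pairs are never hyperelliptically conjugate). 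So on this point your proposal and the paper are comparably incomplete with respect to the literal ``exactly one'' and ``distinct points'' wording; the difference is that the paper at least supplies the finiteness argument, whereas your text leaves the step open. If you fix the Riemann--Roch justification as above and add the finiteness argument for the projection of $C$, your proof matches the paper's.
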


Now we describe the case where the zero of the differential lies on a bridge joining the two points of the
node.

\begin{theorem}\label{theoreme:bordZeroOrdre4Irrbis}
 Let $(\X,\omega,Z)$ be a limit differential at the boundary of the stratum $\omoduli[3,1]^{\odd}(4)$ such that $\X$ is the
union of a smooth curve $\tilde\X$ of genus two  and a projective line $\PP^{1}$ which meet at two distinct
points
$N_{1}$ and $N_{2}$. 

Then the point $Z$ is in the projective line $\PP^{1}$,  and $(\X,\omega,Z)$ is of one of the following two
forms.                                                   
\begin{itemize}
\item
The restriction of $\omega$ on $\PP^{1}$
has a zero of order~$4$ at $Z$, a pole of order~$4$ at $N_{1}$ and a pole of order $2$ at $N_{2}$. The
restriction of $\omega$ to $\tilde\X$ is an
holomorphic differential with a zero of order two at $N_{1}$. In particular, $N_{1}$ is a Weierstrass point
of $\tilde\X$.

\item  The restriction of $\omega$ on
$\PP^{1}$ has a zero of order $4$ at $Z$ and two  poles of order $3$ at $N_{1}$ and $N_{2}$. The restriction
of $\omega$ to $\tilde\X$ is a
holomorphic differential with two simple zeros at $N_{1}$ and $N_{2}$. In particular, $N_{1}$ and $N_{2}$ are
conjugated by the hyperelliptic involution of $\tilde\X$.
\end{itemize}

\end{theorem}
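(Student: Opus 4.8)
The plan is a component-by-component analysis of the candidate differential, cut down successively by the Compatibility and Residue Conditions (Lemma~\ref{lemme:CondCompPlomberieCyl}), by Lemma~\ref{lemme:noeudsPolaireAvecResiduFaible}, by the residue theorem on $\PP^1$, and finally by the parity invariant. First I would pin down $Z$: since $(\X,\omega,Z)$ is a limit differential, $(\X,Z)$ is the stable limit of a family of $1$-pointed curves and hence is itself a stable $1$-pointed curve; the rational component $\PP^1$ meets the rest of $\X$ only at $N_1$ and $N_2$, so it carries at most three special points and is stable precisely when it contains the marked point. Thus $Z\in\PP^1$. The only zero or pole of $\omega$ on the smooth locus of $\PP^1$ is then the order-$4$ zero at $Z$, so in an affine coordinate $z$ on $\PP^1$ with $N_1,N_2,Z$ finite and $\infty$ a regular nonzero point, $\omega|_{\PP^1}=c\,(z-Z)^4(z-N_1)^{-p_1}(z-N_2)^{-p_2}\dz$ with $c\in\CC^\ast$ and $p_1+p_2=6$; by the Compatibility Condition $\omega|_{\tilde\X}$ has order $p_i-2$ at $N_i$, hence divisor $(p_1-2)N_1+(p_2-2)N_2$, of degree $2=\deg K_{\tilde\X}$, with no other zeros or poles.

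The core of the proof is to eliminate all values of $(p_1,p_2)$ except $(4,2)$, $(2,4)$ and $(3,3)$. If $p_i=1$ for one index, an explicit residue computation gives the nonzero residue $c\,(N_{3-i}-Z)^4(N_i-N_{3-i})^{-5}$ for $\omega|_{\PP^1}$ at the pole of order $5$ at $N_{3-i}$; since $\omega|_{\tilde\X}$ is polar there as well, Lemma~\ref{lemme:noeudsPolaireAvecResiduFaible} would produce a differential on $\tilde\X$ with a single simple pole and nonzero residue, which does not exist on a curve of genus two. When $p_i\le0$ the differential $\omega|_{\tilde\X}$ has a zero of order $\ge4$ at $N_{3-i}$ and a pole of order $\ge2$ at $N_i$; here I would first record a linear-series obstruction on $\tilde\X$ (such a divisor class is forced to be very special in $N_1,N_2$), and then, when it exists, rule the configuration out of the closure of the odd component by a parity argument: plumb the node $N_{3-i}$ via Theorem~\ref{theoreme:PlomberieCylindriqueSansResidu} to reach an irreducible one-nodal curve on which the generalised Arf invariant of Theorem~\ref{theoreme:InvDeArfGene} is defined, and compute it, accounting for the contribution of the genus-two summand and of the handle created by $\PP^1$.

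For the surviving triples, $\omega|_{\tilde\X}$ is a holomorphic differential of degree $2$ with zeros supported on $\{N_1,N_2\}$, leaving exactly two possibilities. If $(p_1,p_2)=(4,2)$ — symmetrically $(2,4)$ — then $\omega|_{\tilde\X}$ has a zero of order $2$ at $N_1$, so $2N_1\sim K_{\tilde\X}$ and $N_1$ is a Weierstrass point of $\tilde\X$, while $\omega|_{\PP^1}$ has poles of order $4$ and $2$ at $N_1,N_2$: this is the first listed form. If $(p_1,p_2)=(3,3)$, then $\omega|_{\tilde\X}$ has simple zeros at $N_1$ and $N_2$, so $N_1+N_2\sim K_{\tilde\X}$, i.e.\ $N_1,N_2$ are conjugate under the hyperelliptic involution of $\tilde\X$, while $\omega|_{\PP^1}$ has a triple pole at each node: the second form. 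To see that both forms actually occur, one positions $Z$ on $\PP^1$ so that the residues of $\omega|_{\PP^1}$ at its poles of order $\ge2$ vanish (a single condition, by the residue theorem on $\PP^1$), smooths the differential by Theorem~\ref{theoreme:PlomberieCylindriqueSansResidu}, and verifies by a Gauss-map computation that the smoothing has odd parity.

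The main obstacle is the elimination, in the middle step, of the configurations with $\omega|_{\tilde\X}$ meromorphic, and especially the cases $p_i\le0$: the compatibility and residue conditions, and even Lemma~\ref{lemme:noeudsPolaireAvecResiduFaible}, are not by themselves strong enough to exclude them, so one must bring in the parity invariant — which, on a reducible non-compact-type curve such as $\X$, is only accessible after a partial plumbing reducing to the irreducible case of Theorem~\ref{theoreme:InvDeArfGene} — together with the linear-series argument on the genus-two curve $\tilde\X$ showing that the divisor classes these configurations would require are too special.
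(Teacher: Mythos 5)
Your stability argument for $Z\in\PP^1$, the parametrisation by the pole orders $(p_1,p_2)$ on $\PP^1$, and the identification of the two surviving configurations via $2N_1\sim K_{\tilde\X}$ resp.\ $N_1+N_2\sim K_{\tilde\X}$ all match the paper, but the two steps where real work is needed do not go through. For the elimination of $p_i\le 0$: the generalised Arf invariant of Theorem~\ref{theoreme:InvDeArfGene} is only defined for an irreducible curve whose differential has \emph{simple} poles at every node, whereas after plumbing $N_{3-i}$ the remaining node carries a pole of order $2-p_i\ge 2$ on the $\tilde\X$-side and a zero on the $\PP^1$-side, so the invariant you propose to compute is not defined; and the ``linear-series obstruction'' cannot do the job either, since divisors of the form $(p_{3-i}-2)N_{3-i}-(2-p_i)N_i\sim K_{\tilde\X}$ do exist for special pairs $(N_1,N_2)$. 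What actually excludes these cases is Lemma~\ref{lemme:ConditionPlomberieCheminsFermer}: both edges of the cycle in the dual graph are then traversed in the direction of their orientation, so the closed-path condition becomes $\epsilon_1^{w_1}\epsilon_2^{w_2}=1$ with both weights positive and $|\epsilon_i|<1$, which is impossible (equivalently, the associated stable limit would vanish identically on both components). Your treatment of $p_i=1$ also misreads Lemma~\ref{lemme:noeudsPolaireAvecResiduFaible}: at $N_{3-i}$ the restriction $\omega|_{\tilde\X}$ has a zero, not a pole, and the auxiliary differential produced by that lemma may still have a pole at the other node; here the residue theorem on $\tilde\X$ (the lone simple pole of $\omega|_{\tilde\X}$ would have zero residue) settles the case immediately.

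More seriously, the existence half cannot be run through Theorem~\ref{theoreme:PlomberieCylindriqueSansResidu}: in both surviving configurations the residues at the higher-order poles of $\omega|_{\PP^1}$ are nonzero for \emph{every} position of $Z$ (normalising $N_2=0$, $N_1=\infty$, the residue at the order-two pole is $-4cZ^3$, and in the $(3,3)$ case it is $6cZ^2$), so the ``single condition'' you want to impose on $Z$ is never satisfiable. This is exactly why the paper smooths these candidates with Lemma~\ref{lemme:PlomberieCylindriqueAvecResidu}, using auxiliary differentials $\eta$ on $\tilde\X$ with simple poles at $N_1$ and $N_2$, and plumbing parameters constrained by Lemma~\ref{lemme:ConditionPlomberieCheminsFermer} ($\epsilon_1=\epsilon_2^3$, resp.\ $\epsilon_2=\pm\epsilon_1$). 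Finally, identifying the smoothing as odd is not a routine Gauss-map check in your setup: the paper gets it in the first case because the special curve is not hyperelliptic (and in genus $3$ the minimal stratum has only the hyperelliptic and odd components), and in the $(3,3)$ case it must distinguish the two essentially different plumbings $\epsilon_2=\pm\epsilon_1$ by whether the hyperelliptic involution extends to the smoothed curve --- a subtlety your sketch does not address.
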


We can easily deduce the form of the pointed differentials in this case from
Theorem~\ref{theoreme:bordZeroOrdre4Irrbis} and
Proposition~\ref{proposition:relationPlumStable}.
\begin{cor}\label{corollaire:stabDiffgTroisIrr}
 Let $(\X,\omega,Z)$ be a stable differential in $\obarmoduliinc[3,1]{4}^{\odd}$ such that the curve $\X$ is
the
union of a smooth curve $\tilde\X$ of genus two  and a projective line~$\PP^{1}$ which meet at two distinct
points
$N_{1}$ and $N_{2}$. 

Then the point $Z$ is in the projective line $\PP^{1}$.  The restriction of $\omega$ on $\PP^{1}$ vanishes
everywhere.
Either $N_{1}$ is a Weierstrass point of $\tilde\X$ and the restriction of $\omega$ to $\tilde\X$ is an
holomorphic differential with a zero of order two at $N_{1}$
or the points $N_{1}$ and $N_{2}$ are conjugated by the hyperelliptic involution of $\tilde\X$ and the
restriction of $\omega$ to $\tilde\X$ is a
holomorphic differential with two simple zeros at $N_{1}$ and $N_{2}$. 
\end{cor}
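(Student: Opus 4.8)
The plan is to obtain this purely by pushing Theorem~\ref{theoreme:bordZeroOrdre4Irrbis}, which classifies the relevant \emph{limit} differentials on such a curve, forward to stable differentials through the forgetful map $\varphi$ of Proposition~\ref{proposition:relationPlumStable}.

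First I would set up the dictionary between the two compactifications. By definition a pointed stable differential in $\obarmoduliinc[3,1]{4}^{\odd}$ is the stable limit of a family of smooth pointed differentials inside $\omoduliinc[3,1]{4}^{\odd}$. Here the marked curve $(\X,Z)$ is already stable — the rational component carries the three special points $N_{1}$, $N_{2}$, $Z$ — and $\tilde\X$ is smooth, so no semistable modification intervenes: the underlying family of marked curves has stable limit exactly $(\X,Z)$, and it therefore carries a limit differential $(\X,\tilde\omega,Z)$ on the \emph{same} marked curve in the sense of Definition~\ref{definiton:diffLimite}. By the description of the scalings, on every irreducible component where $\omega$ does not vanish identically $\omega$ and $\tilde\omega$ agree up to a nonzero constant, while on the remaining components $\tilde\omega$ is the nonzero (possibly meromorphic) rescaled limit; this is precisely the recipe by which $\varphi$ recovers $\omega$ from $\tilde\omega$, namely by erasing the restriction of $\tilde\omega$ to the polarly related components (Definition~\ref{definition:composentesPolairesConnectees}) that carry a pole of order $\geq 2$.

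Next I would apply Theorem~\ref{theoreme:bordZeroOrdre4Irrbis} to $(\X,\tilde\omega,Z)$. It gives $Z\in\PP^{1}$ together with the two shapes for $\tilde\omega$; in both of them the restriction $\tilde\omega|_{\PP^{1}}$ has poles at the two nodes of orders $4$ and $2$, respectively $3$ and $3$ — in any case strictly greater than one. A stable differential restricted to a component has at worst simple poles at the nodes, so $\tilde\omega|_{\PP^{1}}$ cannot agree, even up to a constant, with the restriction of any stable differential; since moreover the poles of $\tilde\omega$ at these two nodes are not simple, $\PP^{1}$ is its own polarly related class. Hence the erasing procedure of $\varphi$ must kill the $\PP^{1}$-part, i.e. $\omega|_{\PP^{1}}\equiv 0$. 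As $\omega$ is not identically zero, its restriction to $\tilde\X$ equals the \emph{nonzero} holomorphic differential $\tilde\omega|_{\tilde\X}$, and the two alternatives of the corollary — a zero of order two at the Weierstrass point $N_{1}$, or two simple zeros at $N_{1},N_{2}$, which are then conjugate under the hyperelliptic involution of $\tilde\X$ — are exactly the two alternatives recorded in Theorem~\ref{theoreme:bordZeroOrdre4Irrbis}.

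Finally, for the converse (that every $(\X,\omega,Z)$ of the asserted shape does belong to $\obarmoduliinc[3,1]{4}^{\odd}$) I would invoke Proposition~\ref{proposition:relationPlumStable} on the plumbable limit differentials exhibited in the proof of Theorem~\ref{theoreme:bordZeroOrdre4Irrbis}: $\varphi$ sends them to precisely these stable differentials. The only point in the whole argument that is not routine bookkeeping is the passage from ``stable differential in the closure'' to ``image under $\varphi$ of a limit differential'' in the first step; but that step is genuinely easy here, because $(\X,Z)$ requires no semistable modification, so essentially all the content of the corollary already sits in Theorem~\ref{theoreme:bordZeroOrdre4Irrbis}.
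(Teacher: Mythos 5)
Your proposal is correct and follows exactly the route the paper intends: the paper deduces this corollary from Theorem~\ref{theoreme:bordZeroOrdre4Irrbis} together with Proposition~\ref{proposition:relationPlumStable}, which is precisely your argument, only with the bookkeeping (scalings, the fact that the stable limit is proportional to the limit differential on components where it does not vanish, and the erasure of the $\PP^{1}$-part carrying poles of order $\geq 2$) written out explicitly. No gap; the extra converse direction you include is not required by the statement but is consistent with the paper's constructions.
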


The proofs of Theorem~\ref{theoreme:bordZeroOrdre4Irr} and
Theorem~\ref{theoreme:bordZeroOrdre4Irrbis} are relatively similar. In particular, the main steps will be the
following. The first one is to determine all the possible candidates as triples at the boundary. Then we show
that we can smooth them using the plumbing
cylinder construction of Section~\ref{section:PlomberieCylindrique}. The last step consists of
determining the
cases such that the
smoothing occurs in the odd  component and the ones where the smoothing occurs in the hyperelliptic one.

\begin{proof}[Proof of Theorem~\ref{theoreme:bordZeroOrdre4Irr}]
Let $(\X,Z)$ be an irreducible marked curve of genus two. Then the pointed differentials $(\X,\omega,Z)$ which could
appear in the boundary of the stratum $\PP\obarmoduliinc[3,1]{4}$ are stable differentials $\omega$ with a
zero
of order $4$
at $Z$ and  poles at the nodes of $\X$.

Let us now suppose that $Z$ is not a Weierstrass point of $\tilde\X$.
We want to show that there exists a pair $(N_{1},N_{2})$ on
$\tilde\X$ such that $h^{0}(K_{\tilde\X}+N_{1}+N_{2}-4Z)=1$ and moreover that this pair is unique. 
Since $Z$ is not a Weierstrass point of~$\tilde\X$, the divisor $4Z-K_{\tilde\X}$ is not  canonical. Indeed,
this
would be equivalent to the fact that
$2(Z-\iota Z)$ is principal, where $\iota$ is the hyperelliptic involution. But this would give the existence
of a
function with a pole of order two at $Z$,
contradicting the fact that $Z$ is not  a Weierstrass point. Now
let us consider the locus $E$ inside~$\tilde\X^{(2)}$ consisting of pairs $(Q,\iota Q)$. Then the
Jacobian $\Jac[\tilde\X]$ of $\tilde\X$
 is the quotient $\tilde\X^{(2)}/E$. And since $4Z-K_{\tilde\X}$ is not canonical, this implies that for
each point $Z\notin\WP$ there is a unique pair $(N_{1},N_{2})$ such that
$$\Ox[\tilde\X](K_{\tilde\X}+N_{1}+N_{2}-4Z)=\Ox[\tilde\X].$$

It remains to show that the projection of the set of triples $(N_{1},N_{2},Z)$ to the first coordinate is
finite. Since $\tilde\X$ is a curve, it is enough to
show that there are no pairs $(Q_{1},Q_{2})\in\tilde{\X}$ such that for an open set of $Q\in\tilde\X$ the
equality $K_{\tilde\X}+Q_{1}+Q_{2}-4Q\sim 0$ holds. But this is clearly the case, because the map of
$\tilde\X\to\Jac[\tilde\X]$ is nondegenerate and the pairs are never conjugated by the hyperelliptic
involution.

Now using the plumbing cylinder construction of Theorem~\ref{theoreme:PlomberieCylindriqueSansResidu}, we can
smooth every of these differentials, preserving the zero of order four.
Moreover, the curves that we obtain are clearly not hyperelliptic since the special fibre is not hyperelliptic. 

Suppose now that $Z$ is a Weierstrass point of $\tilde\X$.
We have to show that every smoothing of such a curve which preserves the zero of order $4$ is hyperelliptic.
An
analogous using the Riemann-Roch Theorem implies that the points~$N_{1}$ and~$N_{2}$ are conjugated by the
hyperelliptic involution. But then the continuity of the parity of the generalised Arf invariant proved in
Theorem~\ref{theoreme:InvDeArfGene} concludes the proof.
\end{proof}

We now prove  Theorem~\ref{theoreme:bordZeroOrdre4Irrbis} following a similar scheme.

\begin{proof}[Proof of Theorem~\ref{theoreme:bordZeroOrdre4Irrbis}.]
 First we prove that it is necessary that the differentials are of the form given in  Theorem~\ref{theoreme:bordZeroOrdre4Irrbis}. 
 
 It is clear that the point $Z$ is on the bridge between $N_{1}$ and $N_{2}$ since otherwise $(\X,Z)$ would
not be stable. Moreover, the points which form the node are conjugated by the hyperelliptic
involution or one of them is a Weierstrass point. Otherwise, the differential would have a zero at a smooth
point of $\tilde\X$. But this zero would be preserved by any
deformation, contradicting the fact that the differential is in the boundary of $\pomoduli[3,1](4)$ and that $Z\notin\tilde\X$.
 
 Let us suppose that we are in the first case: the restriction of $\omega$ to $\tilde\X$ has a zero of order
two at $N_{1}$. Let us take a coordinate $z$ on $\PP^{1}$ such that $0$ is identified to $N_{2}$ and $\infty$
to
$N_{1}$. We define the differential form $\eta:=\frac{(z-1)^{4}}{z^{2}}\dz$. 
We want to use the plumbing cylinder construction with parameters $(\epsilon_{1},\epsilon_{2})$ at the nodes.
By Lemma~\ref{lemme:ConditionPlomberieCheminsFermer}, they have to satisfy $\epsilon_{1}=\epsilon_{2}^{3}=:c$.
We can find a differential $\eta$ on~$\tilde\X$ with simple poles at $N_{1}$ and $N_{2}$ and holomorphic
otherwise.
 By Lemma~\ref{lemme:PlomberieCylindriqueAvecResidu}, we can plumb the differential and obtain an
holomorphic differential with a zero of order $4$. Moreover, this
differential is not hyperelliptic since the special fibre is not hyperelliptic.
 This proves the first point.
 
 Let us now suppose that the differential has a single zero at both $N_{1}$ and~$N_{2}$. We can still use
Lemma~\ref{lemme:PlomberieCylindriqueAvecResidu} to plumb this differential. But this
time, there are two distinct ways (up to isomorphisms) to plumb the nodes.
Let $\epsilon_{1}$ be the
parameter of the cylinder at the node $N_{1}$, then according to
Lemma~\ref{lemme:ConditionPlomberieCheminsFermer}, the
parameter of the cylinder at $N_{2}$ has to be of the form $\epsilon_{2}=\pm\epsilon_{1}$. 
To conclude the proof, it suffices to show that the case $\epsilon_{1}=\epsilon_{2}$ leads to a hyperelliptic
curve and that the case $\epsilon_{1}=-\epsilon_{2}$ leads to a non hyperelliptic curve. 

From now on, we will use the notations of  Lemma~\ref{lemme:PlomberieCylindriqueOk}.
The hyperelliptic involution $\iota$ on $\X$ restricts to the hyperelliptic involution on $\tilde\X$  and  to
the involution which fixes $Z$ and permutes $N_{1}$ and $N_{2}$ on
the component $\PP^{1}$. Hence we can suppose that
the two open sets $U_{1}$ and $U_{2}$ and the
coordinates $z_{1}$, $w_{1}$ on $U_{1}$ and $z_{2}$, $w_{2}$ on $U_{2}$ are chosen such that
$\iota(z_{i})=z_{j}$ and
$\iota(w_{i})=w_{j}$ for $i\neq j$. 
 
Let us suppose that the cylinder plumbed at the node $N_{1}$ is given by the equation
$x_{1}y_{1}=\epsilon_{1}$ and at the node $N_{2}$ it is given by
$x_{2}y_{2}=\pm\epsilon_{1}$. Then on the cylinders, the hyperelliptic involution has to be
of the form $\iota(x_{1})=x_{2}$ and
$\iota(y_{1})=\pm y_{2}$ in order to coincide with the hyperelliptic involution on the part of the smoothed
curve coming  from $\tilde\X$. But it is easy to verify that this map can be prolonged to a holomorphic map
on the whole smoothed curve if and only if the sign is positive. Moreover, in this case one can easily
verify that this map is the hyperelliptic involution of the smoothed curve. And in the other case, the
uniqueness of the hyperelliptic involution implies that the smoothed curve cannot be hyperelliptic. 
\end{proof}

We can deduce from Theorem~\ref{theoreme:bordZeroOrdre4Irrbis} the surprising fact that the odd and
hyperelliptic components  of the {incidence variety compactifications} of $\pomoduli[3,1](4)$ meet at their
boundaries.

\begin{cor}\label{corollaire:intersectHypOddGenreTrois}
 Let $\X$ be the union of a curve $\tilde{\X}$ of genus two and a projective line glue together at a pair of
points of $\tilde{\X}$ conjugated by the hyperelliptic involution. Let $Z\in \PP^{1}$ and $\omega$ be a
differential
which vanishes on $\PP^{1}$ and has two single zeros at the points which form the nodes on $\tilde{\X}$.

Then the pointed differential $(\X,\omega,Z)$ is in  $\obarmoduliinc[3,1]{4}^{\hyp}$ and
$\obarmoduliinc[3,1]{4}^{\odd}$.
\end{cor}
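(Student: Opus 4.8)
The plan is to recognise $(\X,\omega,Z)$ as a boundary point that has already been described, independently, in two of the results above: once in the description of the boundary of the odd component over a curve carrying a $\PP^{1}$-bridge, and once in the description of the hyperelliptic incidence variety compactification over an irreducible curve blown up at a node. The point is that the geometric hypothesis of the statement — that $N_{1}$ and $N_{2}$ are conjugated by the hyperelliptic involution of $\tilde\X$ — is precisely the condition singled out in \emph{both} of these descriptions, which forces the two closures to meet.

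First I would treat membership in $\obarmoduliinc[3,1]{4}^{\odd}$. By Corollary~\ref{corollaire:stabDiffgTroisIrr} — which follows from Theorem~\ref{theoreme:bordZeroOrdre4Irrbis} together with Proposition~\ref{proposition:relationPlumStable} — any stable pointed differential in $\obarmoduliinc[3,1]{4}^{\odd}$ whose underlying curve is the union of a smooth genus-two curve $\tilde\X$ and a projective line meeting at two points conjugated by the hyperelliptic involution has $Z\in\PP^{1}$, $\omega\equiv 0$ on the $\PP^{1}$, and $\omega|_{\tilde\X}$ a holomorphic differential with two simple zeros at $N_{1}$ and $N_{2}$. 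This is exactly the triple $(\X,\omega,Z)$ of the statement; concretely it is the stable limit of the family built in the second case of the proof of Theorem~\ref{theoreme:bordZeroOrdre4Irrbis}, plumbing the two nodes with parameters $\epsilon_{1}=-\epsilon_{2}$, the limit differential on $\PP^{1}$ being (in a coordinate with $N_{1}=0$, $N_{2}=\infty$) the form $\frac{(z-Z)^{4}}{z^{3}}\dz$, which has the required orders for any $Z\neq 0,\infty$.

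Next I would treat membership in $\obarmoduliinc[3,1]{4}^{\hyp}$. A semistable hyperelliptic curve of the prescribed shape is an irreducible curve blown up at a node, so by Theorem~\ref{theoreme:locusHyperell} (the irreducible case) the two preimages $N_{1},N_{2}$ of that node must be conjugated by the hyperelliptic involution of $\tilde\X$ — again exactly our hypothesis. Applying Theorem~\ref{theoreme:bordHypCasIrr} in genus $g=3$, the triple $(\X,\omega,Z)$ lies in $\PP\obarmoduliinc[3,1]{4}^{\hyp}$ if and only if $Z$ is in the exceptional component $E=\PP^{1}$ and $\omega$ is the stable differential with a zero of order $g-2=1$ at each of $N_{1}$ and $N_{2}$; that is, $\omega$ vanishes on $\PP^{1}$ and restricts on $\tilde\X$ to a holomorphic differential with two simple zeros at $N_{1}$ and $N_{2}$. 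Once more this is precisely $(\X,\omega,Z)$.

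Combining the two steps exhibits the same pointed differential simultaneously in $\obarmoduliinc[3,1]{4}^{\odd}$ and in $\obarmoduliinc[3,1]{4}^{\hyp}$, which is the claim. There is no substantial obstacle here; the only things to check are bookkeeping — that the genus-three specialisation "$g-2$" of Theorem~\ref{theoreme:bordHypCasIrr} matches the "two simple zeros" of Corollary~\ref{corollaire:stabDiffgTroisIrr}, that the stable differential $\omega$ is literally the same object in both descriptions (it is, by the uniqueness-up-to-scalar statements underlying both results), and that the marked point $Z$ is permitted to range over the same locus, namely $\PP^{1}$ with the two nodes removed. Thus the corollary is a direct juxtaposition of Theorem~\ref{theoreme:bordZeroOrdre4Irrbis} and Theorem~\ref{theoreme:bordHypCasIrr}.
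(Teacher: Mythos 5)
Your proof is correct and matches the paper's (implicit) argument: the paper obtains this corollary by exactly the same juxtaposition, namely the odd-component boundary description of Theorem~\ref{theoreme:bordZeroOrdre4Irrbis} (with membership supplied by the plumbing with $\epsilon_{1}=-\epsilon_{2}$ in its proof, passed to stable differentials via Proposition~\ref{proposition:relationPlumStable}, i.e.\ Corollary~\ref{corollaire:stabDiffgTroisIrr}) together with the hyperelliptic description of Theorem~\ref{theoreme:bordHypCasIrr} specialised to $g=3$. Your added care in noting that actual membership, and not merely the shape of boundary points, comes from the constructive direction of those results is precisely the intended reading.
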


\paragraph{Examples.}

We
give two examples of concrete families in
$\omoduli[3,1]^{\odd}(4)$ which degenerates to a curve stably equivalent to an irreducible curve with one
node.
The first one is given as family of curves in $\PP^{2}$ with a hyperflex. The second is a family of flat
surfaces given as a family of polygons with identifications. 

\begin{ex}\label{exemple:FamilleCourbesPlanesVersDelta0}
 We define in $\PP^{2}\times\Delta$ the family of curves given by:
$$P(x,y,z;t):= xyz^{2}+y^{4}+x^{3}z+tz^{4}.$$
Each curve has a hyperflex of order 4 at $(1,0,0;t)$, thus the differential corresponding to the line at
infinity has a
zero of order 4 at this point. The special curve is irreducible with only one node as singularity. Moreover
the
differential associated to the tangent has a simple pole at the node. Now the Weierstrass form of the
normalisation
is $y^{2}+4x^{5}-1$ and the  preimages of the node are over~$x=0$ and $x=\infty$. In particular, the point
which is over $x=\infty$ is a Weierstrass point. We can show that the Igusa invariant of this curve is
zero. 

More generally, let us consider the family
$$\left\{ xyz^{2}+y^4+a_{1}x^3z+a_{2}x^2yz+a_{3}xy^2z+a_{4}y^3z+tz^4=0 \right\}\subset\PP^{2}\times \{t\},$$
where the $a_{i},\ i=1,\cdots,4$ are complex numbers. This gives us examples where the special curve has any
given Igusa invariants.

Let us now take a look at the family given by the equation
$$P(x,y,z;t):= x^2yz+y^{4}-x^{3}z+tz^{4}.$$
Moreover, the differential associated to the line at infinity has a zero of order~$4$ at $(1,0,0;t)$.
The singularity of the special curve  is a cusp meeting a smooth
branch. It follows from the classification of Kang
\cite[Corollary~2.5]{MR1760371} and the fact that the family is smooth, that the stable limit of this
family is an irreducible curve with one node. The limit of the zeros of order $4$ is in  the node. The limit
stable differential has a zero of order two at one of the
preimages of the node, which is also a Weierstrass point. In this example,  the other preimage of the node is
a
Weierstrass point of the normalisation.
\end{ex}

Let us now give examples using the polygonal representation of the flat surfaces. Since a complete
classification of the cylinder decompositions of the flat surfaces in $\pomoduli[3,1](4)$ is given in
\cite[Proposition~3.1]{arXiv5879A}, these examples could lead to another proof of
Theorem~\ref{theoreme:bordZeroOrdre4Irrbis} using degeneration of these diagrams.

\begin{ex}\label{exemple:SurfacesPlattesVersDelta0}
First we give in Figure~\ref{figure:BordOM3(4)IrrWP} an example of a curve such that a zero of order two is
identified with another point of the curve. In this figure and in the following one, the vertical segment are
identified by an horizontal translation.
In this example, it is not difficult to see that the second point which forms the node is a Weierstrass point
of the curve. However, it is not difficult to construct examples where this point is not a Weierstrass
point.
\begin{figure}[ht]
\centering
\begin{tikzpicture}[scale=1.2]
  \draw (0,0) coordinate (a1) -- node [below] {$1$} (1,0) coordinate (a2) --node [below] {$2$} (2,0)
coordinate (a3) --node [below] {$x$} (3,0) coordinate (a4) --node [below] {$3$} (4,0) coordinate (a5) -- (4,1)
coordinate (a6) --node [above] {$x$} (3,1) coordinate (a7) --node [above] {$3$} (2,1) coordinate (a8) -- (2,2)
coordinate (a9) --node [above] {$2$} (1,2) coordinate (a10) -- (1,1) coordinate (a11) --node [above] {$1$}
(0,1) coordinate (a12) -- cycle;
  \foreach \i in {1,2,...,12}
  \fill (a\i) circle (1pt);

  \draw[->] (4.2,1) -- node [below] {$x\to 0$} (4.8,1);
  
    \draw (5,0) coordinate (b1) -- node [below] {$1$} ++(1,0) coordinate (b2) -- node [below] {$2$} ++(1,0)
coordinate (b3) --node [below] {$3$} ++(1,0) coordinate (b4) -- ++(0,1) coordinate (b5) --node [above] {$3$}
++(-1,0) coordinate (b6) -- ++(0,1) coordinate (b7) --node [above] {$2$} ++(-1,0) coordinate (b8) -- ++(0,-1)
coordinate (b9) --node [above] {$1$} ++(-1,0) coordinate (b10) -- cycle;
  \foreach \i in {1,4,5,10}
  \fill (b\i) circle (1pt);
  \foreach \i in {2,3,6,7,8,9}
  \filldraw[fill=white,draw=black] (b\i) circle (1pt);
\end{tikzpicture}
\caption{A family of curve in $\obarmoduli[3](4)$ degenerating to an irreducible curve with one of the points
of the node a Weierstrass point.}
\label{figure:BordOM3(4)IrrWP}
\end{figure}
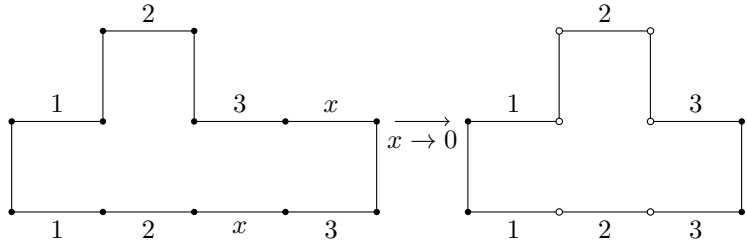

More interesting is the case where the special curve is irreducible and the nodal points are conjugated by
the hyperelliptic involution. In this case, we can produce a smoothing in both connected components of
$\omoduli[3,1](4)$. The  Figure~\ref{figure:BordOM3(4)IrrConjug} shows such a smoothing. One can easily
verify
that the smoothing are in the correct stratum using the Arf invariant of these curves. A consequence of this
is that the Arf Invariant of the nodal curve depends on the choice of a basis of the homology.

\begin{figure}[ht]
\centering
\begin{tikzpicture}[scale=1.1]
   \draw (0,0) coordinate (a1) -- node [below] {$1$} ++(1,0) coordinate (a2) -- ++(0,-1) coordinate (a3) --
node [below] {$2$} ++(1,0) coordinate (a4) -- ++(0,1) coordinate (a5) -- ++(0,1) coordinate (a6) --node
[above] {$2$} ++(-1,0) coordinate (a7) -- ++(0,1) coordinate (a8) --node [above] {$1$} ++(-1,0) coordinate
(a9) -- ++(0,-1) coordinate (a10) -- cycle;
  \foreach \i in {1,2,5,8,9}
  \fill (a\i) circle (1pt);
  \foreach \i in {3,4,6,7,10}
  \filldraw[fill=white,draw=black] (a\i) circle (1pt);

  \draw[->] (0.9,-1.2) -- node [below,sloped] {odd} ++(-1.5,-1);
  \draw[->] (1.1,-1.2) -- node [below,sloped] {hyperelliptic} ++(1.5,-1);
  
  %odd
    \draw (-2,-3.5) coordinate (b1) -- node [below] {$1$} ++(1,0) coordinate (b2) -- node [below] {$x$}
++(.5,0) coordinate (b3) -- ++(0,-1) coordinate (b4) --node [below] {$2$} ++(1,0) coordinate (b5) -- ++(0,1)
coordinate (b6) -- ++(0,1) coordinate (b7) --node [above] {$2$} ++(-1,0) coordinate (b8) --node [above] {$x$}
++(-.5,0) coordinate (b9) -- ++(0,1) coordinate (b10) --node [above] {$1$} ++(-1,0)coordinate (b11) --
++(0,-1)coordinate (b12) -- cycle;
  \foreach \i in {1,2,...,12}
  \fill (b\i) circle (1pt);

  %hyperelliptic
   \draw (3.3,-3.5) coordinate (c1) -- node [below] {$1$} ++(1,0) coordinate (c2) -- ++(0,-1) coordinate (c3)
-- node [below] {$2$} ++(1,0) coordinate (c4) -- ++(0,1) coordinate (c5) -- ++(0,1) coordinate (c6) --node
[above] {$2$} ++(-1,0) coordinate (c7) -- ++(0,1) coordinate (c8) --node [above] {$1$} ++(-1,0) coordinate
(c9) --node [above] {$x$} ++(-.5,0) coordinate (c10) -- ++(0,-1) coordinate (c11) --node [below] {$x$}
++(.5,0) coordinate (c12)   -- cycle;
 
      \foreach \i in {1,2,...,12}
  \fill (c\i) circle (1pt);
\end{tikzpicture}

\caption{Two smoothings of an irreducible curve with a node of conjugated points, one of them in
$\omoduli[3,1]^{\odd}(4)$ and the other in $\omoduli[3,1]^{\hyp}(4)$.}
\label{figure:BordOM3(4)IrrConjug}
\end{figure}
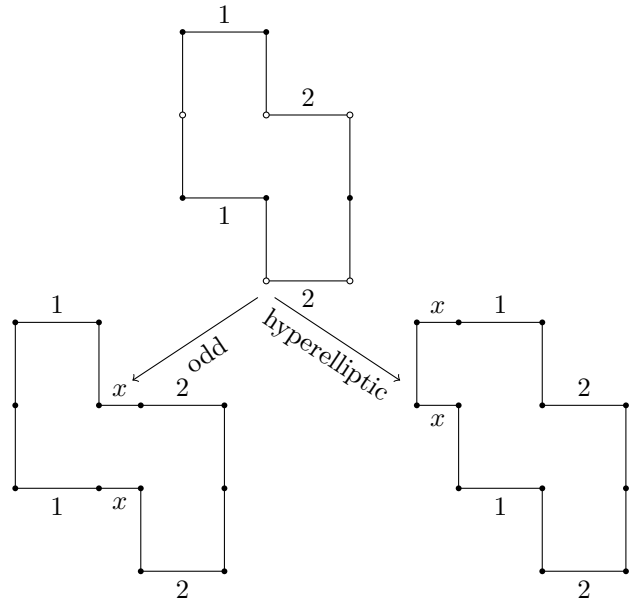
\end{ex}

\section*{Acknowledgements.}                 
This work consists of a part of my doctoral dissertation. I am grateful to Martin M\"oller for his
guidance. I would like to thank Sam Grushevsky for helpful suggestions. Dawei Chen has informed the author
that he has obtained some of the results in Sections \ref{section:Spin} and \ref{section:omoduli3,4,odd}
independently, the methods being in some case related, in some cases disjoint.
During
my PhD I have been supported by the ERC-StG 257137.

%%%%%%%%%%%%%%%%%%%%%%%%%%%%%%%%%%%%%%%%%%%%%%%%%%%%%%%%%%%%%%%%%%%%%%%%%%%%%%%%%%%%%%%%%%%%%%%%%%%%%%%%%%%%%%

\addcontentsline{toc}{section}{References.}
\bibliographystyle{alpha}
\bibliography{biblio}

\bigskip

\noindent
\small{\textsc{Quentin Gendron}, Institut f\"ur Mathematik,\\
 Goethe-Universit\"at,
Robert-Mayer-Str. 6-8,
D-60325 Frankfurt am Main \\
{\em E-mail:} gendron@math.uni-frankfurt.de}

\end{document}